\def\?[#1]{\textbf{[#1]}\marginpar{\Large{\textbf{??}}}}
\def\smath#1{\text{\scalebox{1}{$#1$}}}
\def\sfrac#1#2{\smath{\frac{#1}{#2}}}
\newlist{inlineroman}{enumerate*}{1}
\setlist[inlineroman]{itemjoin*={{, and }},afterlabel=~,label=\roman*.}
\newcommand{\inlineitem}[1][]{%
	\ifnum\enit@type=\tw@
	{\descriptionlabel{#1}}
	\hspace{\labelsep}
	\else
	\ifnum\enit@type=\z@
	\refstepcounter{\@listctr}\fi
	\quad\@itemlabel\hspace{\labelsep}
	\fi}
\DeclareSymbolFont{fouriersymbols}{FMS}{futm}{m}{n}
\DeclareSymbolFont{fourierlargesymbols}{FMX}{futm}{m}{n}
\DeclareMathDelimiter{\VERT}{\mathord}{fouriersymbols}{152}{fourierlargesymbols}{147}
\newcommand{\RN}[1]{%
	\textup{\uppercase\expandafter{\romannumeral#1}}%
}
\theoremstyle{plain}
\newtheorem{theo}{Theorem}
\newtheorem{prop}{Proposition}[section]
\newtheorem{lem}[prop]{Lemma}
\theoremstyle{remark}
\theoremstyle{definition}
\newtheorem{rem}[prop]{Remark}
\newtheorem{defi}[prop]{Definition}
\numberwithin{equation}{section}
\DeclareMathOperator{\codim}{codim}
\DeclareMathOperator{\esssupp}{esssupp}
\newcommand{\Op}{\mathrm{Op}}
\newcommand{\Opbh}{\mathrm{Op}_{\mathrm b,h}}
\newcommand{\sgn}{\mathrm{sgn}}
\DeclareMathOperator{\supp}{supp}
\newcommand{\WF}{\mathrm{WF}_{h}}
\newcommand{\WFb}{\mathrm{WF}_{\mathrm{b},h}}
\newcommand{\ELL}{\mathrm{ell}_h}
\newcommand{\ELLb}{\mathrm{ell}_{\mathrm{b}}}
\newcommand{\RR}{\mathbb{R}}
\newcommand{\CC}{\mathbb{C}}
\newcommand{\NN}{\mathbb{N}}
\newcommand{\OL}[1]
{
	\overline{#1}
}
\DeclareMathOperator{\Diff}{Diff}
\newcommand{\Lap}{\Delta}
\newcommand{\pa}{{\partial}}
\newcommand{\hamvf}{\mathsf{H}}
\newcommand{\bhamvf}{\mathsf{H}^{\mathrm b}}
\newcommand{\hyp}{\mathcal{H}}
\newcommand{\gl}{\mathcal{G}}
\newcommand{\ellip}{\mathcal{E}}
\newcommand{\loc}{\mathrm{loc}}
\newcommand{\Psibh}{\Psi_{\mathrm{b},h}}
\newcommand{\chare}{\Sigma}
\newcommand{\cchare}{\dot{\Sigma}}
\newcommand{\bT}{{}^\mathrm{b} T}
\newcommand{\bdotT}{{}^{\mathrm{b}} \dot T}
\newcommand{\Psibhc}{\Psi_{\mathrm{bc},h}}
\newcommand{\CI}{\mathcal{C}^\infty}
\newcommand{\CcI}{\mathcal{C}_c^\infty}
\newcommand{\CmI}{\mathcal{C}^{-\infty}}
\newcommand{\CdI}{\dot{\mathcal{C}}^{\infty}}
\newcommand{\CdmI}{\dot{\mathcal{C}}^{-\infty}}
\newcommand{\C}{\mathcal{C}}
\newcommand{\Diffh}{\mathrm{Diff}_{h}}
\newcommand{\symbbc}{S_\mathrm{bc}}
\newcommand{\symbb}{S_\mathrm{b}}
\newcommand{\symbbh}{S_{\mathrm{b},h}}
\newcommand{\symbbch}{S_{\mathrm{bc},h}}
\newcommand{\B}{\mathrm{b}}
\newcommand{\indicial}{\widehat{N}}
\newcommand{\Vb}{\mathcal{V}_{\mathrm{b}}}
\newcommand{\bsymbol}{\sigma_{\mathrm{b},h}}
\newcommand{\GBB}{\mathrm{GBB}}
\newcommand{\diag}{\mathrm{diag}}
\newcommand{\COMP}{\mathrm{comp}}
\newcommand{\hmg}{\sphericalangle}
\renewcommand{\Im}{\operatorname{Im}}
\newcommand{\blue}[1]{#1}
\title[Semiclassical diffraction by conormal potential singularities]
{Semiclassical diffraction by conormal potential singularities}
\author{Oran Gannot \and Jared Wunsch}
\email{ogannot@berkeley.edu \and jwunsch@math.northwestern.edu}
\address{Department of Mathematics, Lunt Hall, Northwestern University,
	Evanston, IL 60208, USA}
\begin{document}

\begin{abstract}

  We establish propagation of singularities for the semiclassical
  Schr\"odinger equation, where the potential is conormal to a
  hypersurface. We show that semiclassical wavefront set propagates
  along generalized broken bicharacteristics, hence reflection of
  singularities may occur along trajectories reaching the hypersurface
  transversely.  The reflected wavefront set is weaker, however, by
  a power of $h$ that depends on the regularity of the potential.  We
  also show that for sufficiently regular potentials, wavefront set
  may not stick to the hypersurface, but rather detaches from it at
  points of tangency to  travel along ordinary bicharacteristics.

\end{abstract}

	\maketitle

\thispagestyle{empty}

\section{Introduction}

\subsection{Statement of results} \label{subsect:statementofresults} Let $(X,g)$ be a smooth $n$-dimensional Riemannian manifold, and $Y \subset X$ a hypersurface. We study propagation of semiclassical singularities for the Schr\"odinger operator
\begin{equation} \label{eq:P}
P = -h^2 \Delta_g + V,
\end{equation}
where the \blue{real-valued} potential $V$ is conormal  to $Y$.  \blue{Semiclassical
  propagation of singularities theorems constrain the
  distribution of energy in phase space of a solution to \eqref{eq:P},
  asymptotically as $h \to 0$:
  for $V$ smooth, it is known that the energy concentrates on the
  classical energy surface and is invariant under the associated classical
  dynamics.  Here, by contrast, the singularities of the potential $V$
  play an important role, diffracting energy along \emph{broken} classical trajectories.}

\blue{The class of potentials $V$ that we consider are real-valued \emph{conormal
    distributions} with respect to $Y,$ a class of
  distributions that are smooth functions except at $Y.$  If $x$ is a defining function of $Y$ then
  $x_+^\alpha$ is an instructive example, with $\alpha> 0.$}
More generally,
we assume throughout that $V \in I^{[-1-\alpha]}(Y)$ for some $\alpha
> 0$.  This means that $V$ is locally the inverse Fourier
transform of a Kohn--Nirenberg symbol of order $-1-\alpha$, transverse to $Y$. In particular, $V$ is $1+\alpha$ orders
more regular than the delta distribution along $Y.$  If
$\alpha \geq k + \gamma$ with $k \in \NN$ and $\gamma \in (0,1)$, then
$V \in \C^{k,\gamma}(X)$, but $V$ is $\mathcal{C}^\infty$ away
from $Y$. (See Section~\ref{subsect:conormaldistributions} below for details.)

\blue{Let $p=\lvert\xi\rvert^2_g+V$ denote the semiclassical principal
symbol of $P$.   Let $\hamvf_p$ denote its associated Hamilton vector
field, e.g., $\hamvf_p=2 \xi \cdot \partial_x-(\pa_x V) \cdot \pa_\xi$
if $g$ is the Euclidean metric.  Recall that $\WF^s(u),$ the
semiclassical wavefront set of order $s,$ measures where, in $T^*X,$
the family $u$ fails to be $\mathcal{O}_{L^2}(h^s).$}
If $P u=0$,
\blue{then known results imply that} the semiclassical wavefront set $\WF^s(u)$ of order $s$ is
contained in the characteristic set \blue{$\Sigma\equiv \{p=0\}$}, and is invariant under
the $\hamvf_{p}$ flow for each $s \in \RR \cup \{+\infty\}$, at least away from $Y$. This
result breaks down for singularities striking $T^*_Y X$: the conormal
singularity of $V$ causes ray splitting, generating wavefront set along both
the reflected and transmitted components.

To make the  notion of ray-splitting precise, we introduce a suitable \emph{generalized
  broken bicharacteristic} $(\GBB)$ flow, taking into account both
transverse and tangential incidence to $Y$. Properties of this $\GBB$
flow are described in detail in Section \ref{subsect:GBB}; \blue{its
  main feature is that the allowed trajectories are continuous in
  space but potentially discontinuous in momentum, with momentum
  tangent to $Y$ conserved at interactions with this hypersurface, in
  accordance with the laws of reflection and refraction.  The GBB flow
  is, consequently, not defined on the usual cotangent bundle, where
  it would be discontinuous.  Instead, we introduce an adapted notion
  of semiclassical wavefront set by using a variant of Melrose's
  \emph{b-calculus} of pseudodifferential operators.  This gives rise
  to a \emph{semiclassical b-wavefront set} which lives in a rescaling
  of the usual cotangent bundle, and agrees with the usual
  semiclassical wavefront set away from $Y,$ but has the combined virtue and
  defect of not distinguishing different normal momenta over $Y$
  itself.  The compressed characteristic set employed below is
  likewise an appropriately rescaled version of the set $\{p=0\},$
  which does not distinguish among different normal momenta over $Y.$}
(For details, including the relevant notation, see Section
\ref{sect:bpseudo}.)

\begin{theo}[{Propagation of singularities}] \label{theo:GBBpropagation} 
	Let $\alpha > 0$ and $s \in \RR \cup \{+\infty\}$. If $u$ is $h$-tempered in $H^1_{h,\loc}(X)$, then $\WFb^{s}(u) \setminus \WFb^{-1,s+1}(Pu)$ is the union of maximally extended $\GBB$s within the compressed characteristic set $\cchare$.
\end{theo}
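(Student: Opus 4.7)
The plan is to adapt the Melrose--Sj\"ostrand positive commutator scheme to the semiclassical b-calculus based at $Y$. Away from $Y$ the potential is smooth, so the standard semiclassical propagation theorem handles GBB segments in $T^*X \setminus T^*_Y X$; the substantive content concerns the compressed cotangent fibers over $Y$. I would reduce the theorem to a local forward propagation statement along a short GBB segment ending at a point $q_1$: if the ``initial'' point $q_0$ is not in $\WFb^s(u)$ and the intermediate GBB trajectory avoids $\WFb^{-1,s+1}(Pu)$, then $q_1 \notin \WFb^s(u)$. Iteration along maximally extended GBBs, together with the closedness of the wavefront set, then yields the global statement.

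For the local propagation step I would split into three cases according to the classification of points in $T^*_Y X$ relative to $\chare$. At elliptic points of $\bsymbol(P)$ within the compressed cotangent bundle, b-elliptic regularity from $\Psibh$ immediately gives the estimate. At hyperbolic points, I would construct a self-adjoint b-commutant $A \in \Psibh$ supported in a small b-neighbourhood of the pair of GBBs meeting transversely at $q_0 \in T^*_Y X$, with b-principal symbol $a$ designed so that $\bhamvf_p(a^2) \leq -C a^2 + e$, where the error $e$ is supported on the incoming branch where regularity is assumed. Pairing $(i/h)[P,A]u$ against $u$ and bounding $\langle Pu, Au\rangle$ by Cauchy--Schwarz then propagates regularity across $T^*_Y X$ onto the outgoing branch. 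The key technical point is that $V$ is not a semiclassical symbol but lies in the b-calculus, so $[A,V]$ acts boundedly and gains a power of $h$ in the appropriate b-Sobolev scale; this is what both allows the estimate to close and accounts for the $h^\alpha$ loss of regularity on the reflected branch advertised in the abstract.

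The main obstacle is the treatment of glancing points, where $\bhamvf_p$ is tangent to the compressed boundary face and no monotone commutant is directly available. Here I would follow the Melrose--Sj\"ostrand glancing construction, using a quadratic escape function in a variable transverse to $\gl$ combined with a linear weight along the compressed flow, and then verify that the conormal potential contributes only a subprincipal b-calculus error that can be absorbed. A delicate point is that the $H^1_{h,\loc}$ hypothesis is needed throughout both to give meaning to $Vu$ (since for small $\alpha$ the potential is only H\"older continuous) and to supply the base Sobolev regularity on which the commutator argument is run. Once the three local propagation estimates are in hand, a standard open/closed argument in the flow parameter, together with the compactness of $\cchare$ on bounded energy shells and the invariance under time reversal, shows that $\WFb^s(u) \setminus \WFb^{-1,s+1}(Pu)$ is closed under both forward and backward GBB flow, and is therefore a union of maximally extended GBBs within $\cchare$, as claimed.
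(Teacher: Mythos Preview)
Your overall architecture---the three-region split and positive-commutator estimates in the semiclassical b-calculus---matches the paper. Two points need correction.

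First, the remark that the b-commutator with $V$ ``accounts for the $h^\alpha$ loss of regularity on the reflected branch'' conflates this theorem with Theorem~\ref{theo:propagation}. Theorem~\ref{theo:GBBpropagation} involves no $h^\alpha$ threshold; it is a clean propagation statement at every order $s$, and the b-calculus handles $V\in\Psibhc^0$ with the ordinary $h$ gain in $[A,V]$. The diffractive improvement is a separate result proved by a completely different mechanism (ordinary semiclassical commutants, paired Lagrangian distributions, mixed-norm energy estimates), not by the b-calculus argument you are sketching.

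Second, and more substantively, the passage from the local estimates to the global statement is not a ``standard open/closed argument,'' and your framing of the local step (``$q_0$ regular implies $q_1$ regular along a GBB'') is not what the estimates deliver. The hyperbolic estimate (Proposition~\ref{prop:hyperbolic}) says only that if $q_0$ carries b-wavefront set then so does some nearby point with $\sigma<0$; the glancing estimate (Proposition~\ref{prop:glancing}) says that if $q_0$ carries b-wavefront set then so does some point in a ball of radius $C_0\delta^{2/(2-\theta)}$ about $\exp(-\delta\bhamvf_{\tilde p_0})(q_0)$. Since GBBs are not unique, neither statement directly produces a GBB through $q_0$ lying in the wavefront set. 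At hyperbolic points the paper extracts a sequence of such nearby points in $T^*(X\setminus Y)$, runs ordinary propagation through each to get bicharacteristic segments in the wavefront set, and passes to a uniform limit via equicontinuity of GBBs (Proposition~\ref{prop:equicontinuous}). At glancing points the paper builds piecewise curves alternating between short line segments (tracking the $\tilde p_0$-flow) and honest GBB pieces, then proves in a dedicated lemma (Lemma~\ref{lem:cauchypeano}) that these converge to a genuine GBB. The superlinear ball radius $\delta^{2/(2-\theta)}$---forced by the merely H\"older regularity of $\hamvf_p$, through a $\delta$-dependent choice $\beta\sim\delta^{\theta/(2-\theta)}$ in the commutant---is exactly what makes this limit a GBB rather than an arbitrary continuous curve. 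Your sketch omits this construction, and without it the argument is incomplete.
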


\blue{Suppose $Pu=0.$  Then Theorem~\ref{theo:GBBpropagation} tells us that a given point in
  the wavefront set must give rise to wavefront set along \emph{at
    least} one maximally extended GBB through it, but does not
  distinguish among the various possibilities.  The theorems that
  follow draw subtler distinctions among them, and in particular give
  a special role to GBBs that are in fact ordinary solutions to 
  Hamilton's equations of motion.
Thus we now return to the usual
  cotangent bundle, where we may consider the usual Hamilton flow
  provided that there is enough regularity for it to make sense.}
Introduce local coordinates $(x,y)$ such that $Y = \{x=0\}$, and let $(x,y,\xi,\eta)$ be the corresponding canonical coordinates on $T^*X$. Even though Hamilton's equations become singular over $Y$ when $\alpha \leq 1$, the integral curves of $\hamvf_{p}$ are well defined near transversally incident points 
\[
\varpi_\pm = (0,y_0,\pm \xi_0,\eta_0) \in \Sigma
\] 
where the normal momentum $\pm \xi_0$ does not vanish; see Lemma \ref{lem:caratheodory}. The integral curves $\gamma_\pm$ with $\gamma_\pm(0) = \varpi_\pm$ therefore exist on some interval $(-\varepsilon,\varepsilon)$. To use the terminology of \cite{de2014diffraction}, the points $\varpi_\pm$ are said to be related, in the sense of having the same tangential momentum. Since $\WFb^s(u) = \WF^s(u)$
 away from $Y$, Theorem \ref{theo:GBBpropagation} states the following at transversally incident points: if $\gamma_+((-\varepsilon,0))$ and $\gamma_-((-\varepsilon,0))$ are both disjoint from $\WF^s(u)$, then
\begin{equation} \label{eq:disjointWF}
\gamma_+((0,\varepsilon)) \cap \WF^s(u) = \emptyset.
\end{equation}
On the other hand, the reflected singularity (namely the contribution of incident wavefront set along $\gamma_-((-\varepsilon,0))$ to outgoing wavefront set along $\gamma_+((\varepsilon,0))$) 
is expected to be weaker than the original incident singularity along $\gamma_-((-\varepsilon,0))$. In other words, if $\gamma_+((-\varepsilon,0))$ is disjoint from $\WF^s(u)$ and $\gamma_-((-\varepsilon,0))$ is disjoint from $\WF^r(u)$, then \eqref{eq:disjointWF} should hold for a range of $s$ depending on $\alpha$ and $r$. We show that at least when $\alpha>1$, this holds for $s \leq r+\alpha$.

\begin{theo}[Diffractive improvement at transverse reflection] \label{theo:propagation} Let
  $\alpha > 1$ and $s \leq r+ \alpha$, where
  $s,r \in \RR \cup \{+\infty\}$. Suppose that $u$ is $h$-tempered in
  $H^1_{h,\loc}(X)$ with $Pu \in L^2_{\loc}(X)$, and $\WF^{s+1}(Pu) = \emptyset$. Let
	\[
	\varpi_\pm = (0,y_0,\pm \xi_0,\eta_0) \in \Sigma
	\]
	with $\xi_0 \neq 0$, and let $\gamma_\pm$ be as above. If $\varpi_+ \in \WF^s(u)$, then there exists $\varepsilon > 0$ such that 
	\[
	\gamma_+((-\varepsilon,0)) \subset \WF^s(u) \text{ or } \gamma_-((-\varepsilon,0)) \subset \WF^r(u).
	\]
      \end{theo}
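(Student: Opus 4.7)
The plan is a proof by contradiction via a positive commutator estimate, with the new ingredient being a quantitative bound on the contribution from the conormal singularity of $V$. Assume toward contradiction that $\varpi_+ \in \WF^s(u)$ and that there exists $\varepsilon > 0$ with both
\[
\gamma_+((-\varepsilon, 0)) \cap \WF^s(u) = \emptyset, \qquad \gamma_-((-\varepsilon, 0)) \cap \WF^r(u) = \emptyset.
\]
Work in local canonical coordinates near $\varpi_+ = (0, y_0, \xi_0, \eta_0)$ with $\xi_0 > 0$; the Hamilton flow of the free symbol $p_0 = |\xi|^2 + |\eta|^2$ integrates to $\gamma_+$ transversely to $Y$. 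Following the standard Melrose--Sjöstrand construction, I would build $A = \Opbh(a)$ of semiclassical Sobolev order $s$, microlocalized in a small tube around $\gamma_+([-\varepsilon, 0])$, with $\hamvf_{p_0} a^2 = -b^2 + e_+$, where $b$ is elliptic at $\varpi_+$ and $e_+$ is supported in $\gamma_+((-\varepsilon, 0))$.

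Pair the commutator identity
\[
\tfrac{i}{h} \langle [P, A^*A] u, u\rangle = \tfrac{i}{h}\langle [-h^2\Delta_g, A^*A] u, u\rangle + \tfrac{i}{h}\langle [V, A^*A] u, u\rangle.
\]
The left-hand side is controlled by the hypotheses $\WF^{s+1}(Pu) = \emptyset$ together with $Pu \in L^2_{\loc}$. The first term on the right is handled by standard Hörmander semiclassical propagation: it equals $-\|Bu\|^2 + \langle E_+ u, u\rangle$ modulo lower-order terms, and $\langle E_+ u, u\rangle < \infty$ since $u \in H^s_h$ on the support of $e_+$. The main new work is to estimate $(i/h)\langle [V, A^*A] u, u\rangle$. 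Using the oscillatory representation $V(x,y) = \int e^{ix\xi} v(y,\xi)\, d\xi$ with $v \in S^{-1-\alpha}$, one decomposes $[V, A^*A]$ microlocally; its nontrivial part is a ``reflection operator'' with Schwartz kernel concentrated on the relation $\{x = 0,\ \xi \leftrightarrow -\xi\}$ and principal amplitude of order $h^\alpha$ at normal frequency $\xi_0 / h$ (reflecting the conormal order $-1-\alpha$ of $v$). This yields an estimate of the form
\[
\bigl|\tfrac{i}{h}\langle [V, A^*A] u, u\rangle\bigr| \lesssim h^{2\alpha}\|\widetilde B u\|^2 + O(h^\infty),
\]
where $\widetilde B$ is microlocalized on the reflected backward arm $\gamma_-((-\varepsilon, 0))$ at Sobolev order $r+1$. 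The right-hand side is finite by $\WF^r(u) \cap \gamma_-((-\varepsilon, 0)) = \emptyset$, and the factor $h^{2\alpha}$ lets it be absorbed. Combining yields $\|Bu\|^2 = O(1)$, contradicting $\varpi_+ \in \WF^s(u)$ by microlocal ellipticity of $B$.

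The main obstacle is rigorously establishing the reflection estimate. This requires a quantitative composition calculus matching conormal distributions of order $-1-\alpha$ on $Y$ with semiclassical b-pseudodifferential operators, identifying the reflection relation and the precise $h^\alpha$ gain. A natural approach is to split $V = V^\flat + V^\sharp$ into a smooth part handled by standard propagation and a purely singular conormal part analyzed via its oscillatory integral, with stationary phase in the normal variable producing the reflection kernel and its amplitude. The hypothesis $\alpha > 1$ and the threshold $s = r + \alpha$ enter precisely in ensuring that the $h^{2\alpha}$ gain beats the $h^{-2(s-r)}$ loss implicit in the Sobolev-order mismatch across reflection, and in controlling the remainder terms of the composition calculus so that the absorption step closes.
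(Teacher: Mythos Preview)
Your outline has the right spirit but misidentifies several key structural features of the argument, and the proposed ``reflection estimate'' does not hold in the form you state.

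First, the commutant cannot be a b-operator $A=\Opbh(a)$ microlocalized along $\gamma_+$: b-operators near $Y$ see only $\sigma=x\xi$, not $\xi$, so they cannot distinguish $\gamma_+$ from $\gamma_-$ at the interface. The paper instead uses an \emph{ordinary} semiclassical commutant $A\in\Psi_h^{\COMP}$, localized in $(x,y,\xi,\eta)$ near $\varpi_+$; this is precisely what allows propagation along the single transmitted branch.

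Second, the commutator with $V$ does not produce a clean reflection operator with kernel on $\{\xi\leftrightarrow -\xi\}$ and an $h^{2\alpha}$ gain against $\widetilde B$ microsupported on $\gamma_-$. The conormal singularity of $V$ spreads only in the normal momentum $\xi$, so composition with $V$ destroys localization in $\xi$ while preserving localization in $(x,y,\eta)$. Accordingly, the error term in the paper is $Ch^\alpha\|Q_\B u\|_{H^1_h}$ with $Q_\B\in\Psibh^{\COMP}$ elliptic at $\pi(\varpi_0)$ (equivalently a tangential operator $T$), not an operator microlocalized on $\gamma_-$. The reflected-arm hypothesis enters only indirectly: one first invokes Theorem~\ref{theo:GBBpropagation} to conclude $\pi(\varpi_0)\notin\WFb^{1,r}(u)$ from the assumed regularity along both $\gamma_\pm((-\varepsilon,0))$, and it is this b-regularity that controls the error.

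Third, the sharp threshold $s\le r+\alpha$ is not obtained by a symbolic $h^{2\alpha}$ gain. The paper decomposes $V=V_0+V_1$; the $[V_0,A^*A]$ piece is handled in a paired Lagrangian calculus (giving $h^{2\gamma}$-small errors absorbed by iteration), while the dangerous term $h^{-1}\langle AV_1u,Au\rangle$ is bounded using \emph{mixed-norm} estimates $\|V_1\|_{L^1_x L^\infty_y}=\mathcal O(h^{\alpha+1})$ together with a microlocal energy estimate (Lemma~\ref{lem:energyestimate}) that controls $\|Tu\|_{L^\infty_x L^2_y}$ by $\|Pu\|$ and tangentially microlocalized $H^1_h$ norms. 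Without the energy estimate one loses a full power of $h$ and only reaches $s\le r+\alpha-1$; your outline omits this mechanism entirely.

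Finally, the deduction of the theorem from the local estimate is not the absorption step you describe: one shows that if $\varpi_+\in\WF^s(u)$ then there is a sequence $\varpi_j\to\varpi_+$ in $\WF^s(u)\cap\{x<0\}$, and then uses continuous dependence of bicharacteristics on initial data (Lemma~\ref{lem:caratheodory}, requiring $\alpha>1$) together with ordinary propagation off $Y$ and closedness of $\WF^s(u)$ to conclude $\gamma_+((-\varepsilon,0))\subset\WF^s(u)$.
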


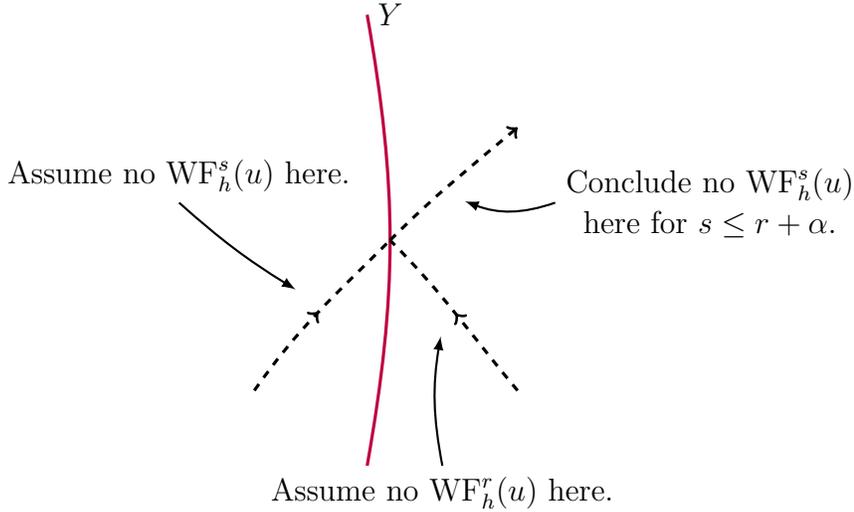
\begin{figure}
\begin{center}
	
	\begin{tikzpicture}

          \tikzset{->-/.style={decoration={
  markings,
  mark=at position .5 with {\arrow{>}}},postaction={decorate}}}

	\coordinate  (A) at (4,0);
	\coordinate [label=right:$Y$] (B) at (4,6);
	\draw[very thick, color = purple] (A) to [bend right = 10] (B);
	\path (A) to [bend right = 10] coordinate [pos=0.5] (q)  (B) ;

		\draw[->,very thick,dashed] (q) to [out=45,in=-140] (6,4.5);
		\path (q) to [out=45,in=-150] coordinate [pos=0.5] (m1)  (6,4.5);
		\draw[thick,->,>=latex] (6.5,3.5) to [bend left = 20] ([xshift = 2mm,yshift = -3mm] m1);
		\node[right, yshift=0mm, align=center] at (6.5,3.5)(bichar){Conclude
                  no $\WF^s(u)$ \\ here for {$s\leq r+\alpha$}.};

		\draw[->-,very thick,dashed] (6,1) to [out=130,in=-45] (q);
		\path (6,1) to [out=130,in=-45] coordinate [pos=0.5] (m2)  (q);
		\draw[thick,->,>=latex] (5,0) to [bend left = 10] ([xshift = -2mm, yshift = -3mm] m2);
		\node[below, yshift=0mm] at (5,0)(bichar){Assume no $\WF^r(u)$ here.};
		
		\draw[->-,very thick,dashed] (2.5,1) to [out=55,in=-135] (q);
		\path (2.5,1) to [out=55,in=-135] coordinate [pos=0.5] (m3)  (q);
		\draw[thick,->,>=latex] (1.5,3.5) to [bend right = 5] ([xshift = -3mm, yshift = 3mm] m3);
		\node[above, yshift=0mm] at (1.5,3.5)(bichar){Assume no
                  $\WF^s(u)$ here.};

	\end{tikzpicture}
	
\end{center}
\caption{Illustration of the diffractive improvement.  The trajectory
  at lower left is $\gamma_+((-\varepsilon,0))$; its continuation across
  the interface is $\gamma_+((0,\varepsilon)).$  The other incident
  trajectory at lower right is $\gamma_-((-\varepsilon,0))$.
  \blue{The limitation on the propagation of regularity through the
    interface is $s \leq r+\alpha.$}\label{fig:diffractive}}

\end{figure}

           For an illustration, see Figure~\ref{fig:diffractive}. We refer to this result as a ``diffractive improvement'' as it shows
that corrections to the naive
geometric optics ansatz (wherein singularities propagate along ordinary
bicharacteristics) is in fact a small perturbation.  It is perhaps
easier to visualize the following reinterpretation in terms of reflection: let $Pu = 0$, 
where $\WF^0(u) = \emptyset$. This of course allows $\gamma_-((-\varepsilon,0))$ to possibly contain incoming singularities in $\WF^\delta(u)$ for $\delta > 0$. On the other hand, assume that
$\WF^\infty(u)$ is disjoint from $\gamma_+((-\varepsilon,0))$.  Then using
the background regularity $r=0$, the
theorem guarantees absence of $\WF^{\alpha}(u)$ along
$\gamma_+((0,\varepsilon))$. No matter how small $\delta > 0$, any incident singularity in $\WF^\delta(u)$ is
partially \emph{reflected} (the sign of $ \xi$ has flipped) to produce
at most a milder singularity --- see Figure~\ref{fig:reflection}.

The threshold $s \leq r+\alpha$ is in general sharp, as we show by example in the next section. The same example indicates that Theorem \ref{theo:propagation} may hold for $\alpha >0$, rather than just $\alpha > 1$.

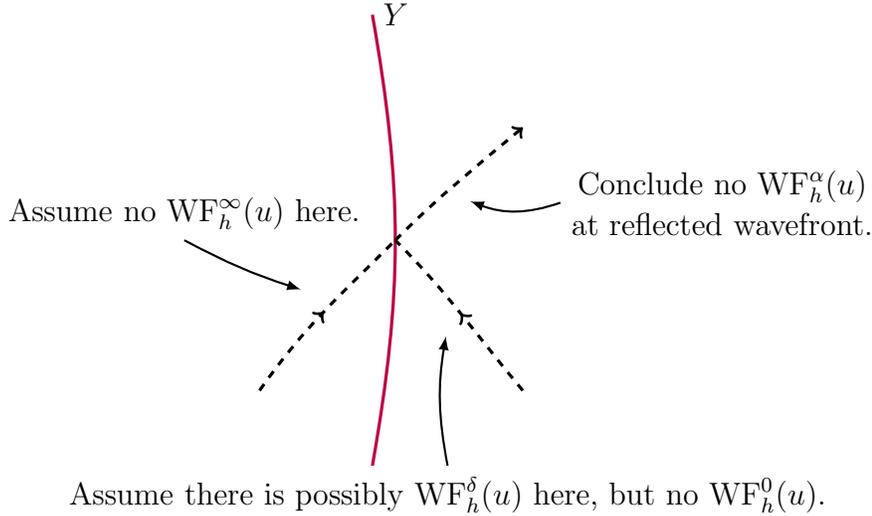
\begin{figure}
\begin{center}
	
	\begin{tikzpicture}

          \tikzset{->-/.style={decoration={
  markings,
  mark=at position .5 with {\arrow{>}}},postaction={decorate}}}

	\coordinate  (A) at (4,0);
	\coordinate [label=right:$Y$] (B) at (4,6);
	\draw[very thick, color = purple] (A) to [bend right = 10] (B);
	\path (A) to [bend right = 10] coordinate [pos=0.5] (q)  (B) ;

	\draw[->,very thick,dashed] (q) to [out=45,in=-140] (6,4.5);
\path (q) to [out=45,in=-150] coordinate [pos=0.5] (m1)  (6,4.5);
\draw[thick,->,>=latex] (6.5,3.5) to [bend left = 20] ([xshift = 2mm,yshift = -3mm] m1);
\node[right, yshift=0mm, align=center] at (6.5,3.5)(bichar){Conclude no
                  $\WF^{\alpha}(u)$ \\ at reflected wavefront.};

		\draw[->-,very thick,dashed] (6,1) to [out=130,in=-45] (q);
		\path (6,1) to [out=130,in=-45] coordinate [pos=0.5] (m2)  (q);
		\draw[thick,->,>=latex] (5,0) to [bend left = 10] ([xshift = -2mm, yshift = -3mm] m2);
		\node[below, yshift=0mm] at (5,0)(bichar){Assume there
                  is possibly $\WF^\delta(u)$ here, but no $\WF^0(u)$.};
		
		\draw[->-,very thick,dashed] (2.5,1) to [out=55,in=-135] (q);
		\path (2.5,1) to [out=55,in=-135] coordinate [pos=0.5] (m3)  (q);
		\draw[thick,->,>=latex] (1.5,3) to [bend right = 5] ([xshift = -3mm, yshift = 3mm] m3);
		\node[above, yshift=0mm] at (1.5,3)(bichar){Assume no
                  $\WF^\infty(u)$ here.};

	\end{tikzpicture}
	
      \end{center}
      \caption{Diffractive reflection of a single incident singularity.\label{fig:reflection}}
\end{figure}

One might further ask exactly what happens to semiclassical wavefront set at
points tangent to $Y;$  an understanding of diffractive improvements
along this set is essential in
understanding global propagation phenomena.  For instance, propagation
along generalized broken bicharacteristics as in
Theorem~\ref{theo:GBBpropagation} permits singularities to ``stick''
to the boundary of a convex $Y$ rather than detaching from it.  Our
final result shows that, at least for slightly more regular $V,$ this
sticking phenomenon does not in fact occur.

We consider points in the \emph{glancing set} $\gl$ (defined below in
\eqref{ellhypgl}) which is essentially the points in the
characteristic set where rays are tangent to the boundary; as $\gl$ is
technically a subset of the \emph{compressed} cotangent bundle (a
quotient of $T^*X,$ also
defined in Section
\ref{sect:bpseudo}), it is actually points in $\pi^{-1}(\gl)\subset T^*X$
at which we consider microlocal regularity, where $\pi$ is the relevant quotient map.

For the
moment we continue to assume that $\alpha > 1$, in which case $\hamvf_p$ is
a $\C^0$ vector field, hence we in general have existence but not
uniqueness of bicharacteristics (see Remark \ref{rem:nonunique} for an example where uniqueness fails). Thus, given any
$\varpi_0 \in \chare$, there exists at least one bicharacteristic
$\gamma : (-\varepsilon,\varepsilon) \rightarrow \chare$ with
$\gamma(0) = \varpi_0$. If $\alpha > 2$, then the Hamilton vector field is
Lipschitz and this bicharacteristic is
unique.

\begin{theo}[Diffractive improvement at glancing] \label{theo:improvedglancing}
	Let $\alpha > 1$ and $r \in \RR$. Let $\varpi_0 \in \pi^{-1}(\gl)$. Suppose that $u$ is $h$-tempered in $H^1_{h,\loc}(X)$ with $Pu \in L^2_\loc(X)$, and $\WF^{r+1}(Pu) = \emptyset$. If $\varpi_0 \in \WF^{r}(u)$, then there exists $\varepsilon > 0$ and a bicharacteristic $\gamma$ with $\gamma(0) = \varpi_0$ such that 
	\[
	\gamma((-\varepsilon,0)) \subset \WF^{r}(u).
	\]
\end{theo}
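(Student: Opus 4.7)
My plan is to combine Theorem~\ref{theo:GBBpropagation} with a diffractive-type positive commutator argument to upgrade propagation from GBBs to ordinary bicharacteristics near glancing. In local coordinates $(x,y)$ with $Y=\{x=0\}$ and $p = \xi^2 + r(x,y,\eta) + V(x,y) - E$, glancing at $\varpi_0=(0,y_0,0,\eta_0)$ means $\xi_0=0$ and $\hamvf_p$ is tangent to $Y$ at $\varpi_0$. Since $V\in\C^\alpha$ with $\alpha>1$, $\hamvf_p$ is continuous and ordinary bicharacteristics through $\varpi_0$ exist by Peano's theorem. From Theorem~\ref{theo:GBBpropagation} combined with $\WF^{r+1}(Pu) = \emptyset$, the starting point is a backward-extended GBB $\tilde\gamma : [-\varepsilon_0,0] \to \WFb^r(u) \cap \cchare$ with $\tilde\gamma(0)=\pi(\varpi_0)$. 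The goal is to produce an ordinary bicharacteristic $\gamma$ in $T^*X$ through $\varpi_0$ with $\gamma((-\varepsilon,0)) \subset \WF^r(u)$.

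If $\tilde\gamma$ detaches from $\pi^{-1}(\gl)$ on the initial backward interval, it lifts to an ordinary bicharacteristic in $T^*X$ and the conclusion is immediate. The nontrivial case is when $\tilde\gamma$ sticks tangentially to $Y$ glancingly, which I rule out by contradiction using a positive commutator argument. Concretely, suppose no ordinary bicharacteristic $\gamma$ through $\varpi_0$ satisfies $\gamma((-\varepsilon,0))\subset \WF^r(u)$ for any small $\varepsilon$. Fix one specific backward bicharacteristic $\gamma_*$ through $\varpi_0$ (whose existence is guaranteed by Peano's theorem), and let $t_*\in(-\varepsilon_0,0)$ be a time with $\gamma_*(t_*)\notin\WF^r(u)$. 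Build an escape function $q\in\symbch$ supported in a thin tube around $\gamma_*([t_*, 0])$, with $q\geq c>0$ near $\varpi_0$, $q=0$ near $\gamma_*(t_*)$, and $\hamvf_p q$ of the sign appropriate for propagating regularity forward along $\gamma_*$. The commutator identity for the self-adjoint $P$ applied to $\Op(q)^*\Op(q)$, combined with G{\aa}rding's inequality, should then yield microlocal $h^r L^2$ control of $u$ at $\varpi_0$, contradicting $\varpi_0\in\WF^r(u)$. The source term is absorbed by $\WF^{r+1}(Pu)=\emptyset$, and the a priori control comes from $u\in H^1_{h,\loc}$ and the control at $\gamma_*(t_*)$.

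The main obstacle is controlling the commutator $\tfrac{i}{h}[V,\Op(q)^*\Op(q)]$, since $V$ is only conormal to $Y$. Using $V\in I^{[-1-\alpha]}(Y)$ and the b-pseudodifferential calculus developed earlier in the paper, this commutator should be controllable with an extra $h^\alpha$ gain relative to the naive estimate. The glancing condition $\xi_0=0$ is crucial here: the normal component $2\xi\pa_x$ of $\hamvf_p$ is small near $\varpi_0$, so the tube $\supp q$ stays close to $T^*Y$, where the b-calculus yields sharp commutator bounds against conormal $V$. Non-uniqueness of bicharacteristics when $1<\alpha\le 2$ is accommodated by working with the single fixed curve $\gamma_*$: the escape function is defined with reference to this one curve, while the positive commutator inequality is pointwise in $\hamvf_p$, which remains a well-defined continuous vector field regardless of uniqueness.
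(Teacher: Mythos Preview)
Your proposal has the right overall shape—a positive commutator argument with an escape function localized near a bicharacteristic, with the commutator $[V,A^*A]$ as the main obstruction—but there is a genuine gap: the argument as written is a \emph{single-shot} propagation step, and this cannot reach regularity $r$ starting only from the tempered assumption $u\in H^1_{h,\loc}$.

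The issue is quantitative. The error from $[V_1,A^*A]$ (after splitting $V$ into smooth, near-diagonal, and high-frequency pieces) is controlled by $h^{\alpha-1}\|Tu\|_{H^1_h}$ for a tangential operator $T$, \emph{not} by $h^{\alpha}$ as you claim; at glancing one loses one power of $h$ relative to the hyperbolic case because the energy estimate of Section~\ref{subsect:energy} is unavailable (the factorization $hD_x\mp\Lambda$ degenerates when $\xi_0=0$). More importantly, this error term carries a norm of $u$ microlocalized only \emph{tangentially}, so it is bounded by $h^{\alpha-1}\|Q_\B u\|_{H^1_h}$ for some $Q_\B\in\Psibh^{\COMP}$ elliptic at $\pi(\varpi_0)$. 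Your ``a priori control from $u\in H^1_{h,\loc}$'' gives only $\|Q_\B u\|=\mathcal{O}(h^{-N})$, so the error is $\mathcal{O}(h^{\alpha-1-N})$, which is useless for concluding $\varpi_0\notin\WF^r(u)$ unless $N\leq r-\alpha+1$ to begin with.

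The paper closes this gap by \emph{iteration}: starting from any $s$ with $\varpi_0\notin\WF^s(u)$, one improves to $s_0=\min(r,s+\alpha-1)$, then repeats. The key observation that makes the iteration work is Lemma~\ref{lem:WFbimpliesWF}: at glancing points, $\varpi_0\notin\WF^s(u)$ implies $\pi(\varpi_0)\notin\WFb^{1,s}(u)$, so the b-regularity needed to control the error term at step $k+1$ is supplied by the output of step $k$. Without this conversion between ordinary and b-wavefront set (which is special to glancing and uses the elliptic estimate of Lemma~\ref{lem:sigmanotzero} off the zero section in $\sigma$), the background term cannot be bootstrapped. Your proposal does not mention either the iteration or this conversion, and the argument does not close without them.
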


While this theorem certainly holds for the range $\alpha>1,$ it is
considerably more powerful when $\alpha>2$ since the set
\[
\{\gamma((-\varepsilon,0)): \gamma \text{ is a bicharacteristic}, \, \gamma(0) = \varpi_0\} 
\]
consists of the \emph{unique} solution to Hamilton's equations on
$(-\varepsilon, 0]$ with
$\gamma(0)=\varpi_0$; in this case, the theorem proves the
``non-sticking'' alluded to above, as it shows that a singularity in
$\gl$ propagates along the unique ordinary bicharacteristic through
that point rather than along one of the many possible generalized
broken bicharacteristics: 
to see this we use
Theorem~\ref{theo:improvedglancing} to obtain absence of $\WF^r(u)$ at
$\varpi_0$ based on regularity along the backward bicharacteristic; if
the bicharacteristic is, e.g., tangent to $Y$ at the single point
$\varpi_0$ before leaving it, then since $\WF^r(u)$ is closed, we
obtain this regularity at nearby points, and may propagate it forward
over $X\backslash Y$ (by the usual propagation of singularities) to
obtain absence of $\WF^r(u)$ along the whole bicharacteristic --- see
Figure~\ref{fig:glancing}.

It would be of considerable interest to know in more detail what
happens in the range $1<\alpha< 2.$ We at least know that
singularities propagate along one or more of the non-unique
bicharacteristics; it is possible that bicharacteristics sticking to
the interface $Y$ may gain regularity at a fixed rate as they do so.

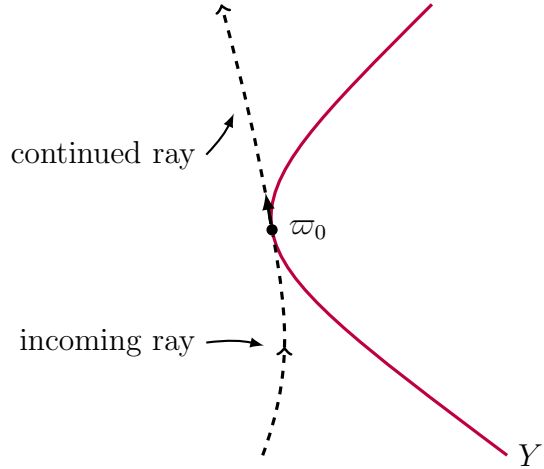
\begin{figure}
\begin{center}
	
	\begin{tikzpicture}
	
	          \tikzset{->-/.style={decoration={
				markings,
				mark=at position .5 with {\arrow{>}}},postaction={decorate}}}
	
	\coordinate [label=right:$Y$] (A) at (5,0);

		\draw[very thick,color=purple] (5,0) .. controls (1,3) .. (4,6);
	
	\path (5,0) .. controls (1,3) .. (4,6)
	node [circle,inner sep=1.5pt, pos=0.5, fill = black, label=right:$\varpi_0$] (q) {} 
	;

	\draw[->,very thick,>=latex] (q) to  ++(100:5mm);

	\coordinate (tangent) at (1.2,6);

	{
	\path (1.75,0) to [out=80,in=-80] coordinate [pos=0.5] (m1)  (q);
	\draw[thick,->,>=latex] (1,1.5) to [bend left = 10] ([yshift = 0mm, xshift = -2mm] m1);
		\node[left] at (1,1.5)(bichar){incoming ray};
}

	{
	\draw[very thick,->-,dashed] (1.75,0) to [out=70,in=-80] (q);
	}

	{
			\path (q) to [out = 100, in = -75] coordinate [pos=0.5] (m2)  (tangent);
		\draw[->,very thick,dashed] (q) to [out = 100, in = -75] (tangent);
			\draw[thick,->,>=latex] (1,4) to [bend right = 10] ([yshift = 0mm, xshift = -2mm] m2);
		\node[left] at (1,4)(bichar){continued ray};
	}
	
	\end{tikzpicture}
      \end{center}
      \caption{A bicharacteristic (dashed line) that is
        tangent to $Y.$  For any $r$, absence of $\WF^r(u)$ on the part of the
        bicharacteristic marked ``incoming ray'' implies absence of
        $\WF^r(u)$ at $\varpi_0;$ since wavefront set is closed, ordinary
        propagation of singularities then gives absence of $\WF^r(u)$ on the part
        of the bicharacteristic labeled ``continued ray,'' i.e.,
        propagation of regularity along this bicharacteristic.  (We
        are assuming $\alpha>2.$) \label{fig:glancing}}
    
\end{figure}

\subsection{A one-dimensional example} \label{subsect:1d}

On $\RR$, consider a compactly supported potential $V \in L^\infty(\RR)$ with the following properties:
\begin{itemize} \itemsep6pt 
	\item  $V = x_+^\alpha$ on an interval $(-\infty,x_0)$ with $x_0 \in (0,1)$, where $\alpha > 0$.
	\item  $V$ is $\CI$ away from $x=0$, and $\sup V < 1$.
\end{itemize}
Observe that $V \in I^{[-1-\alpha]}(\{x=0\})$. Consider the operator $P = (hD_x)^2 + V$. Working at energy $E=1$, away from the support of $V$ solutions to $(P-1)u = 0$ are linear combinations of $e^{\pm ix/h}$. There is a unique solution of the equation $(P-1)u=0$ such that
\begin{equation} \label{eq:scattering}
u = \begin{cases} e^{ix/h} + Re^{-ix/h} &\text{ for } x \leq 0, \\ Te^{ix/h} &\text{ for } x \gg 1, \end{cases}
\end{equation}
where $R,T \in \CC$. 

\begin{prop} \label{prop:1d}
	If $\alpha \in (0,1)$, then $R \sim 2^{-\alpha-2}e^{i\alpha\pi/2}\Gamma(\alpha+1)h^{\alpha}$ as $h\rightarrow 0$.
\end{prop}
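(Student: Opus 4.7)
The plan is to extract $R$ from an exact integral representation via the Lippmann--Schwinger equation, and then evaluate the leading asymptotic as an oscillatory Fourier integral picking up the conormal singularity of $V$ at $0$.

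Writing $u = e^{ix/h} + u_s$, the scattered part satisfies $((hD_x)^2 - 1)u_s = -Vu$ with outgoing radiation at both ends, so inversion with the free Green's function $G_0(x,y) = (i/(2h))\,e^{i|x-y|/h}$ gives $u_s(x) = -\int G_0(x,y) V(y) u(y)\,dy$. Letting $x\to-\infty$, so that $|x-y|=y-x$ for every $y \in \supp V$, and matching to $u_s(x)\sim Re^{-ix/h}$ yields the exact identity
\[
R = -\frac{i}{2h}\int V(y)\,e^{2iy/h}\,w(y)\,dy,\qquad w(y) := e^{-iy/h}u(y).
\]

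Read off the leading term by replacing $w$ with $1$. The smooth part of $V$ contributes $O(h^\infty)$ by non-stationary phase, while the singular part $\chi(y)\,y_+^\alpha$ is computed from the classical Fourier transform of homogeneous distributions:
\[
\int V(y)\,e^{2iy/h}\,dy \;\sim\; \Gamma(\alpha+1)\left(-\tfrac{2i}{h}\right)^{-\alpha-1} = \Gamma(\alpha+1)\,2^{-\alpha-1}\,e^{i\pi(\alpha+1)/2}\,h^{\alpha+1}.
\]
Multiplying by $-i/(2h)$ and using $-i\,e^{i\pi(\alpha+1)/2} = e^{i\pi\alpha/2}$ reproduces precisely $2^{-\alpha-2}\,e^{i\pi\alpha/2}\,\Gamma(\alpha+1)\,h^\alpha$.

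The main obstacle is controlling the error $-(i/(2h))\int V(y)\,e^{2iy/h}(w(y)-1)\,dy$, as $w - 1$ does not decay uniformly. The strategy is to split $w = w(0) + (w - w(0))$ in the integrand and use the matching condition $w(0) = 1 + R$ across $x = 0$, reducing the problem to the self-consistent equation $R(1 - R_0) = R_0 + E$ with $R_0$ the leading term, and to show $E = O(h^{2\alpha})$. A crude a priori $L^\infty$ bound on $u$ over $\supp V$, uniform in $h$ (obtained by an energy estimate for $|u|^2 + h^2|u'|^2$ exploiting $\sup V < 1$), first yields the preliminary bound $|R| = O(h^\alpha)$. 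Next, in the inner region $y \sim h$, rescaling $y = h\tilde y$ converts the equation for $\tilde w(\tilde y) := w(h\tilde y)$ into $\tilde w'' + 2i\tilde w' = h^\alpha\,\tilde y_+^\alpha\,\tilde w$, an $h^\alpha$-perturbation of the free equation; leading-order perturbation theory with matching data $\tilde w(0) = 1+R$, $\tilde w'(0) = -2iR$ yields $w(y) - w(0) = R(e^{-2iy/h}-1) + O(h^\alpha)$ on this scale. Combined with $|R|= O(h^\alpha)$, this bounds the inner contribution to $E$ by $h^{-1}\int_0^{Ch}\,y^\alpha\,O(h^\alpha)\,dy = O(h^{2\alpha})$. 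For $y \gg h$, WKB for $u$ makes the integrand $V(w-w(0))e^{2iy/h}$ carry only non-stationary phases on the smooth part of $V$, and integration by parts produces $O(h^N)$. The principal technical burden is to match the inner and outer expansions rigorously and to control the uniform error in the rescaled perturbation series.
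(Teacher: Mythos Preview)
Your Born-approximation computation of the leading term is correct and is essentially the same Fourier integral the paper isolates: the paper's first Volterra iterate $b_1(y)=\frac{1}{2ih}\int_0^x (1-e^{2i(s-x)/h})V(s)\,ds$ contains exactly the oscillatory piece $-(2ih)^{-1}e^{-2iy}\int_0^x e^{2is/h}s^\alpha\,ds$, evaluated via the incomplete Gamma function. So at the level of identifying the constant $2^{-\alpha-2}e^{i\alpha\pi/2}\Gamma(\alpha+1)$ the two arguments coincide.

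The organization of the remainder analysis is where your proposal and the paper diverge, and where your sketch has a genuine gap. Your claim that in the outer region ``integration by parts produces $O(h^N)$'' is not right: on $[y_*,x_0)$ the potential is still $y^\alpha$, the amplitude has only $\alpha$ orders of regularity, and the lower endpoint $y_*$ is $h$-dependent, so each integration by parts throws off a boundary term of size $h\cdot y_*^{\alpha-k}$ rather than something uniformly small. With $y_*\sim h$ the very first boundary contribution to $E$ is of order $h^{-1}\cdot h\cdot y_*^{\alpha}\sim h^{\alpha}$, the same size as $R_0$, so no improvement. More fundamentally, the inner rescaling $\tilde w''+2i\tilde w'=h^\alpha\tilde y^\alpha\tilde w$ is only a small perturbation while $\tilde y=O(1)$; the natural domain of validity of the inner series is $\sigma(x)=x^{\alpha+1}/((\alpha+1)h)\ll 1$, i.e.\ $x\ll h^{1/(\alpha+1)}$, not $x\sim h$. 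Your inner and outer regions do not overlap at the scale you chose, which is why the matching you flag as ``the principal technical burden'' is not merely a bookkeeping issue but the heart of the proof.

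The paper avoids this by working with two \emph{exact} solutions carrying explicit remainder bounds: on the left, the plane-wave continuations $u_\pm=e^{\pm ix/h}(1+b_\pm)$ built by a convergent Volterra series controlled by $\sigma(x)$; on the right, Olver's WKB solution $v_+=f^{-1/4}e^{i\phi/h}(1+\delta)$ with $|\delta(x)|\le e^{h\tau(x)}-1$ and $\tau(x)\sim \alpha x^{\alpha-1}/4$. Both are valid at $x_0=h^\eta$ for any $\eta\in\big(\tfrac{2+\alpha}{2(\alpha+1)},1\big)$: the Volterra error is $O(\sigma(x_0)^2)=o(h^\alpha)$ and the WKB error is $O(h x_0^{\alpha-1})=o(h^\alpha)$. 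Since Wronskians are constant, evaluating $\mathcal W(u_\pm,v_+)$ at this single matching point gives the connection coefficients exactly, and $R=B/A$ drops out with an $o(h^\alpha)$ error without any further integration-by-parts or self-consistent bootstrap. If you want to make your Lippmann--Schwinger route rigorous, you would need to replicate this overlap-region matching at the scale $h^\eta$, not $h$, and track the boundary contributions carefully; at that point the argument becomes equivalent in effort to the paper's.
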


Note that to leading order $R$ is independent of the choice of potential satisfying the properties above. Thus reflected waves exist and are exactly order $h^\alpha$ in this
simple example.  A proof of this result is given in Appendix
\ref{appendix:1d}.

Proposition \ref{prop:1d} is almost certainly true for $\alpha \geq 1$
as well; see Figure \ref{fig:numerics} for a numerical example. An analytic proof would require computing lower order terms in various
asymptotic expansions that quickly becomes impractical. For the case of
integer $\alpha=k \in \NN$ an analysis of this problem can be found in
Berry \cite{berry1982semiclassically}, where it is shown that if the
$k$'th derivative of the potential is discontinuous, then the
reflection coefficients are (to top order) explicit multiples of the jump in
$V^{(k)}$ times $h^k$.

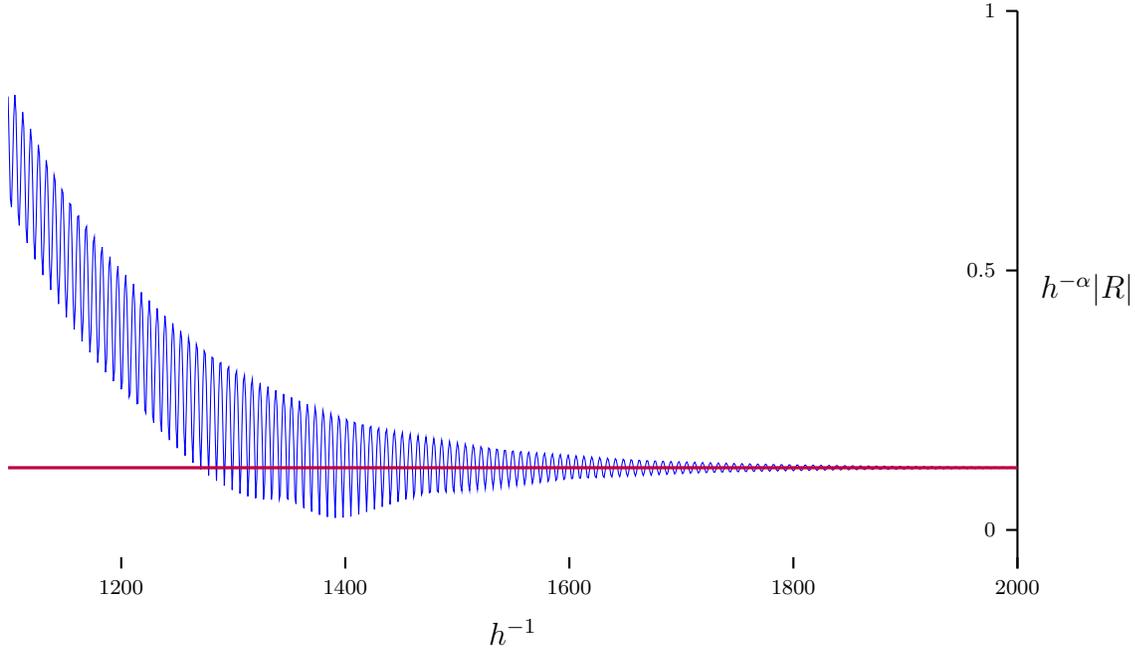
\begin{figure}
	\begin{center}
\begin{tikzpicture}
	\pgfkeys{/pgf/number format/1000 sep={}}
\begin{axis}[
height=9cm, width=15cm, xlabel=$h^{-1}$, ylabel=${h^{-\alpha}|R|}$, ticklabel style = {font=\tiny},
axis y line*=right,
ylabel near ticks, 
axis x line*=bottom,
axis line style={-},
xmin = 1099,
xmax = 2000,
ymax = 1,
x axis line style={draw=none},
xlabel style={yshift=0cm},
ylabel style={yshift=-.3cm},
xtick distance={200},
ytick distance={.5},
ylabel style={rotate=-90},
yticklabel style={anchor=east},
yticklabel shift=-4pt,
thick,
every tick/.style={color=black, thick}
]

\addplot[color=blue, thin] file {reflection_coeff.dat};

\addplot[mark=none, domain=1099:2000, color = purple, very thick] {0.1199};

\end{axis}
\end{tikzpicture}
\end{center}
  \caption{The rescaled reflection amplitude corresponding to a potential as in Section \ref{subsect:1d} with $\alpha = 1.2$ plotted against $h^{-1}$. The horizontal line represents the analytic expression from Proposition \ref{prop:1d}. The limiting asymptotics only emerge for very small values $h \sim 10^{-3}$; this phenomenon was already observed in \cite{berry1982semiclassically}.
  	\label{fig:numerics}}. 
\end{figure}

\subsection{Related work}
While there is little literature on semiclassical problems with
rough coefficients, the related problem of the wave equation with a
rough metric has attracted considerable attention.  In particular,
there is a long history of propagation of singularities theorems in
the setting of $\mathcal{C}^{k,\alpha}$ coefficients, showing
propagation of smoothness along bicharacteristics up to a maximum
level of regularity as in our Theorem~\ref{theo:propagation}; see Bony
\cite{bony1981calcul}, Beals--Reed \cite{beals1984microlocal}, Smith
\cite{smith1998parametrix,smith2014propagation}, Geba--Tataru
\cite{geba2007phase}, Taylor \cite{taylor2007tools}.

While the papers listed above are primarily focused on
unstructured coefficient singularities, the only prior study on
\emph{conormal} singularities appears to be the work of De
Hoop--Uhlmann--Vasy \cite{de2014diffraction}.  This paper, which deals
with the wave equation with coefficients in $I^{[-1-\alpha]}(Y)$ for
$Y$ a hypersurface and $\alpha>1,$ was the primary inspiration for our work.  The
authors are able to show that singularities propagate along
generalized broken bicharacteristics and that transversely reflected singularities
are weaker, in analogy with our first two theorems, although the
regularity obtained for the reflected wave (i.e., the threshold
regularity up to which one can obtain propagation results based on a
fixed level of background regularity) does not appear to be sharp.
Differences in the approach taken here include use of mixed-norm rather than
$L^2$ estimates in the commutator arguments, as well as a precise
decomposition of the potential into high and low frequencies.

In the semiclassical case, there are explicit one-dimensional
computations due to Berry
\cite{berry1982semiclassically}. Semiclassical diffraction
effects from potentials with conical singularities have been studied
by Fermanian-Kammerer--G\'erard--Lasser \cite{fermanian2013wigner} and
Chabu \cite{chabu2016analyse}, \cite{chabu2017semiclassical}; \blue{see also
Harris--Lukkarinen--Teufel--Theil \cite{harris2008energy} for a
discussion of potentials with singularities of the form $\lvert x \rvert$}.  A closely related problem 
of propagation of semiclassical defect measure across
an interface whose width shrinks at an
$h$-dependent speed has also been studied
by Nier \cite{nier1996semi} and Miller \cite{miller1997short}.

The principal novelties of this paper, in addition to obtaining in a
semiclassical setting results analogous to those of
\cite{de2014diffraction}, are, first, the sharpness of the regularity
of the diffracted wave, and, second the improvement at glancing, which
ensures that for $\alpha>2$ there is no sticking of singularities to the
boundary.

\subsection{Strategy of proof}

We follow the same overall strategy as employed in the study of the
wave equation in \cite{de2014diffraction}.  We obtain
Theorem~\ref{theo:GBBpropagation} by a commutator argument in a
semiclassical version of Melrose's \emph{b-calculus} of
pseudodifferential operators.  This calculus, which loosely speaking
consists of operators
$$
A=A(x,y, h x D_x, h D_y)
$$
where $x$ is a defining function for $Y,$ are effective at localizing
in both position and tangential momentum with respect to $Y,$ but not
in the normal momentum, since $h D_x$ is not in the calculus.  This
makes these operators useful for proving that the tangential momentum
is conserved in the interaction of singularities with the boundary,
which is the main content of the propagation along $\GBB$s theorem
(albeit at glancing the connection to the definition of $\GBB$s is
somewhat tricky to untangle).  Such a strategy, employing a positive
commutator argument, goes back to the
original work of Melrose--Sj\"ostrand on boundary problems
\cite{melrose1978singularities,melrose1982singularities}; our
approach is strongly influenced by Vasy's work on manifolds with corners \cite{vasy2008propagation}.

The diffractive improvement at transverse reflections is obtained
instead via a commutator argument involving a commutant that is an
\emph{ordinary} semiclassical pseudodifferential operator, ignoring
the singularity of the operator $P$ across $Y.$ The price one pays is
that the commutator is then no longer a pseudodifferential operator,
but involves operators whose Schwartz kernels are \emph{paired
  Lagrangian distributions}, which must be estimated separately.  It
is in the estimates of these terms that we are forced to use
assumptions on the background regularity of $u,$ and it is here that
limitations are placed on the range of exponents for which we can
expect to obtain propagation of regularity directly across the
interface.

Paired Lagrangians were introduced in the setting of
homogeneous microlocal analysis by Guillemin--Uhlmann
\cite{guillemin1981oscillatory} and Melrose--Uhlmann
\cite{melrose1979lagrangian} and studied by Antoniano--Uhlmann
\cite{antoniano1985pseudodifferential}, Greenleaf--Uhlmann
\cite{greenleaf1990estimates,greenleaf1993recovering}, and De
Hoop--Uhlmann--Vasy \cite{de2014diffraction}.  There seems to be very
little literature on these objects in the semiclassical setting,
however, so we have provided a self-contained presentation of the
basic theory here.  

One key to obtaining the sharp threshold regularity in the transverse
reflection theorem is to estimate certain terms by using
\emph{mixed-norm} estimates in the space $L^\infty(\RR_x; L^2(Y))$
(where $x$ is a defining function for $Y$) rather than the $L^2$
estimates customary in commutator arguments.  Our ability to work in
this space relies on a simple energy estimate similar to the estimates
standard in hyperbolic problems.  Another novelty to our approach is
the decomposition of the potential $V$ into low- and
high-frequency pieces, which simplifies the decomposition of the
commutator into paired Lagrangian pieces, one of which is nearly
microlocal.  This decomposition is readjusted from step to step in the
iterative commutator argument to allow for shrinking microsupports
necessary in the iteration.

The improvement in the glancing region is obtained much as in the
case of transverse interaction, with the important difference that we
are able to microlocalize the necessary background regularity more
finely: we require only background regularity in a region of specified
tangential momentum very close to glancing.  In this region,
b-regularity and ordinary regularity turn out to be essentially
interchangeable, and we are thus able to make a propagation argument
that can be \emph{iterated} as in the usual commutator proof, with the necessary
background regularity being obtained at each inductive step by the
output of the previous one.

The structure of the paper is as follows.   In
Section~\ref{sec:microlocal} we discuss background from
microlocal analysis, starting with a description of the properties of the
class of conormal distributions from which $V$ is drawn (Section~\ref{subsect:conormaldistributions}).
We then discuss pseudodifferential operators, starting with the
ordinary semiclassical calculus and associated conormal distributions (to set notation and as a point of
comparison), also recalling some basic energy estimates. Next, we move on to the \emph{semiclassical b-calculus}
(Section~\ref{sect:bpseudo}), which is the essential tool in proving
Theorem~\ref{theo:GBBpropagation}.  \blue{This section introduces the
  wavefront sets that we use to measure regularity; we need both
  the semiclassical b-wavefront set with respect to $L^2$, and the analogous wavefront
  set measured with respect to the energy space and its dual. The
  relationships among these wavefront sets, elucidated in
  Lemma~\ref{lem:equivalentWF}, explain the different wavefront sets
  arising in Theorem~\ref{theo:GBBpropagation}.}

In Section~\ref{sec:bichars}
  we then discuss the geometry of bicharacteristics, which for our
  purposes are of two  kinds: the \emph{generalized
    broken bicharacteristics}, the largest set along which
  singularities may propagate, and the ordinary solutions to
  Hamilton's equations (well-defined whenever $\alpha>1$, and for transverse rays even when $\alpha > 0$) which are
  distinguished by our diffractive improvements at hyperbolic (i.e.,
  transverse) and
  glancing sets.

  Section~\ref{sec:GBB} is devoted to the proof of
  Theorem~\ref{theo:GBBpropagation}. This splits into three steps,
  where we must first treat estimates on the \emph{elliptic set} for the
  operator and then prove distinct propagation estimates on the
  \emph{hyperbolic set} (rays transverse to $Y$) and on the
  \emph{glancing set} (rays tangent to $Y$).

We then turn to setting the stage for the proofs of Theorems
\ref{theo:propagation} and \ref{theo:improvedglancing}.  We
begin in Section~\ref{sec:paired} by introducing the calculus of
\emph{semiclassical paired Lagrangian distributions}, together with
associated operator estimates.   Finally in
Section~\ref{sec:diffractive} we prove Theorems \ref{theo:propagation}
and \ref{theo:improvedglancing}.
  
\tableofcontents

\subsection*{Acknowledgements}
The authors are very grateful to Andr\'as Vasy for advice and
encouragement as well as to Jeff Galkowski for helpful discussions.
\blue{Dean Baskin and Jeremy Marzuola provided helpful comments on the
manuscript (including corrections to Lemma~\ref{lem:sigmanotzero}).
We are especially grateful to Luc Hillairet for his many suggestions
and corrections,
as well as to two anonymous referees.}
OG was partially supported by NSF grant DMS--1502632; JW was partially
supported by NSF grant DMS--1600023.

\section{Microlocal and semiclassical preliminaries}
\label{sec:microlocal}
\subsection{Conormal distributions} \label{subsect:conormaldistributions} In this section we record H\"older and integrability properties of conormal distributions not discussed in standard references such as \cite[Chapter 18.2]{hormander1994analysis}. While these facts are well known, we were unable to find a suitable reference in the existing literature.

Let $X$ be an $m$-dimensional manifold without boundary, and $Y
\subset X$ a codimension-$k$ submanifold.  \blue{Let $\CmI_c(X)$
  denote the space of compactly supported distributions on $X.$}

\blue{Given
a closed conic Lagrangian submanifold $\Lambda \subset T^*X$, let
$I^m(X;\Lambda)$ denote the space of Lagrangian distributions of order
$m$ as defined in \cite[Chapter
25.1]{hormander1985analysis}.  For $\mu \in \RR$, we define the conormal
distributions of order $\mu$ with respect to $Y$ as
\begin{equation}\label{muorder}
I^{[\mu]}(Y) = I^{\mu + (2k-m)/4}(X; N^*Y).
\end{equation}
Recall that our standing assumption on the potential $V$ is that $V
\in I^{[-1-\alpha]}(Y)$ is real valued, with $\alpha>0$ and $Y$ a hypersurface.}

In elucidating the class of conormal distributions, we first recall the local characterization of $u\in I^{[\mu]}(Y)$ via the Fourier transform. Let $\mathcal{U}$ be a coordinate patch intersecting $Y$ with local coordinates 
\[
x  = (x',x'') = (x'_1,\ldots,x'_k,x''_1,\ldots,x''_{m-k})
\] 
such that $\mathcal{U} \cap Y = \{x'=0\}$. Assume that $u$ has compact support in $\mathcal{U}$; since $I^{[\mu]}(Y)$ is a $\CI(X)$-module, one can always reduce to this case by passing to a partition of unity subordinate to a covering of $X$ by coordinate patches. Thus we consider $u \in \CmI_c(\RR^m)$ of the form 
\begin{equation} \label{eq:inverseFT}
u(x) = (2\pi)^{-(m+2k)/4}\int e^{i\left<x',\xi'\right>} a(x,\xi') \,d\xi'
\end{equation}
for a symbol $a \in S^{\mu}(\RR^{m}_{x};\RR^k_{\xi'})$.

If $\mu < -k$, then $a \in L^1(\RR^m)$, so certainly $u$ is continuous by the Riemann--Lebesgue lemma. In fact, $u$ has much stronger continuity properties; to describe these properly, we must first recall the Zygmund spaces. If $1 = \sum_{j \geq 0} \psi_j$ is a dyadic partition of unity on $\RR^k$ with $\psi_j(\xi) = \psi_1(2^{-j}\xi)$ and $\supp \psi_1 \subset \{1/2 \leq |\xi| \leq 2\}$, then the Zygmund space $\C_*^s(\RR^k)$ consists of all distributions $v \in \mathcal{S}'(\RR^k)$ for which
\[
\| v \|_{\C^s_*} = \sup_j 2^{sj} \| \psi_j(D_{x'}) v \|_{L^\infty} < \infty. 
\]
Directly from the Littlewood--Paley characterization of $\C_*^s$ given above, any $u$ of the form \eqref{eq:inverseFT} satisfies 
\[
u \in \CI(\RR^{m-k}_{x''}; \C^{-\mu-k}_*(\RR^k_{x'})).
\]
We now return to the assumption that $\mu < -k$. It is well known (see
e.g. \cite[Section 13.8]{taylor1997partial})
that if $s = r + \alpha$ for some $r \in \mathbb{N}$ and $\alpha \in (0,1)$, then $\C^s_*(\RR^k)$ agrees with the H\"older space $\C^{r,\alpha}(\RR^k)$. From this, we immediately obtain the following lemma.

\begin{lem}\label{lemma:holder}
	If $\mu < -k$, then there exists $\theta \in (0,1]$ depending only on $\mu + k$ such that any $u \in \CmI_c(\RR^m)$ of the form \eqref{eq:inverseFT} satisfies
	\begin{equation} \label{eq:holder}
	|u(x) - u(y)| \leq C (|x'-y'|^{\theta} + |x''-y''|)
	\end{equation} 
	for each $x,y\in \RR^m$.
\end{lem}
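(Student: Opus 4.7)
The plan is to leverage the two facts displayed immediately before the lemma: first, that any $u$ of the form \eqref{eq:inverseFT} lies in $\CI(\RR^{m-k}_{x''}; \C^{-\mu-k}_*(\RR^k_{x'}))$, and second, that under the hypothesis $\mu<-k$ one has $-\mu-k>0$, so that the Zygmund space $\C^{-\mu-k}_*(\RR^k)$ coincides (up to equivalent norms) with a classical H\"older space. The role of the assumption that $u$ is compactly supported is to ensure that all bounds are uniform in $x''$.

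First I would set $\theta = \min(1,-\mu-k) \in (0,1]$, chosen so that $\theta$ depends only on $\mu+k$. When $-\mu-k\in(0,1)$ the identification $\C^{-\mu-k}_*(\RR^k) = \C^{0,-\mu-k}(\RR^k)$ gives a $\theta$-H\"older estimate in $x'$; when $-\mu-k \geq 1$, the same identification (applied with $r\geq 1$, $\alpha\in(0,1)$, or by interpolation if $-\mu-k$ is an integer, using that $\C^s_*$ continuously embeds into $\C^{0,\theta}$ for all $\theta\leq \min(s,1)$ and any $\theta<1$ if $s=1$) yields a Lipschitz estimate, so $\theta=1$ works. In either case, there is a constant $C$ such that for fixed $x''$,
\[
|u(x',x'') - u(y',x'')| \leq C |x'-y'|^{\theta}.
\]
Uniformity of $C$ in $x''$ follows from the continuity of $x'' \mapsto u(\cdot,x'')$ as a $\C^{-\mu-k}_*(\RR^k)$-valued function and compactness of the support of $u$ in $x''$.

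Next I would handle the tangential direction. Since $a(x,\xi')\in S^{\mu}$ depends smoothly on $x''$ and $\pa_{x''} a$ remains in $S^\mu$, differentiation under the integral in \eqref{eq:inverseFT} shows that $\pa_{x''} u$ is again of the same form with the same symbol order. In particular $\pa_{x''} u \in L^\infty(\RR^m)$, so the mean value theorem (applied with $x'$ held fixed) gives
\[
|u(x',x'') - u(x',y'')| \leq C|x''-y''|,
\]
again uniformly. Combining the two estimates via the triangle inequality
\[
|u(x)-u(y)| \leq |u(x',x'')-u(y',x'')| + |u(y',x'')-u(y',y'')|
\]
yields \eqref{eq:holder}.

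The only point requiring care is the identification of $\C^s_*$ with a H\"older space when $s=-\mu-k$ happens to be an integer, since the statement quoted from \cite{taylor1997partial} presumes a non-integer exponent. The cleanest fix is to observe that $\C^s_* \hookrightarrow \C^{s'}_*$ for any $s'<s$ with $s'>0$, so replacing $\theta$ by any smaller positive value (still depending only on $\mu+k$) in the integer case avoids this issue; alternatively one notes that $\C^1_*$ already yields a Lipschitz bound directly from the Littlewood--Paley definition. I do not expect any serious obstacle here: the whole proof is a bookkeeping exercise combining the two displayed facts with a triangle inequality.
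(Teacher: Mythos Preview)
Your proposal is correct and follows essentially the same route as the paper: use the Zygmund--H\"older identification in $x'$, smoothness in $x''$, and the triangle inequality, with the borderline integer case $-\mu-k=1$ handled by taking any $\theta\in(0,1)$. One caveat: your ``alternative'' remark that $\C^1_*$ already yields a Lipschitz bound is false---the Zygmund class $\C^1_*$ is strictly larger than the Lipschitz class (this is precisely why the paper calls the case $-\mu-k=1$ ``borderline'')---so you should drop that sentence and rely only on the embedding $\C^1_*\hookrightarrow \C^{0,\theta}$ for $\theta<1$.
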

\begin{proof}
	If $-k - \mu \in (0,1)$, then we can let $\theta = -k-\mu$. If $-k - \mu > 1$, then actually $u \in \C^1(\RR^m)$, and we can take $\theta = 1$. The case $-k - \mu= 1$ is borderline in the sense that $\C_*^1(\RR^k)$ functions are not necessarily Lipschitz,  although \eqref{eq:holder} is certainly valid for any $\theta \in (0,1)$.
\end{proof}

More concisely, if $\mu_0 > \mu$, then we can take
$\theta = \min(1,-\mu_0-k)$ in \eqref{eq:holder}. For general
$\mu \in \RR$, the distribution $u$ need not be represented by a
locally integrable function; on the other hand, we have the following
sufficient criterion:

\begin{lem} \label{lem:conormalintegrable}
	If $ -k < \mu < 0$, then any $u \in \CmI_c(\RR^m)$ of the form \eqref{eq:inverseFT} satisfies $u \in L^1(\RR^m)$, and moreover 
	\[
	|u(x)| \leq C|x'|^{-\mu-k}
	\]
	for $x' \neq 0$.
\end{lem}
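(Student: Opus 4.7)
The plan is to obtain the pointwise bound via a dyadic Littlewood--Paley decomposition of the symbol $a$ in the fiber variable $\xi'$, balanced against integration by parts in $\xi'$. Deducing $u \in L^1$ will then be immediate from the compact support of $u$ together with the local integrability of $|x'|^{-\mu-k}$ on $\RR^k$, which is exactly the range $\mu < 0$.

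First, I would fix a dyadic partition of unity $1 = \sum_{j \geq 0} \psi_j(\xi')$ on $\RR^k$ as in the definition of the Zygmund spaces above, and write
\[
u(x) = (2\pi)^{-(m+2k)/4} \sum_{j \geq 0} u_j(x), \qquad u_j(x) = \int e^{i\langle x',\xi'\rangle} a(x,\xi')\psi_j(\xi')\, d\xi'.
\]
On the support of $\psi_j$ (for $j \geq 1$) one has $|\xi'| \sim 2^j$, so by the symbol estimates $|\partial_{\xi'}^\beta a\psi_j| \leq C_\beta 2^{j(\mu - |\beta|)}$, and the support has volume $\lesssim 2^{jk}$. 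This immediately gives the trivial bound
\[
|u_j(x)| \leq C\, 2^{j(\mu+k)}.
\]
On the other hand, using the identity $e^{i\langle x',\xi'\rangle} = |x'|^{-2N}(-\Delta_{\xi'})^N e^{i\langle x',\xi'\rangle}$ and integrating by parts $2N$ times, I obtain the complementary bound
\[
|u_j(x)| \leq C_N |x'|^{-2N}\, 2^{j(\mu+k-2N)}, \qquad x'\neq 0.
\]

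The pointwise estimate is then obtained by splitting the sum at the scale $2^{j_0} \sim 1/|x'|$. Using $\mu+k > 0$, summing the trivial bound over $j \leq j_0$ gives a geometric series whose top term dominates:
\[
\sum_{j \leq j_0} |u_j(x)| \leq C\, 2^{j_0(\mu+k)} \leq C |x'|^{-(\mu+k)}.
\]
Choosing $2N > \mu+k$ and summing the integration-by-parts bound over $j > j_0$ yields the convergent tail
\[
\sum_{j > j_0} |u_j(x)| \leq C |x'|^{-2N} \cdot 2^{j_0(\mu+k-2N)} \leq C |x'|^{-(\mu+k)}.
\]
Combining gives $|u(x)| \lesssim |x'|^{-\mu-k}$ for all $x' \neq 0$.

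For the $L^1$ claim, since $u$ is compactly supported in $\RR^m$ and the function $|x'|^{-\mu-k}$ is locally integrable on $\RR^k$ precisely when $\mu+k < k$, i.e., $\mu < 0$, Fubini's theorem in the variables $(x',x'')$ finishes the proof. The only mildly delicate point is balancing the two dyadic estimates correctly, which is routine; no substantive obstacle is anticipated.
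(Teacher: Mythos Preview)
Your proof is correct and follows essentially the same approach as the paper: a dyadic decomposition in $\xi'$, the trivial bound $2^{j(\mu+k)}$ versus the integration-by-parts bound $|x'|^{-M}2^{j(\mu+k-M)}$, and splitting the sum at the scale $2^{j}\sim |x'|^{-1}$. The only cosmetic differences are that the paper phrases the pieces as $\psi_j(D_{x'})u$ rather than inserting $\psi_j$ into the amplitude, and it allows arbitrary integer order $M$ in the integration by parts rather than the even orders $2N$ coming from powers of the Laplacian; neither affects the argument.
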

\begin{proof}
  Since $u \in \CI(\RR^m \setminus \{x'=0\})$, it follows that
  $u(x) = \sum_{j \geq 0} \psi_j(D_{x'})u(x)$ for $x' \neq 0$. Now
  $|x'|^{-\mu-k}$ is locally integrable (since $-\mu-k > -k$) so by
  the dominated convergence theorem it suffices to show that
	\begin{equation} \label{eq:muk}
	\sum_{j=0}^N |\psi_j(D_{x'})u(x)| \leq C|x'|^{-\mu-k}
	\end{equation}
	for $x'\neq 0$ and every $N \geq 0$, where $C$ does not depend
        on $N$.  \blue{(This will establish that $u$ differs from a
         locally $L^1$ function by a distribution supported along $\{x'=0\},$ and
          the latter are ruled out since $\mu<0.$)}
           \blue{Integration by parts using the
        operator $\Lap_{\xi'}^M$ now yields
	\begin{equation} \label{eq:mukM} 
	|\psi_j(D_{x'}) u(x) | \leq C_M |x'|^{-2M} 2^{j(\mu+k - 2M)}
	\end{equation}
	for each $M \in \mathbb{N}$. Now simply split the sum
        \eqref{eq:muk} into two pieces, the first where $2^j <
        |x'|^{-1}$, taking $M= 0$ in \eqref{eq:mukM} and using  that
        $\mu + k > 0$, and the second where $2^j \geq |x'|^{-1}$,
        taking $2M > \mu + k$ in \eqref{eq:mukM}. }
\end{proof}

Under the
hypotheses of Lemma \ref{lem:conormalintegrable} and applying the mean value theorem in the $x''$ variables,
\begin{equation} \label{eq:L1lipschitz}
|u(x',x'') - u(x',y'')| \leq C|x'|^{-\mu-k}|x''-y''|
\end{equation}
for $x' \neq 0$ and $x'',y'' \in \RR^{m-k}$. This estimate will be important when discussing Hamilton's equations in Section \ref{subsect:hamiltonflow}.

\begin{lem} \label{lem:vanishes}
	Let $\mu < -k + 1$. If $u$ is given by \eqref{eq:inverseFT} and $f \in \C^1(\RR^m)$ vanishes along $Y = \{x'=0\}$, then $f u$ vanishes along $Y$.
\end{lem}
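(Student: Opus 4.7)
The plan is to show that under the hypothesis $\mu < -k+1$ the product $fu$ is actually a continuous function on $\RR^m$ which takes the value zero on $Y$. This is stronger than any purely distributional interpretation of ``vanishing along $Y$'' and is exactly what one would want for later applications (multiplication of conormal distributions by cut-offs that kill the singular locus).

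I would split the argument into two cases according to the regularity of $u$ itself. In the regime $\mu \le -k$, Lemma \ref{lemma:holder} already gives $u \in \C^0(\RR^m)$ (in fact H\"older), so $fu \in \C^0(\RR^m)$ and the conclusion is immediate from $f|_Y = 0$. The only interesting case is therefore $-k < \mu < -k+1$, where $u$ is genuinely singular along $Y$ but the singularity is mild enough to be killed by the $\C^1$ factor $f$.

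For this case I would invoke Lemma \ref{lem:conormalintegrable} to obtain the pointwise bound
\[
|u(x)| \le C\,|x'|^{-\mu-k}, \qquad x' \neq 0,
\]
and combine it with the Taylor estimate $|f(x)| \le C|x'|$ coming from $f \in \C^1(\RR^m)$ with $f|_{\{x'=0\}} = 0$. Multiplying gives
\[
|f(x)u(x)| \le C\,|x'|^{\,1-\mu-k}
\]
for all $x$ with $x' \neq 0$, and the exponent $1-\mu-k$ is strictly positive by the hypothesis $\mu < -k+1$. Away from $Y$ the product $fu$ is smooth (since $u$ is smooth off $Y$), and the bound above shows that $fu$ extends continuously by zero across $Y$. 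Hence $fu$ is a continuous function on $\RR^m$ that vanishes identically on $Y$, which is the desired conclusion.

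There is no substantive obstacle here; the only minor point to address is the boundary value $\mu = -k$, which is not covered directly by Lemma \ref{lem:conormalintegrable} but falls into the first case via Lemma \ref{lemma:holder} (taking $\theta$ arbitrarily close to $1$), so the two cases together exhaust the range $\mu < -k+1$.
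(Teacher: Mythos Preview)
Your approach is correct and genuinely different from the paper's. The paper works entirely on the Fourier side: it reduces to $f = x'_j$, writes $x'_j u$ as an oscillatory integral with amplitude $-D_{\xi'_j} a \in S^{\mu-1}$, notes that $\mu - 1 < -k$ makes this amplitude integrable, and then evaluates directly
\[
(x'_j u)(0,x'') = -\int_{\RR^k} D_{\xi'_j} a(0,x'',\xi')\, d\xi' = 0
\]
by Fubini and the fundamental theorem of calculus. Your argument is physical-side, combining the pointwise bounds of Lemmas~\ref{lemma:holder} and~\ref{lem:conormalintegrable} with the Taylor estimate $|f| \le C|x'|$. A nice feature of your Case~2 is that it delivers the quantitative vanishing rate $|fu(x)| \le C|x'|^{1-\mu-k}$ in one stroke, whereas the paper obtains the analogous bound \eqref{eq:vanishingholder} only as a follow-up, by applying Lemma~\ref{lemma:holder} to $x'_j u$ after the lemma is proved. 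The paper's route, on the other hand, avoids any case split.

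One slip: your handling of the borderline $\mu = -k$ is not right. Lemma~\ref{lemma:holder} requires $\mu < -k$ strictly; the ``$\theta$ close to $1$'' remark in its proof concerns the \emph{other} borderline $\mu = -k-1$, not $\mu = -k$. At $\mu = -k$ the function $u$ need not be continuous (think of logarithmic behaviour). The easy fix is to use the inclusion $S^{-k} \subset S^{\mu'}$ for any $\mu' \in (-k,-k+1)$ and run your Case~2 argument with $\mu'$ in place of $\mu$, giving $|fu(x)| \le C_{\mu'}|x'|^{1-\mu'-k}$ with positive exponent.
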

\begin{proof}
	We may assume that $f$ is given by one of the coordinate functions $f= x_j'$.  Upon splitting $\xi' = (\xi'_j,\xi'')$,
	\[
	(fu)(0,x'')  = \int D_{\xi_j'} a(0,x'',\xi) \, d\xi = \int_{\RR^{k-1}} \int_{\RR} D_{\xi_j'}a(0,x'',\xi) \, d\xi'_j \, d\xi'' = 0
	\]
	by Fubini's theorem, since $D_{\xi_j'}a(0,x'',\cdot) \in L^1(\RR^k)$. 
\end{proof}	

Suppose that $u$ and $f$ are as in Lemma \ref{lem:vanishes}, where $\mu < -k+1$. Combined with the H\"older bound \eqref{eq:holder}, we conclude that 
\begin{equation} \label{eq:vanishingholder}
|(f u)(x)| \leq C|x'|^\theta
\end{equation}
for some $\theta \in (0,1)$ depending only on $\mu + k$.

\subsection{Semiclassical pseudodifferential operators} \label{subsect:semiclassicalpseudos}
Next, we give a brief overview of the semiclassical analysis used in this
paper. For a detailed exposition, the reader is referred to \cite{zworski2012semiclassical} and \cite[Appendix E]{zworski:resonances}.

We say that an $h$-dependent  family of symbols $a(x,\theta) = a(x,\theta;h)$ is in $S^m(\RR^p_x;\RR^q_\theta)$ if the usual symbol  bounds 
\[
|D_{x}^\alpha D_{\theta}^\beta a(x,\theta)| \leq C_{\alpha\beta}\left<\theta\right>^{m-|\beta|}
\]
are uniform in $h\in (0,1)$. We also say that $a(x,\theta) \in S^{\COMP}(\RR^p;\RR^q)$ if $a$ is supported in an $h$-independent compact set, and its $\CcI(\RR^p\times \RR^q)$ seminorms are all uniformly bounded in $h$. 

On $\RR^n$, we obtain an operator from $a(x,\xi) \in S^m(\RR^n;\RR^n)$ by the standard left quantization procedure,
\begin{equation} \label{eq:semiclassicalquantizationRn}
\Op_{h}(a)u(x) = (2\pi h)^{-n} \int e^{\sfrac i h \langle x-y, \xi \rangle}a(x,\xi)u(y) \, dy d\xi.
\end{equation}
This operator acts on $\mathcal{S}(\RR^n)$ and $\mathcal{S}'(\RR^n)$.

For a manifold $X$, we similarly define the class of $h$-dependent symbols on $T^*X$, which we continue to denote by $S^m(T^*X)$. The space $S^\COMP(T^*X)$ is defined analogously. We use semiclassical pseudodifferential operators $\Psi^m_h(X)$ with symbols in $S^m(T^*X)$. For simplicity, assume that $X$ is compact; this is only used to avoid issues such as proper supports, and is inessential. The space $\Psi^m_h(X)$ enjoys the following properties:
\begin{enumerate}  [label=(\Roman*)] \itemsep6pt
	
	\item Each $A \in \Psi_h^m(X)$ maps $\CI(X) \rightarrow \CI(X)$ and $\CmI(X) \rightarrow \CmI(X)$.
	
	\item There is a principal symbol map $\sigma_h: \Psi_h^m(X) \rightarrow S^m(T^*X)/hS^{m-1}(T^*X)$ such that the sequence
	\[
          0 \rightarrow
         h\Psi_h^{m-1}(X) \rightarrow \Psi_h^m(X)
         \xrightarrow{\sigma_h} S^m(T^*X)/hS^{m-1}(T^*X)\rightarrow 0
	\]
	is exact.
	\item There exists a (non-canonical) quantization map $\Op_h : S^m(T^*X) \rightarrow \Psi_h^m(X)$ such that if $a \in S^m(T^*X)$, then 
	\[
	\sigma_h(\Op_h(a)) = a
	\]
	in $S^m(T^*X)/hS^{m-1}(T^*X)$.

	\item If $A \in \Psi^m_h(X)$, then $A^* \in \Psi^m_h(X)$ with principal symbol 
	\[
	\sigma_h(A^*) = \overline{\sigma_h(A)}.
	\] 
	Here the adjoint is taken with respect to any fixed density on $X$. 
	
	\item If $A \in \Psi_h^m(X)$ and $B\in\Psi_h^{m'}(X)$, then $[A,B] \in h\Psi_h^{m+m'-1}(X)$ with principal symbol 
	\[
	\sigma_h(\tfrac{i}{h}[A,B]) = \{\sigma_h(A),\sigma_h(B)\} = \hamvf_{\sigma_h(A)} \sigma_h(B)
	\]
	where $\{\cdot,\cdot\}$ is the Poisson bracket, and $\hamvf_f$ is the Hamilton vector field of a function $f$ on $T^*X$.
	\item Each $A \in \Psi_h^m(X)$ extends to a bounded operator $H^s_h(X) \rightarrow H^{s-m}_h(X)$. Moreover, if $A \in \Psi_h^0(X)$, then there exists $A' \in \Psi_h^{-\infty}(X)$ such that
	\begin{equation} \label{eq:semiclassicalL2bounded}
	\|Au\|_{L^2} \leq 2\sup |\sigma_h(A)| \|u\|_{L^2} + \mathcal{O}(h^\infty)\|A'u\|_{L^2}
	\end{equation}
	for each $u\in L^2(X)$. Here $\sigma_h(A)$ is any representative of the principal symbol in $S^0(T^*X)/hS^{-1}(T^*X)$.
\end{enumerate}
In \eqref{eq:semiclassicalL2bounded}, $H^{s}_h(X)$ refers to the usual Sobolev space $H^s(X)$ but equipped with its semiclassically rescaled Sobolev norm $\| \cdot \|_{H^s_h}$. In particular, given $u \in H^1_h(X)$, we can take
\begin{equation} \label{eq:H1norm}
\| u \|_{H^1_h} = \int_X |u|^2 + h^2|du|^2 \, dg,
\end{equation}
where $dg$ is the volume density for a Riemannian metric $g$, and the magnitude of $du$ is computed with respect to $g$.

The negligible operators $h^\infty \Psi_h^{-\infty}(X)$ in this calculus are precisely those with smooth Schwartz kernels, such that each $\CI(X)$ seminorm is of order $\mathcal{O}(h^\infty)$. Given $A \in \Psi^m_h(X)$, there exists $a \in S^m(T^*X)$ such that
\begin{equation} \label{eq:Opsurjective}
A = \Op_h(a) + h^\infty \Psi_h^{-\infty}(X).
\end{equation}
The operator wavefront set (also known as the microsupport) $\WF(A)$ of $A \in \Psi^m_h(X)$ can be defined as the essential support of its full symbol in any coordinate representation. Here essential support is meant in the semiclassical sense: if $a(x,\theta) \in S^{m}(\RR^p;\RR^q)$, then
\[
\esssupp(a)^\complement = \{(x,\theta) : \, a \in h^\infty S^{-\infty}(\RR^p; \RR^q) \text{ near } (x,\theta) \}.
\]
Note that we are viewing $\esssupp(a)$ as a subset of the radial
compactification $\RR^p \times \OL{\RR^q}$. Thus $\WF(A)$ is a subset
of the fiber-radially compactified cotangent bundle $\OL{T^*}X$ (see
\cite[Section E.2]{zworski:resonances}). We also write $\ELL(A)$ for
the elliptic set of $A \in \Psi^m_h(X)$, again viewed as a subset of
$\OL{T^*}X$: \blue{this is the set where the principal symbol is invertible.}
 
 The compactly microlocalized operators $\Psi_h^\COMP(X) \subset
 \Psi^{-\infty}_h(X)$ are defined to be those with compact
 operator wavefront set in $T^*X \subset \OL{T^*}X$. Equivalently, $A \in \Psi_h^\COMP(X)$ if $A$ can be written in the form \eqref{eq:Opsurjective} with $a \in S^\COMP(T^*X)$. If $X$ is not compact, we also assume that the Schwartz kernel of $A \in \Psi_h^\COMP(X)$  has compact support in $X\times X$.

We need to consider distributions which are $h$-tempered relative to a fixed order Sobolev space.
\begin{defi}
We say that an $h$-dependent family $u = u(h) \in \CmI(X)$ is $h$-tempered in $H^s_h(X)$ if there exists $C,N>0$ such that
\[
\| u \|_{H^s_h} \leq Ch^{-N}.
\]
\end{defi}
Thus the usual notion of an $h$-tempered distribution $u \in \CmI(X)$ is that $u$ is $h$-tempered in some $H^{-M}_h(X)$.

\begin{defi} \blue{Let $r \in \RR.$}
	If $u$ is $h$-tempered in $L^2(X)$ we say that $(x,\xi) \notin \WF^r(u)$ if there exists $A \in \Psi^0_h(X)$ elliptic at $(x,\xi)$ such that
	\[
	\| A u \|_{L^2} \leq Ch^{r}.
	\]
	If $r = +\infty$, we write $\WF(u)$ for $\WF^\infty(u)$.
      \end{defi}
\noindent \blue{Recall that ellipticity, and hence wavefront set, is defined at
  points in the fiber-compactified cotangent bundle.}
      
              We will also occasionally employ a wavefront set measured with respect to
        spaces other than $L^2:$
        \begin{defi}\label{def:sobolevWF}
Let $r,s \in \RR.$          If $u$ is $h$-tempered in $H_h^{s} (X)$ we say that
          $(x,\xi) \notin \WF^{s,r}(u)$ if there exists
          $A \in \Psi^0_h(X)$ elliptic at $(x,\xi)$ such that
	\[
	\| A u \|_{H^{s}_h} \leq Ch^{r}.
	\]
      \end{defi}

Lastly, we consider a class of ``tangential'' pseudodifferential operators on $\RR^{d+1}$. Fix a splitting of coordinates $x = (x_1,x') \in \RR \times \RR^d$. Given $k \in \NN \cup \{+\infty\}$, we consider operators
\[
Q \in \C^k(\RR_{x_1}; \Psi^m_h(\RR^{d}_{x'})).
\]
Thus we can write $Q = \Op_h(q)$, where $q \in \C^k(\RR; S^m(\RR^d))$ and $\Op_h$ denotes the quantization procedure \eqref{eq:semiclassicalquantizationRn} on $\RR^d$. However, since $q$ is not necessarily smooth in $x_1$, the notion of operator wavefront set must be modified. We say that $(x,\xi') \notin \esssupp(q)$ if there is a neighborhood of $(x,\xi')$ in $\RR^{d+1} \times \OL{\RR^{d}}$ where
\[
D_{x_1}^j D_{x'}^\alpha D_{\xi'}^\beta q(x_1,x',\xi') = \mathcal{O}(h^\infty \left<\xi'\right>^{-\infty})
\]
 for $j \leq k$. We then define $\WF(Q) = \esssupp(q)$. This definition guarantees that $\WF(\partial_{x_1}^k(Q)) \subset \WF(Q)$ for $k\geq 1$.

\subsection{Energy estimates}
\label{subsect:energy}

In this section we prove a microlocal energy estimate that will eventually be applied to the operator $P$ in \eqref{eq:P}. These estimates follow the strategy used in \cite[Sections
23.1--23.2]{hormander1994analysis} for hyperbolic operators; similar
estimates for semiclassical problems have also been obtained in
\cite[Section 3.2]{christianson2011quantum}.

\blue{In what follows we will employ the notation $\Diff^k_h$ for the algebra of
semiclassical differential operators
$$
\sum_{\lvert \alpha \rvert\leq k} a_\alpha (x; h) (hD)^\alpha
$$
with $a_\alpha \in \CI,$ uniformly in $h\to 0.$}

We work on $\RR^{d+1}$. Let $x = (x_1,x') \in\RR \times \RR^{d}$, and
consider a \blue{differential} operator
\[
L = (hD_{x_1})^2 - R + hR_0
\]
where $R \in \C^1(\RR; \Diff^2_h(\RR^{d}))$ and $R_0 \in \C^1(\RR; \Diff^1_h(\RR^d))$. Writing $r(x,\xi') = \sigma_h(R)$, we make the following microlocal hyperbolicity assumption:
\[
r(x,\xi') > 0 \text{ near } (-\varepsilon,\varepsilon) \times U,
\]
where $U \subset T^*\RR^{d}$ is open with compact closure. Therefore we can find a self-adjoint tangential operator $\Lambda \in \C^1(\RR;\Psi^{\COMP}_h(\RR^{d}))$ with $\sigma_h(\Lambda) = r^{1/2}$ 
near $(-\varepsilon,\varepsilon) \times U$ such that
\[\Lambda^2 = R + R',\] where $R' \in \C^1(\RR;\Psi^{2}_h(\RR^d))$ and $ (-\varepsilon,\varepsilon) \times U \cap \WF(R') = \emptyset$. Then we have 
\begin{align*}
(hD_{x_1} \mp \Lambda)(hD_{x_1} \pm \Lambda) &= (hD_{x_1})^2 - \Lambda^2 \pm [hD_{x_1},\Lambda] 
\\ &= L + R' \pm hR_1,
\end{align*}
where $R_1 = h^{-1}[hD_{x_1},\Lambda] \pm R_0 \in \C^0(\RR; \Psi^\COMP_h(\RR^d)) +  \C^1(\RR; \Diff^1_h(\RR^d))$.  Given $u \in \CI(\RR^{d+1})$, write $u(x_1)$ for the function $x' \mapsto u(x_1,x')$ on $\RR^d$.

\begin{lem} \label{lem:energyestimate}
	If $A \in \CI(\RR;\Psi^{\COMP}_h(\RR^d))$ satisfies $\WF(A) \subset (-\varepsilon,\varepsilon) \times U$ and $B \in \CI(\RR;\Psi^{\COMP}_h(\RR^d))$ is elliptic on $\WF(A)$, then 
\begin{equation}\label{Luc}	\begin{aligned}
	\| Au(x_1) \|_{L^2(\RR^d)} &\leq C\| \left<hD_{x_1}\right> Bu \|_{L^2} + C h^{-1} \int_{-\varepsilon}^\varepsilon \| BLu(s) \|_{L^2(\RR^d)} \, ds \\ &+ \mathcal{O}(h^\infty)\| \left<hD_{x_1}\right> u \|_{L^2}
	\end{aligned} 	\end{equation}
	for every $u \in \CI(\RR^{n+1})$ and $x_1 \in (-\varepsilon,\varepsilon)$.
\end{lem} 
\noindent \blue{Since \eqref{Luc} is the first of many
  estimates of this form, we clarify that the inequality means
  that there exists $C$ fixed such that for every $M \in \NN,$ there
  exist $C_M$ and $h_0=h_0(M)$ such that for $h \in (0, h_0),$
	\begin{align*}
	\| Au(x_1) \|_{L^2(\RR^d)} &\leq C\| \left<hD_{x_1}\right> Bu
                                     \|_{L^2} + C h^{-1}
                                     \int_{-\varepsilon}^\varepsilon
                                     \| BLu(s) \|_{L^2(\RR^d)} \, ds
          \\ &+ C_M h^M\| \left<hD_{x_1}\right> u \|_{L^2}.
	\end{align*} 	}

\begin{proof} 
	The usual energy inequalities hold for the operators $hD_{x_1} \pm \Lambda$, cf \cite[Lemma 23.1.1]{hormander1994analysis}: for each $x_1,t \in \RR$,
	\begin{equation} \label{eq:energyestimateorder1}
	\| u(x_1) \|_{L^2(\RR^d)} \leq \| u(t) \|_{L^2(\RR^d)} + h^{-1} \int_{t}^{x_1} \| (hD_{x_1} \pm \Lambda)u(s)\|_{L^2(\RR^d)} \, ds.
	\end{equation}
	Given $B_1 \in \CI(\RR; \Psi^\COMP_h(\RR^d))$, set $v_\pm = B_1(hD_{x_1}\mp \Lambda)u$ and 
	compute
	\[
	(hD_{x_1}\pm \Lambda)v_\pm = B_1 Lu + [hD_{x_1} \pm \Lambda,B_1]u  \pm h B_1R_1 u  + B_1R'u 
	\]
	Take $B_1$ elliptic on $\WF(A)$ with $\WF(B_1) \subset
	(-\varepsilon,\varepsilon) \times U$ and let $B$ be elliptic on
	$\WF(B_1).$  Then
	\begin{align*}
	\| (hD_{x_1}\pm \Lambda)v_\pm(x_1) \|_{L^2(\RR^d)} &\leq C\| BLu(x_1) \|_{L^2(\RR^d)} \\ &+  Ch\| Bu(x_1) \|_{L^2(\RR^d)} + \mathcal{O}(h^\infty)\| u(x_1) \|_{L^2(\RR^d)}
	\end{align*}
	for $x_1 \in (-\varepsilon,\varepsilon)$. Applying \eqref{eq:energyestimateorder1} to $v_\pm$ yields the estimate
	\begin{multline*}
	\|v_\pm(x_1)\|_{L^2(\RR^d)}\leq \| v_\pm(t) \|_{L^2(\RR^d)}\\ + C \int_{t}^{x_1} h^{-1} \| B Lu(s) \|_{L^2(\RR^d)} + \|B u(s)\|_{L^2(\RR^d)} + \mathcal{O}(h^\infty)\| u(s)\|_{L^2(\RR^d)}\,ds
	\end{multline*}
	for $x_1,t \in (-\varepsilon,\varepsilon)$. Furthermore, we can estimate 
	\[
	\|v_\pm(t) \|_{L^2(\RR^d)} \leq C \|\left<hD_{x_1}\right> Bu(t)\|_{L^2(\RR^d)}.
	\]
	On the other hand, since $\WF(A) \subset \ELL(\Lambda)$,
	\begin{align*}
	\|Au(x_1) \|_{L^2(\RR^d)} &\leq C (\| v_{+}(x_1) \|_{L^2(\RR^d)} + \| v_{-}(x_1) \|_{L^2(\RR^d)})  +  \mathcal{O}(h^\infty) \| u(x_1) \|_{L^2(\RR^d)}.
	\end{align*}
	Estimating the $\mathcal{O}(h^\infty)\| u(x_1)\|_{L^2(\RR^d)}$ term on the right hand side by \eqref{eq:energyestimateorder1}, we conclude that
	\begin{multline*}
	\| Au(x_1) \|_{L^2(\RR^d)} \leq  C \| \left<hD_{x_1} \right>B u(t) \|_{L^2(\RR^d)} \\ + C \int_{t}^{x_1} h^{-1} \| B Lu(s) \|_{L^2(\RR^d)} + \|B u(s)\|_{L^2(\RR^d)} + \mathcal{O}(h^\infty)\|\left<hD_{x_1} \right>u(s)\|_{L^2(\RR^d)}\,ds
	\end{multline*}
	for $x_1,t \in (-\varepsilon,\varepsilon)$. Integrating in $t$ finishes the proof.
\end{proof}

\subsection{Semiclassical conormal distributions}
We return to the setting of Section \ref{subsect:conormaldistributions}, adopting the notation there. 
\begin{defi}
If $u \in \CmI_c(X)$ has compact support in a coordinate patch $\mathcal{U}$ as in Section \ref{subsect:conormaldistributions}, we say that $u \in I^{p}_h(X;N^*Y)$ if
\begin{equation} \label{eq:semiclassicalconormaldistribution}
u = (2\pi h)^{-(m+2k)/4} \int  e^{\sfrac{i}{h}\langle x',\xi' \rangle }a(x,\xi')\, d\xi'
\end{equation}
for some $a(x,\xi') = a(x,\xi';h) \in S^{p+(m-2k)/4}(\RR^m_x;\RR^{k}_{\xi'})$.
\end{defi}
The general definition of $I^{p}(X;N^*Y)$ is obtained by localization. If $u \in \CmI_c(\RR^n)$ is given by \eqref{eq:semiclassicalconormaldistribution}, then $u$ is certainly $h$-tempered, and
\[
\WF(u) \subset \{ (x,\xi) \in \OL{N^*}Y: (x,\xi') \in \esssupp(a)\}. 
\]
Here we have written $\OL{N^*}Y \subset \OL{T^*}X$ for the fiber-radially compactified conormal bundle to $Y$. 

We say that $u \in I^\COMP_h(X;N^*Y)$ if $u \in I^{-\infty}_{h}(X;N^*Y)$ has compact support, and $\WF(u)$ is compact in $T^*X$. Equivalently, $u$ can locally be written in the form \eqref{eq:semiclassicalconormaldistribution} with $a \in S^\COMP(\RR^n;\RR^k)$, modulo an $h^\infty \CcI(\RR^m)$ remainder.

\section{Semiclassical b-pseudodifferential operators} \label{sect:bpseudo}

\subsection{b-Tangent and b-cotangent bundles} \label{subsect:btangentcotangent} Let $X$ be a manifold with boundary. 
Let $\mathcal{V}(X)$ denote the Lie algebra of smooth vector fields on $X$, and $\Vb(X)$ the subalgebra of vector fields tangent to $\partial X$. Let $(x,y) = (x,y_1,\ldots,y_n)$ be local coordinates on a chart $\mathcal{U}$ intersecting $\partial X$, such that $\mathcal{U} \cap \partial X = \{x=0\}$. With respect to these coordinates, elements of $\Vb(X)$ are locally of the form
\begin{equation} \label{eq:btangentsection}
f(x,y)x\partial_x + \sum g_i(x,y) \partial_{y_i}.
\end{equation} 
Furthermore, $\Vb(X)$ coincides with sections of a bundle, the b-tangent bundle $\bT X$. There is also a natural bundle map 
\begin{equation} \label{eq:ib}
i : \bT X \rightarrow TX
\end{equation}
induced by the inclusion $\Vb(X) \hookrightarrow \mathcal{V}(X)$. Over
$q \in X^\circ$ (the interior of $X$) this map is an isomorphism,
which gives the identification $\bT_{X^\circ} X = T X^\circ$.  \blue{Here we
use the notation $\bT_Z X$ for the restriction of $\bT X$ to the
submanifold $Z.$}

The dual bundle to $\bT X$ is the b-cotangent bundle $\bT^*X = (\bT X)^*$. In coordinates $(x,y)$ near the boundary, sections of $\bT^*X$ are of the form
\begin{equation} \label{eq:bcotangentsection}
\sigma(x,y) \frac{dx}{x} + \sum \eta_i(x,y) dy_i.
\end{equation}
Thus $(x,y,\sigma,\eta)$ provide coordinates on $\bT^*X$. 
Let $\pi : T^*X \rightarrow \bT^*X$
denote the adjoint of \eqref{eq:ib}. Over the interior, $\pi$ induces a dual identification $\bT_{X^\circ}^* X = T^*X^\circ$.
On the other hand, if $(x,y,\xi,\eta)$ are the usual coordinates on $T^*X$ induced by $(x,y)$, then
\[
\pi(x,y,\xi,\eta) = (x,y,x\xi,\eta).
\]
In particular, \blue{since it maps to $\sigma=x\xi,$} $\pi$ is not surjective over $\partial X$. We denote by $\bdotT^*X$ the image $T^*X$ under $\pi$, referred to as the \emph{compressed cotangent bundle}.

By a slight abuse of notation, we also consider $T^*\partial X$ as a subset of $\bT^*_{\partial X} X$. More precisely, $i$ takes $\bT_{\partial X} X$ onto $T\partial X$, and the inclusion  $T^*\partial X \hookrightarrow \bT^*_{\partial X} X$ is the adjoint of this restriction; in local coordinates, it is just the map $(y,\eta) \mapsto (0,y,0,\eta)$.

While the definitions above apply to a manifold with boundary, for our purposes we need to replace $\partial X$ with an embedded interior hypersurface $Y \subset X$, where $X$ is now boundaryless. In that case we consider the relative b-tangent bundle $\bT(X;Y)$. Sections of $\bT(X;Y)$ coincide with the subalgebra $\Vb(X;Y) \subset \mathcal{V}(X)$ of vector fields tangent to $Y$. The discussion above applies verbatim to $\bT(X;Y)$ by replacing $\partial X$ with $Y$, and $X^\circ = X \setminus \partial X$ with $X\setminus Y$.

 \subsection{b-Pseudodifferential operators} \label{subsect:b-pseudos}

 We now describe the class of semiclassical b-pseudodifferential
 operators on a compact manifold $X$ with boundary.  This is a variant
 on the \emph{b-calculus} introduced in the setting of homogeneous
 microlocal analysis by Melrose \cite{melrose1981transformation},
 \cite{melrose1983elliptic} (see also \cite{melrose1993atiyah} for a
 detailed treatment).  A description of the semiclassical b-calculus
 employed here can be found in \cite[Appendix A]{hintz2016global}.

We begin by defining the class of residual operators $h^\infty
\Psibhc^{-\infty}(X)$. Here we resort to a geometric description in
terms of a certain \emph{blow-up} of $X\times X$ since this yields the most
concise definition.  (We refer the reader to \cite{melrose1993atiyah}
for a discussion of real blow-up in the context of the b-calculus and for further references.)

 Recall that the b-stretched product $X \times_\B X$ is defined by blowing up the corner $\partial X \times \partial X$ in $X \times X$,
 \[
 X \times_\B X= [X\times X; \partial X \times \partial X].
 \]
 The blow-down map is denoted by $\beta_{\B} : X\times_\B X \rightarrow X\times X$.  The front face, namely the lift of $\partial X \times \partial X$, is denoted $\mathrm{ff}$, whereas the lifts of $X^\circ \times \partial X$ and $\partial X \times X^\circ$ are denoted $\mathrm{lf}$ and $\mathrm{rf}$, respectively.
 
  If $M$ is a manifold with corners, we use the notation $\mathcal{A}(M)$ for the space of $L^\infty$ based conormal distributions on $M$:
 \[
 \mathcal{A}(M) = \{u \in \CmI(M): \mathcal{V}_\B(M)^k u \in L^\infty(M) \text{ for all } k \in \NN\}.
 \]
 Returning to the b-stretched product, let $\rho_{\mathrm{sf}}$ be a total boundary defining function for the side faces. We then consider operators $A$ with Schwartz kernels in $\rho_{\mathrm{sf}}^\infty \mathcal{A}(X\times_\B X)$. Note that this space has a natural family of seminorms.

\blue{In what follows $\CdI(X)$ denotes the set of
smooth functions on $X$ vanishing to infinite order at the boundary (cf.\ \cite[Appendix
B.2]{hormander1994analysis}).}
 
 \begin{defi}
 A family of operators $A= A(h) : \CdI(X) \rightarrow \CdI(X)$ belongs to $h^\infty \Psibhc^{-\infty}(X)$ if its kernel $K_A$ is the pushforward by $\beta_{\B}$ of an element \[
 \widetilde K = \widetilde{K}(h)  \in \rho_{\mathrm{sf}}^\infty \mathcal{A}(X \times_\B X),
 \] 
where each seminorm of $\widetilde{K}$ is of order $\mathcal{O}(h^\infty)$. We say that $A$ belongs to $h^\infty \Psibh^{-\infty}(X)$ if $\widetilde K$ is in addition smooth up to $\mathrm{ff}$.
 \end{defi}
 
In general, semiclassical b-pseudodifferential operators have Schwartz kernels with additional singularities on the diagonal. We choose to give a definition via localization. First we describe the appropriate semiclassical symbol classes. Let us identify 
\[
\bT^*\RR_+^n = \RR^n_+ \times \RR^n,
\]
with coordinates $(x,y) \in \RR_+ \times \RR^{n-1}$ in the first
factor, and $(\sigma,\eta)\in \RR\times \RR^{n-1}$ in the second. 
In that case, we define $h$-dependent Kohn--Nirenberg $\symbbc^m(\bT^*\RR^n_+)$ corresponding to symbol bounds of the form
\begin{equation} \label{eq:conormalsymbol}
|(xD_x)^j D_y^\alpha D_{\sigma}^k D_{\eta}^\beta a(x,y,\sigma,\eta)| \leq C_{kj\alpha\beta}\left<(\sigma,\eta)\right>^{m-k-|\beta|}
\end{equation}
uniformly in $h$. Thus $a$ need not be smooth up to the boundary of $\bT^*\RR^n_+$. If we wish to require smoothness, we can define $\symbb^m(\bT^*\RR^n_+)$ by replacing $xD_x$ with $D_x$ in \eqref{eq:conormalsymbol}. In general,  $\symbbc^m(\bT^*X)$ is defined by localization, and similarly for $\symbb^m(\bT^*X)$.

We now define a left quantization procedure on $\RR^n_+$. For this, fix $\phi \in \CI_c((1/2,2))$ such that $\phi(s) = 1$ near $s=1$. Given $a \in \symbbch^m(\bT^*\RR_+^n)$, define  $\Opbh(a)$ by
\begin{multline} \label{eq:localquantization}
\Opbh(a)u(x,y) \\ = (2\pi h)^{-n}\int e^{\sfrac i h ((x-\tilde x)\xi + \left<y-\tilde y,\eta\right>)} \phi(x/\tilde x) a(x,y,\eta,x\xi) u(\tilde x, \tilde y)\, d\xi d\eta d\tilde x d\tilde y.
\end{multline}
Semiclassical b-pseudodifferential operators are defined in general by localization:

\begin{defi} \label{defi:b-pseudosbylocalization}
	A family of operators $A = A(h) : \CdI(X) \rightarrow \CdI(X)$ belongs to $\Psibhc^m(X)$ if the following properties hold.
	\begin{enumerate} \itemsep6pt
		\item If $\varphi,\psi \in \CI(X)$ have disjoint supports, then $\varphi A \psi \in h^\infty \Psibh^{-\infty}(X)$.
		\item If $\psi \in \CI_c(O)$ has support in an interior coordinate patch $O$ and $\kappa : O \rightarrow O_\kappa \subset \RR^n$ is a diffeomorphism, then  
		$(\kappa^*)^{-1} \psi A \psi \kappa^* \in \Psi_h^m(\RR^n)$.
		\item If $\psi \in \CI_c(O)$ has support in a boundary coordinate patch $O$ and $\kappa : O \rightarrow O_\kappa \subset \RR_+^n$ is a diffeomorphism, then
		\begin{equation} \label{eq:localrep}
		(\kappa^{-1})^*\psi A \psi \kappa^* = \Opbh(a) + R
		\end{equation}
		for some $a \in \symbbch^m(\bT^*\RR_+^n)$ and $R \in h^\infty \Psibhc^{-\infty}(\RR_+^n)$
\end{enumerate}
	We say that $A$ belongs to $\Psibh^m(X)$ if $\eqref{eq:localrep}$ holds for some $a \in \symbbh^m(\bT^*\RR_+^n)$ and $R \in h^\infty \Psibh^{-\infty}(X)$.	
\end{defi}

The space \blue{$\Psibhc (X)$} of semiclassical b-pseudodifferential operators with conormal coefficients on a compact manifold $X$ with boundary has the following properties.
\begin{enumerate}  [label=(\Roman*)] \itemsep6pt
	
	\item Each $A \in \Psibhc^m(X)$ maps $\CdI(X) \rightarrow \CdI(X)$ and $\CmI(X) \rightarrow \CmI(X)$.
	\label{it:mapping}
	
	\item There is a principal symbol map $\bsymbol: \Psibhc^m(X) \rightarrow \symbbc^m(\bT^*X)/h\symbbc^{m-1}(\bT^*X)$ such that the sequence
	\[
	0 \rightarrow h\Psibhc^{m-1}(X) \rightarrow \Psibhc^m(X)
        \xrightarrow{\bsymbol}
        \symbbc^m(\bT^*X)/h\symbbc^{m-1}(\bT^*X) \rightarrow 0
	\]
	is exact. \label{it:symbol}
	\item There exists a non-canonical quantization map $\Opbh : \symbbc^m(\bT^*X) \rightarrow \Psibhc^m(X)$ such that if $a \in \symbbc^m(\bT^*X)$, then 
	\[
	\bsymbol(\Opbh(a)) = a
	\]\label{it:quantization}
	in $\symbbc^m(\bT^*X)/h\symbbc^{m-1}(\bT^*X)$.
	
	\item If $A \in \Psibhc^m(X)$, then $A^* \in \Psibhc^m(X)$ with principal symbol 
	\[
	\bsymbol(A^*) = \overline{\bsymbol(A)}.
	\] 
	Here the adjoint is taken with respect to any fixed density on $X$.  \label{it:adjoint}
	
	\item If $A \in \Psibhc^m(X)$ and $B\in\Psibhc^{m'}(X)$, then $[A,B] \in h\Psibhc^{m+m'-1}(X)$ with principal symbol 
	\[
	\bsymbol(\tfrac{i}{h}[A,B]) = \{\bsymbol(A),\bsymbol(B)\} = \bhamvf_{\bsymbol(A)}\bsymbol(B)
	\]
where the Poisson bracket is with respect to the usual symplectic form on $T^*X^\circ = \bT^*_{X^\circ} X$ extended by continuity to $\bT^*X$, which also defines the b-Hamilton vector $\bhamvf_f$. \label{it:commutator}

\blue{In canonical coordinates given by \eqref{eq:bcotangentsection}, the
  symplectic form is
  $$
\omega= \frac{d\sigma \wedge dx}{x}+ d\eta \wedge dy
$$
while the Hamilton vector field of $f$ is
$$
\bhamvf_f = x (\pa_\sigma f) \pa_x - x (\pa_x f)\pa_\sigma +(\pa_\eta f)
\cdot \pa_y -(\pa_y f ) \cdot \pa_\eta.
$$}

\item Each $A \in \Psibhc^0(X)$ extends to a bounded operator on $L^2(X)$, and moreover there exists $A' \in \Psibhc^{-\infty}(X)$ such that
	\[
	\|Au\|_{L^2} \leq 2\sup|\bsymbol(A)|\|u\|_{L^2} + \mathcal{O}(h^\infty)\|A'u\|_{L^2}
	\] 
	for each $u\in L^2(X)$. Here $\bsymbol(A)$ is any representative of the principal symbol in $\symbbc^0(\bT^*X)/h\symbbc^{-1}(\bT^*X)$. \label{it:L2}
\end{enumerate}

The subspace of operators with smooth coefficients, $\Psibh^m(X) \subset \Psibhc^m(X)$, satisfies \ref{it:mapping}, \ref{it:symbol}, \ref{it:quantization}, \ref{it:adjoint}, \ref{it:commutator}, \ref{it:L2} above, simply dropping the subscript $\mathrm{c}$ throughout. Moreover, $\Psibh^m(X)$ enjoys better mapping properties, namely each element of $\Psibh^m(X)$ maps $\CI(X) \rightarrow \CI(X)$ and $\CdmI(X) \rightarrow \CdmI(X)$.

Suppose that $F \in I^{[-1-\alpha]}(\RR^n_+;\partial \RR^n_+)$ has compact support, where $\alpha > 0$. Then $F$ is continuous, smooth away from the boundary, and after a semiclassical rescaling the Schwartz kernel of multiplication by $F$ is 
\begin{equation} \label{eq:Fkernel}
\delta(x-\tilde x)\delta(y-\tilde y)F(x,y) = (2\pi h)^{-n}\int e^{\sfrac i h ((x-\tilde x) \sigma + \left<y-\tilde y, \eta\right>)} F(x,y) \, d\sigma d\eta.
\end{equation}
We can always insert a cutoff $\phi(x/\tilde{x})$ as in \eqref{eq:localquantization}, since the kernel is supported by the diagonal. In particular, \eqref{eq:Fkernel} can be written in the form \eqref{eq:localquantization}. The reason for introducing the algebra with conormal coeffients is that when viewed as a symbol (independent of $\sigma,\eta$),
\[
F \in \symbbch^0(\bT^*\RR^n_+),
\] 
namely multiplication by $F$ is in $\Psibhc^0(X)$ when $\alpha > 0$ (but not $\Psibh^0(X)$).

\subsection{Interaction with differential operators} We will also need to consider the interaction between $\Psibh(X)$ and the algebra of semiclassical differential operators $\Diffh(X)$, which of course is not a subalgebra of $\Psibh(X)$. The material in this section is not relevant for the class of conormal coefficient operators $\Psibhc(X)$.

The key consideration in what follows is the indicial operator family of $A \in \Psibh(X)$, defined for $\sigma \in \CC$ and $v \in \CI(\partial X)$ by
\[
\indicial(A)(\sigma)v = x^{-i\sigma}A (x^{i\sigma}u)|_{\partial X},
\]
where $u \in \CI(X)$ is an arbitrary extension of $v$; here $x$ is a
fixed, global boundary defining function. Thus $\indicial(A) = 0$ for
$A \in \Psibh^m(X)$ precisely when $A \in x\Psibh^m(X)$. Furthermore,
the indicial operator map is an algebra homomorphism \blue{to
  $\sigma$-dependent families of semiclassical pseudodifferential
  operators on $\pa X:$}
\[
\indicial(AB)(\sigma) = \indicial(A)(\sigma) \circ \indicial(B)(\sigma).
\]
Observe that $\indicial(hx D_x)(\sigma)$ is simply multiplication by $\sigma$, and $\indicial(x)(\sigma)$ vanishes identically. 

Assume that $A \in \Psibh^m(X)$ has compact support in a boundary coordinate patch $\mathcal{U}\subset X$, so that $(hD_x)A$ is a well defined operator. Applying $\indicial$, it follows that $[hxD_x,A]\in  xh \Psibh^{m}(X)$ and $[x,A] \in xh \Psibh^{m-1}(X)$. Therefore,
\[
(hD_x) A = x^{-1}[hxD_x,A] + x^{-1}Ax(hD_x). 
\]
This can be rephrased as in the following lemma:

\begin{lem} \label{lem:AhDxswap}
	Given $A \in \Psibh^m(X)$ with compact support in $\mathcal{U}$, there exist $A', \, A'' \in \Psibh^m(X)$ with compact support in $\mathcal{U}$ such that 
	\begin{equation} \label{eq:swap}
	(hD_x)A - A'(hD_x) = hA'',
	\end{equation}
	where $A' = x^{-1}Ax$ and $A'' = x^{-1}[hxD_x,A]$.
\end{lem}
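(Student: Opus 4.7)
The plan is to observe that the lemma is essentially a restatement of the identity $(hD_x)A = x^{-1}[hxD_x,A] + x^{-1}Ax(hD_x)$ given just above the statement, so the real content is verifying that the two ``rearranged'' pieces $A'$ and $A''$ actually lie in $\Psibh^m(X)$ and remain compactly supported in $\mathcal{U}$. First I would simply confirm the algebraic identity: expanding the commutator gives $hxD_x A = [hxD_x,A] + A\, hxD_x$, and since $A$ is compactly supported in the boundary chart $\mathcal{U}$ one may legitimately multiply by $x^{-1}$ on the left to obtain $(hD_x)A = x^{-1}[hxD_x,A] + x^{-1}Ax\,(hD_x)$. Matching this to the form $A'(hD_x) + hA''$ forces $A' = x^{-1}Ax$ and $A'' = h^{-1}x^{-1}[hxD_x,A]$.

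Next I would check that $A' = x^{-1}Ax \in \Psibh^m(X)$. The key input, recorded just above the lemma, is that $[x,A] \in xh\Psibh^{m-1}(X)$, which itself follows from the fact that $\indicial(x)(\sigma) \equiv 0$ (so the indicial operator of $[x,A]$ vanishes, forcing the commutator into $x\Psibh^{m-1}$) combined with the commutator order drop $[\Psibh^0,\Psibh^m] \subset h\Psibh^{m-1}$. Writing $Ax = xA - [x,A]$ and dividing by $x$ on the left gives
\[
A' = x^{-1}Ax = A - x^{-1}[x,A] \in A + h\Psibh^{m-1}(X) \subset \Psibh^m(X).
\]

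For $A''$ the argument is parallel, using the second ingredient $[hxD_x,A] \in xh\Psibh^m(X)$. This follows because $\indicial(hxD_x)(\sigma)$ is multiplication by the scalar $\sigma$, hence commutes with $\indicial(A)(\sigma)$ in the indicial algebra, so $\indicial([hxD_x,A]) = 0$ and consequently the commutator lies in $x\Psibh^m$; the extra $h$ is again the commutator gain. Dividing on the left by $x$ and then by $h$ yields
\[
A'' = h^{-1}x^{-1}[hxD_x,A] \in \Psibh^m(X),
\]
which is exactly what is needed. The compact support assertions are automatic: $A$ is supported in $\mathcal{U}$, $x$ is defined on all of $\mathcal{U}$, and none of the operations (left/right multiplication by $x$, commutation with $hxD_x$, multiplication by $x^{-1}$ on a neighborhood where it is smooth off $\partial X$) enlarge the support.

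The only delicate point — and thus the main thing to be careful about — is that division by $x$ is performed on objects that genuinely vanish to order $x$ at the boundary. This is precisely what the indicial operator machinery guarantees: both $[x,A]$ and $[hxD_x,A]$ lie in $x\Psibh(X)$ rather than merely in $\Psibh(X)$, which is what makes $x^{-1}[x,A]$ and $x^{-1}[hxD_x,A]$ well-defined elements of the smooth-coefficient b-calculus $\Psibh^m(X)$ rather than objects picking up conormal singularities at $\partial X$. Once these two membership facts are in hand, the lemma is immediate.
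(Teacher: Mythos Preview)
Your proposal is correct and follows exactly the approach of the paper: the lemma is a repackaging of the identity $(hD_x)A = x^{-1}[hxD_x,A] + x^{-1}Ax\,(hD_x)$ together with the membership facts $[x,A]\in xh\Psibh^{m-1}(X)$ and $[hxD_x,A]\in xh\Psibh^m(X)$, both obtained via the indicial operator. You have in fact spelled out more carefully than the paper does why division by $x$ is legitimate, and your formula $A'' = h^{-1}x^{-1}[hxD_x,A]$ is the correct one consistent with the identity and the claim $A''\in\Psibh^m(X)$ (the displayed formula in the statement omits the factor $h^{-1}$).
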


Lemma \ref{lem:AhDxswap} allows us to give a reasonable definition of differential operators with b-pseudodifferential coefficients:

\begin{defi}
Let $\Diff_h^k \Psibh^m(X)$ denote the vector space of locally finite sums of the form $\sum P_j A_j$, where $P_j \in \Diffh^k(X)$ and $A_j \in \Psibh^m(X)$. 
\end{defi}

Using Lemma \ref{lem:AhDxswap}, it can shown that any $\sum P_j A_j \in \Diff^k_h \Psibh^m(X)$ can also be written in the form $\sum A'_j P'_j$, where $A'_j \in \Psibh^m(X)$ and $P'_j \in \Diff_h^k(X)$.

\blue{One can moreover show that the differential-b-pseudodifferential
operators form a graded algebra in the following sense.}
\begin{lem}[cf. {\cite[Lemma 2.5]{vasy2008propagation}}]
	If $B_1\in \Diff_h^{k_1} \Psibh^{m_1}(X)$ and $B_2\in \Diff_h^{k_2} \Psibh^{m_2}(X)$, then the composition satisfies \[
	B_1 B_2 \in \Diff_h^{k_1+k_2} \Psibh^{m_1+m_2}(X).
	\] 
Furthermore,
	\[
	[B_1,B_2] \in h\Diff_h^{k_1+k_2} \Psibh^{m_1+m_2-1}(X).
	\]
\end{lem}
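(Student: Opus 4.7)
The plan is to prove composition first, then deduce the commutator bound via the standard algebraic expansion. For composition, write $B_1 = \sum_j P_j A_j$ and $B_2 = \sum_l Q_l C_l$ with $P_j \in \Diffh^{k_1}$, $A_j \in \Psibh^{m_1}$, and similarly for $B_2$; the result follows once we know that $A Q \in \Diffh^{k_2}\Psibh^{m_1}(X)$ for any $A \in \Psibh^{m_1}(X)$, $Q \in \Diffh^{k_2}(X)$, combined with the b-calculus composition $\Psibh^{m_1}\cdot \Psibh^{m_2} \subset \Psibh^{m_1+m_2}$. Induction on $k_2$ reduces this to $Q = h D_{x_i}$ for some coordinate. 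Tangential derivatives $hD_y \in \Psibh^1(X)$ are absorbed directly: $A(hD_y) \in \Psibh^{m_1+1}(X) \subset \Diffh^1\Psibh^{m_1}(X)$. The transverse derivative $hD_x$ is treated by Lemma~\ref{lem:AhDxswap}, or rather the dual/alternative $\sum A'_j P'_j$ presentation remarked on right after it, which produces $A(h D_x) = (hD_x) A' + h A''$ with $A', A'' \in \Psibh^{m_1}(X)$, manifestly in $\Diffh^1 \Psibh^{m_1}(X)$.

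For the commutator I would use the algebraic identity
\begin{equation*}
[P_1 A_1,\, P_2 A_2] \;=\; [P_1, P_2]\, A_1 A_2 \;+\; P_2 P_1\, [A_1, A_2] \;+\; P_1\, [A_1, P_2]\, A_2 \;-\; P_2\, [A_2, P_1]\, A_1,
\end{equation*}
obtained by writing $A_1 P_2 = P_2 A_1 + [A_1, P_2]$ and analogously on the other side. The second term lies in $h\Diffh^{k_1+k_2}\Psibh^{m_1+m_2-1}(X)$ directly from property~\ref{it:commutator} of the b-calculus. The mixed commutators $[A_j, P_l]$ are analyzed by peeling $P_l$ into first-order factors and iteratively invoking Lemma~\ref{lem:AhDxswap} in the transverse direction, and the elementary fact $[A_j, hD_y] \in h\Psibh^{m_j}(X)$ in tangential directions; each such commutator thus lies in $h \Diffh^{k_l-1}\Psibh^{m_j}(X)$. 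Together with the composition step, the third and fourth terms, as well as the first term $[P_1,P_2] A_1 A_2$ (where $[P_1,P_2] \in h\Diffh^{k_1+k_2-1}(X)$), sit in $h \Diffh^{k_1+k_2-1}\Psibh^{m_1+m_2}(X)$.

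The main obstacle is the bookkeeping required to place all four terms into a single bifiltered class, exactly as in the corners calculus of Vasy~\cite{vasy2008propagation}. The subtlety is that Lemma~\ref{lem:AhDxswap} crucially produces a commutator remainder $h A''$ having the \emph{same} b-order as $A$ rather than one lower, so contributions born from commuting differential factors naturally land in $h \Diffh^{k_1+k_2-1}\Psibh^{m_1+m_2}(X)$ instead of $h \Diffh^{k_1+k_2}\Psibh^{m_1+m_2-1}(X)$. One concludes by using the natural inclusion between these bifiltrations in the relevant range of orders, so that the sum of all four terms indeed sits in the stated class $h \Diffh^{k_1+k_2}\Psibh^{m_1+m_2-1}(X)$.
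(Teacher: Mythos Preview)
The paper does not supply its own proof of this lemma, citing instead \cite[Lemma 2.5]{vasy2008propagation}; your outline is the standard argument and is essentially correct. Two points deserve comment.

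First, your intermediate claim that $[A_j,P_l] \in h\Diffh^{k_l-1}\Psibh^{m_j}(X)$ is slightly inaccurate for the transverse factor: by Lemma~\ref{lem:hDxcommutator} one has $i[hD_x,A] = hA_1 + hA_0(hD_x)$ with $A_1 \in \Psibh^{m_j}$ and $A_0 \in \Psibh^{m_j-1}$, so $[A_j,hD_x]$ lies in $h\Psibh^{m_j} + h\Diffh^1\Psibh^{m_j-1}$ rather than just $h\Psibh^{m_j}$. You effectively acknowledge this in the final paragraph, so the discrepancy is cosmetic, but the two paragraphs should be reconciled.

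Second, and more substantively, the ``natural inclusion'' $\Diffh^{k-1}\Psibh^{m+1}(X) \subset \Diffh^k\Psibh^m(X)$ you invoke at the end is the crux of the commutator bound, and while true it is not automatic from the definitions. It reduces to $\Psibh^{m+1} \subset \Diffh^1\Psibh^m$: given $A \in \Psibh^{m+1}$, decompose its principal b-symbol as $a = q_0 + q_\sigma\sigma + \sum_j q_j\eta_j$ with $q_\bullet \in \symbb^m$ (possible since $1+\sigma^2+|\eta|^2$ is b-elliptic of order~$2$), so that modulo $h\Psibh^m$ one has $A = Q_0 + Q_\sigma(hxD_x) + \sum_j Q_j(hD_{y_j})$; since $hxD_x, hD_{y_j} \in \Diffh^1(X)$ and $\Psibh^m\cdot\Diffh^1 = \Diffh^1\cdot\Psibh^m$ by the remark following Lemma~\ref{lem:AhDxswap}, the inclusion follows. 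Without this step the argument would only place the commutator in $h\bigl(\Diffh^{k_1+k_2-1}\Psibh^{m_1+m_2} + \Diffh^{k_1+k_2}\Psibh^{m_1+m_2-1}\bigr)$, which is a genuinely different bifiltered space.
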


We also have the following fundamental commutation result:
\begin{lem}[cf. {\cite[Lemma 2.8]{vasy2008propagation}}] \label{lem:hDxcommutator}
	If $A \in \Psibh^m(X)$ has compact support in a boundary coordinate patch $\mathcal{U}$, then there exist $A_1 \in \Psibh^m(X)$ and $A_0 \in \Psibh^{m-1}(X)$ satisfying 
	\begin{equation} \label{eq:D_xcommutator}
	i[hD_x,A] = hA_1 + hA_0 (hD_x).
	\end{equation}
	Here $\bsymbol(A_0) = \pa_{\sigma} a$ and $\bsymbol(A_1)= \pa_x a$.
\end{lem}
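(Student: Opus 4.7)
The plan is to work locally in a boundary coordinate chart, compute the commutator directly from the oscillatory integral \eqref{eq:localquantization}, and then split the result into a piece that is b-pseudodifferential of order $m$ and a piece that factors as a b-pseudodifferential operator of order $m-1$ composed on the right with $hD_x$. By the compact support assumption and Definition \ref{defi:b-pseudosbylocalization} applied in the chart $\mathcal{U}$, I may write $A = \Opbh(a) + R$ for some $a \in \symbbh^m(\bT^*\RR_+^n)$ and $R \in h^\infty \Psibh^{-\infty}(X)$. Since the contribution of $R$ to $i[hD_x, A]$ lies in $h \cdot h^\infty \Psibh^{-\infty}(X)$ and can be absorbed into the final $hA_1$, it suffices to analyze $[hD_x, \Opbh(a)]$.

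I then compute by Leibniz on the kernel. When $hD_x$ acts on $\Opbh(a)u$ from the left, three contributions arise: the phase produces a factor of $\sigma$; the cutoff $\phi(x/\tilde x)$ contributes a term supported in $\{x/\tilde x \in \supp\phi'\}$, hence away from the diagonal, which is a smoothing remainder after the standard $(\sigma,\eta)$-integration by parts; and $a(x,y,x\sigma,\eta)$ contributes $-ih\bigl[\partial_x a + \sigma\,\partial_\sigma a\bigr](x,y,x\sigma,\eta)$ by the chain rule, with $\partial_x$ and $\partial_\sigma$ denoting derivatives in the first and third slots of $a$. When $hD_x$ acts from the right, integration by parts in $\tilde x$ transfers $hD_{\tilde x}$ onto the kernel; the phase now yields $-\sigma$, the cutoff again yields a smoothing remainder, and $a$ has no $\tilde x$ dependence. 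The $\pm\sigma$ contributions cancel in the commutator, leaving, modulo $h^\infty \Psibh^{-\infty}(X)$, the identity
\[
i[hD_x, \Opbh(a)]u = h(2\pi h)^{-n}\int e^{i\theta/h}\,\phi(x/\tilde x)\bigl[\partial_x a + \sigma\,\partial_\sigma a\bigr](x,y,x\sigma,\eta)\,u(\tilde x,\tilde y)\,d\sigma\,d\eta\,d\tilde x\,d\tilde y,
\]
where $\theta = (x-\tilde x)\sigma + \langle y-\tilde y,\eta\rangle$.

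Finally I split the integrand into its two pieces. The $\partial_x a$ piece is precisely $h\Opbh(\partial_x a)$; since $a \in \symbbh^m$ is smooth in $x$ up to the boundary, $\partial_x a \in \symbbh^m$ as well, so $A_1 := \Opbh(\partial_x a) \in \Psibh^m(X)$ has $\bsymbol(A_1) = \partial_x a$. The $\sigma\,\partial_\sigma a$ piece I recognize as $h\,\Opbh(\partial_\sigma a)\circ(hD_x)$ modulo smoothing: running the same right-composition integration-by-parts computation for $\Opbh(\partial_\sigma a)\circ(hD_x)$ yields exactly the integral with integrand $\phi(x/\tilde x)\,\sigma\,(\partial_\sigma a)(x,y,x\sigma,\eta)$, modulo $h^\infty \Psibh^{-\infty}(X)$. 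Setting $A_0 := \Opbh(\partial_\sigma a) \in \Psibh^{m-1}(X)$ with $\bsymbol(A_0) = \partial_\sigma a$, and absorbing all residual $h^\infty \Psibh^{-\infty}(X)$ terms into $hA_1$, yields the desired identity $i[hD_x,A] = hA_1 + hA_0 (hD_x)$.

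The only nontrivial move is the recognition that $\sigma\,\partial_\sigma a$ is the symbol of a right composition with $hD_x$; this works because $\sigma$ simultaneously plays two roles, as the Fourier dual of $x$ in the phase and as the b-symbol of $hxD_x$, and the factor of $x$ built into the b-quantization reconciles the two. The hypothesis $A \in \Psibh^m$ rather than $\Psibhc^m$ enters solely in concluding $\partial_x a \in \symbbh^m$: for conormal symbols only $(xD_x)^j a$ is controlled, and a plain $\partial_x$-derivative would leave the class.
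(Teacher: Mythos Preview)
Your proof is correct and takes a genuinely different route from the paper's. The paper derives \eqref{eq:D_xcommutator} algebraically from Lemma~\ref{lem:AhDxswap}: writing $(hD_x)A = A'(hD_x)+hA''$ with $A'=x^{-1}Ax$ and $hA''=x^{-1}[hxD_x,A]$, one obtains $i[hD_x,A]=i(A'-A)(hD_x)+ihA''$, and then $A'-A=x^{-1}[A,x]\in h\Psibh^{m-1}$ by the indicial-operator argument. The principal symbols are then computed ``by continuity from $T^*X^\circ$,'' i.e., by noting that over the interior the b-calculus agrees with the ordinary semiclassical calculus where the formulas are standard. Your approach instead opens up the quantization \eqref{eq:localquantization} and computes the commutator directly, with the key identity coming from the chain rule applied to $a(x,y,x\sigma,\eta)$: the $\sigma\,\partial_\sigma a$ term arising from the $x$-derivative of the amplitude matches exactly the $\sigma$ produced by right composition with $hD_x$ via integration by parts in $\tilde x$. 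This bypasses the indicial-operator machinery entirely and yields $\bsymbol(A_0)=\partial_\sigma a$, $\bsymbol(A_1)=\partial_x a$ as an immediate byproduct of the computation rather than via a separate continuity argument. The paper's route is shorter once Lemma~\ref{lem:AhDxswap} is in hand; yours is more self-contained and makes the mechanism behind the factorization transparent.
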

\begin{proof}
	The identity \eqref{eq:D_xcommutator} follows from \eqref{eq:swap}, since $A' - A = x^{-1}[A,x] \in h \Psibh^{m-1}(X)$. The computation of the principal symbol follows by continuity from $T^*X^\circ$ as in \cite[Lemma 2.8]{vasy2008propagation}
\end{proof}

For the next result we fix a Riemannian metric on $X$ with respect to which all adjoints are taken. In particular, $(hD_x)^* = hD_x + h\Diff^0_h(X)$.

\begin{lem} \label{lem:Dx2Acommutator}
	Let $A \in \Psibh^m(X)$  have compact support in $\mathcal{U}$, and suppose that $a = \bsymbol(A)$ is real valued. Then there exist 
	\[
	B_0 \in \Psibh^{m-1}(X), \quad B_1 \in \Psibh^m(X)
	\] 
	with $\bsymbol(B_0) = 2\partial_{\sigma} a$ and $\bsymbol(B_1) = 2\partial_xa$, such that
	\[
	(i/h)[(hD_x)^*hD_x,A] = (hD_x)^*B_0(hD_x) + (hD_x)^*B_1 + hR,
	\]
	where $R \in \Diff^1_h \Psibh^{m-1}(X)$.
\end{lem}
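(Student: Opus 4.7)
The approach is to expand the commutator, apply Lemma \ref{lem:hDxcommutator} twice in succession, and then rearrange so that the principal contribution is sandwiched between factors of $(hD_x)^*$ and $(hD_x)$. First I would split
\[
[(hD_x)^*(hD_x), A] = (hD_x)^*[hD_x, A] + [(hD_x)^*, A](hD_x),
\]
substitute $[hD_x, A] = -ih(A_1 + A_0(hD_x))$ from Lemma \ref{lem:hDxcommutator} with $\bsymbol(A_1) = \partial_x a$ and $\bsymbol(A_0) = \partial_\sigma a$, and use that $(hD_x)^* - hD_x = hE$ for some $E \in \CI(X)$ fixed by the reference density, so that $[(hD_x)^*, A] = [hD_x, A] + h[E, A]$ with $[E, A] \in h\Psibh^{m-1}$ by the b-calculus commutator property. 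Dividing by $h$ yields
\[
(i/h)[(hD_x)^*(hD_x), A] = (hD_x)^*A_1 + (hD_x)^*A_0(hD_x) + A_1(hD_x) + A_0(hD_x)^2 + hT(hD_x)
\]
for some $T \in \Psibh^{m-1}$.

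Next I would symmetrize by pushing the trailing $(hD_x)$-factors across $A_0$ and $A_1$, using the identities $A_1(hD_x) = (hD_x)^*A_1 - hEA_1 - [hD_x, A_1]$ and $A_0(hD_x)^2 = (hD_x)^*A_0(hD_x) - hEA_0(hD_x) - [hD_x, A_0](hD_x)$, together with a second application of Lemma \ref{lem:hDxcommutator} to $A_0, A_1$ producing auxiliary operators $A_{i,j}$ with $\bsymbol(A_{i,1}) = \partial_x \bsymbol(A_i)$, $\bsymbol(A_{i,0}) = \partial_\sigma \bsymbol(A_i)$. Collecting yields
\[
(i/h)[(hD_x)^*(hD_x), A] = 2(hD_x)^*A_0(hD_x) + 2(hD_x)^*A_1 + hR_0,
\]
so the tentative choice $B_0 = 2A_0$, $B_1 = 2A_1$ satisfies the principal-symbol requirements $\bsymbol(B_0) = 2\partial_\sigma a$, $\bsymbol(B_1) = 2\partial_x a$. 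The remainder $R_0$ is the explicit combination of $-EA_1 + iA_{1,1}$, $iA_{1,0}(hD_x)$, $-EA_0(hD_x) + iA_{0,1}(hD_x)$, $iA_{0,0}(hD_x)^2$, and $iT(hD_x)$.

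The main technical obstacle is verifying $R_0 \in \Diff^1_h\Psibh^{m-1}$. Most summands are manifestly of this form, being order-$(m{-}1)$ b-pseudodifferential operators right-composed with at most one factor of $hD_x$, which by the identity $\Psibh^{m-1}\Diffh^1 = \Diff^1_h\Psibh^{m-1}$ recalled after Lemma \ref{lem:AhDxswap} belong to $\Diff^1_h\Psibh^{m-1}$. The delicate summand $iA_{0,0}(hD_x)^2 \in \Diff^2_h\Psibh^{m-2}$ is absorbed by iterative correction of $B_0$: the replacement $B_0 \mapsto B_0 + ihA_{0,0}$ preserves the principal symbol but contributes $i(hD_x)^*A_{0,0}(hD_x)$ on the right, which upon a further application of Lemma \ref{lem:hDxcommutator} precisely cancels $iA_{0,0}(hD_x)^2$ modulo a new $(hD_x)^2$-term of one lower b-order (involving $A_{0,0,0}$). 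Iterating and Borel-summing produces an asymptotic $B_0 \sim 2A_0 + ihA_{0,0} + (ih)^2 A_{0,0,0} + \cdots \in \Psibh^{m-1}$ such that the final remainder lies in $\Diff^1_h\Psibh^{m-1}$ modulo $h^\infty$-smoothing operators. The residual $\Psibh^m$-contributions $-EA_1 + iA_{1,1}$ are absorbed by parallel subleading corrections to $B_1$, exploiting the reality of $a$ and the density-dependent expression of $E$ to identify them as being of the form $(hD_x)\cdot(\text{order-}(m{-}1)\text{ b-pseudo})$ modulo lower order.
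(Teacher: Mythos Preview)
Your approach is workable but takes a longer route than the paper. The paper's key simplification is to handle the second half of the Leibniz expansion via the adjoint identity $[(hD_x)^*,A]=-[hD_x,A^*]^*$: applying Lemma~\ref{lem:hDxcommutator} to $A^*$ and then taking adjoints gives $-i[hD_x,A^*]^*=h\big(\hat A_1^*+(hD_x)^*\hat A_0^*\big)$, with the factor $(hD_x)^*$ already on the \emph{left}. Right-multiplying by $(hD_x)$ produces the sandwiched term $(hD_x)^*\hat A_0^*(hD_x)$ directly, so one may simply take $B_0=A_0+\hat A_0^*$ (which has principal symbol $2\partial_\sigma a$ since $a$ is real) and no $(hD_x)^2$ term ever appears. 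Only the $A_1$-type term $\hat A_1^*(hD_x)$ then needs to be commuted to $(hD_x)^*A_1$, and that stays at first differential order.

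By contrast, writing $[(hD_x)^*,A]=[hD_x,A]+h[E,A]$ as you do forces the term $A_0(hD_x)^2$, and your subsequent use of Lemma~\ref{lem:hDxcommutator} on $A_0$ regenerates an $(hD_x)^2$ term at each step, leading you to the Borel iteration. This is unnecessary: you can kill $ihA_{0,0}(hD_x)^2$ in a \emph{single} step by using Lemma~\ref{lem:AhDxswap} (a swap, not a commutator) to write $A_{0,0}(hD_x)=(hD_x)C+hC'$ with $C,C'\in\Psibh^{m-2}$; then $A_{0,0}(hD_x)^2=(hD_x)^*C(hD_x)+h\,\Diff^1_h\Psibh^{m-2}$, and $ihC$ is absorbed into $B_0$ without changing its principal symbol. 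As written, your iteration also leaves an unaddressed residual in $h^\infty\Diff^2_h\Psibh^{-\infty}$, which is not obviously contained in $h\Diff^1_h\Psibh^{m-1}$; the one-step swap (or the paper's adjoint trick) avoids this.

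Finally, your last sentence is unnecessary and somewhat muddled: the stray terms $-EA_1+iA_{1,1}\in h\Psibh^m$ require no correction of $B_1$, since $\Psibh^m\subset\Diff^1_h\Psibh^{m-1}$ already. Indeed, splitting the symbol as $c=c_\infty+\sigma c_0+\sum_j\eta_jc_j$ with $c_\infty\in S^{-\infty}_\B$ and $c_0,c_j\in S^{m-1}_\B$, and noting $hxD_x,\,hD_{y_j}\in\Diff^1_h$, gives the inclusion.
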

\begin{proof}
	First, compute
	\begin{align*}
	i[(hD_x)^*hD_x, A] &= i(hD_x)^* [hD_x, A] -i[hD_x, A^*]^* (h D_x)\\ &= h (h D_x)^* (A_0+A_0^*) (hD_x)+h \left( (hD_x)^* A_1 + A_1^*(hD_x) \right),
	\end{align*}
	modulo $h \Diff^1_h \Psibh^{m-1}(X)$, where according to Lemma \ref{lem:hDxcommutator},
	\[
	\bsymbol(A_0) = \partial_{\sigma}a, \quad \bsymbol(A_1) = \partial_xa.
	\] 
	Here we used that $A = A^* + h\Psibh^{m-1}(X)$. In particular, $\bsymbol(A_0+A_0^*) = 2 \partial_{\sigma} a$. We then write
	\[
	A_1^*(hD_x) = (hD_x)^*A_1 + h\Diff_h^1 \Psibh(X)
	\]
	according to Lemma \ref{lem:hDxcommutator}. Therefore,
	\[
	(i/h)[(hD_x^*)hD_x,A] = (hD_x)^*B_0(hD_x) + (hD_x)^*B_1 +  h\Diff^1_h \Psibh(X),
	\]
 with $B_0 = A_0 + A_0^*$ and $B_1 = A_1$. 
\end{proof}

\subsection{Wavefront set and ellipticity}
In this section $X$ continues to denote a smooth manifold with
boundary. There is an operator wavefront set for elements of
$\Psibhc(X)$, which is naturally a subset of the fiber-radial
compactification $\OL{\bT^*}X$. As usual, $\WFb(A)$ can be defined
locally as the essential support of the total symbol $a$ of
$A \in \Psibhc^m(X)$. Here the notion of essential support takes into
account the conormal behavior of $a$: $q_0 \notin \esssupp(a)$ if
there is a neighborhood of $q_0$ in $\OL{\bT^*}X$ where $a$ lies
in
$h^\infty \symbbc^{-\infty}(\bT^*X)$. If $a \in \symbb^m(\bT^*X)$, this
automatically implies that a is locally in $h^\infty \symbb^{-\infty}(\bT^*X)$
near $q$.  The operator wavefront set satisfies the usual relations
\begin{equation} \label{eq:WFcalculus}
\begin{gathered}
\WFb(AB) \subset \WFb(A) \cap \WFb(B), \\
 \WFb(A+B) \subset \WFb(A) \cup \WFb(B).\\
\end{gathered}
\end{equation} 
We write $\Psibhc^\COMP(X)$ for the subalgebra of operators whose wavefront sets are a compact subset of $\bT^*X \subset \OL{\bT^*}X$, and similarly for $\Psibh^\COMP(X)$.

Ellipticity is also defined as usual. For instance, fix a norm
$|\cdot|$ on the fibers on $\bT^*X$, and then set $\left<\zeta \right>
= (1+ |\zeta|^2)^{1/2}$. We say that $A \in \Psibhc^m(X)$ is elliptic
at $q_0 \in \OL{\bT^*} X$ if for some $h_0>0$
\[
\left<\zeta\right>^{-m}|\bsymbol(A)(z,\zeta)| > 0
\]
for $h \in (0, h_0)$ in a neighborhood of $q_0 = (z_0,\zeta_0)$. The set of elliptic points is denoted $\ELLb(A)$. 
The standard symbolic procedure for elliptic symbols allows one to construct microlocal elliptic parametrices: if $A\in \Psibhc^s(X)$ and $B \in \Psibhc^m(X)$ satisfy $\WFb(A) \subset \ELLb(B)$, then there is $Q \in \Psibhc^{s-m}(X)$ such that 
\begin{equation} \label{eq:parametrix}
A -QB \in h^\infty\Psibhc^{-\infty}(X), \quad A-BQ \in h^\infty \Psibhc^{-\infty}(X).
\end{equation}
Of course if $A,B \in \Psibh(X)$, then both $Q$ and the residual terms in \eqref{eq:parametrix} can be chosen in $\Psibh(X)$.

A simple adaptation of \cite[Lemmas 3.2, 3.4]{vasy2008propagation} shows that each $A \in \Psibh^0(X)$ defines a uniformly bounded map
\begin{equation}\label{sobolevbounded}
A : \OL{H}^1_h(X) \rightarrow \OL{H}^1_h(X),
\end{equation}
where $\OL{H}^1_h(X)$ is the space of extendible distributions in the
sense of \cite[Appendix B.2]{hormander1994analysis}. The same is true
if $\OL{H}^1_h(X)$ is replaced by $\dot{H}^1_h(X)$, the 
\blue{space of distributions supported on $X,$ again in the sense of
\cite[Appendix B.2]{hormander1994analysis}.} By duality, $A$ is uniformly bounded on $\OL{H}^{-1}_h(X)$ and $\dot{H}^{-1}_h(X)$ as well.

\begin{lem} \label{lem:H1L2bbounded}
	Each $A \in \Psibhc^1(X)$ is uniformly bounded $A: \OL{H}^1_h(X) \rightarrow L^2(X)$.
\end{lem}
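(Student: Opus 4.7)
The strategy is to decompose any $A \in \Psibhc^1(X)$ into an order zero b-pseudodifferential part plus compositions of order zero b-pseudodifferential operators with the b-vector fields $hxD_x$ and $hD_{y_j}$, and then combine the $L^2$-boundedness of $\Psibhc^0(X)$ (property (VI)) with the elementary fact that these b-vector fields map $\OL{H}^1_h(X)$ to $L^2(X)$.

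First I localize. A partition of unity together with the standard semiclassical mapping $\Psi_h^1(X) : H^1_h \to L^2$ handles pieces supported in interior charts, so it suffices to work in a boundary coordinate chart and assume $A = \Opbh(a) + R$ with $a \in \symbbch^1(\bT^*\RR^n_+)$ of compact base support and $R \in h^\infty \Psibhc^{-\infty}$. The residual $R$ is trivially bounded on every Sobolev space.

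Next I decompose the symbol. Fix a cutoff $\chi \in \CcI(\RR^n)$ with $\chi \equiv 1$ near $\zeta = 0$ and write $a = \chi a + (1-\chi) a$. The piece $\chi a \in \symbbch^{\COMP}$ gives a compactly microlocalized operator, hence $L^2$-bounded. On the support of $(1-\chi) a$, the quantity $|\zeta|$ stays away from zero, so multiplying and dividing by $\sigma^2 + |\eta|^2$ yields
\[
(1-\chi) a = \tilde a_\sigma \cdot \sigma + \sum_j \tilde a_j \cdot \eta_j,
\]
with $\tilde a_\sigma, \tilde a_j \in \symbbch^0$. Since $hxD_x, hD_{y_j} \in \Psibh^1$ have principal symbols $\sigma$ and $\eta_j$, the composition rule in the b-calculus produces
\[
\Opbh((1-\chi) a) = \Opbh(\tilde a_\sigma)\,(hxD_x) + \sum_j \Opbh(\tilde a_j)\,(hD_{y_j}) + hE,
\]
with $E \in \Psibhc^0(X)$.

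Finally I estimate. Property (VI) bounds $\Opbh(\tilde a_\sigma)$, $\Opbh(\tilde a_j)$, and $E$ uniformly on $L^2$. For the b-vector fields, the intrinsic formula \eqref{eq:H1norm} for the $H^1_h$-norm (applied via any extension, since $hD_x$ and $hD_{y_j}$ are local) gives $\|hD_{y_j} u\|_{L^2} \leq \|u\|_{\OL{H}^1_h}$ and, using that $x$ is bounded on $X$,
\[
\|hxD_x u\|_{L^2} = \|x \cdot hD_x u\|_{L^2} \leq \bigl(\sup_X x\bigr) \|u\|_{\OL{H}^1_h}.
\]
Summing the resulting contributions yields the desired uniform bound. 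The only subtle step is the symbol decomposition: one must choose the low-frequency cutoff so that division by $\sigma^2 + |\eta|^2$ is legitimate within the conormal class $\symbbch$, and this is handled by the explicit construction above. Everything else is a direct application of the b-calculus developed earlier in this section.
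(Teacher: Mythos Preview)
Your proof is correct and follows essentially the same strategy as the paper: factor $A$ through b-vector fields composed with order-zero b-operators, then combine property~\ref{it:L2} with the elementary $\OL{H}^1_h \to L^2$ boundedness of $hxD_x$ and $hD_{y_j}$. The paper's implementation is slightly more streamlined---rather than the explicit low/high-frequency symbol split in a chart, it uses a microlocal partition of unity so that $\WFb(A)$ lies in the elliptic set of a single $B = hW$ with $W \in \Vb(X)$, and then invokes the parametrix construction \eqref{eq:parametrix} to write $A = QB + R$ with $Q \in \Psibhc^0$---but the two arguments are equivalent in substance.
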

\begin{proof}
By a microlocal partition of unity we can assume that
        $\WFb(A)$ is contained in the elliptic set of some vector
        field $B$ (we can take $B = hW$ for some $W
        \in \Vb(X)$). Thus $A = QB + R$ for a parametrix $Q \in
        \Psibhc^0(X),$ where $R \in h^\infty \Psibhc^{-\infty}(X).$
Hence
	\[
	\| Au \|_{L^2} \leq C\| Bu \|_{L^2} + \mathcal{O}(h^\infty)\|u\|_{L^2} \leq C\| u \|_{\OL{H}^1_h}
	\]
	since $B \in \Diff^1_h(X)$.
\end{proof}

It will also be convenient to have a wavefront set for operators 
\[
A \in \Diff_h^k \Psibh^m(X) + \Psibhc^l(X).
\]
For this, we define
\[
\WFb^k(A)^\complement = \bigcup \, \{\ELLb(B): B \in \Psibh^0(X) \text{ and } BA \in h^\infty \Diff_h^k\Psibh^{-\infty}(X)  + h^\infty\Psibhc^{-\infty}(X)\}.
\]
If $ A \in \Psibhc^m(X)$, then $\WFb^k(A) = \WFb(A)$ for all $k \in \mathbb{N}$. 
Consider a concrete representation
\[
A = \sum P_j A_j \in \Diff^k_h \Psibh^m(X)
\] 
where $P_j \in \Diffh^k(X).$ 
In that case, if $\WFb(A_j) \subset U$ for some $U$, then $\WFb^k(A) \subset U$ as well. In fact, the only reason we choose to introduce $\WFb^k(A)$ is to bound certain quadratic forms. For this, we use the following observation: if $F \in \Psibh(X)$ satisfies $\WFb(F) \cap \WFb^k(A) = \emptyset$ with $A$ as above, then $FA \in h^\infty \Diff_h^k\Psibh^{-\infty}(X)$.

\begin{lem} \label{lem:quadraticformWF}
If $A \in \Diff_h^2 \Psibh^0(X)$ and $G \in \Psibh^0(X)$ satisfy $\WFb^2(A) \subset \ELLb(G)$, then
\begin{equation} \label{eq:WFdbestimate}
\lvert \left< Au,u\right>\rvert \leq C \| Gu\|_{H^1_h}^2 + \mathcal{O}(h^\infty)\|u\|^2_{H^1_h}
\end{equation}
for each $ u\in H^1_h(X)$, where the left hand side of
\eqref{eq:WFdbestimate} is the pairing of $Au \in
H^{-1}_h(X)$ with $u \in H^1_h(X)$.
\end{lem}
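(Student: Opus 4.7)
The plan is to construct a microlocal factorization of the form $A = G^* A' G + R$, with $A' \in \Diff_h^2 \Psibh^0(X)$ and $R \in h^\infty \Psibh^{-\infty}(X)$. Granted such a factorization, the estimate follows immediately: rewriting $\langle Au, u\rangle = \langle A' Gu, Gu\rangle + \langle Ru, u\rangle$, both pairings are well defined in the $H^1_h(X)$--$H^{-1}_h(X)$ duality. The operator $A'$ maps $H^1_h \to H^{-1}_h$ uniformly in $h$, since its $\Psibh^0(X)$ factors preserve $H^1_h(X)$ by \eqref{sobolevbounded} and its $\Diff_h^2$ factor then sends $H^1_h$ to $H^{-1}_h$; this yields $|\langle A' Gu, Gu\rangle| \leq C\|Gu\|_{H^1_h}^2$. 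Meanwhile $R \in h^\infty \Psibh^{-\infty}(X)$ has a smooth Schwartz kernel with $\mathcal{O}(h^\infty)$ seminorms, so $|\langle Ru, u\rangle| \leq \mathcal{O}(h^\infty) \|u\|_{H^1_h}^2$, giving the desired bound \eqref{eq:WFdbestimate}.

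To build the factorization, first choose $F \in \Psibh^0(X)$ with $\WFb(F) \subset \ELLb(G)$ and $\WFb(I-F) \cap \WFb^2(A) = \emptyset$; that is, $F$ is a microlocal identity near $\WFb^2(A)$ supported in the elliptic set of $G$. By the remark preceding the lemma, together with its right-sided analog (obtained by passing to adjoints and using $\WFb^2(A^*) = \WFb^2(A)$), both $(I-F)A$ and $A(I-F)$ lie in $h^\infty \Diff_h^2 \Psibh^{-\infty}(X) \subset h^\infty \Psibh^{-\infty}(X)$, so $A = FAF + R_0$ with $R_0$ residual. Next, since $G$ and $G^*$ are elliptic on the open set $\WFb(F) \subset \ELLb(G)$, the standard elliptic parametrix construction in the semiclassical b-calculus (cf.\ \eqref{eq:parametrix}) produces $B_L, B_R \in \Psibh^0(X)$ satisfying $G^* B_L - F$ and $B_R G - F$ in $h^\infty \Psibh^{-\infty}(X)$. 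Substituting $F = G^* B_L - T_L$ on the left and $F = B_R G - T_R$ on the right of $FAF$ and expanding, the main contribution is $G^*(B_L A B_R) G$, while every cross term carries a factor of $T_L$ or $T_R$ and hence lies in $h^\infty \Psibh^{-\infty}(X)$ (using that pre- or post-composing a smoothing $h^\infty \Psibh^{-\infty}$ operator with an element of $\Diff_h^2 \Psibh^0$ preserves the smoothing class, since it merely applies a bounded differential operator to a smooth kernel with $\mathcal{O}(h^\infty)$ seminorms). Setting $A' := B_L A B_R \in \Diff_h^2 \Psibh^0(X)$, by the composition lemma recalled in the excerpt, then yields the desired factorization.

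The main obstacle is the two-sided microlocalization in the first step: the remark preceding the lemma explicitly handles $(I-F)A$, but the dual statement $A(I-F) \in h^\infty \Psibh^{-\infty}(X)$ is not stated. One route is to verify $\WFb^2(A^*) = \WFb^2(A)$ via the symmetric representation $A = \sum A'_j P'_j$ recalled in the excerpt, so that $A(I-F) = \big((I-F)^* A^*\big)^*$ is handled by applying the stated remark to $A^*$. An alternative is direct: commute $F$ past each differential factor $P'_j$ using Lemma \ref{lem:hDxcommutator}, iteratively absorbing the commutator terms (which have strictly lower differential order) into the residual class. Either way, the remainder of the argument is standard symbolic manipulation within the calculus.
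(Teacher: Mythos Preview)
Your argument is correct in substance and close in spirit to the paper's, but the route differs in a useful way. The paper microlocalizes only on the \emph{left}: it writes $A = BA + h^\infty\Diff_h^2\Psibh^{-\infty}$ with $B$ a microlocal cutoff, then decomposes $BA = \sum_{i,j} B Q_j Q'_j A_{ij}$ where $Q_j, Q'_j \in \Diff_h^1$ and $A_{ij} \in \Psibh^0$ have $\WFb(A_{ij}) \subset \ELLb(G)$, so that $\langle Au,u\rangle$ becomes a sum of pairings $\langle Q'_j A_{ij} u,\, Q_j^* B^* u\rangle$ between two $L^2$ elements, each controlled by $\|Gu\|_{H^1_h}$ via elliptic regularity. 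This avoids the two-sided microlocalization issue you flagged (it only needs $\WFb(B^*)=\WFb(B)$ for a genuine b-pseudodifferential operator, which is immediate). Your $G^* A' G$ factorization is arguably cleaner once the two-sided cutoff is justified, and your discussion of that justification is honest; either route you sketch (adjoints via the $\sum A'_j P'_j$ representation, or direct commutation using Lemma~\ref{lem:hDxcommutator}) works.

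One small technical slip: the inclusion $h^\infty \Diff_h^2 \Psibh^{-\infty}(X) \subset h^\infty \Psibh^{-\infty}(X)$ you invoke is not generally true, since applying $hD_x$ to a kernel smooth on the b-stretched product need not preserve smoothness up to the front face. The paper accordingly leaves the residual in $h^\infty \Diff_h^2 \Psibh^{-\infty}(X)$ throughout. This does not damage your bound, however: such an operator still maps $H^1_h \to H^{-1}_h$ with norm $\mathcal{O}(h^\infty)$ (write it as $\sum P_j R_j$ and use that $R_j: H^1_h \to H^1_h$ is $\mathcal{O}(h^\infty)$ while $P_j: H^1_h \to H^{-1}_h$ is bounded), so $|\langle Ru,u\rangle| = \mathcal{O}(h^\infty)\|u\|_{H^1_h}^2$ holds regardless. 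Just keep the residual in the larger class.
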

\begin{proof}
	Choose $B \in \Psibh^0(X)$ such that 
	\[
	\WFb(B) \subset \ELLb(G), \quad \WFb(1-B) \cap \WFb^2(A) = \emptyset.
	\]
	Therefore $A = BA + h^\infty \Diff^2_h\Psibh^{-\infty}(X)$. We can then choose a decomposition
	\[
	BA = \sum_{i,j} B Q_j Q_j' A_{ij} + h^\infty \Diff^2_h\Psibh^{-\infty}(X),
	\]
	where $Q_{i},Q_j' \in \Diff^1_h(X)$, and $A_{ij} \in \Psibh^0(X)$ satisfies $\WFb(A_{ij}) \subset \ELLb(G)$. Therefore 
	\[
	\lvert\left< Au,u\right>\rvert \leq \sum_{ij}\lvert\left<Q_j' A_{ij}u,Q_j^* B^*u\right>\rvert + \mathcal{O}(h^\infty)\|u\|_{H^1_h}^2 \leq C \| Gu\|_{H^1_h}^2 + \mathcal{O}(h^\infty)\|u\|^2_{H^1_h}
	\]
	as desired.
\end{proof}

\subsection{b-Calculus relative to an interior hypersurface}
In this section we depart from the setting of manifolds with boundary,
and instead consider a boundaryless manifold $X$ with a distinguished
hypersurface $Y\subset X$. For simplicity of exposition, we will work under the geometric assumption
that $Y$ is oriented, and that $Y$ divides $X$ into two manifolds with
boundaries,
\[
X = X_+ \cup X_-,
\] 
each of which satisfies $Y = \partial X_\pm$; the orientation is
chosen so that $X_+$ is the positive side.   In fact, all of our uses
of this calculus will be local near a single point in $Y,$ so
neither the hypothesis of orientation nor that of bounding two components
plays any role here: both are always true locally.

The space $\Psibh^m(X,Y)$ of b-pseudodifferential operators (or
$\Psibhc^m(X,Y)$, with conormal coefficients) relative to $Y$ is
defined in analogy with boundary case discussed in Section
\ref{subsect:b-pseudos}. For instance, to define residual operators
$h^\infty \Psibhc^{-\infty}(X,Y)$, the stretched product $X^2_\B$ is replaced
by the blow-up $[X^2;Y^2]$. The condition of vanishing to infinite
order at the side faces is then replaced by requiring the kernel to be
supported on the lift of $X_+^2 \cup X_-^2$.

In the case of smooth coefficients, we must impose an additional condition to ensure that the residual operators preserve $H^1_h(X)$. If $R \in h^\infty \Psibh^{-\infty}(X,Y)$, then by restriction $R$ defines two operators $R_\pm \in h^\infty \Psibh^{-\infty}(X_\pm)$, and the action of $R$ on $\CI(X)$ is given by
\[
R = e_+ R_+ r_+ + e_- R_- r_-,
\]
where $r_\pm : \CI(X) \rightarrow \CI(X_\pm)$ are the restriction maps, and $e_\pm$ is extension by zero from $X_\pm$ to $X$. A priori $R$ does not preserve $\CI(X)$. On the other hand, if we further require that the normal operators $\widehat{N}(R_\pm)(0)$ agree along $Y$, then $R$ maps $\CI(X)$ into piecewise continuous functions with smooth restrictions to $X_\pm$; this implies that $R$ is uniformly bounded on $H^1_h(X)$ and $H^{-1}_h(X)$ by duality (cf.\ the discussion preceding \cite[Lemma 4.1]{de2014diffraction}). \emph{We thus always assume this matching condition for residual operators with smooth coefficients} (observe that this is meaningless for operators with conormal coefficients).

The symbol classes
$\symbbc^m(\bT^*(X,Y))$ and $\symbb^m(\bT^*(X,Y))$ are defined in the
obvious way, replacing the usual b-cotangent bundle by the relative
space $\bT^*(X,Y)$ discussed in Section
\ref{subsect:btangentcotangent}. The quantization procedure
\eqref{eq:localquantization} does not need modification, and hence
Definition \ref{defi:b-pseudosbylocalization} goes through
verbatim. In particular, if $a \in \symbb^m(\bT^*(X,Y))$ is a smooth b-symbol, then $\Opbh(a)$ automatically has matching normal operators.

Properties of $\Psibhc^m(X,Y)$ are largely analogous to those in the boundary case. If $X$ is compact then each $\Psibh^0(X,Y)$ is uniformly bounded on $H^s_h(X)$ for $s \in \{-1,0,1\}$, cf.\ \cite[Lemma 4.1]{de2014diffraction}. In the case of conormal coefficients, we still have uniform boundedness on $L^2(X)$.

 Similarly, we can define $\Diff_h^k \Psibh(X,Y)$ to consist of locally finite sums $\sum P_j A_j$, where $P_j \in \Diff_h^k(X)$ and $A_j \in \Psibh(X,Y)$.

Finally, we define the wavefront set of a family $u = u(h)$ which is $h$-tempered in $H^s_h(X)$. Here, we will only consider the cases $s \in \{-1,0,1\}$. We say that $q_0 \notin \WFb^{s,r}(u)$ if there exists $A \in \Psibh^0(X)$ which is elliptic at $q_0$ and
\[
\|Au \|_{H^s_h} \leq Ch^r.
\]
When $s = 0$ it suffices to test within the larger class of operators $A \in \Psibhc^0(X)$, and we also abbreviate $\WFb^{r}(u) = \WFb^{0,r}(u)$.
The action of b-pseudodifferential operators is then semiclassically pseudolocal in the sense that  
\[
\WFb^{s,r}(Au) \subset \WFb^{s,r}(u) \cap \WFb(A).
\]
In fact, the following result shows that for our purposes, the distinction between $\WFb^{1,r}(u)$ and $\WFb^{r}$ is irrelevant; the operator $P$ is as in Section \ref{subsect:statementofresults}.
\begin{lem} \label{lem:equivalentWF}
	If $u$ is $h$-tempered in $H^1_h(X)$, then 
	\[
	\WFb^{1,r}(u) = \WFb^{r}(u) \cup \WFb^{-1,r}(Pu).
	\]
\end{lem}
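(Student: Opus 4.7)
My plan is to prove the two inclusions separately. The direction $\WFb^{1,r}(u)\supseteq\WFb^r(u)\cup\WFb^{-1,r}(Pu)$ is straightforward: if $A\in\Psibh^0(X)$ is elliptic at $q$ with $\|Au\|_{H^1_h}\le Ch^r$, then $\|Au\|_{L^2}\le Ch^r$ and $\Psibh^0\subset\Psibhc^0$ immediately give $q\notin\WFb^r(u)$. Taking $A'\in\Psibh^0$ elliptic at $q$ with $\WFb(A')\subset\ELLb(A)$ and splitting $A'Pu=PA'u+[A',P]u$, the first summand satisfies $\|PA'u\|_{H^{-1}_h}\lesssim\|A'u\|_{H^1_h}\lesssim h^r$ via the elliptic parametrix \eqref{eq:parametrix} and the uniform boundedness $P:H^1_h\to H^{-1}_h$ (which in turn uses $V\in L^\infty$ from Lemma \ref{lemma:holder}). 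The second summand satisfies $\|[A',P]u\|_{H^{-1}_h}\lesssim h\cdot\|Au\|_{H^1_h}\lesssim h^{r+1}$ after factoring $[A',P]=[A',P]E'A+\mathcal O(h^\infty)$ through a microlocal parametrix $E'$ and invoking the commutator estimate below.

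For the substantive inclusion $\WFb^{1,r}(u)\subseteq\WFb^r(u)\cup\WFb^{-1,r}(Pu)$ I combine a coercivity estimate with a finite iteration. Expanding
\[
\|hd(Au)\|^2_{L^2}=\langle P(Au),Au\rangle-\langle VAu,Au\rangle,
\]
bounding $|V|\le C$, and applying Cauchy--Schwarz yields
\[
\|Au\|^2_{H^1_h}\le C\bigl(\|Au\|^2_{L^2}+\|P(Au)\|^2_{H^{-1}_h}\bigr)
\]
for any $A\in\Psibh^0(X)$. Suppose $q\notin\WFb^r(u)\cup\WFb^{-1,r}(Pu)$, witnessed by elliptic operators $A_0\in\Psibhc^0$ and $A_1\in\Psibh^0$. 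For any $A\in\Psibh^0$ elliptic at $q$ with $\WFb(A)\subset\ELLb(A_0)\cap\ELLb(A_1)$, microlocal parametrices in $\Psibhc$ and $\Psibh$ respectively give $\|Au\|_{L^2}\lesssim h^r$ and, after splitting $P(Au)=APu+[P,A]u$, also $\|APu\|_{H^{-1}_h}\lesssim h^r$. The only outstanding term is $\|[P,A]u\|_{H^{-1}_h}$.

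The key estimate is $\|[P,A]v\|_{H^{-1}_h}\le Ch\|v\|_{H^1_h}$ for $v$ with wavefront in a fixed neighborhood of $\WFb(A)$. For the smooth part, Lemma \ref{lem:Dx2Acommutator} realizes $[-h^2\Delta_g,A]$ (together with the analogous tangential terms) in the form $h(hD_x)^*B_0(hD_x)+h(hD_x)^*B_1+h^2R$ with $B_0\in\Psibh^{-1}$, $B_1\in\Psibh^0$, $R\in\Diff_h^1\Psibh^0$, which is $O(h)$ from $H^1_h$ to $H^{-1}_h$; for the singular part, the commutator rule \ref{it:commutator} in the conormal calculus places $[V,A]\in h\Psibhc^{-1}$, hence $O(h)$ on $L^2\hookrightarrow H^{-1}_h$. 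Because $h$-temperedness only supplies $\|u\|_{H^1_h}\le Ch^{-N}$, I apply this estimate iteratively: starting from $s_0=-N$, $B_0=I$ and assuming inductively $\|B_ku\|_{H^1_h}\le Ch^{s_k}$, a parametrix for $B_k$ on $\WFb([P,A])\subset\WFb(A)\subset\ELLb(B_k)$ yields $\|[P,A]u\|_{H^{-1}_h}\le Ch^{s_k+1}$, and the coercivity estimate then gives $\|Au\|_{H^1_h}\le C(h^r+h^{s_k+1})$. Setting $B_{k+1}=A$ and $s_{k+1}=\min(r,s_k+1)$, with the wavefront of $A$ shrunk at each step to leave room for further iterations, the process reaches $s_k=r$ in $\lceil r+N\rceil$ steps. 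The main technical obstacle is the $h$-gain for the non-smooth commutator $[V,A]$, which is exactly what the conormal calculus $\Psibhc$ was introduced to provide; the remaining ingredients (coercivity, elliptic parametrices, finite iteration) are then routine.
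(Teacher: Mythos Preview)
Your proof is correct and follows essentially the same route as the paper: the paper simply cites Lemma~\ref{lem:L2H1}, whose proof (via Lemma~\ref{lem:greens} and \eqref{eq:L2H1constant}) uses exactly your ingredients---the coercivity estimate from Green's formula with $V\in L^\infty$, the $O(h)$ commutator bound on $[P,A]$ split into its smooth $\Diff_h\Psibh$ and conormal $\Psibhc$ parts, and an iteration that gains one power of $h$ at each step while shrinking the microlocalizer. Your treatment of the ``obvious'' inclusion $\WFb^{-1,r}(Pu)\subset\WFb^{1,r}(u)$ is in fact more explicit than the paper's, which glosses over the commutator step there.
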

\begin{proof}
	The inclusion $\WFb^{r}(u) \cup \WFb^{-1,r}(Pu) \subset \WFb^{1,r}(u)$ is obvious. The converse inclusion follows directly from Lemma \ref{lem:L2H1}, proved in Section \ref{subsect:elliptic} below.
\end{proof}

\section{Bicharacteristics}
\label{sec:bichars}

 \subsection{The characteristic set}

We return to the setting of Section \ref{subsect:statementofresults}: $(X,g)$ is a smooth $n$ dimensional Riemannian manifold with a distinguished hypersurface $Y\subset X$, and 
\[
P = h^2 \Delta_g + V
\]
where $V \in I^{[-1-\alpha]}(Y)$ for some $\alpha > 0$. In particular, we can consider multiplication by $V$ as a b-pseudodifferential operator
\[
V \in \Psibhc^0(X,Y).
\] 
Since $Y$ is fixed, for ease of notation we write $\bT^*X$ instead of the more precise $\bT^*(X,Y)$.

Given a point $y_0 \in Y$, we can find a coordinate patch $\mathcal{U} \ni y_0$ equipped with geodesic normal coordinates $(x,y)$ with respect to $g$. In particular, $\mathcal{U} \cap Y = \{x=0\}$.  In these coordinates the metric is given by 
\[
g=dx^2 + k(x,y,dy),
\]
where $x\mapsto k(x,\cdot)$ is family of metrics on $Y$ depending smoothly on the parameter $x$. Therefore
\[
P = (hD_x)^*(hD_x) + h^2\Lap_k + V,
\]
where $(hD_x)^*$ is the adjoint of $hD_x$ with respect to the metric density.  If $(x,y,\xi,\eta)$ are the corresponding canonical coordinates on $T^*X$, then the principal symbol is given by
\[
p = \xi^2 + k^{ij}\eta_i \eta_j  + V.
\] 
We also set \begin{equation}\label{Ptilde}\tilde{P} = h^2\Delta_k + V\end{equation} with principal symbol \[\tilde p = k^{ij}\eta_i \eta_j + V.\] 
Denote the characteristic set of $P$ by $\Sigma = \{p=0\}\subset T^*X$. The compressed characteristic set is then defined by
\[
\cchare = \pi (\Sigma)\subset \bdotT^*X,
\]
where $\pi : T^*X \rightarrow \bdotT^*X$ is the usual map. We equip $\cchare$ with the subspace topology inherited as a subset of $\bT^*X$ (in particular, $\cchare$ is locally compact and metrizable). Note that $\chare$ is compact in the fiber variables: if $K \subset X$ is compact, then so is $\chare \cap T^*_K X$. In particular, the restriction of $\pi$ to $\chare$ is proper.

We decompose the fiber-radial compactification $\OL{\bdotT^*}X$ into the elliptic, hyperbolic, and glancing regions, denoted by $\ellip, \hyp, \gl$, respectively:
\begin{equation}\label{ellhypgl}
\begin{aligned}
\ellip &= \{q \in \OL{\bdotT^*}X : \pi^{-1}(q) \cap \chare = \emptyset\},\\
\gl &= \{q \in \bdotT X:  |\pi^{-1}(q) \cap \chare| = 1\},\\
\hyp &= \{q \in \bdotT X: |\pi^{-1}(q) \cap \chare| \geq 2\}.
\end{aligned}
\end{equation}
Here $|\cdot|$ refers to the cardinality of a set. Since the restriction of $\pi$ to $T^*(X\setminus Y)$ is \blue{$1-1$}, it is clear that $\hyp \subset \bT^*_Y X \cap \dot \chare$. Furthermore, if $T^*(X\setminus Y)$ is identified with its image under $\pi$, any point $q\in T^*(X\setminus Y)$ is either in $\ellip$ or $\gl$, depending on whether $q \notin \Sigma$ or $q\in\Sigma$, respectively. Over a normal coordinate patch $\mathcal{U}$, the glancing region is given by
\blue{\[
\gl \cap   {\bT^*_\mathcal{U} X} = \{x=0,\ \tilde p= 0\} \subset T^*Y
\subset \bT_Y^* X.
\]
Likewise $\hyp \cap   {\bT^*_\mathcal{U} X}$ consists of those points
$q \in T^*Y\subset \bT_Y^*X$ for which $\tilde p(q) < 0$.}

\subsection{Hamilton flow} \label{subsect:hamiltonflow} Formally, the Hamilton vector field of $p$ on $T^*X$ in normal coordinates is given by
\[
\hamvf_p = 2\xi \partial_x + 2k^{ij}\eta_j \partial_{y_i} - \left(\left(\partial_x k^{ij}\right)\eta_i\eta_j + \partial_x V \right)\partial_{\xi} - \left(\left(\partial_{y_i} k^{jk}\right)\eta_j\eta_k +  \partial_{y_i} V \right) \partial_{\eta_i},
\]
where Einstein summation is implied. This is  a smooth vector field away from $T^*_Y X$, but in general only possesses $\C_*^{\alpha -1}$ coefficients due to the $\partial_\xi$ component. Of course if $\alpha > 2$, then $\hamvf_p$ has $\C^1$ (hence Lipschitz continuous) components, where the existence and uniqueness of solutions to Hamilton's equations are classical. Under the assumption that $\alpha > 0$, we define integral curves in the following sense:

\begin{defi} \label{defi:integralcurve}
If $I \subset \RR$ is an interval, we say that an absolutely continuous map $\gamma : I \rightarrow T^*X$ is an integral curve of $\hamvf_p$ if
\begin{equation} \label{eq:caratheodory}
\frac{d}{ds}{\gamma}(s) = \hamvf_p(\gamma(s))
\end{equation}
for almost every $s \in I$.
\blue{Such a curve is called a bicharacteristic.}
\end{defi}

Implicit in this definition is that $\hamvf_p\circ \gamma$ itself has measurable, locally integrable components. For general $\alpha > 0$, there is no reason to expect existence, let alone uniqueness, of integral curves through an arbitrary point $q_0 \in T^*_Y X$. 

On the other hand, near a point $q_0 = (0,y_0,\xi_0,\eta_0)$ with $\xi_0 \neq 0$, we can convert \eqref{eq:caratheodory} into an equation to which the Carath\'eodory existence and uniqueness theorem applies. More generally, consider a vector field 
\[
F = \sum F_j \partial_{z_j}
\]
on an open set $D\subset \RR^m_z$ with arbitrary real coefficients. Generalizing Definition \ref{defi:integralcurve}, we say that an absolutely continuous map $\gamma : I \rightarrow D$ is an integral curve of $F$ if 
\begin{equation} \label{eq:measurableode}
\frac{d}{ds}\gamma(s) = F(\gamma(s))
\end{equation}
for almost every $s \in I$. The following lemma is a variation of \cite[Lemma 3.1]{de2014diffraction}; when applied to $F = \hamvf_p$, it allows us to treat the whole range of parameters $\alpha > 0$, whereas the given reference would only be valid for $\alpha >1$.

\begin{lem} \label{lem:caratheodory}
Let $z = (z_1,z') \in \RR \times \RR^{m-1}$, with a corresponding decomposition $F = (F_1,F') : \RR^m \rightarrow \RR \times \RR^{m-1}$. Assume that 
\[
D = J_{z_1} \times O_{z'},
\] 
where $J \subset \RR$ is an interval and $O \subset \RR^{m-1}$. Suppose that $F_1$ is continuous and nonvanishing, and $F'$ satisfies the following properties on $D$.
\begin{enumerate} \itemsep6pt
	\item $F'$ is measurable in $z_1$ for all $z'$, and continuous in $z'$ for almost every $z_1$.
	\item There exists $m \in L^1(J; \RR_+)$ such that $|F'(z)| \leq m(z_1)$.
	\item There exists $k \in L^1(J; \RR_+)$ such that  $|F'(z_1,x') - F'(z_1,y')| \leq k(z_1)|x' - y'|$.
\end{enumerate}
Given $z_0 \in D$, there exists $\varepsilon >0$ and a unique integral curve $\gamma: [-\varepsilon,\varepsilon] \rightarrow D$ such that $\gamma(0) = z_0$.

Furthermore, suppose that $F'$ is continuous. If $\delta > 0$ is sufficiently small and $|z-z_0| \leq \delta$, then there is  a unique integral curve 
\[
\gamma^{(z)}: [-\varepsilon,\varepsilon] \rightarrow D
\] 
satisfying $\gamma^{(z)}(0) = z$, and $\gamma^{(z)} \rightarrow \gamma^{(z_0)}$ uniformly on $[-\varepsilon,\varepsilon]$ as $z \rightarrow z_0$.
\end{lem}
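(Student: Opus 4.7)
The plan is to reduce the problem to a classical Carath\'eodory ODE in $\RR^{m-1}$ by exploiting the fact that $F_1$ is continuous and nonvanishing to reparametrize by $z_1$ itself. Since $F_1$ is continuous and $F_1(z_0) \neq 0$, we may assume, shrinking $D$ around $z_0 = (z_{1,0}, z_0')$ if necessary and possibly changing sign, that $F_1 \geq c_0 > 0$ on a closed rectangle $J_0 \times O_0 \subset D$ containing $z_0$ in its interior.

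First, I would straighten by the time change. Any integral curve $\gamma(s) = (\gamma_1(s), \gamma'(s))$ of $F$ with values in $J_0 \times O_0$ satisfies $\dot\gamma_1 = F_1 \circ \gamma \geq c_0$, where the composition $F_1 \circ \gamma$ is continuous (since $F_1$ is continuous and $\gamma$ is absolutely continuous), so $\gamma_1 \in C^1$ is strictly increasing and has a $C^1$ inverse $s(\tau)$. Setting $\tilde\gamma'(\tau) = \gamma'(s(\tau))$, the chain rule together with \eqref{eq:measurableode} gives
\[
\tfrac{d}{d\tau}\tilde\gamma'(\tau) = G(\tau, \tilde\gamma'(\tau)), \qquad G(\tau, z') := \frac{F'(\tau, z')}{F_1(\tau, z')}
\]
for almost every $\tau$, with $\tilde\gamma'(z_{1,0}) = z_0'$. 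Conversely, given any absolutely continuous solution $\tilde\gamma'$ of this equation on some interval about $z_{1,0}$, the time change determined by $s'(\tau) = 1/F_1(\tau, \tilde\gamma'(\tau))$ and $s(z_{1,0}) = 0$ recovers an integral curve of $F$.

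Next I would verify that $G$ satisfies the hypotheses of the classical Carath\'eodory existence and uniqueness theorem on $J_0 \times O_0$. Measurability in $\tau$ and continuity in $z'$ for a.e.\ $\tau$ follow from the corresponding properties of $F'$ in hypothesis (1) combined with the joint continuity of $F_1$; the bound $|G(\tau, z')| \leq c_0^{-1} m(\tau)$ from hypothesis (2) yields an $L^1$ majorant; and the Lipschitz property
\[
|G(\tau, x') - G(\tau, y')| \leq \frac{|F'(\tau, x') - F'(\tau, y')|}{c_0} + \frac{|F'(\tau, y')|\,|F_1(\tau, x') - F_1(\tau, y')|}{c_0^2}
\]
gives, using hypotheses (2)-(3) and local uniform continuity of $F_1$, an $L^1$ Lipschitz constant $\tilde k(\tau)$ on $J_0 \times O_0$. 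The classical Carath\'eodory theorem then provides a unique absolutely continuous $\tilde\gamma'$ on $[z_{1,0} - \delta, z_{1,0} + \delta]$ for some $\delta > 0$, and inverting the bi-Lipschitz time change produces a unique integral curve $\gamma$ of $F$ on some $[-\varepsilon, \varepsilon]$.

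For the last assertion, when $F'$ is continuous the map $G$ is jointly continuous on $D$ and still satisfies the Carath\'eodory--Lipschitz bounds above. Applying the standard continuous dependence statement for such ODEs (an immediate Gronwall argument on the integrated equation) to the reduced problem with initial data $z = (z_1, z')$ near $z_0$, the solutions $\tilde\gamma'^{(z)}$ of $\tfrac{d}{d\tau} \tilde\gamma'^{(z)} = G(\tau, \tilde\gamma'^{(z)})$ with $\tilde\gamma'^{(z)}(z_1) = z'$ converge uniformly to $\tilde\gamma'^{(z_0)}$ as $z \to z_0$. Since the time change $\tau \mapsto s$ depends continuously on the solution through $F_1$, this yields the uniform convergence $\gamma^{(z)} \to \gamma^{(z_0)}$ on $[-\varepsilon, \varepsilon]$. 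The main technical care required is ensuring that the interval $[-\varepsilon, \varepsilon]$ of existence can be chosen uniformly in $z$ near $z_0$; this is handled by fixing a slightly smaller rectangle on which $F_1 \geq c_0/2$ and shrinking the neighborhood of initial data so that all solutions remain in $J_0 \times O_0$ for $|\tau - z_{1,0}| \leq \delta$.
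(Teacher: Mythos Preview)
Your approach matches the paper's: reparametrize by $z_1$, reduce to a Carath\'eodory ODE with $z_1$ as the independent variable, and invoke the classical theory for existence, uniqueness, and continuous dependence. The paper packages the reduced system as $G = (1/F_1, F'/F_1)$ acting on the pair $(s,z')$ and cites Coddington--Levinson directly, whereas you solve for $z'$ first and recover $s(\tau)$ afterward; this difference is cosmetic.

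One technical slip to flag: in your Lipschitz verification for $G$, the second term $c_0^{-2} m(\tau)\,|F_1(\tau,x') - F_1(\tau,y')|$ requires a Lipschitz bound on $F_1$ in $z'$, but the lemma only assumes $F_1$ continuous, and ``local uniform continuity'' does not produce a Lipschitz constant. The paper's proof sidesteps this by citing Coddington--Levinson without explicitly checking the uniqueness hypothesis, so the same issue is latent there. In the only application (Lemma~\ref{lem:bicharacteristicsexist}) one has $F_1 = 2\xi$, which is linear in $z'$, so nothing is lost in practice; but under the hypotheses of the lemma as stated, neither argument quite establishes the $z'$-Lipschitz condition needed for Carath\'eodory uniqueness.
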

\begin{proof}
To avoid notational confusion, we reserve 
\[
\pi_1 : \RR \times \RR^{m-1} \rightarrow \RR, \quad \pi' : \RR \times \RR^{m-1} \rightarrow \RR^{m-1}
\] 
for projections onto the first and second factors, respectively. Suppose that $\gamma$ is an integral curve of $F$. Since $F_1$ is continuous and nonvanishing, the map $s \mapsto (\pi_1\circ \gamma)(s)$ has an absolutely continuous (and in fact $\C^1$) inverse $S = S(t)$. Define the time dependent vector field $G = (G_1,G')$ by 
\[
G_1(t,s,z') = 1/F_1(t,z'), \quad G'(t,s,z) = F'(t,z')/F_1(t,z').
\]
Then the curve $\Gamma(t) = (S(t),(\pi' \circ \gamma)(S(t)))$
satisfies the equation
\begin{equation} \label{eq:reparametrizedcurve}
\frac{d}{dt} \Gamma(t) = G(t,\Gamma(t)).
\end{equation}
This process can be reversed as well, in the sense that from an
absolutely continuous solution $\Gamma(t)$ of
\eqref{eq:reparametrizedcurve} we can recover a solution $\gamma(s)$
of \eqref{eq:measurableode} by setting
\begin{equation} \label{eq:Gammatogamma}
\gamma(s) = (T(s),(\pi'\circ \Gamma)(T(s))),
\end{equation}
where $T = T(s)$ is the inverse of $t \mapsto (\pi_1 \circ \Gamma)(t)$.

The equation \eqref{eq:reparametrizedcurve} is well-posed in the sense of Carath\'eodory  \cite[Theorems 1.1]{coddington1955theory}. Thus, given $(z_1,z') \in D$, there exists $\varepsilon_0 >0$ and a unique integral curve
\[
\Gamma : [z_1-\varepsilon_0, z_1 + \varepsilon_0] \rightarrow D
\]
such that $\Gamma(z_0) = (0,z')$. Passing to a curve $\gamma$ as in \eqref{eq:Gammatogamma}, we obtain a unique integral curve of $F$ satisfying $\gamma(0) = (z_1,z')$ on a suitable interval $[-\varepsilon,\varepsilon]$.

If $F'$ is continuous in its arguments, then solutions to
\eqref{eq:measurableode} (which are unique by the argument above)
depend continuously on the initial data \cite[Theorem
4.2]{coddington1955theory}, which implies the second
point. \end{proof}

Lemma \ref{lem:caratheodory} applies directly to the equation \eqref{eq:caratheodory} in a neighborhood of the hyperbolic region. 

\begin{lem} \label{lem:bicharacteristicsexist} Let $\alpha>0.$ Given
  $ \varpi_0\in \pi^{-1}(\hyp)$, there exists $\varepsilon>0$ and a
  unique integral curve
  $\gamma: (-\varepsilon,\varepsilon) \rightarrow \chare$ of $\hamvf_p$
  such that $\gamma(0) = \varpi_0$. Furthermore, if $\alpha > 1$, then
  the flow
	\[
	(s,\varpi) \mapsto \exp(sH_p)(\varpi)
	\]
	exists and is continuous in a neighborhood of $(0,\varpi_0)$
\end{lem}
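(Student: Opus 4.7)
The plan is to verify that $\hamvf_p$, in normal coordinates near $\varpi_0$, satisfies the hypotheses of Lemma~\ref{lem:caratheodory}, with $z_1 = x$ and $z' = (y, \xi, \eta)$. Since $\varpi_0 \in \pi^{-1}(\hyp)$, I may write $\varpi_0 = (0, y_0, \xi_0, \eta_0)$ with $\xi_0 \neq 0$. The coefficient of $\partial_x$ in $\hamvf_p$ is $F_1 = 2\xi$, which is continuous and nonvanishing on a neighborhood of $\varpi_0$; the remaining coefficients make up $F'$.

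All terms in $F'$ are smooth apart from those involving $V$. Since tangential differentiation preserves conormal order while $\partial_x$ raises it by one, $\partial_{y_i} V \in I^{[-1-\alpha]}(Y)$ and $\partial_x V$ has leading conormal order $-\alpha$. Lemma~\ref{lemma:holder} makes $\partial_{y_i} V$ and $\partial_{y_j}\partial_{y_i} V$ Hölder continuous, so $\partial_{y_i} V$ is bounded and Lipschitz in $y$ with bounded Lipschitz constant. For $\partial_x V$, when $\alpha \in (0,1)$ the order $-\alpha$ lies in the range of Lemma~\ref{lem:conormalintegrable}, yielding the pointwise estimate $|\partial_x V(x,y)| \leq C|x|^{\alpha-1}$; the same argument applied to $\partial_y \partial_x V$ combined with the mean value theorem in $y$ gives $|\partial_x V(x,y_1) - \partial_x V(x,y_2)| \leq C|x|^{\alpha-1}|y_1 - y_2|$. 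For $\alpha \geq 1$ the same quantities are bounded on $X$. In all cases the majorants $m(x), k(x)$ of conditions (2)--(3) lie in $L^1_{\mathrm{loc}}$, so Lemma~\ref{lem:caratheodory} produces a unique integral curve $\gamma: (-\varepsilon,\varepsilon) \to T^*X$ with $\gamma(0) = \varpi_0$.

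To see that $\gamma$ lies in $\Sigma$, observe that $V$ is continuous on $X$ by Lemma~\ref{lemma:holder}, hence so is $p$. Since $\dot x(0) = 2\xi_0 \neq 0$, after shrinking $\varepsilon$ the $x$-coordinate of $\gamma$ vanishes only at $s = 0$, and away from $s = 0$ the vector field $\hamvf_p$ is smooth and tangent to $\Sigma$; thus $p \circ \gamma$ is constant on each component of $(-\varepsilon,\varepsilon) \setminus \{0\}$, and continuity forces $p \circ \gamma \equiv p(\varpi_0) = 0$ throughout.

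Finally, when $\alpha > 1$ the symbol order of $\partial_x V$ satisfies $\mu + k = 1 - \alpha < 0$, so Lemma~\ref{lemma:holder} now shows that $\partial_x V$ is continuous on all of $X$. Hence $F'$ is continuous in $(x,y,\xi,\eta)$, and the second assertion of Lemma~\ref{lem:caratheodory} yields uniform convergence of the integral curves as the initial data is varied, which together with absolute continuity in $s$ gives continuity of $(s,\varpi) \mapsto \exp(s\hamvf_p)(\varpi)$ near $(0, \varpi_0)$. The main subtlety in the whole argument is the integrability estimate on $\partial_x V$ in the range $\alpha \in (0,1)$, where $\hamvf_p$ has genuinely unbounded coefficients across $Y$ and one relies on the hyperbolicity $\xi_0 \neq 0$ to use $x$ as the transverse driving variable in Lemma~\ref{lem:caratheodory}.
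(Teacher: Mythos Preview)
Your proof is correct and follows the same route as the paper: apply Lemma~\ref{lem:caratheodory} with the splitting $z_1 = x$, $z' = (y,\xi,\eta)$, verify the $L^1$ and Lipschitz hypotheses on $F'$ via Lemma~\ref{lem:conormalintegrable} and \eqref{eq:L1lipschitz}, then argue that $p\circ\gamma$ is locally constant off $s=0$ and continuous at $s=0$. One small imprecision: at the borderline $\alpha = 1$ the distribution $\partial_x V$ lies only in the Zygmund class $\C^0_*$ and need not be bounded, but the needed $L^1$ and Lipschitz-in-$y$ estimates still follow by the inclusion $I^{[-1]}(Y)\subset I^{[-1+\epsilon]}(Y)$ and Lemma~\ref{lem:conormalintegrable}.
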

\begin{proof}
Apply Lemma \ref{lem:caratheodory} to $F = \hamvf_p$ with the splitting of variables $z_1 = x$ and $z' = (y,\xi,\eta)$. Since $F_1 = 2\xi$, it is continuous and nonvanishing in a small neighborhood of $\tilde q_0$. The hypotheses on the remaining components of $F$ follows from Lemma \ref{lem:conormalintegrable} and \eqref{eq:L1lipschitz}. It remains to shows that $\gamma((-\varepsilon,\varepsilon)) \subset \Sigma$. If $x\neq 0$, then $\hamvf_p p =0$. On the other hand, 
\[
x(\gamma(s)) \neq 0 \text{ for } s \in (-\varepsilon,\varepsilon)\setminus 0,
\]
since, \blue{by our assumption that $\varpi_0 \in \pi^{-1}(\hyp),$} $F_1 = \hamvf_p x\neq 0$. Thus $p\circ \gamma$ is locally constant on $(-\varepsilon,\varepsilon)\setminus 0$, which completes the proof since $p\circ \gamma$ is continuous and $p(\gamma(0)) = 0$.

Now suppose that $\alpha > 1$, in which case the properties of the flow in $(s,{q})$ follow from the second part of Lemma \ref{lem:caratheodory}. \end{proof}

\begin{rem} \label{rem:nonunique}
For $\alpha<2,$ uniqueness of bicharacteristics can certainly fail \blue{in
the glancing region},
notwithstanding the special structure of Hamilton's equations.
Consider for instance the symbol
\[p= \big( \xi^2+\eta^2\big) -1-4 \lvert x\rvert^{3/2}\] on
$T^*\RR^2.$
The Hamilton vector field is
\[ 
\hamvf_p = 2 \xi \pa_x+ 6 (\sgn x) \lvert x\rvert^{1/2} \pa_\xi+2\eta \pa_y.\]
Clearly $(x=0, \xi=0, y=2s, \eta=1)$ is a null bicharacteristic.  But on the other hand, so is
  $(x= s_+^4,  \xi= 2 s_+^3,  y=2s, \eta=1).$
This example exhibits the possibility of bicharacteristics sticking to the
interface $Y$ for arbitrarily long times before detaching (cf.\
\cite{hartman1951problems} for further related examples of
non-uniqueness of geodesics).
\end{rem}

\subsection{Generalized broken bicharacteristics} \label{subsect:GBB}

We now define the generalized broken bicharacteristic flow as initially introduced by Melrose--Sj\"ostrand \cite{melrose1978singularities}; cf.\ \cite{lebeau1997propagation}, \cite{vasy2008propagation}.

\begin{defi}
	A function $f$ on $T^*X$ is $\pi$-invariant if $f(\varpi_1) = f(\varpi_2)$ whenever $\pi(\varpi_1) = \pi(\varpi_2)$.
\end{defi}

Any $\pi$-invariant function $f$ induces a function on $\bdotT^*X$, denoted by $f_\pi$.  A rich class of $\pi$-invariant functions are those of the form $\pi^*F$, where $F$ is a function on $\bdotT^* X$. In that case $F = (\pi^*F)_\pi$. If $f$ is $\pi$-invariant, then in local coordinates $(x,y,\xi,\eta)$ on $T^*X$,
\[
\xi \mapsto f(0,y,\xi,\eta)
\] 
is constant for every fixed $(y,\eta)$.

\begin{lem} \label{lem:Hpextends}
Let $\alpha>0.$  If $f \in \C^1(T^*X)$ is $\pi$-invariant, then $\hamvf_p f$ admits a continuous extension to $T^*X$. 
\end{lem}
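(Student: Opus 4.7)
The plan is to write $\hamvf_p = \hamvf_{p_0} + \hamvf_V$ in local geodesic normal coordinates $(x,y,\xi,\eta)$ near a point of $Y$, where $p_0 = \xi^2 + k^{ij}\eta_i\eta_j$ is smooth on $T^*X$. Since $p_0$ is smooth and $f \in \C^1$, the function $\hamvf_{p_0} f$ is automatically continuous on all of $T^*X$, so the content of the lemma is to show that the potentially rough piece
\[
\hamvf_V f = -(\partial_x V)\,\partial_\xi f - \sum_i(\partial_{y_i} V)\,\partial_{\eta_i} f
\]
extends continuously across $T^*_Y X$.

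The tangential terms are immediate: $\partial_{y_i}$ preserves the conormal order, so $\partial_{y_i} V \in I^{[-1-\alpha]}(Y)$ is H\"older continuous on $X$ by Lemma~\ref{lemma:holder}, and multiplication by the continuous function $\partial_{\eta_i} f$ produces something continuous. The only dangerous contribution is $(\partial_x V)\,\partial_\xi f$: $\partial_x V \in I^{[-\alpha]}(Y)$ can blow up like $|x|^{\alpha-1}$ as $x\to 0$ (cf.\ Lemma~\ref{lem:conormalintegrable} for $0<\alpha<1$), so on its own it is not continuous at $Y$.

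The crux step is to use $\pi$-invariance to recover a factor of $x$ on $\partial_\xi f$. Since $\pi(x,y,\xi,\eta)=(x,y,x\xi,\eta)$, a $\pi$-invariant function is locally of the form $f(x,y,\xi,\eta)=F(x,y,x\xi,\eta)$; combined with the $\C^1$ hypothesis this gives
\[
\partial_\xi f(x,y,\xi,\eta) = x\cdot(\partial_\sigma F)(x,y,x\xi,\eta),
\]
with $(\partial_\sigma F)\circ\pi$ continuous on $T^*X$. Plugging this into the dangerous term yields
\[
(\partial_x V)\,\partial_\xi f = (x\,\partial_x V)\cdot(\partial_\sigma F)(x,y,x\xi,\eta),
\]
and the key observation is that $x\,\partial_x V$ lies in the same conormal class $I^{[-1-\alpha]}(Y)$ as $V$ itself --- multiplication by the defining function $x$ exactly compensates the order loss incurred by $\partial_x$. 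A second application of Lemma~\ref{lemma:holder} then yields H\"older continuity of $x\,\partial_x V$ on $X$, and the product of two continuous functions is continuous.

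The hard part is the justification of the factorization $\partial_\xi f = x\cdot(\partial_\sigma F)(\cdots)$ from the $\C^1$ hypothesis alone --- essentially identifying $\pi$-invariant $\C^1$ functions on $T^*X$ with $\C^1$ functions on the compressed bundle $\bdotT^*X$ via pullback. Once this is in hand, patching the local calculation via a partition of unity subordinate to a covering of $X$ by normal coordinate charts intersecting $Y$ yields the global continuous extension of $\hamvf_p f$, completing the proof.
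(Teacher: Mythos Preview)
Your approach matches the paper's: isolate the single dangerous term $(\partial_x V)\,\partial_\xi f$, use $\pi$-invariance to pull a factor of $x$ out of $\partial_\xi f$, and absorb it into $\partial_x V$ so that one is left with the continuous function $x\partial_x V$ times something continuous.

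The difference lies in how you justify the factorization $\partial_\xi f = x\cdot(\text{continuous})$. You try to descend $f$ to a $\C^1$ function $F$ on the compressed bundle $\bdotT^*X$ and differentiate the composition; as you say, this is ``the hard part,'' and it is genuinely awkward since $\bdotT^*X$ is not a manifold over $Y$ and the regularity of the descended $F$ is not obvious from $f\in\C^1(T^*X)$ alone. The paper sidesteps this entirely: $\pi$-invariance directly says that $\xi\mapsto f(0,y,\xi,\eta)$ is constant, so $\partial_\xi f$ vanishes along $T^*_Y X$, and one writes $\partial_\xi f\in x\C^0(T^*X)$ without ever leaving $T^*X$. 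Then $(\partial_x V)\,\partial_\xi f=(x\partial_x V)F$ with $F\in\C^0$, and Lemma~\ref{lem:vanishes} (applied with $u=\partial_x V\in I^{[-\alpha]}(Y)$, $\mu=-\alpha<-k+1=0$, and the $\C^1$ function $x$) shows this product vanishes along $T^*_Y X$, giving the continuous extension. This direct route is shorter and avoids the regularity issue you left open; your invocation of Lemma~\ref{lemma:holder} for $x\partial_x V$ is correct but the paper's use of Lemma~\ref{lem:vanishes} gives the vanishing at $Y$ in one stroke.
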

\begin{proof}
	The only obstruction to proving the lemma is the term $-(\partial_x V) \partial_\xi f$. On the other hand, since $f$ is $\pi$-invariant, $\xi \mapsto f(0,y,\xi,\eta)$ is constant. Now $\partial_{\xi} f$ exists and vanishes along $T^*_Y X$, and hence $\partial_{\xi} f \in x\C^0(T^*X)$. Therefore $(\partial_x V) \partial_{\xi} f = (x\partial_x V)F$, where $F \in \C^0(T^*X)$, and this latter term vanishes along $T^*_Y X$ by Lemma \ref{lem:vanishes}.
\end{proof}

We now recall the definition of generalized broken bicharacteristics
as given in \cite{vasy2004propagation}.
\begin{defi} \label{defi:GBB}
	If $I \subset \RR$ is an interval, we say that a continuous map $\gamma : I \rightarrow \dot \chare$ is a generalized broken bicharacteristic ($\GBB$) if for each $s_0 \in I$ and $f \in \CI(T^*X)$ which is $\pi$-invariant,
	\begin{equation} \label{eq:GBBdefi}
	\liminf_{s\rightarrow s_0} \frac{ f_\pi(\gamma(s)) -
          f_\pi(\gamma(s_0))}{s-s_0} \geq \inf\{ (\hamvf_p f)(\varpi):
        \pi(\varpi) = \gamma(s_0),\ \varpi \in \Sigma\}
	\end{equation}
	If $s_0$ is an endpoint of $I$, the left hand side of \eqref{eq:GBBdefi} is meant in the one-sided sense.
      \end{defi}
Note that in the case at hand, the infimum on the right hand side is in fact a minimum over at
most two values.

This is of course the same as saying that both lower Dini derivatives
$D_\pm (f_\pi\circ \gamma)(s_0)$ are no smaller than the right hand side
of \eqref{eq:GBBdefi}. Definition \ref{defi:GBB} makes it clear that
$\GBB$s can be concatenated: if $\gamma : (s_0,s_1] \rightarrow
\cchare$ and $\gamma' : [s_1,s_2) \rightarrow \cchare$ are two $\GBB$s
with $\gamma(s_1) = \gamma'(s_1)$, then we can define a $\GBB$ on
$(s_0,s_2)$ that restricts to $\gamma$ on $(s_0,s_1]$ and $\gamma'$ on
$[s_1,s_2)$. This concise definition can be recast more concretely, as
in work of Lebeau \cite{lebeau1997propagation}:
\begin{lem} \label{lem:GBBequiv}
	If $I \subset \RR$ and $\gamma : I \rightarrow \dot \chare$ is a continuous map, then the following are equivalent.
	
	\begin{enumerate} \itemsep6pt 
		\item \label{it:GBB1} $\gamma$ is a $\GBB$ in the sense of Definition \ref{defi:GBB}.
		\item \label{it:GBB2} The following two conditions are satisfied for each $s_0 \in I$.
		\begin{enumerate}[itemsep = 6pt,topsep = 6pt] 
			\item If $q_0 = \gamma(s_0) \in \gl$, then for each $f \in \CI(T^*X)$ which is $\pi$-invariant,
		\begin{equation} \label{eq:glancingderivative}
		\frac{d}{ds}(f_\pi\circ \gamma)(s_0) = (\hamvf_p f)(\varpi_0),
		\end{equation}
		where $\varpi_0 \in \Sigma$ is the unique point for which $\pi(\varpi_0) = q_0$.
		\item If $q_0 = \gamma(s_0) \in \hyp$, then there exists $\varepsilon>0$ such that $0 < |s-s_0| < \varepsilon$ implies that $x(\gamma(s)) \neq 0$.
		\end{enumerate}
	\item \label{it:GBB3} For each $s_0 \in I$ there exist unique
          $\varpi_\pm \in \Sigma$ such that $\pi(\varpi_\pm) =
          \gamma(s_0)$ and for all $\pi$-invariant $f,$
	\begin{equation} \label{eq:onesidedderivative}
	\frac{d}{ds}(f_\pi \circ \gamma)_\pm(s_0) = (\hamvf_p f)(\varpi_\pm).
	\end{equation}
	\end{enumerate}

\end{lem}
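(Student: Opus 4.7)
I will prove (1) $\Rightarrow$ (2) $\Rightarrow$ (3) $\Rightarrow$ (1). The easy implication is (3) $\Rightarrow$ (1): for any $s_0 \in I$ and any $\pi$-invariant $f$, the existence of one-sided derivatives and \eqref{eq:onesidedderivative} give
\[
\liminf_{s \to s_0}\frac{f_\pi(\gamma(s)) - f_\pi(\gamma(s_0))}{s - s_0} \geq \min\bigl((\hamvf_p f)(\varpi_+),\,(\hamvf_p f)(\varpi_-)\bigr),
\]
which is bounded below by $\inf\{(\hamvf_p f)(\varpi) : \pi(\varpi) = \gamma(s_0),\ \varpi \in \Sigma\}$ since $\varpi_\pm$ both lie in this set.

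For (1) $\Rightarrow$ (2), at a point $s_0$ with $q_0 = \gamma(s_0) \in \gl$ the set $\pi^{-1}(q_0) \cap \Sigma = \{\varpi_0\}$ is a singleton, so applying \eqref{eq:GBBdefi} to both $f$ and $-f$ yields $\liminf \geq (\hamvf_p f)(\varpi_0)$ and $\limsup \leq (\hamvf_p f)(\varpi_0)$, proving the two-sided derivative identity \eqref{eq:glancingderivative}. At a hyperbolic point $q_0$ with $\pi^{-1}(q_0) \cap \Sigma = \{\varpi_\pm\}$, $\varpi_\pm = (0,y_0,\pm\xi_0,\eta_0)$, $\xi_0 > 0$, the key test function is $f = x\xi$ (localized to be in $\CI(T^*X)$ near $q_0$). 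This $f$ is $\pi$-invariant since $f|_{x=0} = 0$, and a direct computation gives
\[
\hamvf_p f = 2\xi^2 - x\bigl[(\partial_x k^{ij})\eta_i\eta_j + \partial_x V\bigr];
\]
by Lemma \ref{lem:Hpextends} (applicable since $f \in \C^1$ is $\pi$-invariant) this expression extends continuously with value $2\xi_0^2$ at both $\varpi_\pm$. Applying \eqref{eq:GBBdefi} to $\pm f$, noting that $f_\pi = \sigma$ and $\sigma(\gamma(s_0)) = 0$, yields $\lim_{s\to s_0}\sigma(\gamma(s))/(s-s_0) = 2\xi_0^2 > 0$. Hence $\sigma(\gamma(s)) \neq 0$ for $0 < |s-s_0| < \varepsilon$; since points of $\bdotT^*X$ above $Y$ have $\sigma = 0$, this gives $x(\gamma(s)) \neq 0$ on a punctured neighborhood, establishing condition (2b).

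For (2) $\Rightarrow$ (3), the glancing case is immediate: set $\varpi_+ = \varpi_- = \varpi_0$, the unique preimage, then (2a) produces both one-sided derivatives, and uniqueness of $\varpi_\pm$ in $\pi^{-1}(q_0)\cap\Sigma$ is automatic. For the hyperbolic case, (2b) ensures that for $s \in (s_0-\varepsilon,s_0)\cup(s_0,s_0+\varepsilon)$ the trajectory $\gamma(s)$ lies in $T^*(X\setminus Y)$, where $\gl$ coincides with $\Sigma\setminus T^*_Y X$ and $\pi$ is the identity. Condition (2a), applied now at each interior $s$, forces $\gamma$ to be a classical integral curve of $\hamvf_p$ on each half-interval (and by Lemma \ref{lem:bicharacteristicsexist} the smooth bicharacteristic through any such interior point extends uniquely up to $s_0$). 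Continuity of $\gamma$ into $\cchare$ forces the two smooth pieces to have limits $\varpi_\pm \in \pi^{-1}(q_0) \cap \Sigma$ as $s \to s_0^\pm$; for any $\pi$-invariant $f$, continuity of the extended $\hamvf_p f$ then gives $(f_\pi\circ\gamma)_\pm'(s_0) = \lim_{s\to s_0^\pm}(\hamvf_p f)(\gamma(s)) = (\hamvf_p f)(\varpi_\pm)$. Uniqueness of $\varpi_\pm$ in \eqref{eq:onesidedderivative} uses the distinguishing test function $f = x\,g(y)$ with $g(y_0)\neq 0$: then $(\hamvf_p f)(\varpi_\pm) = \pm 2\xi_0 g(y_0)$ separates the two preimages.

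The main obstacle is the hyperbolic case of (1) $\Rightarrow$ (2b): one must cleverly select a $\pi$-invariant test function whose Hamilton derivative extends continuously to $T^*_Y X$ with a strictly positive value at \emph{both} preimages, despite the non-Lipschitz, and for $\alpha \leq 1$ non-continuous, character of $\hamvf_p$. The choice $f = x\xi$ accomplishes this because its $\xi$-derivative vanishes on $Y$ and the ``bad'' $\partial_x V$ term is damped by the prefactor $x$ via Lemma \ref{lem:vanishes}. Everything else amounts to standard one-sided-derivative bookkeeping and identification of $\gamma$ with the classical bicharacteristic flow away from $Y$.
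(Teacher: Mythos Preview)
Your overall strategy matches the paper's: the same cycle (1)$\Rightarrow$(2)$\Rightarrow$(3)$\Rightarrow$(1), the same test function $f=x\xi$ for the hyperbolic case of (1)$\Rightarrow$(2), and the same uniqueness check via a function like $f=x$ at the end. The implications (3)$\Rightarrow$(1) and (1)$\Rightarrow$(2) are correct and essentially identical to the paper's.

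There is one soft spot in your (2)$\Rightarrow$(3), hyperbolic case. You assert that ``continuity of $\gamma$ into $\cchare$ forces the two smooth pieces to have limits $\varpi_\pm\in\pi^{-1}(q_0)\cap\Sigma$,'' and you justify this by invoking Lemma~\ref{lem:bicharacteristicsexist}. But continuity in $\cchare$ only gives convergence of $(x,y,\sigma,\eta)$, not of $\xi$; and Lemma~\ref{lem:bicharacteristicsexist} produces integral curves with initial data \emph{in} $\pi^{-1}(\hyp)$, not the extension of a curve starting at an interior point up to the interface (and its continuous-dependence clause requires $\alpha>1$, which is not assumed here). The paper closes this gap directly: on each half-interval the lift satisfies $\xi(\gamma(s))^2 = -\tilde p(\gamma(s))$, and since $\tilde p$ is $\pi$-invariant it is continuous along $\gamma$ in $\cchare$, so $\xi(\gamma(s))$ converges to $\pm(-\tilde p(q_0))^{1/2}$ with a fixed sign. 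That is the missing sentence; once you insert it, your argument is complete and equivalent to the paper's (and your appeal to Lemma~\ref{lem:bicharacteristicsexist} can be dropped).
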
 
\begin{proof}
\eqref{it:GBB1} $\Longrightarrow$  \eqref{it:GBB2}: Let $\gamma : I \rightarrow \cchare$ be a $\GBB$ and $s_0 \in I$. First assume that $q_0 = \gamma(s_0) \in \gl$, in which case $\pi^{-1}(\{q_0\})$ consists of a single point $\varpi_0$. Applying \eqref{eq:GBBdefi} to $f$ and $-f$ shows that \eqref{eq:glancingderivative} holds. If $q_0 \in \hyp$ instead, apply \eqref{eq:GBBdefi} to the $\pi$-invariant function 
\[
f = x\xi = \pi^*\sigma.
\]
Then $\hamvf_p f = 2\xi^2$ along $\pi^{-1}(\hyp)$, so the infimum on the right hand side of \eqref{eq:GBBdefi} is positive for $q_0 \in \hyp$. On the other hand $f_\pi(\gamma(s_0)) = 0$, so $\sigma(\gamma(s)) \neq 0$ for small but nonzero values of $|s-s_0|$. Since $\gamma$ takes values in $\cchare$, this implies that $x(\gamma(s)) \neq 0$ as well.

\eqref{it:GBB2} $\Longrightarrow$ \eqref{it:GBB3}: By definition this implication is clear for $\gamma(s_0) \in \gl$, so we may assume that $\gamma(s_0) \in \hyp$, and that $s_0=0$. By hypothesis,
\[
x(\gamma(s))\neq 0 \text{ and } \gamma(s) \in \gl \text{ for } s \in [-\varepsilon,\varepsilon]\setminus 0,
\] 
thus we can view $\gamma : (0,\varepsilon] \rightarrow \chare$. In particular, $\xi(\gamma(s)) = \pm (\tilde p (\gamma(s)))^{1/2}$ for one choice of sign, and since $\tilde p$ is $\pi$-invariant, the limit $\xi_+ = \lim_{s\rightarrow 0^+}\xi(\gamma(s))$ exists. We then set 
\[
\varpi_+ = (0,y(\gamma(0)),\xi_+,\eta(\gamma(0))).
\]
Similarly, we can construct $\varpi_-$, and it is easy to check that \eqref{eq:onesidedderivative} holds. The choices of $\xi_\pm$ are unique, since they are recovered by applying \eqref{eq:onesidedderivative} to the $\pi$-invariant function $x$.

\eqref{it:GBB3} $\Longrightarrow$ \eqref{it:GBB1}: The condition \eqref{eq:onesidedderivative} shows that the left hand side of \eqref{eq:GBBdefi} is equal to the minimum of $(\hamvf_p f)(\varpi_\pm)$, which is clearly bigger than or equal to the infimum on the right hand side of \eqref{eq:GBBdefi}. 
\end{proof}

Suppose that $q_0 \in \hyp$. In view of Lemma \ref{lem:GBBequiv}, we can construct a backward $\GBB$ on some $(-\varepsilon,0]$ by solving \eqref{eq:caratheodory} with a choice of initial data in $\pi^{-1}(\{q_0\})$ and then projecting to $\cchare$ by $\pi$;  the same construction works in the forward direction. Conversely, any $\GBB$ through a point $q_0 \in \hyp$ is locally obtained by concatenating two solutions of \eqref{eq:caratheodory} projected to $\cchare$.

If $f$ is $\pi$-invariant, then $f_\pi \circ \gamma$ is Lipschitz on $I$. This follows from the fact that $f_\pi \circ \gamma$ has uniformly bounded one sided derivatives at each $s \in I$ by Lemma \ref{lem:GBBequiv}. Therefore
\[
|(f_\pi\circ\gamma)(s_1) - (f_\pi \circ \gamma)(s_2)| \leq \sup\{ |{\hamvf_p f( \varpi)|: \pi( \varpi) \in \gamma(I) \}} \cdot |s_1-s_2|,
\]
so $f_\pi \circ \gamma$ is in fact Lipschitz on $I$ with a constant independent of $\gamma$ provided we assume that $\gamma$ takes values in a fixed compact set $K$ (cf.\ \cite[Corollary 5.3]{vasy2008propagation} and \cite[Corollary 2]{lebeau1997propagation}).

Furthermore, suppose that $\mathcal{U}$ is an adapted coordinate patch, so that $\bT^*_{\mathcal{U}}X$ is equipped with the Euclidean distance 
induced by the coordinates $(x,y,\sigma,\eta)$. If $\gamma$ is any $\GBB$ with values in a fixed compact set $K \subset \bT^*_{\mathcal{U}}X$, we conclude that
\begin{equation} \label{eq:gammalipshitz}
|\gamma(s_1) - \gamma(s_2)| \leq L|s_1-s_2|
\end{equation}
for a constant $L > 0$ depending only on $K$. Using these observations, one deduces some important topological information about the set of all $\GBB$s. For a proof of the following proposition, the reader is referred to \cite[Proposition 5.4 and Corollary 5.6]{vasy2001propagation}.

\begin{prop} \label{prop:equicontinuous}
	Given a compact set $K \subset \cchare$ and a compact interval $[a,b] \subset \RR$, let 
	\[
	\mathcal{R} = \{\gamma: [a,b]\rightarrow K: \gamma \text{ is a } \GBB\}.
	\] 
	If $\mathcal{R} \neq \emptyset$, then $\mathcal{R}$ is compact with respect to the topology of uniform convergence. Furthermore, if  $\gamma : (a,b) \rightarrow \cchare$ is a $\GBB$, then $\gamma$ extends to a $\GBB$ on $[a,b]$.
\end{prop}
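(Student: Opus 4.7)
The plan is to prove Proposition~\ref{prop:equicontinuous} by an Arzelà--Ascoli argument, using the Lipschitz bound \eqref{eq:gammalipshitz} for equicontinuity and a lower-semicontinuity argument to show that uniform limits of $\GBB$s are $\GBB$s.

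\textbf{Step 1 (equicontinuity).} The first observation is that \eqref{eq:gammalipshitz}, which was derived from Lemma~\ref{lem:GBBequiv} by noting that $s \mapsto f_\pi(\gamma(s))$ has one-sided derivatives bounded by $\sup_{K} |\hamvf_p f|$ for all $\pi$-invariant $f$, gives a uniform Lipschitz constant $L = L(K)$ for all $\gamma \in \mathcal R$. Here we use that $K \subset \cchare$ can be covered by finitely many adapted coordinate patches, and that the local coordinate functions $(x, y, \sigma, \eta)$ on $\bT^*X$ are pullbacks of $\pi$-invariant functions on $T^*X$ (indeed $\sigma = \pi^*(x\xi)$). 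Hence $\mathcal R$ is equicontinuous, and since $\mathcal R \subset C([a,b]; K)$ with $K$ compact, Arzelà--Ascoli furnishes a subsequential uniform limit from any sequence in $\mathcal R$.

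\textbf{Step 2 (closedness of $\mathcal R$ under uniform limits).} Suppose $\gamma_n \to \gamma$ uniformly with $\gamma_n \in \mathcal R$. I must show $\gamma$ satisfies \eqref{eq:GBBdefi}. Fix a $\pi$-invariant $f \in \CI(T^*X)$ and set
\[
G(q) = \inf\{ \hamvf_p f(\varpi) : \pi(\varpi) = q,\ \varpi \in \Sigma \},
\]
which by Lemma~\ref{lem:Hpextends} extends continuously from $T^*_{X\setminus Y}X$ to all of $T^*X$. Since $\pi|_\Sigma$ is proper and $\pi^{-1}(q)\cap \Sigma$ has at most two elements (corresponding to $\pm\xi$), the set-valued map $q \mapsto \pi^{-1}(q)\cap \Sigma$ has compact values varying upper semicontinuously; infimum of a continuous function over such a map is lower semicontinuous, so $G$ is lower semicontinuous on $\cchare$. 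Each $f_\pi \circ \gamma_n$ is Lipschitz (hence absolutely continuous), and by \eqref{eq:GBBdefi} its classical derivative (where defined) is bounded below by $G(\gamma_n(s))$, so
\[
(f_\pi \circ \gamma_n)(s) - (f_\pi \circ \gamma_n)(s_0) \geq \int_{s_0}^{s} G(\gamma_n(s'))\, ds', \qquad s > s_0.
\]
Sending $n \to \infty$ using uniform convergence and Fatou's lemma (applicable because $G$ is bounded below on the compact set $K$), the left side converges and the right side satisfies $\liminf_n \int_{s_0}^s G(\gamma_n) \geq \int_{s_0}^s G(\gamma)$ by lower semicontinuity of $G$ along the uniformly convergent sequence. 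Dividing by $s - s_0$ and using Fatou's lemma again together with lower semicontinuity of $s' \mapsto G(\gamma(s'))$ at $s_0$ gives
\[
\liminf_{s\to s_0^+} \frac{(f_\pi \circ \gamma)(s) - (f_\pi \circ \gamma)(s_0)}{s-s_0} \geq \liminf_{s\to s_0^+} \frac{1}{s-s_0}\int_{s_0}^s G(\gamma(s'))\, ds' \geq G(\gamma(s_0)),
\]
and symmetrically for $s \to s_0^-$. Hence $\gamma$ is a $\GBB$, so $\mathcal R$ is closed in $C([a,b]; K)$ and thus compact.

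\textbf{Step 3 (extension to closed intervals).} For the final claim, let $\gamma : (a,b) \to \cchare$ be a $\GBB$. Since $\gamma((a,b))$ is the continuous image of a precompact interval and $\pi$-invariant coordinate functions compose with $\gamma$ to give uniformly Lipschitz functions (by the argument in Step~1, the Lipschitz constant depends only on $\sup_{\gamma((a,b))} |\hamvf_p f|$, which is finite because $\gamma((a,b))$ is precompact in $\cchare$ once one checks it stays in a compact set --- this uses that the coordinates of $\gamma$ remain Lipschitz in $s$, hence bounded). Thus each local coordinate of $\gamma$ extends continuously to $[a,b]$, and $\gamma$ itself extends to a continuous map $\bar\gamma : [a,b] \to \cchare$. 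Verification of \eqref{eq:GBBdefi} at the endpoints follows exactly as in Step~2, now applied to the trivial constant sequence $\gamma_n = \gamma$: the one-sided lim inf at $s_0 = a$ (resp.\ $s_0 = b$) is controlled by integration of $G(\gamma(s'))$ from $a$ (resp.\ to $b$), and lower semicontinuity of $G$ combined with continuity of $\bar\gamma$ at the endpoint yields \eqref{eq:GBBdefi}.

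\textbf{Main obstacle.} The technically delicate point is Step~2: one must justify passing from the one-sided lim inf in \eqref{eq:GBBdefi} to an integrated inequality along $\gamma_n$, and then back again after passing to the uniform limit. This rests on two ingredients that are not quite automatic: the lower semicontinuity of the set-valued infimum $G$ in the presence of the hyperbolic/glancing splitting of $\pi^{-1}(q)\cap\Sigma$, and the a.e.\ differentiability of the Lipschitz function $f_\pi \circ \gamma_n$ that lets one integrate the Dini-derivative bound \eqref{eq:GBBdefi}. Once these are in hand the rest is a standard Fatou/Lebesgue-differentiation argument.
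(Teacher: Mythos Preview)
The paper does not give its own proof of this proposition; it simply cites \cite[Proposition 5.4 and Corollary 5.6]{vasy2001propagation}. Your approach---uniform Lipschitz bound from \eqref{eq:gammalipshitz}, Arzel\`a--Ascoli, and passing to the limit through an integrated form of the Dini inequality \eqref{eq:GBBdefi} using lower semicontinuity of $G(q)=\min\{\hamvf_p f(\varpi):\pi(\varpi)=q,\ \varpi\in\Sigma\}$---is exactly the standard argument, and is essentially what appears in Vasy's work (and in Lebeau \cite{lebeau1997propagation}). Steps~1 and~2 are correct as written; the verification that $G$ is lower semicontinuous (properness of $\pi|_\Sigma$, closed graph of the fiber map, continuity of $\hamvf_p f$ from Lemma~\ref{lem:Hpextends}) is the right mechanism.

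There is one genuine gap, in Step~3. Your argument that $\gamma((a,b))$ is precompact is circular: you invoke the Lipschitz bound to conclude the coordinates stay bounded, but the Lipschitz constant in \eqref{eq:gammalipshitz} already presupposes that $\gamma$ takes values in a fixed compact set. As stated, the second assertion of the proposition can fail on a noncompact $X$ (a $\GBB$ can escape to infinity in finite time if $V$ is unbounded). The fix is either to invoke the standing assumption (made explicitly in Section~\ref{sec:GBB}) that $X$ is compact, so that $\cchare$ itself is compact and the issue evaporates, or to run a continuation argument: fix $s_1\in(a,b)$, take a compact neighborhood $K_0\ni\gamma(s_1)$, and use the Lipschitz bound with constant $L(K_0)$ to show $\gamma$ remains in $K_0$ on an interval of length at least $\mathrm{dist}(\gamma(s_1),\partial K_0)/L(K_0)$; iterate using local compactness of $\cchare$. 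Either way, once precompactness is in hand, your endpoint verification via the integrated inequality is correct.
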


 For a closely related result, see Lemma \ref{lem:cauchypeano}.
\begin{lem} \label{lem:GBBgrowth}
	Let $U \subset \cchare$ be open and precompact, and $K \subset U$ be compact. There exists $\varepsilon_0 >0$ such that if $\gamma$ is any $\GBB$ defined on $[-\varepsilon,\varepsilon]$ with $\varepsilon \in (0,\varepsilon_0)$ and $\gamma(0) \in K$, then $\gamma([\varepsilon,\varepsilon]) \subset U$.
\end{lem}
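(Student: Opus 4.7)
The plan is a compactness–continuation argument based on the uniform Lipschitz bound \eqref{eq:gammalipshitz} for $\GBB$s confined to compact subsets of coordinate patches. Since $K$ is compact and $U$ is open in $\cchare$, cover $K$ by finitely many precompact open sets $V_1,\ldots,V_N\subset\cchare$, where each $V_i$ lies in a single adapted coordinate patch $\bT^*_{\mathcal{U}_i}X$ and satisfies $\overline{V_i}\subset U$. For each $i$, pick a slightly larger precompact set $W_i\subset\bT^*_{\mathcal{U}_i}X$ with $\overline{V_i}\subset W_i$ and $\overline{W_i}\subset U\cap\bT^*_{\mathcal{U}_i}X$, and let $\delta_i>0$ denote the Euclidean distance (with respect to the coordinates on the patch) from $\overline{V_i}$ to $\bT^*_{\mathcal{U}_i}X\setminus W_i$.

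Applying \eqref{eq:gammalipshitz} to the compact set $\overline{W_i}$ yields a uniform Lipschitz constant $L_i$ valid for every $\GBB$ taking values in $\overline{W_i}$. Set
\[
\varepsilon_0=\min_{1\le i\le N}\frac{\delta_i}{2L_i}.
\]
Given a $\GBB$ $\gamma:[-\varepsilon,\varepsilon]\to\cchare$ with $\varepsilon<\varepsilon_0$ and $\gamma(0)\in K$, choose $i$ with $\gamma(0)\in V_i$. I claim $\gamma([-\varepsilon,\varepsilon])\subset W_i\subset U$. For the forward half, set
\[
s^*=\sup\{s\in[0,\varepsilon]:\gamma([0,s])\subset\overline{W_i}\}.
\]
Continuity of $\gamma$ gives $\gamma([0,s^*])\subset\overline{W_i}$, so the Lipschitz bound applies on $[0,s^*]$ and yields $|\gamma(s^*)-\gamma(0)|\le L_i s^*\le L_i\varepsilon<\delta_i/2$. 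Since $\gamma(0)\in\overline{V_i}$, this places $\gamma(s^*)$ at Euclidean distance strictly less than $\delta_i/2$ from $\overline{V_i}$, hence in the interior of $W_i$. By continuity this forces $s^*=\varepsilon$; the backward half is identical.

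There is no real obstacle here: all of the analytic work is already packaged in \eqref{eq:gammalipshitz}, and the rest is routine bookkeeping involving a finite cover of $K$ and a standard open/closed continuation argument. The one point that requires care is that the constants $\delta_i$ are strictly positive; this is immediate from compactness of $\overline{V_i}$ together with its disjointness from the closed set $\bT^*_{\mathcal{U}_i}X\setminus W_i$.
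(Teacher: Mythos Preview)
Your argument is correct in substance, though you should be slightly more careful with the topological setup: since $U$ is open in $\cchare$ (not in $\bT^*X$), the condition ``$\overline{W_i}\subset U\cap\bT^*_{\mathcal{U}_i}X$'' forces $\overline{W_i}\subset\cchare$, which an open subset of $\bT^*_{\mathcal{U}_i}X$ will not satisfy. What you really want is $W_i$ open in $\bT^*_{\mathcal{U}_i}X$ with $\overline{W_i}\cap\cchare\subset U$; the continuation argument then yields $\gamma([-\varepsilon,\varepsilon])\subset W_i\cap\cchare\subset U$. With that adjustment your proof is complete.

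Your route differs from the paper's. The paper argues by contradiction: assuming the lemma fails, it produces $\GBB$s $\gamma_n$ that escape $U$ at times $s_n\to 0$, extracts subsequential limits of the initial and exit points, and uses the Lipschitz bound on $f_\pi\circ\gamma_n$ for $\pi$-invariant $f$ together with the fact that such functions separate points of $\cchare$ to derive a contradiction. Your approach is direct: you invoke the coordinate-patch Lipschitz bound \eqref{eq:gammalipshitz} and run a standard open/closed continuation argument across a finite cover. The direct argument is more constructive (it gives an explicit $\varepsilon_0$) and avoids appealing to the point-separation property; the paper's argument is shorter and sidesteps the bookkeeping of the cover, at the cost of being nonconstructive and relying on the separation fact.
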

\begin{proof}
	 It suffices to prove the result with $[0,\varepsilon]$ and $[-\varepsilon,0]$ replacing $[-\varepsilon,\varepsilon]$. We argue by contradiction. Fix $U' \supset K$ open with closure in $U$; we may thus assume that $d(U',\partial U) > c_0$ for some $c_0$. If the result does not hold, then we may choose a positive decreasing sequence $s_n \rightarrow 0$ and $\GBB$s $\gamma_n : [0,s_n] \rightarrow \overline U$ such that $\gamma_n(s_n) \in \partial U$ and $\gamma_n(s_{n+1}) \in U'$. In particular,
	 \[
	 d(\gamma_n(s_n),\gamma_n(s_{n+1})) > c_0
	 \]
	 uniformly in $n$. Let $q,q'$ denote subsequential limits of $\gamma_n(s_n), \gamma_n(s_{n+1})$, respectively; it follows that $q \neq q'$.	On the other hand, if $f \in \CI$ is $\pi$-invariant, then
	 \[
	 |f_\pi(\gamma_n(s_n)) - f_\pi(\gamma_n(s_{n+1}))| \leq L(|s_n| + |s_{n+1}|)
	 \]
	 where $L$ is independent of $n$. Since functions of the form $f_\pi$ separate points, this implies that $q = q'$, which is a contradiction.   
\end{proof}

\blue{We close this section with a brief description of the
  phenomenology allowed by the results above.  Fix $\alpha>1,$ so
  that solutions to Hamilton's equations exist.  A bicharacteristic curve arriving transversely at
  $Y$ (hence at a point in $\hyp$)
  can be continued in just one way across the interface as a
  bicharacteristic curve.  By contrast, the continuous trajectory in $X$
  obtained by flipping the sign of the normal momentum at the moment
  of impact is also the image of a GBB; these two curves are the only
  possible continuations of the incident bicharacteristic as a GBB,
  with the latter being ``diffractive'' in the heuristic terminology
  of the introduction.  A bicharacteristic arriving tangent
  to $Y,$ hence in $\gl,$ may, if $\alpha<2,$ stick to $Y$ thereafter (and possibly
  re-release in a tangent direction at some later point).   If
  $\alpha>2,$ uniqueness of bicharacteristics rules out this sticking
  at a point of simple tangency: the bicharacteristic brushes past $Y$
  and continues on its way.  By contrast,  the sticking behavior is
  always possible for a GBB.}

\section{Propagation of singularities along $\GBB$s}
\label{sec:GBB}

Throughout this section we assume that $\alpha >0$. We continue to write $\bT^*X$ instead of $\bT^*(X,Y)$, and also abbreviate $\Psibh^m = \Psibh^m(X,Y)$. To simplify various statements, assume that $X$ is compact; as usual this is inessential. 

\subsection{The elliptic region}
\label{subsect:elliptic}

We will begin by studying the elliptic region. The main result here is the following:

\begin{prop} \label{prop:elliptic}
	If $A,G \in \Psibh^0$ satisfy $\WFb(A) \subset \ELLb(G)$ and $\WFb(A) \cap \cchare = \emptyset$, then 
\[
\| Au \|_{H^1_h} \leq C\|GPu \|_{H^{-1}_h} + \mathcal{O}(h^\infty)\|u \|_{H^1_h}
\]
for each $u \in H^1_h(X)$. 
\end{prop}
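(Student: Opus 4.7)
The plan is to construct a microlocal parametrix $Q$ for $P$ on $\WFb(A)$ and use it to convert the equation $Pu=GPu$ (modulo microlocally trivial terms) into a regularity statement for $Au$. The hypothesis $\WFb(A)\cap\cchare=\emptyset$ furnishes ellipticity of $P$ on $\WFb(A)$ in the appropriate sense. Concretely, $\WFb(A)$ splits into portions: (i) off $Y$, where the hypothesis reduces to the standard semiclassical ellipticity $p\neq 0$; (ii) over $Y$ at $\sigma=0$ with bounded fiber, where the hypothesis reduces to the tangential ellipticity $\tilde p>0$; and (iii) at fiber infinity in $\bT^*X$ and over $Y$ at $\sigma\neq 0$, where $P$ is elliptic automatically because the order-$2$ principal symbol of $-h^2\Delta_g$ dominates.

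To construct $Q$, I would decompose $A$ via a microlocal partition of unity. Off $Y$, $V$ is smooth and $P\in\Diff_h^2(X)$ is an ordinary semiclassical operator, admitting a standard elliptic parametrix in $\Psi_h^{-2}(X)$. Near $Y$, write $P=(hD_x)^*(hD_x)+\tilde P$ with $\tilde P=h^2\Delta_k+V\in\Psibhc^2$ (of b-principal symbol $\tilde p$), and construct a b-parametrix $\tilde Q\in\Psibhc^{-2}$ for $\tilde P$ on the tangential elliptic region, correcting by iteration to absorb the $(hD_x)^*(hD_x)$ contribution. Lemmas \ref{lem:AhDxswap} and \ref{lem:hDxcommutator} serve to rearrange $hD_x$-factors past b-pseudodifferential ones, so the resulting $Q$ lies in the mixed class $\Diff_h^k\Psibh^m+\Psibhc^l$, and satisfies
\[
QP=A+R,\qquad R\in h^\infty\Psibh^{-\infty}+h^\infty\Psibhc^{-\infty}.
\]

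With the parametrix in hand, since $G$ is elliptic on $\WFb(A)\supset\WFb(Q)$, the standard elliptic parametrix \eqref{eq:parametrix} applied to $G$ yields a factorization $Q=Q'G+R'$ with $R'$ of order $h^\infty$. Writing $Au=Q'GPu+R'Pu-Ru$, and invoking the mapping properties of Section \ref{sect:bpseudo} for $Q'$ (in particular Lemma \ref{lem:H1L2bbounded} to handle the conormal potential factor), one obtains
\[
\|Au\|_{H^1_h}\leq C\|GPu\|_{H^{-1}_h}+\mathcal{O}(h^\infty)\|u\|_{H^1_h},
\]
as desired.

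The main obstacle is the parametrix construction near $Y$: because $P\notin\Psibh^2$ due to the non-b character of $(hD_x)^2$, the symbol orders must be tracked carefully in the mixed class $\Diff_h^k\Psibh^m+\Psibhc^l$, and the limited regularity of $V$ must be controlled throughout the iteration using the $\Psibhc$ calculus. A likely cleaner alternative is a two-step approach: first establish an $L^2$-level microlocal elliptic estimate for $Au$ (using that $P$ differs from the smooth operator $-h^2\Delta_g$ by a bounded potential), then upgrade to $H^1_h$ via the anticipated Lemma \ref{lem:L2H1}, which presumably provides the $L^2$-to-$H^1$ regularity gain directly from the equation $Pu\in H^{-1}_h$.
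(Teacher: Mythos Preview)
Your parametrix approach differs substantially from the paper's and runs into a genuine obstacle. The paper does \emph{not} construct a microlocal parametrix for $P$; instead it uses a quadratic-form (Dirichlet form) argument based on Green's identity. Writing
\[
\int_X h^2|dAu|_g^2 + V|Au|^2\,dg = \langle PAu, Au\rangle
\]
(Lemma~\ref{lem:greens}), the paper then exploits the assumed ellipticity on $\WFb(A)$ to show that the left-hand side dominates $\|Au\|_{H^1_h}^2$. This is done in two lemmas matching your decomposition: Lemma~\ref{lem:sigmanotzero} treats the region $\sigma\neq 0$ over $Y$ (your case (iii)) by observing that $\delta^{-2}(hxD_x)^*(hxD_x)+V$ is a positive b-operator there, and Lemma~\ref{lem:nearelliptic} treats $q_0\in\ellip$ (your case (ii)) by using $(1-c_0)k^{ij}\eta_i\eta_j+V>0$ so that the tangential part together with the manifestly nonnegative $(hD_x)^*(hD_x)$ is coercive. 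A microlocal partition of unity then gives the proposition. This is the standard route in b-type elliptic regularity (cf.\ Vasy \cite{vasy2008propagation}), precisely because a symbolic parametrix is unavailable.

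The gap in your scheme is the iteration: if $\tilde Q\in\Psibhc^{-2}$ is a b-parametrix for $\tilde P$ near $q_0\in\ellip$, then $\tilde Q P = A + \tilde Q(hD_x)^*(hD_x) + O(h)$, and the error $\tilde Q(hD_x)^*(hD_x)\in\Diff_h^2\Psibh^{-2}$ is \emph{not} small in any filtration the mixed class respects---it is $O(1)$ on $H^1_h$, not $O(h)$. No iteration in $\Diff_h^k\Psibh^m+\Psibhc^l$ will drive this to zero, because $(hD_x)^2$ carries no b-symbolic order that a b-pseudodifferential inverse can cancel; Lemmas~\ref{lem:AhDxswap} and~\ref{lem:hDxcommutator} only rearrange factors, they do not produce gain. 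The quadratic-form argument sidesteps this by using positivity of $(hD_x)^*(hD_x)$ directly. Your ``cleaner alternative'' is closer in spirit, but note that Lemma~\ref{lem:L2H1} is itself a \emph{corollary} of Lemma~\ref{lem:greens}, so you would still need that Dirichlet-form ingredient to get off the ground.
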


An immediate consequence of Proposition \ref{prop:elliptic} is microlocal b-elliptic regularity, in the semiclassical sense.

\begin{prop}
	If $u$ is $h$-tempered in $H^1_h(X)$, then $\WFb^{1,r}(u) \subset \WFb^{-1,r}(Pu) \cup \cchare$ for each $r \in \RR \cup \{+\infty\}$.
\end{prop}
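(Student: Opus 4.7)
The plan is to deduce this immediately from Proposition \ref{prop:elliptic} by a standard microlocalization argument.

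First, suppose $q_0 \in \OL{\bT^*}X$ lies outside both $\WFb^{-1,r}(Pu)$ and $\cchare$. Since $\cchare$ is closed and $q_0 \notin \WFb^{-1,r}(Pu)$ by hypothesis, the definition of the latter wavefront set produces some $G_0 \in \Psibh^0$ elliptic at $q_0$ with
\[
\|G_0 P u\|_{H^{-1}_h} \leq C h^r,
\]
and we may furthermore arrange (by multiplying $G_0$ by a cutoff if necessary) that $\WFb(G_0) \cap \cchare = \emptyset$. I would then choose $A \in \Psibh^0$ elliptic at $q_0$ whose microsupport is sufficiently small that
\[
\WFb(A) \subset \ELLb(G_0) \quad \text{and} \quad \WFb(A) \cap \cchare = \emptyset.
\]

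With these choices, Proposition \ref{prop:elliptic} applies with $G = G_0$ and gives
\[
\|A u\|_{H^1_h} \leq C \|G_0 Pu\|_{H^{-1}_h} + \mathcal{O}(h^\infty)\|u\|_{H^1_h}.
\]
The first term on the right is $\mathcal{O}(h^r)$ by choice of $G_0$. For the second, the hypothesis that $u$ is $h$-tempered in $H^1_h(X)$ gives $\|u\|_{H^1_h} = \mathcal{O}(h^{-N})$ for some $N$, so $\mathcal{O}(h^\infty)\|u\|_{H^1_h} = \mathcal{O}(h^\infty)$, which dominates $\mathcal{O}(h^r)$ for any finite $r$ (and is itself $\mathcal{O}(h^\infty)$ when $r = +\infty$). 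Combining, $\|Au\|_{H^1_h} \leq C' h^r$, which by definition means $q_0 \notin \WFb^{1,r}(u)$.

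I do not anticipate any real obstacle here: the entire analytic content is in Proposition \ref{prop:elliptic} itself, and the deduction is just the usual passage from a norm bound (with a suitable microlocal cutoff elliptic at the point in question) to the statement in terms of wavefront sets, with the $h$-temperedness assumption used exactly to absorb the $\mathcal{O}(h^\infty)\|u\|_{H^1_h}$ error term into the principal estimate.
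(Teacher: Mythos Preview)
Your argument is correct and is exactly the standard deduction the paper has in mind when it says the result is an immediate consequence of Proposition \ref{prop:elliptic}. One minor wording slip: where you write that $\mathcal{O}(h^\infty)$ ``dominates $\mathcal{O}(h^r)$'' you mean the reverse (it is absorbed into the $\mathcal{O}(h^r)$ bound), but the mathematics is fine.
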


Since this is just ordinary elliptic regularity away from $Y$, we will henceforth assume that all pseudodifferential operators have compact support in a normal coordinate chart $\mathcal{U}$. 
We begin by giving a simple microlocal estimate for the Dirichlet form associated with the operator $P$.

\begin{lem} \label{lem:greens}
	If $A, G \in \Psibh^0$ satisfy $\WFb(A) \subset \ELLb(G)$, then
	\begin{align*}
	\int_X h^2|dAu|_g^2 + V|Au|^2 \, dg &\leq C \varepsilon^{-1}\|GPu\|_{H^{-1}_h}^2 \\&+ \varepsilon \|Au\|^2_{H^1_h} +Ch\|Gu\|^2_{H^1_h} + \mathcal{O}(h^\infty)\| u \|^2_{H^1_h}
	\end{align*}
	for each $u \in H^1_h(X)$ and $\varepsilon > 0$.
\end{lem}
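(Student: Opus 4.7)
The strategy is to identify the left-hand side as the Dirichlet quadratic form $\langle PAu, Au\rangle$ and then commute $P$ through $A$. Since $P = -h^2\Delta_g + V$ with $V$ real-valued, an integration by parts on the closed manifold $X$ gives
\begin{equation*}
\int_X h^2|dAu|_g^2 + V|Au|^2\,dg = \langle PAu, Au\rangle,
\end{equation*}
where the right-hand side is the duality pairing between $PAu \in H^{-1}_h$ and $Au \in H^1_h$; both memberships follow from the mapping property $A: H^1_h \to H^1_h$ (via \eqref{sobolevbounded}) and $P: H^1_h \to H^{-1}_h$. Writing $PA = AP + [P,A]$ decomposes the Dirichlet form into a forcing contribution $\langle APu, Au\rangle$ and a commutator contribution $\langle [P,A]u, Au\rangle$, which I will estimate separately.

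For the forcing term, I invoke the b-elliptic parametrix for the pair $A, G$ with $\WFb(A) \subset \ELLb(G)$, producing $Q \in \Psibh^0$ and $R \in h^\infty\Psibh^{-\infty}$ with $A = QG + R$. Uniform $H^{-1}_h$-boundedness of $Q$ together with the smoothing property of $R$ yield
\begin{equation*}
\|APu\|_{H^{-1}_h} \leq C\|GPu\|_{H^{-1}_h} + \mathcal{O}(h^\infty)\|u\|_{H^1_h}.
\end{equation*}
Cauchy--Schwarz in the $H^{-1}_h$--$H^1_h$ duality followed by an $\varepsilon$-weighted Young inequality produces the $C\varepsilon^{-1}\|GPu\|^2_{H^{-1}_h} + \varepsilon\|Au\|^2_{H^1_h}$ contribution, with cross terms absorbed into $\mathcal{O}(h^\infty)\|u\|^2_{H^1_h}$ using $\|Au\|_{H^1_h} \leq C\|u\|_{H^1_h}$.

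For the commutator, split $[P,A] = [-h^2\Delta_g, A] + [V,A]$. The smooth differential piece satisfies $A^*[-h^2\Delta_g, A] \in h\Diff_h^2\Psibh^{-1} \subset h\Diff_h^2\Psibh^0$ by the $\Diff_h \cdot \Psibh$ composition and commutator rules, and its $\WFb^2$ lies inside $\WFb(A) \subset \ELLb(G)$; Lemma \ref{lem:quadraticformWF} then yields
\begin{equation*}
|\langle A^*[-h^2\Delta_g, A]u, u\rangle| \leq Ch\|Gu\|^2_{H^1_h} + \mathcal{O}(h^\infty)\|u\|^2_{H^1_h}.
\end{equation*}
The conormal piece $[V,A] \in h\Psibhc^{-1}$ is not covered by Lemma \ref{lem:quadraticformWF}; instead, a parametrix in the conormal calculus gives $[V,A] = \tilde E G$ modulo $h^\infty\Psibhc^{-\infty}$ with $\tilde E \in h\Psibhc^{-1}$, and $L^2$-boundedness of $\Psibhc^0$ operators yields $\|[V,A]u\|_{L^2} \leq Ch\|Gu\|_{L^2} + \mathcal{O}(h^\infty)\|u\|_{L^2}$. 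Pairing with $Au$ and using the analogous parametrix bound $\|Au\|_{L^2} \leq C\|Gu\|_{L^2} + \mathcal{O}(h^\infty)\|u\|_{L^2}$ produces a bound of the same $Ch\|Gu\|^2_{H^1_h} + \mathcal{O}(h^\infty)\|u\|^2_{H^1_h}$ form.

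The main obstacle is the conormal commutator $[V,A]$: since $V$ only lies in $\Psibhc^0$ rather than in the smooth b-calculus, it falls outside the hypothesis of the clean quadratic-form estimate in Lemma \ref{lem:quadraticformWF}. The workaround is the parametrix-plus-Cauchy--Schwarz argument above, which relies only on the $L^2$-boundedness of the conormal calculus.
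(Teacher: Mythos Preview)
Your proof is correct and follows the same overall approach as the paper: identify the left-hand side as $\langle PAu, Au\rangle$ via Green's formula, split into $\langle APu, Au\rangle + \langle [P,A]u, Au\rangle$, handle the first term by Cauchy--Schwarz plus a microlocal elliptic parametrix, and bound the commutator using Lemma \ref{lem:quadraticformWF}.

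The one noteworthy difference is your treatment of the commutator. The paper simply asserts $\WFb^2([P,A]) \subset \WFb(A)$ and invokes Lemma \ref{lem:quadraticformWF} directly for the full commutator. You instead split $[P,A] = [-h^2\Delta_g, A] + [V,A]$ and treat the two pieces separately, applying Lemma \ref{lem:quadraticformWF} only to the smooth $\Diff_h^2\Psibh$ piece and handling the conormal piece $[V,A] \in h\Psibhc^{-1}$ by a direct $L^2$ parametrix argument. This is arguably more careful: as stated, Lemma \ref{lem:quadraticformWF} has hypothesis $A \in \Diff_h^2\Psibh^0$, so its applicability to the $\Psibhc$ component is not literally covered (although the extension is trivial, and the paper's $\WFb^k$ is defined precisely to accommodate such sums). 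Your explicit decomposition makes transparent why the conormal piece causes no trouble.
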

\begin{proof}
	By Green's formula, if $v \in H^1_h(X)$, then
	\[
	\int_X h^2|dv|^2_g + V|v|^2 \, dg = \left<Pv, v\right>,
	\]	
	where the right hand side is the pairing of $H^{-1}_h(X)$ with $H^1_h(X)$ induced by the volume density. Applying this to $v = Au \in H^1_h(X)$, it remains to estimate 
	\begin{equation} \label{eq:PAu}
	\left<PAu,Au\right> = \left<APu,Au\right> + \left<[P,A]u,Au\right>.
	\end{equation}
	The first term on the right hand side of \eqref{eq:PAu} is simply bounded by Cauchy--Schwarz,
	\begin{equation} \label{eq:ellipticAP}
	|\left<APu,Au\right>| \leq (1/4)\varepsilon^{-1}\| APu \|_{H^{-1}_h}^2 + \varepsilon\|Au\|_{H^1_h}^2.
	\end{equation}
	Since $\WFb(A) \subset \ELLb(G)$, we can use
        microlocal
        ellipticity to estimate
        $\|APu\|_{H^{-1}_h}$ by $\|GPu\|_{H^{-1}_h} +
        \mathcal{O}(h^\infty) \| P u\|_{H^{-1}_h}$ on the right hand
        side of \eqref{eq:ellipticAP}; we may of course further
        estimate $\| P u\|_{H^{-1}_h} \leq C\| u \|_{H^1_h}$. As for the commutator, $\WFb^2([P,A]) \subset \WFb(A)$, and therefore by Lemma \ref{lem:quadraticformWF},
\[
	|\left<[P,A]u,Au \right>| \leq Ch\| Gu \|^2_{H^1_h} + \mathcal{O}(h^\infty)\|u\|^2_{H^1_h}. 
	\]
	This completes the proof.
\end{proof}

Before proving Proposition \ref{prop:elliptic} we record a corollary of Lemma \ref{lem:greens} that will be important when studying the hyperbolic region. Since $V \in L^\infty(X)$, by choosing $\varepsilon > 0$ sufficiently small in Lemma \ref{lem:greens} we can estimate
\begin{equation} \label{eq:L2H1constant}
\| Au \|_{H^1_h} \leq C \| GPu\|_{H^{-1}_h} + Ch \|Gu\|_{H^1_h} +  C_0 \|Au \|_{L^2} + \mathcal{O}(h^\infty)\| u \|_{H^1_h},
\end{equation}
where crucially $C_0 > 0$ is independent of $A$. The remainder can also be improved, at the cost of losing control of $C_0$:

\begin{lem} \label{lem:L2H1}
If $A,G \in \Psibh^0$ satisfy $\WFb(A) \subset \ELLb(G)$, then
\[
\|Au\|_{H^1_h} \leq C\|GPu\|_{H^{-1}_h} + C\|Gu\|_{L^2}  + \mathcal{O}(h^\infty)\|u\|_{H^1_h}.
\]
for each $u \in H^1_h(X)$.
\end{lem}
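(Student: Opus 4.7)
The plan is to iterate the inequality \eqref{eq:L2H1constant} and exploit the extra factor of $h$ in front of $\|Gu\|_{H^1_h}$ to turn it into an $\mathcal{O}(h^\infty)$ remainder, while simultaneously using microlocal ellipticity of $G$ to upgrade $\|Au\|_{L^2}$ to $\|Gu\|_{L^2}$.

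First I would apply the elliptic parametrix construction \eqref{eq:parametrix}: since $\WFb(A) \subset \ELLb(G)$, there is $Q \in \Psibh^0$ with $A = QG + h^\infty \Psibh^{-\infty}$, so $\|Au\|_{L^2} \leq C\|Gu\|_{L^2} + \mathcal{O}(h^\infty)\|u\|_{L^2}$. Plugging this into \eqref{eq:L2H1constant} yields the single-step estimate
\begin{equation} \label{eq:L2H1onestep}
\|Au\|_{H^1_h} \leq C\|GPu\|_{H^{-1}_h} + Ch\|Gu\|_{H^1_h} + C\|Gu\|_{L^2} + \mathcal{O}(h^\infty)\|u\|_{H^1_h}.
\end{equation}

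Next, to absorb the intermediate $Ch\|Gu\|_{H^1_h}$ term, I would build a nested chain of operators $A = A_0, A_1, \ldots, A_N \in \Psibh^0$ with $\WFb(A_i) \subset \ELLb(A_{i+1})$ for $i < N$ and $\WFb(A_N) \subset \ELLb(G)$ (so that $G$ is elliptic on each $\WFb(A_i)$). Applying \eqref{eq:L2H1onestep} with $(A,G)$ replaced by $(A_i, A_{i+1})$ gives
\[
\|A_i u\|_{H^1_h} \leq C\|A_{i+1}Pu\|_{H^{-1}_h} + Ch\|A_{i+1}u\|_{H^1_h} + C\|A_{i+1}u\|_{L^2} + \mathcal{O}(h^\infty)\|u\|_{H^1_h}.
\]
Substituting the $i=1$ estimate into the $i=0$ estimate, then the $i=2$ estimate into that, and so on, the repeated $Ch$ prefactors in front of the $H^1_h$-norm accumulate as a geometric series $\sum_{j=0}^{N-1}(Ch)^j$, which is bounded for small $h$. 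At step $N$ the remaining term $(Ch)^N\|A_N u\|_{H^1_h}$ is controlled by the uniform $H^1_h$-boundedness \eqref{sobolevbounded}, yielding a contribution of size $C^{N+1}h^N\|u\|_{H^1_h}$. All the intermediate $\|A_i Pu\|_{H^{-1}_h}$ and $\|A_i u\|_{L^2}$ terms are then replaced by $\|GPu\|_{H^{-1}_h}$ and $\|Gu\|_{L^2}$ (modulo $\mathcal{O}(h^\infty)$ errors) via microlocal ellipticity, since $\WFb(A_i) \subset \ELLb(G)$ for every $i$.

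This produces, for each fixed $N \in \NN$, an estimate
\[
\|Au\|_{H^1_h} \leq C_N\|GPu\|_{H^{-1}_h} + C_N\|Gu\|_{L^2} + C^{N+1}h^N\|u\|_{H^1_h} + \mathcal{O}(h^\infty)\|u\|_{H^1_h}.
\]
Since $N$ is arbitrary and the coefficients $C_N$ stay bounded (they arise from finite geometric sums in $h$), the $h^N$ term is absorbed into the $\mathcal{O}(h^\infty)\|u\|_{H^1_h}$ remainder. There is no substantive obstacle here beyond bookkeeping: the crucial structural input is the presence of the extra factor of $h$ multiplying the ``bad'' $\|Gu\|_{H^1_h}$ term in \eqref{eq:L2H1constant}, which makes the iteration converge in inverse powers of $h^{-1}$.
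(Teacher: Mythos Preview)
Your proposal is correct and follows essentially the same strategy as the paper: derive the single-step estimate \eqref{eq:L2H1onestep} from \eqref{eq:L2H1constant} plus microlocal ellipticity, then iterate through intermediate operators to convert the $Ch\|Gu\|_{H^1_h}$ term into an $\mathcal{O}(h^\infty)$ remainder. The paper phrases the iteration as an induction on the exponent $k$ in $Ch^k\|Gu\|_{H^1_h}$ (inserting a single intermediate operator $A'$ with $\WFb(A)\subset\ELLb(A')$ and $\WFb(A')\subset\ELLb(G)$ at each step), whereas you build the entire chain $A_0,\ldots,A_N$ up front; these are equivalent bookkeeping choices. One small imprecision: your claim that ``the coefficients $C_N$ stay bounded'' requires the single-step constants along the chain to be uniform, which is not automatic since they depend on the pairs $(A_i,A_{i+1})$; however the lemma only needs the family of estimates indexed by $N$ (with $N$-dependent constants), which is exactly what your iteration yields and also what the paper's induction produces.
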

\begin{proof}
The proof follows by inductively showing that for each $k \in
\mathbb{N}$ and $u \in H^1_h(X)$, \blue{and for every $A,G$ satisfying
  the hypotheses of the lemma,}
\begin{equation} \label{eq:ellipticinduction}
\|Au\|_{H^1_h} \leq   C\|GPu\|_{H^{-1}_h} + C\|Gu\|_{L^2} +  Ch^k\|Gu\|_{H^1_h} + \mathcal{O}(h^\infty)\|u\|^2_{H^1_h}.
\end{equation}
Now \eqref{eq:ellipticinduction} holds for $k=1$ by using \eqref{eq:L2H1constant} and then estimating 
\begin{equation}\label{ellreg4.1}
\|Au\|_{L^2} \leq C\|Gu\|_{L^2} + \mathcal{O}(h^\infty)\| u \|_{L^2}.
\end{equation}
In the inductive step, assume that \eqref{eq:ellipticinduction} holds
for $k=s$. \blue{Apply \eqref{eq:ellipticinduction}, replacing $G$
  with $A'$ satisfying $\WFb(A)\subset \ELLb(A')$ and $\WF(A') \subset
  \ELLb(G)$ to obtain
\begin{equation} \label{eq:ellipticinduction2}
\|Au\|_{H^1_h} \leq   C\|A'Pu\|_{H^{-1}_h} + C\|A'u\|_{L^2} +  Ch^s\|A'u\|_{H^1_h} + \mathcal{O}(h^\infty)\|u\|^2_{H^1_h}.
\end{equation}
Likewise, replacing $A$ with $A'$ in our inductive assumption gives
\begin{equation} \label{eq:ellipticinduction3}
\|A'u\|_{H^1_h} \leq   C\|GPu\|_{H^{-1}_h} + C\|Gu\|_{L^2} +  Ch^s\|Gu\|_{H^1_h} + \mathcal{O}(h^\infty)\|u\|^2_{H^1_h}.
\end{equation}
Then substitute \eqref{eq:ellipticinduction3} into the $\|A' u\|_{H^1_h}$ term on the
right hand side of \eqref{eq:ellipticinduction2}; the remaining $A'$
terms on the right are estimated by the corresponding terms with $G$
by elliptic regularity as in \eqref{ellreg4.1} (recall that the
b-calculus is bounded on $H^{\pm 1}_h$ as well as $L^2$); this completes the
inductive step.}
\end{proof}

Note that the complement of $\cchare$ within $\overline{\bT^*}X$ is
the union of $\overline{\bT^*X}\setminus \OL{\bdotT^*}X$ with
$\ellip$. We begin by studying regularity on the former of these sets.

\begin{lem} \label{lem:sigmanotzero}
If $A \in \Psibh^0$ has compact support in $\{|x| < \delta/\sqrt{2}\}$, where $\delta>0$ satisfies 
			\begin{equation} \label{eq:Vellipticbound}
|V| < \tfrac12 \delta^{-2}\sigma^2
\end{equation}
in a neighborhood of $\WF(A)$, and $G \in \Psibh^0$ satisfies $\ELLb(A) \subset \WFb(G)$, then
\[
\| Au \|_{H^1_h} \leq C\|GPu \|_{H^{-1}_h} + \mathcal{O}(h^\infty)\|u \|_{H^1_h}
\]
for each $u \in H^1_h(X)$. 
\end{lem}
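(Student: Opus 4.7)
The hypotheses secretly encode classical ellipticity of $P$ on $\WFb(A)$: since $\WFb(A) \subset \OL{\bT^*}X \setminus \OL{\bdotT^*}X$ sits entirely over $Y$ and corresponds, in the ordinary cotangent bundle, to the fiber-infinity region in the $\xi$-direction, and since $|V| < \tfrac{1}{2}\delta^{-2}\sigma^2 = \tfrac{1}{2}\delta^{-2}x^2\xi^2$ together with $|x| < \delta/\sqrt{2}$ on $\mathrm{supp}\,A$ forces $|V| < \xi^2/4$ on $\WFb(A)$, the ordinary principal symbol $p = \xi^2 + k^{ij}\eta_i\eta_j + V$ satisfies $p \geq (3/4)\xi^2$ there. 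The plan is to convert this classical ellipticity, which lives at $\xi$-fiber-infinity, into the desired b-estimate via a microlocal parametrix in the conormal b-calculus.

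The central idea is that $x^2 P$ is b-elliptic at $\WFb(A)$. Indeed, using $x(hD_x) = hxD_x$ and $[hD_x, x] = -ih$ one computes $x^2(hD_x)^*(hD_x) = (hxD_x)^2 + ih(hxD_x)$ modulo lower b-order terms, so
\[
x^2 P = (hxD_x)^2 + ih(hxD_x) + x^2 h^2 \Delta_k + x^2 V + h\cdot\Psibhc
\]
has b-principal symbol $\sigma^2 + x^2 k^{ij}\eta_i\eta_j + x^2 V$. At $x=0$ this reduces to $\sigma^2$, elliptic on $\{\sigma \neq 0\}$; off $Y$ it equals $x^2 p$, elliptic by the bound $|V|<\xi^2/4$. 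Symbolic recursion in $\Psibhc$, handling the b-differential leading part $(hxD_x)^2$ alongside the conormal lower-order terms $x^2 V$, produces $Q \in \Psibhc^{-2}$ with
\[
Q \cdot x^2 P = I + E
\]
microlocally on a neighborhood of $\WFb(A)$, where $E \in h^\infty\Psibhc^{-\infty}$. Then $A = AQ \cdot x^2 P - AE$ microlocally, giving
\[
\|Au\|_{H^1_h} \leq \|AQ\, x^2 Pu\|_{H^1_h} + O(h^\infty)\|u\|_{H^1_h}.
\]

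To control $\|AQ\, x^2 Pu\|_{H^1_h}$ by $\|GPu\|_{H^{-1}_h}$, I would write $AQx^2 = AQx^2 \cdot F G + R$, where $F$ is an elliptic parametrix for $G$ on $\ELLb(G)\supset\WFb(AQx^2)$ and $R\in h^\infty\Psibhc^{-\infty}$; the $R$-piece contributes $O(h^\infty)\|u\|_{H^1_h}$. For the main piece, the essential point is that the b-order $-2$ operator $AQx^2 F$ has microsupport contained in $\{x=0,\ \sigma\neq 0\}$---equivalently, at fiber infinity in the $\xi$-direction of $\OL{T^*}X$---where b-Sobolev and ordinary semiclassical Sobolev norms on microlocalized functions are equivalent. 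This equivalence lets the b-order $-2$ genuinely gain two normal derivatives, yielding $\|AQx^2 F\cdot f\|_{H^1_h} \leq C\|f\|_{H^{-1}_h} + O(h^\infty)\|f\|_{H^{-1}_h}$ for $f$ with semiclassical wavefront set in a neighborhood of the relevant region; taking $f = GPu$ completes the proof.

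The main obstacle is the parametrix construction in Step~2, since $x^2 P$ is not itself in $\Psibhc$ but in a hybrid algebra mixing b-differential operators with conormal-coefficient b-pseudodifferential operators; one must track both the elliptic b-differential leading part and the conormal potential through the symbolic recursion without losing the $\Psibhc$ structure. A secondary technical point is verifying the $H^{-1}_h \to H^1_h$ mapping property used in Step~3, which is \emph{not} a feature of b-operators of order $-2$ in general, but holds on functions whose semiclassical wavefront set sits at $\xi$-fiber-infinity over $Y$---precisely the configuration forced by the hypothesis $\WFb(A) \cap \OL{\bdotT^*}X = \emptyset$.
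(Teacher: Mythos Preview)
Your parametrix approach is genuinely different from the paper's and has a real gap.

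First, a minor but consequential error: the opening claim that $\WFb(A)\subset\OL{\bT^*}X\setminus\OL{\bdotT^*}X$ ``sits entirely over $Y$'' is false. The hypotheses only give $|x|<\delta/\sqrt{2}$ and $\sigma\neq 0$ on $\WFb(A)$, not $x=0$. Points with $x\neq 0$ lie in $\bdotT^*X$. So the premise of Step~3---that $AQx^2F$ has microsupport in $\{x=0,\ \sigma\neq 0\}$---is unfounded, and with it the claimed equivalence of b-Sobolev and ordinary Sobolev norms on the relevant microsupport.

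The decisive gap is the $H^{-1}_h\to H^1_h$ mapping property you need in Step~3. A b-operator of order $-2$ gains two orders of \emph{b}-regularity, but $H^1_h$ requires control of $hD_x$, which is not a b-vector field. Even on functions microlocalized near $\{\sigma\neq 0\}$, there is no mechanism by which b-order $-2$ recovers a genuine normal derivative; the identity $hD_x=x^{-1}(hxD_x)$ is only useful away from $Y$. You correctly flag that this mapping ``is not a feature of b-operators of order $-2$ in general,'' but the special case you invoke does not hold either, and the proposal gives no argument for it.

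The paper avoids this entirely by working directly with the Dirichlet form. From Lemma~\ref{lem:greens} one has control of $\int h^2|dAu|^2+V|Au|^2$. The support condition $|x|^2<\delta^2/2$ trades half of $|hD_xAu|^2$ for $\delta^{-2}|hxD_xAu|^2$; then the hypothesis $\delta^{-2}\sigma^2+V>\tfrac{1}{2}\delta^{-2}\sigma^2>0$ on $\WFb(A)$ lets one write $\delta^{-2}(hxD_x)^*(hxD_x)+V=B^*B+hF$ microlocally, with $B\in\Psibhc^1$ elliptic on $\WFb(A)$. The resulting inequality
\[
\tfrac{1}{2}\int h^2|dAu|^2+\|BAu\|_{L^2}^2\leq\int h^2|dAu|^2+V|Au|^2+\text{(lower order)}
\]
bounds $\|Au\|_{H^1_h}^2$ directly: the first term controls all first derivatives (including the normal one), and ellipticity of $B$ controls $\|Au\|_{L^2}$. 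No parametrix is needed, and the $H^1_h$ control arises from the quadratic form rather than from any putative b-operator mapping property.
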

\begin{proof}

  Since $A$ is assumed to have compact support in $\{|x| < \delta/\sqrt{2}\}$,
	\begin{equation} \label{eq:xDxbound}
	\int_X \delta^{-2} |(xhD_x) Au|^2 + V|Au|^2 \,dg \leq \int_X \tfrac{1}{2} h^2|dAu|^2 + V|Au|^2 \, dg.
	\end{equation}
	In view of \eqref{eq:Vellipticbound}, we can choose $B,F \in \Psibhc^1(X,Y)$, where $\WFb(A) \subset \ELLb(B)$, such that
	\[
	\WFb((\delta^{-2}(hxD_x)^*(hxD_x) + V) -  (B^* B + hF)) \cap \WFb(A) = \emptyset.
	\]
	Now integrate by parts in $x$ to write the left hand side of \eqref{eq:xDxbound} as
	\[
	\int_X \delta^{-2} |hxD_x Au|^2 + V|Au|^2 \,dg = \|BAu\|^2_{L^2} + h\left<FAu,Au\right> + \mathcal{O}(h^\infty)\|u \|^2_{L^2}.
	\]
	In particular, this implies that
	\begin{multline*}
	\int_X \tfrac{1}{2} h^2|dAu|^2 \, dg + \|BAu\|^2_{L^2} \leq \int_X h^2|dAu|^2 + V|Au|^2\, dg \\ + h\|FAu\|_{L^2}\|Au\|_{L^2} + \mathcal{O}(h^\infty)\|u \|^2_{L^2}.
	\end{multline*}
	Since $B$ is elliptic on $\WFb(A)$, the left hand side of the inequality above controls $\|Au\|^2_{H^1_h}$, whereas by Lemma \ref{lem:greens} the right hand side is controlled by
	\[
	C\varepsilon\|GPu\|^2_{H^{-1}_h} + (C
        \varepsilon^{-1}+Ch)\|Au\|^2_{H^1_h}+C h\|G u\|^2_{H^1_h} + \mathcal{O}(h^\infty)\|u\|^2_{H^1_h}.
	\]
	Here we used Lemma \ref{lem:H1L2bbounded} to bound the
        operator norm of $F \in \Psibhc^1(X,Y).$	Thus for $\varepsilon > 0$ sufficiently small we can absorb
        the second term on the right hand side into the left hand
        side.

        \blue{This establishes the result but with an extra term $C h\|G
        u\|^2_{H^1_h} $ on the right hand side.  We now eliminate this
      term iteratively, just as in the proof of Lemma~\ref{lem:L2H1}.}
\end{proof}

Lemma \ref{lem:sigmanotzero} will also prove useful later in Section \ref{subsect:truepositivecommutator}. The next step is to consider $A \in \Psibh^0$ with wavefront set in a neighborhood of $q_0 \in \ellip$.

\begin{lem} \label{lem:nearelliptic}
	Let $q_0 \in \ellip$. There exists $A \in \Psibh^0$ with $q_0 \in \ELLb(A)$, such that if $G \in \Psibh^0$ satisfies $\WFb(A) \subset \ELL(G)$, then
\[
\| Au \|_{H^1_h} \leq C\|GPu \|_{H^{-1}_h} + \mathcal{O}(h^\infty)\|u \|_{H^1_h}
\]
for each $u \in H^1_h(X)$. 	
\end{lem}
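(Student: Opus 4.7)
The plan is to choose $A$ microlocalized in a small neighborhood of $q_0$ where the \emph{tangential} principal symbol $\tilde p = k^{ij}\eta_i\eta_j + V$ is positive, and then imitate the Green's formula argument of Lemma~\ref{lem:greens} and Lemma~\ref{lem:sigmanotzero}, using this positivity to control $\|Au\|_{H^1_h}$.

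First I would reduce to the case $q_0 \in \OL{\bdotT^*}_Y X$, i.e.\ $q_0$ lies over the hypersurface $Y$: if $x(q_0)\neq 0$, then on a small neighborhood of $q_0$ the b-calculus coincides with the ordinary semiclassical calculus, $\pi$ is the identity, and $p(q_0) \neq 0$ since $\pi^{-1}(q_0)\cap \chare=\emptyset$, so standard semiclassical elliptic regularity yields the estimate. Otherwise $q_0=(0,y_0,0,\eta_0)$, and $\pi^{-1}(q_0)\cap\chare=\emptyset$ forces $\xi^2 + \tilde p(0,y_0,\eta_0)\neq 0$ for every $\xi\in\RR$, hence $\tilde p(q_0)>0$; at fiber infinity in $\eta$ the quadratic form $k^{ij}\eta_i\eta_j$ dominates $V\in L^\infty$ and one gets the same positivity automatically. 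By continuity of $V$ (Lemma~\ref{lemma:holder}) there is a small neighborhood $U$ of $q_0$ in $\OL{\bT^*}X$ on which $k^{ij}(x,y)\eta_i\eta_j + V(x,y) \geq c\langle\eta\rangle^2$ for some $c>0$. Choose $A\in\Psibh^0$ with $q_0\in\ELLb(A)$ and $\WFb(A)\subset U$, further arranging $|\sigma|$ to be small on $\WFb(A)$ (possible since $\sigma(q_0)=0$).

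With this $A$ and $v:=Au$, apply Green's formula:
\[
\langle Pv,v\rangle \;=\; \|hD_xv\|_{L^2}^2 \;+\; \int_X h^2 k^{ij}\partial_{y_i}v\,\overline{\partial_{y_j}v}\, dg \;+\; \int_X V|v|^2\, dg.
\]
Using the positivity of the tangential symbol together with the conormal-coefficient b-calculus (recall $V\in\Psibhc^0$), construct $B\in\Psibhc^1$ with $\bsymbol(B)^2 = k^{ij}\eta_i\eta_j + V$ microlocally on $\WFb(A)$, so that the tangential part of the Dirichlet integral equals $\|Bv\|_{L^2}^2$ modulo $h\langle Rv,v\rangle$ with $R\in\Psibhc^1$ microsupported near $\WFb(A)$, plus $\mathcal{O}(h^\infty)\|u\|_{L^2}^2$. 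Since $B$ is b-elliptic on $\WFb(A)$ and $\|hD_xv\|_{L^2}^2\geq 0$, the left-hand side controls $\|Av\|_{H^1_h}^2$ up to lower-order terms microlocalized in $\ELLb(G)$.

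To finish, I would bound $\langle Pv,v\rangle$ exactly as in Lemma~\ref{lem:greens}: write $\langle PAu,Au\rangle = \langle APu,Au\rangle + \langle[P,A]u,Au\rangle$, estimate the first summand by Cauchy--Schwarz and microlocal ellipticity of $G$ by $C\varepsilon^{-1}\|GPu\|_{H^{-1}_h}^2+\varepsilon\|Au\|_{H^1_h}^2$, and the commutator by $Ch\|Gu\|_{H^1_h}^2+\mathcal{O}(h^\infty)\|u\|_{H^1_h}^2$ via Lemma~\ref{lem:quadraticformWF}. Absorbing $\varepsilon\|Au\|_{H^1_h}^2$ for small $\varepsilon$ and then iterating to remove the $Ch\|Gu\|_{H^1_h}^2$ loss by the same bootstrapping device as in Lemma~\ref{lem:L2H1} (replacing $G$ at each step by an operator with slightly smaller b-microsupport, elliptic on $\WFb(A)$ and elliptic-dominated by the original $G$) produces the desired clean estimate. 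The main obstacle is the careful construction of the tangential factorization $\tilde p = \bsymbol(B)^2$ and the bookkeeping of the conormal remainders in $\Psibhc$; in particular, showing that the $V|v|^2$ term is absorbed into $\|Bv\|_{L^2}^2$ up to a controllable error requires the mapping properties of $\Psibhc^1$ provided by Lemma~\ref{lem:H1L2bbounded}.
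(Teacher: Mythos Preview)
Your proposal is correct and follows essentially the same approach as the paper: Green's formula, positivity of the tangential symbol $\tilde p$ near $q_0$, and a factorization $B^*B$ in the conormal b-calculus. The only cosmetic difference is that the paper factors $(1-c_0)k^{ij}\eta_i\eta_j + V$ (positive for small $c_0>0$) rather than all of $\tilde p$, thereby retaining an explicit $c_0\int k^{ij}(hD_{y_i}Au)\overline{(hD_{y_j}Au)}\,dg$ term on the left; this makes the $H^1_h$ control of $Au$ immediate without invoking b-elliptic regularity of your order-$1$ operator $B$, but the argument is otherwise identical (and your remark about iterating away the $Ch\|Gu\|_{H^1_h}^2$ loss is a point the paper leaves implicit).
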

\begin{proof}
If $\WFb(A)$ is a sufficiently small neighborhood of $q_0$, then there exists $c_0>0$ such that
	\begin{equation} \label{eq:ellipticpositive}
	(1-c_0)k^{ij}\eta_i \eta_j +V > 0
	\end{equation}
	near $\WFb(A)$. As in the proof of Lemma \ref{lem:sigmanotzero}, we can choose $B,F \in \Psibhc^1(X,Y)$, where $\WFb(A) \subset \ELLb(B)$, such that
	\[
	\WFb((1-c_0)k^{ij}(hD_{y^i})(hD_{y^j}) + V ) -  (B^* B + hF)) \cap \WFb(A) = \emptyset.
	\] Integrating by parts in $y$, it follows that 
	\begin{multline*}
	\int_X |(hD_x)Au|^2 + c_0 k^{ij}(hD_{y^i} Au)(\overline{hD_{y^j}Au}) + |BAu|^2 \, dg \\ \leq C \int_X h^2 |dAu|^2 + V|Au|^2 \, dg + h\left<FAu,Au\right> + \mathcal{O}(h^\infty)\|u \|^2_{L^2}.
	\end{multline*}
	which completes the proof as above, since the left hand side controls a multiple of $\|Au \|_{H^1_h}^2$. 
\end{proof}

Proposition \ref{prop:elliptic} follows by combining Lemmas \ref{lem:sigmanotzero}, \ref{lem:nearelliptic} with a microlocal partition of unity argument.

\subsection{The hyperbolic region} \label{subsect:hyperbolicregion}
 Since $\hyp$ is  a compact subset of $\bT^*X$, it suffices to work with pseudodifferential operators that are both compactly supported in a normal coordinate patch $\mathcal{U}$ and compactly microlocalized. Let $q_0 \in \mathcal{H}$. If  $(x,y,\sigma,\eta)$ are local coordinates near $q_0$, then
\[
q_0 = (0,y_0,0,\eta_0),
\]
where $\tilde p(q_0)  < 0$. 

\begin{prop}\label{prop:qualitativehyp}
	Suppose that $u$ is $h$-tempered in $H^1_h(X)$ and $q_0 \notin \WFb^{-1,r+1}(Pu)$, where $r \in \RR \cup \{+\infty\}$. If $q_0$ has a neighborhood $U \subset \cchare$ such that 
	\[
	U\cap \WFb^{1,r}(u) \cap \{\sigma < 0\} = \emptyset,
	\] 
	then $q_0 \notin \WFb^{1,r}(u)$.
\end{prop}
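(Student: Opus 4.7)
The argument is a positive commutator argument in the semiclassical b-calculus. By Lemma \ref{lem:equivalentWF} it suffices to show $q_0 \notin \WFb^r(u)$. Work in local coordinates $(x,y,\sigma,\eta)$ near $q_0 = (0, y_0, 0, \eta_0)$; hyperbolicity means $\tilde p(q_0) < 0$.

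The plan is to construct a commutant $A = \Opbh(a)$ with $a \in \symbbh^0$ nonnegative, elliptic at $q_0$, and microsupported in $U$, whose principal b-symbol satisfies the positive-commutator identity
\begin{equation*}
\bhamvf_p(a^2) = -b^2 + e,
\end{equation*}
where $b \in \symbbh^0$ is elliptic at $q_0$ and $\supp e \subset U \cap \{\sigma < 0\}$ lies in the a priori controlled region. The construction follows the standard Melrose--Sj\"ostrand template: introduce the propagation variable $\omega_1 = -\sigma + M\omega_0$, where $\omega_0 = |y-y_0|^2 + |\eta-\eta_0|^2$ is the tangential distance squared and $M$ is a large constant. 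Hyperbolicity yields $\bhamvf_p\sigma = -2\tilde p > 0$ at $q_0$, while the other components of $\bhamvf_p$ are bounded (using that $x\pa_x V$ and $\pa_{y_i} V$ are bounded by Lemma \ref{lemma:holder} and \eqref{eq:L1lipschitz}), so for $M$ large and the neighborhood small $\bhamvf_p \omega_1 \leq -c < 0$. A standard cutoff $a = \chi_1(\omega_1)^{1/2}\chi_0(\omega_0)$, with $\chi_0,\chi_1$ tailored so that $\chi_1'\geq 0$ is elliptic at $q_0$ and the remaining $\chi_0'$-contribution, combined with $\chi_1 > 0$, lies (thanks to the large $M$) inside $\{\sigma<0\}$, produces the required identity.

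The commutator $(i/h)[P, A^*A]$ is computed by splitting $P = (hD_x)^*(hD_x) + \tilde P$ as in \eqref{Ptilde}. The commutator of $\tilde P \in \Psibhc^2$ with $A^*A$ lies in $h\Psibhc^1$ with principal b-symbol $\bhamvf_{\tilde p}(a^2)$, while Lemma \ref{lem:Dx2Acommutator} gives $(i/h)[(hD_x)^*(hD_x), A^*A] = (hD_x)^*B_0(hD_x) + (hD_x)^*B_1 + hR$ with $\bsymbol(B_0) = 4a\pa_\sigma a$ and $\bsymbol(B_1) = 4a\pa_x a$. Pairing against $u$ via Green's identity, the $(hD_x)^*B_0(hD_x)$ term becomes an $L^2$-pairing of $B_0(hD_x)u$ with $(hD_x)u$; on $\Sigma$, where $\xi^2 = -\tilde p$, these reassemble with the $\tilde P$-piece to produce precisely $\bhamvf_p(a^2)$ at the principal level. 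Controlling subprincipal remainders via Lemma \ref{lem:quadraticformWF} yields
\begin{equation*}
\|Bu\|^2_{L^2} \leq C\lvert\langle Pu, A^*Au\rangle\rvert + C\|Eu\|^2_{H^1_h} + Ch\|Gu\|^2_{H^1_h} + \mathcal{O}(h^\infty)\|u\|^2_{H^1_h},
\end{equation*}
where $B = \Opbh(b)$, $\WFb(E) \subset U \cap \{\sigma < 0\}$, and $G$ is elliptic on $\WFb(A) \cup \WFb(E)$. By hypothesis $\|GPu\|_{H^{-1}_h} = \mathcal{O}(h^{r+1})$ and $\|Eu\|_{H^1_h} = \mathcal{O}(h^r)$; the standard propagation bootstrap, inserting weights $h^{-r}$ and using a family of commutants with shrinking microsupports together with Lemma \ref{lem:L2H1} to absorb the $Ch\|Gu\|^2_{H^1_h}$ remainder by the output of the previous iterate, then gives $\|Bu\|_{L^2} = \mathcal{O}(h^r)$, so $q_0 \notin \WFb^r(u)$.

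The principal obstacle is that $\hamvf_p$ is not even continuous across $Y$ when $\alpha < 1$, so Hamilton's equations are ill-posed and the usual smooth commutator calculus is unavailable. It is crucial that we use a b-symbol commutant (independent of $\xi$), for then $\pi^*a$ is $\pi$-invariant and by Lemma \ref{lem:Hpextends}, $\hamvf_p(\pi^*a)$ extends continuously across $Y$. This continuity, together with the bookkeeping of Lemma \ref{lem:Dx2Acommutator} segregating the $(hD_x)$-factors from the b-pseudodifferential coefficient, is what gives the symbolic identity $\bhamvf_p(a^2) = -b^2 + e$ genuine operator-theoretic content throughout the range $\alpha > 0$.
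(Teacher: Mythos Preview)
Your outline follows the same strategy as the paper---a positive commutator argument in the b-calculus, with Lemma~\ref{lem:Dx2Acommutator} handling the $(hD_x)^*(hD_x)$ piece---but there are real gaps in the execution.

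The most serious is the ``reassembly on $\Sigma$'' step. Since $P \notin \Psibh$, there is no operator-level principal symbol identity with left side $\bhamvf_p(a^2)$; the commutator $(i/h)[P,A^*A]$ lies in $\Diff_h^2\Psibh$ and must be handled piece by piece. The paper's Lemma~\ref{lem:hyperboliccommutator} does this by substituting $(hD_x)^*(hD_x) = P - \tilde P$ in the $B_0$ term (equation~\eqref{eq:hypcommutator}), producing a $B_0 Pu$ contribution and the main positive piece $-B_0\tilde P$. Positivity then comes solely from $-\tilde p>0$ on the hyperbolic region, via a microlocal square root $\tilde B$ with $\tilde B^*\tilde B \approx -\tilde P$, yielding $\|\tilde BBu\|_{L^2}^2$. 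Crucially, the remaining terms $B_1(hD_x)$ and $(i/h)[\tilde P,A^*A]$ are \emph{not} folded into $b^2$: they are treated as perturbations $R_0,R_1$ whose symbols are made small by first taking the scaling parameter $\beta$ large and then $\delta$ small (Lemma~\ref{lem:hyperbolicerrorbound}). The $R_1$ term involves $(hD_x)Bu$, so absorbing it requires $\|Bu\|_{H^1_h}^2$ on the left, obtained from $\|\tilde BBu\|_{L^2}^2$ via~\eqref{eq:L2H1constant}. Your sketch elides this substitution and does not say how the stray $hD_x$ factors are controlled.

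Your commutant also has problems. First, $\omega_0 = |y-y_0|^2 + |\eta-\eta_0|^2$ omits $|x|^2$, so $a$ has no localization in $x$ and $\WFb(A)$ is not contained in a neighborhood of $q_0$ in $\bT^*X$; the paper uses $\omega = |x|^2+|y-y_0|^2+|\eta-\eta_0|^2$. Second, the claim that the $\chi_0'$ contribution lies in $\{\sigma<0\}$ does not follow from your construction: on $\supp\chi_0'$ one has $\omega_0$ bounded below by some $\varepsilon^2$, and any support condition on $\chi_1(\omega_1)$ that keeps $q_0$ (where $\omega_1=0$) in $\supp a$ then forces $\sigma = M\omega_0-\omega_1$ to be \emph{positive} for $M$ large, not negative. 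In the paper the a priori error instead comes from a separate cutoff $\chi_1(2+\sigma/\delta)$ acting directly in $\sigma$, whose derivative is automatically supported in $\{-2\delta\leq\sigma\leq-\delta\}$.
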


Combined with b-elliptic regularity, this proposition implies that if
\blue{$$q_0 \in \WFb^{1,r}(u)\backslash
  \WFb^{-1,r+1}(Pu),$$ then $q_0$ is a limit point} of $\WFb^{1,r}(u) \cap T^*(X\setminus Y)$. This in turns suffices to prove propagation of singularities; see Section \ref{subsect:propagationofsingularities}. The proposition is a restatement of the following quantitative result.

\begin{prop} \label{prop:hyperbolic} If $G \in \Psibh^\COMP$ is elliptic at $q_0$, then there exist $Q, Q_1\in \Psibh^\COMP$, where
\begin{gather*}
\WFb(Q) \subset \ELLb(G) \text{ and } q_0 \in \ELLb(Q),\\
\WFb(Q_1) \subset \ELLb(G) \cap \{\sigma < 0\},
\end{gather*}
such that
\[
\| Qu \|_{H^1_h} \leq  C h^{-1}\| GPu\|_{H^{-1}_h} + C\| Q_1u \|_{H^1_h} + \mathcal{O}(h^\infty)\|u\|_{H^1_h},
\]
for each $u\in H^1_h(X)$. 
\end{prop}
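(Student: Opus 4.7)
The proof is a positive commutator argument in the b-calculus, with commutant microsupported near $q_0$ inside $\ELLb(G)$.  The essential geometric input is strict monotonicity of the $\pi$-invariant function $\sigma=x\xi$ along the b-Hamilton flow near $q_0$: since $q_0\in\hyp$ forces $\tilde p(q_0)<0$, the formal expression $\bhamvf_{p}\sigma=2\xi^{2}-x\partial_x V$ equals $2\xi_0^{2}>0$ at $q_0$ and extends continuously to a full neighborhood of $q_0$ in $\bT^{*}X$ by Lemma~\ref{lem:Hpextends} (here the boundedness of the a priori singular term $x\partial_x V$ is what the conormality of $V$, together with Lemma~\ref{lem:vanishes}, provides).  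Hence $\bhamvf_{p}\sigma\geq c_0>0$ on some neighborhood of $q_0$.

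\textbf{Commutant and commutator.} I would take $A=\Op_{\mathrm{b},h}(a)\in\Psibh^{\COMP}$ self-adjoint, with symbol of product form
\[
a(x,y,\sigma,\eta)=\chi_{0}(-\sigma/\delta)\,\chi_{1}(y,\eta),
\]
where $\chi_{1}$ is a tangential bump near $(y_{0},\eta_{0})$ lying in the tangential projection of $\ELLb(G)$ and chosen as a thin slab approximately invariant under the tangential projection of the flow, while $\chi_{0}\in\CcI(\RR)$ is arranged so that $\chi_{0}(0)>0$ and $\chi_{0}\chi_{0}'=-e_{0}^{2}+e_{1}^{2}$ with $e_{0}$ elliptic at $0$ and $\supp e_{1}\subset\{-\sigma/\delta<-c\}$.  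Then $a$ is elliptic at $q_0$, $\WFb(A)\subset\ELLb(G)$, and property~\ref{it:commutator} of the b-calculus gives
\[
\bsymbol\!\bigl(\tfrac{i}{h}[P,A^{2}]\bigr)=\bhamvf_{p}(a^{2})=\tfrac{2}{\delta}(\bhamvf_{p}\sigma)\,\chi_{1}^{2}(e_{0}^{2}-e_{1}^{2})(-\sigma/\delta)+2\chi_{0}^{2}\chi_{1}\bhamvf_{p}\chi_{1}.
\]
Using $\bhamvf_{p}\sigma\geq c_{0}>0$ on the support, the first term has the shape $q_{0}^{2}-q_{1}^{2}$ with $q_{0}$ elliptic at $q_0$ and $\supp q_{1}\subset\{\sigma<-c\delta/2\}\subset\{\sigma<0\}$; the tangential error $\chi_{0}^{2}\chi_{1}\bhamvf_{p}\chi_{1}$ is (by the slab construction) supported in a region where $u$ is controlled by $\|Gu\|_{H^{1}_{h}}$ together with $\|Q_{1}u\|_{H^{1}_{h}}$.

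\textbf{Pairing and arithmetic.} Pairing with $u\in H^{1}_{h}(X)$, formal self-adjointness of $P$ gives
\[
\Bigl|\Bigl\langle\tfrac{i}{h}[P,A^{2}]u,u\Bigr\rangle\Bigr|=\tfrac{2}{h}\bigl|\Im\langle Au,APu\rangle\bigr|\leq Ch^{-1}\|Au\|_{H^{1}_{h}}\|GPu\|_{H^{-1}_{h}}+\mathcal{O}(h^{\infty})\|u\|_{H^{1}_{h}}^{2}.
\]
Combined with the symbol identity above, and using Lemma~\ref{lem:quadraticformWF} to estimate the remaining quadratic forms of $\Diff_{h}^{2}\Psibhc^{-1}$-errors by $\|Gu\|_{H^{1}_{h}}^{2}$, one is led to
\[
\|Q_{0}u\|_{L^{2}}^{2}\leq\|Q_{1}u\|_{L^{2}}^{2}+Ch^{-1}\|Au\|_{H^{1}_{h}}\|GPu\|_{H^{-1}_{h}}+Ch\|Gu\|_{H^{1}_{h}}^{2}+\mathcal{O}(h^{\infty})\|u\|_{H^{1}_{h}}^{2}.
\]
Lemma~\ref{lem:L2H1} applied to $A$ and $G$ bounds $\|Au\|_{H^{1}_{h}}$ and $\|Gu\|_{H^{1}_{h}}$ in terms of $\|GPu\|_{H^{-1}_{h}}+\|Gu\|_{L^{2}}$; Young's inequality then absorbs the arising $\|Gu\|_{L^{2}}^{2}$ into $\|Q_{0}u\|_{L^{2}}^{2}$ (after shrinking the microsupports appropriately), yielding the $L^{2}$-version of the desired estimate.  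A final application of Lemma~\ref{lem:L2H1} to $Q_{0}$ upgrades the left-hand side from $L^{2}$ to $H^{1}_{h}$, and one sets $Q=Q_{0}$ and $Q_{1}$ as constructed.

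\textbf{Main obstacle.} The principal subtlety is tracking the conormal singularity of $V$ through the whole argument.  It appears already in $\bhamvf_{p}\sigma$ via the term $x\partial_x V$, whose boundedness relies crucially on $\pi$-invariance of $\sigma$ (without this, $\partial_\xi a$ would pair with the genuinely singular $\partial_x V$); it also appears through $[V,A^{2}]$, which only lives in $h\Psibhc^{-\infty}$ rather than $h\Psibh^{-\infty}$.  Consequently the entire calculation must be performed in the conormal-coefficient calculus $\Psibhc$.  The $L^{2}$-boundedness of $\Psibhc^{0}$ furnished by property~\ref{it:L2}, together with Lemma~\ref{lem:H1L2bbounded} for the degree-one pieces, is exactly what is needed to ensure every $V$-generated error term is absorbable without loss of the finite negative power of $h$ on the right.
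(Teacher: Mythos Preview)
Your argument has a structural gap at the very first step: you invoke property~\ref{it:commutator} of the b-calculus to write $\bsymbol\bigl(\tfrac{i}{h}[P,A^{2}]\bigr)=\bhamvf_{p}(a^{2})$, but that property applies only when both operators lie in $\Psibhc$.  The operator $P$ does not: its $(hD_x)^{*}(hD_x)$ part is genuinely in $\Diff_h^{2}$ and not in the b-algebra.  Correspondingly, $p=\xi^{2}+\tilde p$ is not a b-symbol at all --- in b-coordinates $\xi^{2}=\sigma^{2}/x^{2}$, which blows up at $x=0$, so the expression $\bhamvf_{p}\sigma=2\xi^{2}-x\partial_x V$ you write down is not a function on $\bT^{*}X$.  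Lemma~\ref{lem:Hpextends} gives continuity of $\hamvf_p f$ on $T^{*}X$ for $\pi$-invariant $f$, but that is a statement upstairs in $T^{*}X$ and does not produce a b-principal symbol for the commutator.  The commutator $[P,A^{*}A]$ lives only in $h\Diff_h^{2}\Psibh^{-1}$, so positivity cannot be read off from a single scalar b-symbol.

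The paper confronts exactly this point.  It never attempts to treat $[P,A^{*}A]$ symbolically as a whole; instead it uses Lemma~\ref{lem:Dx2Acommutator} to write
\[
(i/h)[P,A^{*}A]=B_{0}P - B_{0}\tilde P + B_{1}(hD_x) + (i/h)[\tilde P,A^{*}A] + h\,\Diff_h^{2}\Psibh^{\COMP},
\]
with $\bsymbol(B_{0})=2\partial_{\sigma}(a^{2})$ and $\bsymbol(B_{1})=2\partial_{x}(a^{2})$.  The $B_{0}P$ term is thrown to the right-hand side; the main positivity comes from $-B_{0}\tilde P$, exploiting $\tilde p(q_{0})<0$ at the \emph{operator} level via an auxiliary $\tilde B\in\Psibhc^{\COMP}$ with $\tilde B^{*}\tilde B+\tilde P$ microlocally trivial (Lemma~\ref{lem:hyperboliccommutator}).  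The remaining pieces $B_{1}(hD_x)$ and $(i/h)[\tilde P,A^{*}A]$ are genuine error terms, controlled only after choosing the parameters $\beta$ large and then $\delta$ small (Lemma~\ref{lem:hyperbolicerrorbound}).  A second, smaller issue in your sketch: you propose to ``absorb $\|Gu\|_{L^{2}}^{2}$ into $\|Q_{0}u\|_{L^{2}}^{2}$,'' but $G$ is elliptic on a strictly larger set than $Q_{0}$, so no such absorption is possible.  The paper instead retains a term $Ch^{1/2}\|Gu\|_{H^{1}_{h}}$ and removes it by iteration over a nested family of commutants with decreasing $\delta$, gaining $h^{1/2}$ at each step.
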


Proposition \ref{prop:hyperbolic} holds verbatim if we replace $\sigma$ with $-\sigma$ (corresponding to propagation in the backwards direction). 
We prove Proposition \ref{prop:hyperbolic} by a positive commutator argument, closely following \cite[Section 6]{vasy2008propagation}. Define the functions
\[
\omega = |x|^2 + |y-y_0|^2 + |\eta - \eta_0|^2, \quad \phi = \sigma+\frac{1}{\beta^2\delta}\omega.
\]
Here the parameters $\delta,\beta \in (0,\infty)$ will be chosen
later;  $\delta$ will be chosen small, while in this
argument $\beta$ will ultimately be taken to be large.

Observe that $|W\phi| \leq C(1+\beta^{-2}\delta^{-1})\omega^{1/2}$,
where $W \in \{\partial_x, x\partial_{\sigma}, \partial_{y_i},
\partial_{\eta_i}\}$. In particular, if $f \in \CI(\bT^*X)$, and $U$
is a neighborhood of $q_0$ with compact closure in $\bT^*X$, then
using Lemma~\ref{lemma:holder} we find
\begin{equation} \label{eq:phiderivatives}
|\partial_x \phi| + |H_f \phi| \leq C_0(1+\beta^{-2}\delta^{-1})\omega^{1/2}
\end{equation}
on $U$, where $C_0 > 0$ does not depend on $\beta,\delta$. Choose cutoff functions $\chi_0, \chi_1$ with the following properties:
\begin{itemize} \itemsep6pt
	\item $\chi_0$ is supported in $[0, \infty)$, with $\chi_0(s)
	=\exp(-1/s)$ for $s>0$.
	\item $\chi_1$ is supported in $[0,\infty)$, with $\chi_1(s) =1$ for
	$s\geq 1$, and $\chi_1'\geq 0$.
\end{itemize}
Now set
\begin{equation} \label{eq:hypcommutant}
a=\chi_0(2-\phi/\delta) \chi_1(2+\sigma/\delta).
\end{equation}
For each fixed $\beta > 0$, the support of $a$ is controlled by the
parameter $\delta > 0$ as follows.

\begin{lem} \label{lem:asupport}
	Given a neighborhood $U\subset \bT^* X$ of $q_0 \in \hyp$ and $\beta>0$, there exists $\delta_0>0$ such that $\supp a \subset U$ for each $\delta \in (0,\delta_0)$.
\end{lem}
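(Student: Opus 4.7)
The plan is to directly unpack the two cutoff factors in the definition \eqref{eq:hypcommutant} and read off explicit bounds on the coordinates $(x, y, \sigma, \eta)$ throughout $\supp a$.

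First, I would observe that $a(q) \neq 0$ forces simultaneously
\[
\chi_0(2 - \phi/\delta) \neq 0 \quad \text{and} \quad \chi_1(2 + \sigma/\delta) \neq 0,
\]
which, from the support properties of $\chi_0$ and $\chi_1$, translates into the two inequalities $\phi(q) < 2\delta$ and $\sigma(q) > -2\delta$. Substituting $\phi = \sigma + (\beta^2 \delta)^{-1} \omega$ into the first inequality and then eliminating $\sigma$ using the second gives
\[
\frac{1}{\beta^2 \delta}\, \omega(q) < 2\delta - \sigma(q) < 4\delta,
\]
so that $\omega(q) < 4 \beta^2 \delta^2$ and $|\sigma(q)| < 2\delta$ on $\supp a$.

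Next, since $\omega = |x|^2 + |y - y_0|^2 + |\eta - \eta_0|^2$ controls each coordinate separately, I would conclude that on $\supp a$,
\[
|x| + |y - y_0| + |\eta - \eta_0| \leq C \beta \delta, \qquad |\sigma| \leq 2\delta,
\]
with $C$ absolute. For fixed $\beta$, the right-hand sides tend to $0$ as $\delta \to 0^+$, so $\supp a$ collapses to the single point $q_0 = (0, y_0, 0, \eta_0)$. Choosing $\delta_0 > 0$ small enough that the closed box defined by these estimates (with $\delta = \delta_0$) lies in $U$ then gives the claim for all $\delta \in (0, \delta_0)$. There is no genuine obstacle here; the argument is a direct coordinate estimate on the support of the cutoff functions.
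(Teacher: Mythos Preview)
Your proof is correct and follows essentially the same approach as the paper: both derive the support conditions $|\sigma| \leq 2\delta$ and $\omega \leq 4\beta^2\delta^2$ directly from the cutoffs $\chi_0,\chi_1$, then observe that for fixed $\beta$ these force $\supp a$ into any prescribed neighborhood of $q_0$ once $\delta$ is small. The only implicit step is the upper bound $\sigma < 2\delta$, which follows immediately from $\phi < 2\delta$ and $\omega \geq 0$.
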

\begin{proof}
	Necessary conditions to lie in the support of $a$ are $\phi \leq 2\delta$ and  $-2\delta  \leq \sigma$. From the definition of $\phi$, 
	\[
	|\sigma| \leq 2\delta, \quad 0 \leq \omega \leq \beta^2\delta(2 \delta-\sigma) \leq 4\beta^2 \delta^2
	\] 
	on $\supp a$, i.e.,\
        \begin{equation}
          \label{suppa}
          \supp a \subset \{|\sigma| \leq 2\delta, \, \omega^{1/2} \leq 2\beta \delta \}.
        \end{equation}
        Finally, observe that any neighborhood of $U$ of $q_0$ contains a set of the form $\{|\sigma| \leq 2\delta, \, \omega^{1/2} \leq 2\beta \delta \}$ provided $\delta$ is sufficiently small.
\end{proof}

If $A \in \Psibh^\COMP$ has principal symbol $a$, the goal is to obtain negativity of the
commutator $(i/h)[P,A^*A]$. This cannot be done symbolically within
the b-calculus, since $P$ is merely an element of $\Diff^2_h$ (for
more motivational material, see \cite[Section
6]{vasy2008propagation}). Using the expression for $P$ and the
notation of Lemma \ref{lem:Dx2Acommutator} and \eqref{Ptilde},
\begin{align} \label{eq:hypcommutator}
(i/h)[P,A^*A] &= B_0(hD_x)^*(hD_x) + B_1(hD_x) + (i/h)[\tilde P,A^*A]  + h \Diff_h^2 \Psibh^\COMP                \notag \\ &= B_0P - B_0\tilde P + B_1(hD_x) +
                            (i/h)[\tilde{P},A^*A] + h \Diff_h^2 \Psibh^\COMP,
\end{align}
where $\bsymbol(B_0) = 2\partial_{\sigma}(a^2)$ and $\bsymbol(B_1) = 2\partial_x(a^2)$. The last term is a b-pseudodifferential operator (with conormal coefficients) with principal symbol $\bhamvf_{\tilde p}a^2$.

The symbols of the operators in \eqref{eq:hypcommutator} can be further decomposed, depending on whether the various derivatives fall onto $\chi_0$ or $\chi_1$ when $a^2$ is differentiated. Those terms differentiating $\chi_1$ give rise to an term error supported on $\{-2\delta \leq \sigma \leq -\delta, \, \omega^{1/2} \leq 2\beta\delta\}$, whereas derivatives of $\chi_0$ will yield positivity. To this end, define
\begin{equation}\label{bdef}
b = 2\delta^{-1/2} (\chi'_0 \chi_0)^{1/2} \chi_1, \quad B = \Op_h(b).
\end{equation}
Here we have suppressed the arguments of $\chi_0,\chi_1$ as in \eqref{eq:hypcommutant}. 

Next, fix a neighborhood $U_0$ of $q_0$ with compact closure in $\bT^*X$ such that $\tilde p <0$ near $U_0$. Thus we can choose $\tilde{B} \in \Psibhc^\COMP$ such that
\[
U_0 \subset \ELLb(\tilde B), \quad 	\WFb(\tilde B^*\tilde B + \tilde P) \cap U_0 = \emptyset. 
\]
The operators $A,B$ depend on $\delta,\beta$, whereas $\tilde B$ does not. Finally, fix $\alpha_0 \in (0,\alpha)$ and let $\theta = \min(1,\alpha_0) \in (0,1]$. According to Lemma \ref{lem:vanishes} and \eqref{eq:vanishingholder},
\[
|x\partial_x V| \leq C|x|^{\theta}
\]
on $\mathcal{U}$. We then have the following decomposition of $[P,A^*A]$:

\begin{lem} \label{lem:hyperboliccommutator}
Given $\beta >0$, there exists $\delta_0 >0$ such that for each $\delta \in (0,\delta_0)$,
\begin{equation} \label{eq:hyperboliccommutator}
(i/h)[P,A^*A] = B_0 P - B^*(\tilde B^*\tilde B + R_0 + (hD_x)^* R_1)B + E + hR,
\end{equation}
where $A,B, B_0,C$ are as above, and remaining operators in \eqref{eq:hyperboliccommutator} have the following properties:
\begin{itemize} \itemsep6pt 
	\item $R_0 \in \Psibhc^\COMP$ and $R_1 \in \Psibh^\COMP$ satisfy 
	\[
	|\bsymbol(R_i) | \leq C_1((\delta \beta)^{\theta} + \beta^{-1}),
\]
 where $C_1 > 0$ does not depend on $\beta,\delta$.
\item $E,R\in \Diff^2_h \Psibh^\COMP + \Psibhc^\COMP$, and $\WFb^2(E) \subset \{-2\delta \leq \sigma \leq -\delta,\, \omega^{1/2} \leq 2\beta\delta\}$. 

\end{itemize}
The b-wavefront sets of $R_0,R_1, R$ are contained in $ \{|\sigma| \leq 2\delta, \, \omega^{1/2} \leq 2\beta\delta\}$.
\end{lem}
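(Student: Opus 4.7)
The plan is to start from the identity \eqref{eq:hypcommutator} and rewrite the three terms $-B_0\tilde P$, $B_1(hD_x)$, and $(i/h)[\tilde P, A^*A]$ in the prescribed form by matching b-principal symbols on the one hand, and absorbing all sub-principal corrections into $hR$ on the other. The key structural facts I would use repeatedly are: (a) $\tilde P = h^2\Delta_k + V \in \Psibhc^2$ is a b-operator (with conormal coefficients) because $\Delta_k$ is purely tangential and $V \in \Psibhc^0$; (b) by construction, $\tilde B^*\tilde B + \tilde P \in h^\infty\Psibhc^{-\infty}$ near $U_0$, so we may replace $\tilde P$ by $-\tilde B^*\tilde B$ against anything with b-wavefront set in $U_0$; and (c) Lemma \ref{lem:asupport} tells us that for small $\delta$ both $\supp a$ and $\supp b$ lie in $U_0$.

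First I would compute the relevant principal symbols from the definitions in \eqref{eq:hypcommutant} and \eqref{bdef}. A direct calculation gives
\[
\sigma_b(B_0) = 2\partial_\sigma(a^2) = -b^2 + 4\delta^{-1}\chi_0^2\chi_1\chi_1',\qquad \sigma_b(B_1) = 2\partial_x(a^2) = -\tfrac{2x}{\beta^2}b^2,
\]
since only the $\omega$-term in $\phi$ depends on $x$, and $\partial_x\omega = 2x$. For the b-commutator,
\[
\sigma_b\!\bigl(\tfrac{i}{h}[\tilde P, A^*A]\bigr) = \bhamvf_{\tilde p}(a^2) = -\tfrac{1}{2}b^2\,\bhamvf_{\tilde p}\phi + 2\delta^{-1}\chi_0^2\chi_1\chi_1'\,\bhamvf_{\tilde p}\sigma.
\]
Setting $r_0 = \tfrac12\bhamvf_{\tilde p}\phi$ and $r_1 = 2x/\beta^2$ on $\supp b$, I then assemble $-B^*(\tilde B^*\tilde B + R_0 + (hD_x)^*R_1)B$: its b-principal symbol is $-b^2|\tilde b|^2 - b^2 r_0 - b^2 \xi r_1$, and since $|\tilde b|^2 = -\tilde p$ on $\supp b$ this equals $b^2\tilde p - \tfrac{b^2}{2}\bhamvf_{\tilde p}\phi - \tfrac{2x\xi}{\beta^2}b^2$, which agrees with the non-$\chi_1'$ part of $-\sigma_b(B_0)\tilde p + \sigma_b(B_1)\xi + \bhamvf_{\tilde p}(a^2)$. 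The $\chi_1'$-supported contributions collect into $E \in \Diff^2_h\Psibh^\COMP + \Psibhc^\COMP$ whose b-wavefront set lies in $\{-2\delta\leq \sigma\leq -\delta,\ \omega^{1/2}\leq 2\beta\delta\}$ by \eqref{suppa} and $\supp \chi_1'\subset [-1,0]$ (scaled). All remaining differences -- the errors in the symbolic identities above, the reordering $B^*B\tilde P \leadsto B^*\tilde P B + B^*[B,\tilde P]$ and analogously for $(hD_x)$, the replacement of $\tilde P$ by $-\tilde B^*\tilde B$ off $U_0$, and the final commutator $h\Diff^2_h\Psibh^\COMP$ already present in \eqref{eq:hypcommutator} -- are each of order $h$ in the appropriate sense and are absorbed into $hR$.

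The delicate point, and the anticipated main obstacle, is verifying the size bounds on $R_0$ and $R_1$ with the stated exponent $\theta$. For $r_1 = 2x/\beta^2$, the support estimate $|x|\leq \omega^{1/2}\leq 2\beta\delta$ gives $|r_1|\leq 4\delta/\beta \leq C\beta^{-1}$. For $r_0 = \tfrac12\bhamvf_{\tilde p}\phi = \tfrac12\bhamvf_{\tilde p}\sigma + \tfrac{1}{2\beta^2\delta}\bhamvf_{\tilde p}\omega$, the smoothness of $k$ yields $|\bhamvf_{\tilde p}\omega|\leq C\omega^{1/2}\leq 2C\beta\delta$ and hence a contribution of size $O(\beta^{-1})$. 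The remaining term $\bhamvf_{\tilde p}\sigma = -x\partial_x\tilde p = -x(\partial_x k^{ij})\eta_i\eta_j - x\partial_x V$ is where the low regularity of $V$ intervenes: Lemma \ref{lem:vanishes} combined with \eqref{eq:vanishingholder} (applied with $\mu + k = -\alpha$, choosing any $\alpha_0 < \alpha$) ensures that $x\partial_x V$ is H\"older with $|x\partial_x V|\leq C|x|^\theta$ for $\theta = \min(1,\alpha_0)$, so on $\supp b$ we obtain $|\bhamvf_{\tilde p}\sigma|\leq C|x|^\theta \leq C(\beta\delta)^\theta$. Adding these yields the claimed $|r_i|\leq C_1((\beta\delta)^\theta + \beta^{-1})$. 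It is precisely because $\bhamvf_{\tilde p}\sigma$ involves $x\partial_x V$ -- a merely conormal quantity -- that we require $R_0 \in \Psibhc^\COMP$ rather than $\Psibh^\COMP$, whereas $R_1$, coming from the purely smooth $2x/\beta^2$, lies in the smaller $\Psibh^\COMP$.
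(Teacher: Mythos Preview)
Your proposal is correct and follows essentially the same approach as the paper: decompose the three terms in \eqref{eq:hypcommutator}, match b-principal symbols to produce $B^*\tilde B^*\tilde B B$, $B^*R_0 B$, $B^*R_1(hD_x)B$, collect the $\chi_1'$-supported pieces into $E$, and absorb all subprincipal corrections into $hR$. The paper organizes this into four separate steps (treating $-B_0\tilde P$, $B_1(hD_x)$, $[h^2\Delta_k,A^*A]$, $[V,A^*A]$ in turn) rather than assembling everything at once, but the content is the same.

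One computational slip: $\partial_x\phi = \tfrac{2x}{\beta^2\delta}$, not $\tfrac{2x}{\beta^2}$, so your $r_1$ is off by a factor of $\delta^{-1}$. This does not affect the conclusion, since with the correct formula one still has $|r_1|\leq 4/\beta$ on $\supp b$, which is the bound actually needed. Also, your phrase ``the smoothness of $k$ yields $|\bhamvf_{\tilde p}\omega|\leq C\omega^{1/2}$'' slightly undersells the point: $\tilde p$ includes $V$, but since $\omega$ does not depend on $\sigma$, the only $V$-contribution to $\bhamvf_{\tilde p}\omega$ is through $\partial_{y_i}V$, which is indeed bounded (tangential smoothness), so the estimate holds as you claim.
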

\begin{proof}
		Throughout the proof, we will use the notation $E,R$ to denote any operators satisfying the hypotheses of the lemma; these may change from line to line. Fix a cutoff $\psi \in \CI(\bT^*X;[0,1])$ such that $\psi = 1$ near $\{|\sigma| \leq 2\delta,\, \omega^{1/2} \leq 2\beta\delta\}$ with support in $\{|\sigma| < 3\delta, \, \omega^{1/2} < 3\beta\delta\}$.
		
			\begin{inparaenum}
		\item As in Lemma \ref{lem:asupport}, given $\beta > 0$ we can choose $\delta_0 > 0$ so that $\WFb(B) \subset U_0$ for $\delta \in (0,\delta_0)$; without loss we can assume that $\delta\beta \leq 1$. On the other hand, 
		\begin{align} \label{eq:aderivative}
		\bsymbol(B_0) &= -4\delta^{-1} (\chi'_0 \chi_0) \chi_1^2  + 4\delta^{-1}\chi_0^2 (\chi_1' \chi_1) \notag \\&= -b^2 + e.
		\end{align}
		Since $e$ is supported in $\{-2\delta \leq \sigma \leq -\delta,\, \omega^{1/2} \leq 2\beta\delta\}$, if we denote its quantization by $E$, then
		\[
		-B_0 \tilde P = -B^*\tilde B^*\tilde B B + E + hR.
		\]
	Here the error $R$ arises since we have arranged equality at the level of principal symbols.
	
	\item 
	Next, consider the term $B_1(hD_x)$. Since $\bsymbol(B_1) = -(\partial_x \phi)\psi b^2$, we can write 
	\[
	B_1(hD_x) = B^*R_1 (hD_x)B + hR,
	\]
	where according to \eqref{eq:phiderivatives} we can bound $|\bsymbol(R_1)| \leq C_0(1+\beta^{-2}\delta^{-1})\omega^{1/2}$ on $U_0$ for some $C_0 > 0$ independent of $\beta,\delta$ (recall that $U_0$ is chosen in the paragraph preceding the lemma). Moreover, $\omega^{1/2} \leq 3\beta\delta$ on its support, so
	\[
	|\bsymbol(R_1)| \leq 3C_0(\delta \beta + \beta^{-1})
	\]
	as desired.
	
	\item We split up $(i/h)[\tilde P, A^*A] = (i/h)[h^2\Delta_k,A^*A] + (i/h)[V,A^*A]$. Temporarily writing $f = k^{ij}\eta_i \eta_j$, the first term has principal symbol 
	\[
	\bhamvf_fa^2 = -(\bhamvf_f \phi)\psi b^2 + e,
	\]
	where $\supp e \subset \{-2\delta < \sigma < -\delta,\ \omega^{1/2} \leq 2\beta\delta\}$.  As above, we can write
	\[
	(i/h)[h^2 \Delta_k, A^*A] = B^*R_0'B + E + hR, 
	\]
	where according to \eqref{eq:phiderivatives} we can bound
        $|\bsymbol(R'_0)| \leq 3C_0(\delta\beta + \beta^{-1})$.

        \item Finally, consider $(i/h)[V,A^*A]$ with principal symbol 
	\[
	\bhamvf_V a^2 = -(\bhamvf_V \phi)\psi b^2 + e.
	\]
	Now $\bhamvf_V = (x\partial_x V) \partial_{\sigma} + (\partial_{y_i}V) \partial_{\eta_i}$ (with Einstein summation), so when bounding $|\bhamvf_V \phi|$ we certainly have
	\[
	|(\partial_{y_i}V) \partial_{\eta_i} \phi| \leq C_0(1+\beta^{-2}\delta^{-1})\omega^{1/2}
	\]
	by \eqref{eq:phiderivatives}. 
	
	This does not hold when $\phi$ is differentiated in $\sigma$. Instead, we bound $|x\partial_x V| \leq C_0 '|x|^{\theta} \leq C_0'\omega^{\theta/2}$. Thus we can write
	\[
	(i/h)[V, A^*A] = B^*R_0''B + E + hR, 
	\]
	where $|\bsymbol(R_0'')| \leq 3C_0((\beta\delta ) + \beta^{-1}) + 3^\theta C_0'(\beta\delta)^\theta$ by the support properties of $\psi$. Letting $R_0 = R_0' + R_0''$ completes the proof of the lemma.
	\end{inparaenum}
\end{proof}

Given $u \in H^1(X)$, apply Lemma \ref{lem:hyperboliccommutator} to write
\begin{align*}
-(2/h)\Im \left<APu,Au\right>  &= (i/h)\left<[A^*A,P]u,u\right> \\  &= \|\tilde B Bu\|^2_{L^2} + \left<R_0Bu,Bu\right> + \left< R_1Bu, (hD_x)Bu\right>\\ &- \left< Eu,u\right> + h\left<Ru,u\right>  -\left<B_0 Pu,u\right>,
\end{align*}
noting that $A,B,\tilde B$ preserve $H^1_h(X)$ and $B_0$ preserves $H^{-1}_h(X)$ (these operators all have smooth coefficients).

 First, we use the ellipticity of $\tilde B$ on $\WFb(B)$ and \eqref{eq:L2H1constant} to estimate
\begin{equation} \label{eq:CBbound}
c_0\|Bu\|^2_{H^1_h} \leq  \|\tilde B Bu\|^2_{L^2} + C\|GPu\|_{H^{-1}_h}^2 + Ch\| Gu \|_{H^1_h}^2 + \mathcal{O}(h^\infty)\|u\|_{H^1_h}^2,
\end{equation}
where $c_0>0$ independent of $\beta,\delta$ so long as $\delta \in (0,\delta_0)$, and where $G$ is elliptic on $\WFb(B)$. We fix $\beta$ once and for all using the following lemma:

\begin{lem} \label{lem:hyperbolicerrorbound}
	Given $\varepsilon>0$, there exists $\beta>0$ and $\delta_1  \in (0,\delta_0)$ such that
	\[
	|\left<R_0Bu,Bu\right>| + |\left< R_1Bu, (hD_x)Bu\right>| \leq \varepsilon\| Bu\|_{H^1_h}^2 + \mathcal{O}(h^\infty)\|u\|^2_{H^1_h}.
	\]
	for each $\delta \in (0,\delta_1)$ and $u \in H^1_h(X)$. 
\end{lem}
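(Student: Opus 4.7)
The plan is to combine Cauchy--Schwarz with the sharp $L^2$ operator norm bound (property \ref{it:L2} of the semiclassical b-calculus with conormal coefficients), which for $R_i \in \Psibhc^\COMP$ acting on $Bu$ yields
\begin{equation*}
\|R_i Bu\|_{L^2} \leq 2\sup|\bsymbol(R_i)|\,\|Bu\|_{L^2} + \mathcal{O}(h^\infty)\|u\|_{L^2}.
\end{equation*}
Here the residual term $\mathcal{O}(h^\infty)\|A' Bu\|_{L^2}$ supplied by \ref{it:L2} is absorbed into $\mathcal{O}(h^\infty)\|u\|_{L^2}$, using that $A' B \in \Psibhc^{-\infty}$ is uniformly bounded on $L^2$.

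Inserting the hypothesized bound $|\bsymbol(R_i)| \leq C_1((\delta\beta)^\theta + \beta^{-1})$ and applying Cauchy--Schwarz to each inner product gives
\begin{equation*}
|\langle R_0 Bu, Bu\rangle| \leq 2 C_1((\delta\beta)^\theta + \beta^{-1})\|Bu\|^2_{L^2} + \mathcal{O}(h^\infty)\|u\|^2_{H^1_h},
\end{equation*}
and, using the trivial estimate $\|(hD_x)Bu\|_{L^2} \leq \|Bu\|_{H^1_h}$ from the definition of the semiclassical Sobolev norm,
\begin{equation*}
|\langle R_1 Bu, (hD_x)Bu\rangle| \leq 2 C_1((\delta\beta)^\theta + \beta^{-1})\|Bu\|^2_{H^1_h} + \mathcal{O}(h^\infty)\|u\|^2_{H^1_h}.
\end{equation*}
Since $\|Bu\|_{L^2} \leq \|Bu\|_{H^1_h}$, adding these yields a total coefficient of at most $4 C_1((\delta\beta)^\theta + \beta^{-1})$ in front of $\|Bu\|_{H^1_h}^2$. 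Given $\varepsilon > 0$, I first choose $\beta$ so large that $4 C_1 \beta^{-1} < \varepsilon/2$; with $\beta$ fixed, I then select $\delta_1 \in (0,\delta_0)$ small enough that $4 C_1 (\delta\beta)^\theta < \varepsilon/2$ for every $\delta \in (0,\delta_1)$, which delivers the claimed estimate.

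No serious obstacle is expected: the entire argument is a principal-symbol operator norm estimate in which the two parameters $\beta$ and $\delta$ have been introduced precisely to provide two independently tunable small factors. The only point requiring a moment's thought is that the constants implicit in property \ref{it:L2} and in the bound for $A'B$ must be uniform in the parameters $\beta, \delta$. This is immediate once one notes that $R_i$ and $B$ are built from symbols supported in the fixed precompact neighborhood $U_0$ of $q_0$, with symbol seminorms depending continuously on the parameters, so after $\beta$ is fixed the uniformity in $\delta \in (0,\delta_1)$ follows on compact subintervals.
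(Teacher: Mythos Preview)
Your proof is correct and follows essentially the same approach as the paper: apply the sharp $L^2$ bound \ref{it:L2} to $R_i$ acting on $Bu$, use Cauchy--Schwarz (together with $\|(hD_x)Bu\|_{L^2}\le\|Bu\|_{H^1_h}$), and then choose $\beta$ large followed by $\delta_1$ small. One small clarification regarding your final paragraph: since $\beta$ is fixed first and the statement is asserted separately for each $\delta\in(0,\delta_1)$, the implied $\mathcal{O}(h^\infty)$ constants may depend on $\delta$; no uniformity in $\delta$ is actually required there.
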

\begin{proof}
We bound 
\begin{align*}
\|R_i v\|_{L^2} &\leq 2\sup |\bsymbol(R_i)| \| v\|_{L^2} + \mathcal{O}(h^\infty) \|v\|_{L^2} \\ 
&\leq 2C_1( (\delta \beta)^{\theta} + \beta^{-1} )\|v\|_{L^2} + \mathcal{O}(h^\infty) \| v\|_{L^2},
\end{align*}
where $C_1 > 0$ does not depend on $\beta,\delta$. It suffices to first fix $\beta > 0$ sufficiently large, and then take $\delta_1 \in (0,\delta_0)$ sufficiently small. Applying this to $v=Bu$, along with Cauchy--Schwarz, finishes the proof.
\end{proof} 

Now suppose that $G \in \Psibh^\COMP$ is elliptic on $\WFb(B)$, and
$Q_1 \in \Psibh^\COMP$ is elliptic on $\WFb(E)$ with $\WFb(Q_1)
\subset \ELLb(G)\cap \{\sigma<0\}$ as in the statement of
Proposition~\ref{prop:hyperbolic}. Apply Lemma
\ref{lem:hyperbolicerrorbound} by taking $\varepsilon = c_0/2$
\blue{(with $c_0$ defined by \eqref{eq:CBbound})}. Combined with \eqref{eq:CBbound},
\begin{multline*}
(c_0/2)\|Bu\|^2_{H^1_h} \leq (2/h)\lvert\left<APu,Au\right>\rvert + C\|GPu\|_{H^{-1}_h}^2 + Ch\| Gu \|_{H^1_h}^2  \\ +\lvert\left< Eu,u\right>\rvert + h\lvert\left<Ru,u\right>\rvert + \lvert\left<B_0Pu,u\right>\rvert + \mathcal{O}(h^\infty)\|u\|^2_{H^1_h}
\end{multline*}
for $\delta \in (0,\delta_1)$.  Using Cauchy--Schwarz on the $B_0$ term
and estimating the $E$ term by $Q_1$ using microlocal elliptic
regularity bounds the second line by
\begin{multline*}
\lvert\left< Eu,u\right>\rvert + h\lvert\left<Ru,u\right>\rvert + \lvert\left<B_0Pu,u\right>\rvert \\ \leq  Ch^{-1}\|GPu\|^2_{H^{-1}_h} + Ch\|Gu\|_{H^1_h}^2+ C\|Q_1u \|_{H^1_h}^2 + \mathcal{O}(h^\infty)\|u\|_{H^1_h}^2.
\end{multline*}
Since $\WFb(A) \subset \ELLb(G)$ as well, we can also estimate
\[
(2/h)\lvert\left<APu,Au\right>\rvert \leq C\varepsilon^{-1} h^{-2}\| GPu\|_{H^{-1}_h}^2 + C\varepsilon \|Au\|^2_{H^1_h}+ \mathcal{O}(h^\infty)\| u \|_{H^1_h}^2.
\]
Hence overall we obtain
\begin{align*}
(c_0/2)\|Bu\|^2_{H^1_h} &\leq C\varepsilon^{-1} h^{-2}\| GPu\|_{H^{-1}_h}^2 + Ch\|Gu\|_{H^1_h}^2 + C\|Q_1u \|_{H^1_h}^2 \\ &+ C\varepsilon \|Au\|^2_{H^1_h} + \mathcal{O}(h^\infty)\|u\|_{H^1_h}^2.
\end{align*}
By construction $\chi_0(s) = s^2 \chi_0'(s)$ for $s>0$, so  
\[
a = (2-\phi/\delta) (\chi_0'\chi_0)^{1/2}\chi_1 = \frac 12 \delta^{1/2}(2-\phi/\delta)b.
\] 
Thus we can write $A = FB + hF'$ for some $F,F' \in \Psibh^\COMP$. Choosing $\varepsilon>0$ sufficiently small gives the estimate
\[
\|Bu\|_{H^1_h} \leq C h^{-1}\|GPu\|_{H^{-1}_h} + C\| Q_1u \|_{H^1_h} + C h^{1/2} \|G u \|_{H^1_h} + \mathcal{O}(h^\infty)\| u \|_{H^1_h}.
\]
We now finish the proof of Proposition \ref{prop:hyperbolic}.

\begin{proof} [Proof of Proposition \ref{prop:hyperbolic}]
	Let $G$ be as in the statement of the proposition. Since $\ELLb(G)$ is open, choose $\delta_\star \in (0,\delta_1)$ such that \[
	\{|\sigma| \leq 2\delta_\star, \, \omega^{1/2} \leq 2\beta \delta_\star \} \subset \ELLb(G).
	\] 
	Recall that $\delta_1,\beta$ are fixed in Lemma
        \ref{lem:hyperbolicerrorbound}. Then, choose $Q_1 \in
        \Psibh^\COMP$ such that
	\[
	\{-2\delta_\star \leq \sigma \leq -\delta_\star, \, \omega^{1/2} \leq 2\beta \delta_\star \} \subset \ELLb(Q_1), \quad \WFb(Q_1) \subset \WFb(G).
	\]
	Take a sequence of operators $B_k \in \Psibh^\COMP$ corresponding to decreasing sequence of $\delta_k$ in $(\delta_\star/2,\delta_\star)$. Then $B_{k}$ is elliptic on $\WFb(B_{k+1})$, so
	\[
	\| B_{k+1} u\|_{H^1_h} \leq Ch^{-1} \|B_k Pu \|_{H^{-1}_h} + C\|Q_1 u\|_{H^1_h} + Ch^{1/2}\|B_{k} u \|_{H^1_h} + \mathcal{O}(h^\infty)\|u\|_{H^1_h}
	\]
	for each $k$. Fix $Q \in \Psibh^\COMP$, elliptic at $q_0$, such that each $B_k$ is elliptic on $\WFb(Q)$. By induction, we conclude that 
	\[
	\| Q u\|_{H^1_h} \leq Ch^{-1} \|G Pu \|_{H^{-1}_h} + C\|Q_1 u\|_{H^1_h} + Ch^{k/2}\|G u \|_{H^1_h} + \mathcal{O}(h^\infty)\|u\|_{H^1_h}
	\]
	for each $k \in \mathbb{N}$, which completes the proof.
\end{proof}

\subsection{The glancing region} \label{subsect:glancing}
As before, we assume that all b-pseudodifferential operators are supported in a fixed normal coordinate patch $\mathcal{U}$, and are compactly microlocalized.  
Before proceeding to the commutator argument, we need a variant of Lemma \ref{lem:L2H1}.

\begin{lem} \label{lem:Dxbound}
Given $\delta > 0$, let $U_\delta = \{q \in \bT^*_{\mathcal{U}}X: |\tilde p| < \delta\}$. If $A,G \in \Psibh^\COMP$ satisfy $\WFb(A) \subset \ELLb(G) \cap  U_\delta$,
then
\[
\int_X |hD_x Au|^2 \, dg \leq   C h^{-1}\|GPu \|^2_{H^{-1}_h} +Ch\|Gu\|^2_{H^1_h} + 2\delta \| Au\|^2_{L^2} + \mathcal{O}(h^\infty)\| u\|_{H^1_h}  
\] 
for each $u \in H^1_h(X)$.
\end{lem}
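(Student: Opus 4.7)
The plan is to split $P = (hD_{x})^{\ast}(hD_{x}) + \tilde{P}$ with $\tilde{P} = h^{2}\Delta_{k} + V$ and use a Green's identity. In normal coordinates $g = dx^{2} + k$, and $\tilde{P}$ is self-adjoint on $L^{2}(X, dg)$ with only tangential derivatives, so integration by parts yields the exact identity
\[
\int_{X}|hD_{x}Au|^{2}\,dg \;=\; \langle PAu, Au\rangle - \langle \tilde{P}Au, Au\rangle
\]
for $u \in H^{1}_{h}(X)$, with brackets denoting the $L^{2}$ duality pairing between $H^{-1}_{h}$ and $H^{1}_{h}$. For $\langle PAu, Au\rangle$ I would apply Lemma~\ref{lem:greens} with $\varepsilon = h$ and then use Lemma~\ref{lem:L2H1} to replace the resulting $h\|Au\|^{2}_{H^{1}_{h}}$ contribution by terms already present, obtaining
\[
\langle PAu, Au\rangle \;\leq\; Ch^{-1}\|GPu\|^{2}_{H^{-1}_{h}} + Ch\|Gu\|^{2}_{H^{1}_{h}} + \mathcal{O}(h^{\infty})\|u\|^{2}_{H^{1}_{h}}.
\]

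The main step is to bound $|\langle \tilde{P}Au, Au\rangle|$ by $2\delta\|Au\|^{2}_{L^{2}}$ up to negligible error, with the \emph{sharp} constant $2\delta$. Since $\WFb(A) \subset U_{\delta}$ and $U_{\delta}$ is open, I would choose a real $\psi \in \CcI(U_{\delta})$ with $0 \leq \psi \leq 1$ and $\psi \equiv 1$ on a neighborhood of $\WFb(A)$, and let $B \in \Psibhc^{\COMP}$ be self-adjoint with principal symbol $\psi\tilde{p}$, so that $\sup|\bsymbol(B)| \leq \delta$ globally. Because $(1-\psi)\tilde{p}$ vanishes in a neighborhood of $\WFb(A)$, an iterated microlocal parametrix construction gives $A^{\ast}(\tilde{P} - B)A \in h^{\infty}\Psibhc^{-\infty}$, hence $\langle \tilde{P}Au, Au\rangle = \langle BAu, Au\rangle + \mathcal{O}(h^{\infty})\|u\|^{2}_{H^{1}_{h}}$. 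Property (VI) of the b-calculus applied to $B$ yields $\|BAu\|_{L^{2}} \leq 2\delta\|Au\|_{L^{2}} + \mathcal{O}(h^{\infty})\|u\|_{H^{1}_{h}}$, and Cauchy--Schwarz with AM--GM delivers $|\langle BAu, Au\rangle| \leq 2\delta\|Au\|^{2}_{L^{2}} + \mathcal{O}(h^{\infty})\|u\|^{2}_{H^{1}_{h}}$. Combining with the Green's identity and the bound on $\langle PAu, Au\rangle$ gives the lemma.

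The main obstacle is realizing the sharp constant $2\delta$: a direct commutator or energy-type argument would typically lose this factor. The openness of $U_{\delta}$ is what makes possible the microlocal cutoff $B$ with principal symbol bounded by $\delta$ \emph{everywhere}, enabling direct use of the semiclassical $L^{2}$-boundedness of $\Psibhc^{0}$ with its explicit constant $2$ (property (VI)).
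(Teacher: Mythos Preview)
Your proposal is correct and follows essentially the same approach as the paper. The paper likewise applies Lemma~\ref{lem:greens} with $\varepsilon=h$, integrates by parts in the tangential variables to isolate $-\langle\tilde P Au,Au\rangle$, replaces $\tilde P$ near $\WFb(A)$ by an operator $F\in\Psibhc^\COMP$ with $\sup|\bsymbol(F)|\le\delta$, and then uses property~(VI) exactly as you do to obtain the sharp constant $2\delta$. One minor simplification: your appeal to Lemma~\ref{lem:L2H1} to absorb the $h\|Au\|_{H^1_h}^2$ term is correct but unnecessary---since $\WFb(A)\subset\ELLb(G)$, a parametrix gives $\|Au\|_{H^1_h}\le C\|Gu\|_{H^1_h}+\mathcal{O}(h^\infty)\|u\|_{H^1_h}$ directly, which is what the paper uses.
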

\begin{proof}
	Write $|hd v|_g^2 = |hD_x v|^2 +
        k^{ij}(hD_{y_i}v)(\overline{hD_{y_j}v})$; now let $v =Au$ and
        apply Lemma \ref{lem:greens} with $\varepsilon=h$ to see that
	\begin{multline*}
	\int_X |hD_x Au|^2 \, dg \leq -\int_X (\tilde PAu) \overline{Au} \, dg \\ +C h^{-1}\|GPu\|_{H^{-1}_h}^2 +Ch\|Gu\|^2_{H^1_h} + \mathcal{O}(h^\infty)\| u \|^2_{H^1_h}
	\end{multline*}
	after integrating by parts in $y.$  Choose $F \in \Psibhc^\COMP$ such that 
	\[
		\WFb(F + \tilde {P}) \cap \WFb(A) = \emptyset, \quad \WFb(F) \subset U_\delta
	\]
	One can always choose $F$ such that with $f = \bsymbol(F)$,
	\[
	\sup |f| \leq \delta.
	\]
Therefore we can bound
	\[
	\left<Fv,v\right> \leq 2\sup|f|\|v\|^2_{L^2} + \mathcal{O}(h^\infty)\|v\|^2_{L^2} \leq 2\delta  \|v\|_{L^2} + \mathcal{O}(h^\infty)\|v\|_{L^2}.
	\]
Applying this to $v= Au$ and using that $\WFb(A) \subset \ELLb(G)$, we find that
	\[
	\int_X |hD_x Au|^2 \, dg \leq Ch^{-1}\|GPu\|^2_{L^2} + 2\delta \|Au\|_{L^2}^2 + Ch\|Gu\|^2_{H^1_h} + \mathcal{O}(h^\infty)\| u \|_{H^1_h}
	\]
	for each $u \in H^1_h(X)$.
\end{proof}

Define $\tilde{p}_0 \in \C^\infty(T^*X)$ over the normal coordinate patch $\mathcal{U}$ by 
\[
\tilde{p}_0(x,y,\xi,\eta) = k^{ij}(0,y)\eta_i \eta_j + V(0,y). 
\]
Given $q_0 \in \gl \cap T^*Y$, let $\varpi_0$ denote the unique point in $\Sigma$ such that $\pi(\varpi_0) = q_0$. Recall that a $\GBB$ passing through $q_0$ at $s=s_0$ is characterized by the equality
\[
\frac{d}{ds}(f_\pi\circ \gamma)(s_0) = (\hamvf_p f)(\varpi_0)
\]
for each $f\in \CI(T^*X)$ which is $\pi$-invariant. On the other hand, since $\xi(\varpi_0) = 0$, it follows that 
\begin{equation} \label{eq:equalityatglancing}
(\hamvf_p f)(\varpi_0) = (\hamvf_{\tilde{p}_0}f)(\varpi_0).
\end{equation}
Via the local coordinates $(x,y,\sigma,\eta)$, we can also view $\tilde{p}_0$ as a function on $\bT^*_{\mathcal{U}}X$. With this identification, $\tilde p_0$ can be considered as a function on $\bT^*X$, and the flow $\exp(s\bhamvf_{\tilde p_0})$ on $\bT^*X$ makes sense.

As in Section \ref{subsect:hyperbolicregion}, choose $\alpha_0 \in (0,\alpha)$ and let $\theta = \min(1,\alpha_0) \in (0,1]$. Denote by $|\cdot|$ the Euclidean distance on $\bT^*_{\mathcal{U}}X$ in local coordinates, and write $\mathsf{B}(q_0,\varepsilon)$ for the corresponding ball of radius $\varepsilon>0$.

\begin{prop} \label{prop:glancing}
	Suppose that $u$ is $h$-tempered in $H^1_h(X)$ and $q_0 \notin \WFb^{-1,r+1}(Pu)$, where $r \in \RR \cup \{+\infty\}$. Let $K \subset \gl \cap T^*_{\mathcal{U}\cap Y} Y$ be compact. There exists $C_0, \delta_0>0$ such that for each $\delta \in (0,\delta_0)$ and $q_0 \in K$, if
\[
\mathsf{B}(\exp(-\delta \bhamvf_{\tilde p_0})(q_0),C_0\delta^{2/(2-\theta)}) \cap \WF^{1,r}(u) = \emptyset,
\]
then $q_0 \notin \WFb^{1,r}(u)$.	
\end{prop}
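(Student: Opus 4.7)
The approach is a positive commutator estimate modeled on Proposition \ref{prop:hyperbolic}, with the commutant adapted to the approximate bicharacteristic of the frozen smooth symbol $\tilde p_0$. The structural observation is that $\tilde p_0 \in \CI(T^*X)$ is independent of $(x,\sigma)$, so its b-Hamilton vector field $\bhamvf_{\tilde p_0}$ descends to a smooth vector field on $\bT^*X$ that is tangent to $\{x=0,\sigma=0\}$ and nonvanishing on $\pi^{-1}(K)$; moreover $\hamvf_p(\varpi_0) = \hamvf_{\tilde p_0}(\varpi_0)$ at the unique glancing lift by \eqref{eq:equalityatglancing}. Thus $\tilde p_0$ provides a smooth model for the Hamilton flow near $K$, whose deviation from the true flow is controlled by the non-smoothness of $V$.

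The plan is to choose a smooth $\pi$-invariant time function $\phi_0$ on a neighborhood of $\pi^{-1}(K)$ with $\bhamvf_{\tilde p_0}\phi_0 = 1$ and $\phi_0(q_0) = 0$, together with a smooth nonnegative $\omega$ comparable to the squared Euclidean distance to the endpoint $\exp(-\delta\bhamvf_{\tilde p_0})(q_0)$. With parameters $\beta$ (to be taken large) and a microlocal scale $\mu$ (to be chosen in terms of $\delta$), and with $\chi_0, \chi_1$ as in \eqref{eq:hypcommutant}, I would take the commutant in schematic form
\[
\phi = \phi_0 + (\beta^2 \mu)^{-1}\omega, \qquad a = \chi_0(2 - \phi/\mu)\,\chi_1(2 + (\phi_0 + \delta)/\mu),
\]
so that $\supp a$ is contained in a Euclidean ball of radius $C\beta\mu$ around $\exp(-\delta\bhamvf_{\tilde p_0})(q_0)$. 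Letting $A \in \Psibh^\COMP$ have principal symbol $a$, decompose $(i/h)[P, A^*A]$ as in \eqref{eq:hypcommutator}, further splitting the tangential commutator $(i/h)[\tilde P, A^*A]$ into a piece involving the smooth b-pseudodifferential operator $\tilde P_0$ with symbol $\tilde p_0$ and a remainder for $\tilde P - \tilde P_0$.

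The main positive contribution comes from $\bhamvf_{\tilde p_0}a^2 = -\mu^{-1}(\bhamvf_{\tilde p_0}\phi)\psi b^2 + e$, with $b$ analogous to \eqref{bdef}. Using $\bhamvf_{\tilde p_0}\phi_0 = 1$ and $|\bhamvf_{\tilde p_0}\omega| \leq C\omega^{1/2} \leq C\beta\mu$ on the support, one gets $\bhamvf_{\tilde p_0}\phi \geq 1 - C\beta^{-1}$, giving positivity for $\beta$ large. The key error terms are: (i) the $B_1(hD_x)$ term, handled via Lemma \ref{lem:Dxbound} at the cost of a $2\delta\|Au\|^2_{L^2}$ contribution plus controllable remainders; and (ii) the commutator $(i/h)[\tilde P - \tilde P_0, A^*A]$, whose symbol is bounded, using Lemmas \ref{lem:conormalintegrable} and \ref{lem:vanishes}, by $C|x|^\theta$ (from $|x\partial_x V|$ and $|\tilde p - \tilde p_0|$) times derivatives of $\phi$, and is thus of size $(\beta\mu)^\theta$ times factors involving $\mu^{-1}$ on $\supp a$.

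The main obstacle is the simultaneous balancing of scales that produces the exponent $2/(2-\theta)$. The support constraint --- that $\supp e$ lie inside the hypothesized ball --- requires $\beta\mu \leq C_0 \delta^{2/(2-\theta)}$, while absorption of the error $(\beta\mu)^\theta \mu^{-1}$ into the main positive contribution of size $\mu^{-1}$ requires $(\beta\mu)^\theta \ll 1$. Balancing these against the $2\delta\|Au\|^2_{L^2}$ penalty from Lemma \ref{lem:Dxbound} (which controls the $hD_x$ part and forces a comparison with the symbolic scale) yields the unique choice $\mu \sim \delta^{2/(2-\theta)}$ with $\beta$ a sufficiently large constant, consistent with the hypothesized ball. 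Microlocal elliptic regularity on $\supp e$ (disjoint from $\WF^{1,r}(u)$ by hypothesis) then bounds $|\langle Eu, u\rangle|$ by the requisite factor of $h^{2r}$, and iteration across a shrinking family of commutants as in the proof of Proposition \ref{prop:hyperbolic} upgrades the resulting estimate on $Bu$ to the desired $\WFb^{1,r}$ regularity at $q_0$.
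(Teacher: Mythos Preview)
Your overall architecture matches the paper's: a positive commutator adapted to $\bhamvf_{\tilde p_0}$, with the $(hD_x)$-term handled via Lemma \ref{lem:Dxbound} and the discrepancy $\tilde p - \tilde p_0$ bounded using the H\"older exponent $\theta$. The gap is in the parametrization and the scale balancing, which is precisely where the exponent $2/(2-\theta)$ is produced.

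Your claim that $\supp a$ lies in a ball of radius $C\beta\mu$ about the endpoint is false. With $\chi_0(2-\phi/\mu)$ and $\chi_1(2+(\phi_0+\delta)/\mu)$ the support is a \emph{tube}: one has $\phi_0 \in [-\delta-2\mu,\,2\mu]$ (length $\sim\delta$, since $q_0$ must lie in the support), and from $\phi\le 2\mu$ one gets $\omega \le \beta^2\mu(2\mu-\phi_0)\lesssim \beta^2\mu\delta$ near $\phi_0=-\delta$, i.e.\ $\omega^{1/2}\lesssim \beta\sqrt{\mu\delta}$, not $\beta\mu$. With $\mu\sim\delta^{2/(2-\theta)}$ and $\beta$ a fixed large constant this transverse width is $\sim\delta^{(4-\theta)/(2(2-\theta))}$, strictly larger than $\delta^{2/(2-\theta)}$, so the error region does not fit in the hypothesized ball. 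The same miscount wrecks the positivity: $(\beta^2\mu)^{-1}|\bhamvf_{\tilde p_0}\omega|\lesssim \beta^{-1}\sqrt{\delta/\mu}\to\infty$, so $\bhamvf_{\tilde p_0}\phi\ge 1-C\beta^{-1}$ is not available. Finally, the ``$2\delta\|Au\|^2$ penalty'' conflates the flowtime $\delta$ with the $\delta$ appearing in Lemma \ref{lem:Dxbound}, which is the bound on $|\tilde p|$ over the support and is of order $(\text{transverse scale})^\theta$, not $\delta$.

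The paper's resolution is to keep the $\chi_0$-scale equal to the flowtime $\delta$ (so $\phi=\rho_0+(\beta^2\delta)^{-1}\omega$ and $a=\chi_0(2-\phi/\delta)\,\chi_1(1+(\rho_0+\delta)/(\beta\delta))$, giving a tube of length $\sim\delta$ and transverse width $\beta\delta$) and then to take $\beta$ \emph{small}, namely $\beta=c\,\delta^{\theta/(2-\theta)}$ with $c$ large --- the opposite regime from the hyperbolic argument. This places the error region inside a ball of radius $\beta\delta=c\,\delta^{2/(2-\theta)}$ about the endpoint, and the decisive balance becomes $\beta^{-2}\cdot(\beta\delta)^\theta=c^{\theta-2}$ (combining $|\bsymbol(R_1)|\le C\beta^{-1}$ with the Lemma \ref{lem:Dxbound} bound $\|hD_xBu\|^2\lesssim(\beta\delta)^\theta\|Bu\|^2+\cdots$), which is small for large $c$ since $\theta\le 1$.
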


Following \cite[Section 7]{vasy2008propagation}, define the set 
\[
\mathsf{D}(q_0,\varepsilon) = \{q \in \bT^*X: |x(q)-x(q_0)| + |y(q)-y(q_0)| + |\eta(q) -\eta(q_0)| \leq \varepsilon\}.
\]
In order to prove Proposition \ref{prop:glancing}, it suffices to replace $\mathsf{B}$ with $\mathsf{D}$, possibly modifying $C_0$.
Indeed, $\WFb^{1,r}(u) \subset \cchare$, and on the compressed characteristic set $|\sigma| \leq C_1|x|$, where $C_1>0$ is uniform over compact subsets of $X$. Proposition \ref{prop:glancing} is then just a restatement of the following result:

\begin{prop}\label{prop:restated}
Let $K \subset \gl \cap T^*_{\mathcal{U}\cap Y} Y$ be compact. There exist $C_0,\delta_0>0$ such that the following property holds for each $\delta \in (0,\delta_0)$ and $q_0 \in K$. If $G \in \Psibh^\COMP$ is elliptic at $q_0$, then there exist $Q,Q_1 \in \Psibh^\COMP$, where
\begin{gather*}
\WFb(Q) \subset \ELLb(G) \text{ and } q_0 \in \ELLb(Q),\\
\WFb(Q_1) \subset \ELLb(G)\cap  \mathsf{D}(\exp(-\delta \bhamvf_{\tilde p_0})(q_0),C_0\delta^{2/(2-\theta)}) 
\end{gather*}
such that 
\[
\| Qu \|_{H^1_h} \leq  C h^{-1}\| GPu\|_{H^{-1}_h} + C\| Q_1u \|_{H^1_h} + \mathcal{O}(h^\infty)\|u\|_{H^1_h}
\]
for each $u\in H^1_h(X)$. 
\end{prop}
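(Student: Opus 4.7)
The plan is to run a positive commutator argument along the flow of $\bhamvf_{\tilde p_0}$, in direct analogy with the hyperbolic argument of Section \ref{subsect:hyperbolicregion}, with three essential modifications: (i) the propagation weight is built along $\bhamvf_{\tilde p_0}$ rather than along $\partial_\sigma$; (ii) one uses that $\hamvf_p$ agrees with $\bhamvf_{\tilde p_0}$ on $\pi^{-1}(\gl \cap T^*_Y Y)$ to arrange the main positive term; and (iii) the fiber width of the commutant's support is chosen to scale like $\delta^{2/(2-\theta)}$ in order to absorb the new error terms coming from the H\"older regularity of $V$.

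First I would fix a smooth function $\phi_0$ on $\bT^*_{\mathcal{U}}X$ with $\phi_0(q_0) = 0$ and $\bhamvf_{\tilde p_0}\phi_0 \geq c_1 > 0$ uniformly over $K$, together with a nonnegative smooth $\omega$ vanishing on the integral curve of $\bhamvf_{\tilde p_0}$ through $q_0$ and dominating the squared Euclidean distance to that curve in $(x,y,\sigma,\eta)$ coordinates, arranged so that $\bhamvf_{\tilde p_0}\omega = \mathcal{O}(\omega)$. Then, in analogy with \eqref{eq:hypcommutant}, set
\[
\phi = -\phi_0 + \frac{1}{\beta^2 \delta^{\gamma}}\omega, \quad \gamma = \tfrac{2+\theta}{2-\theta}, \quad a = \chi_0(2 - \phi/\delta)\,\chi_1(2 + \phi_0/\delta),
\]
so that $\supp a$ lies in a tube of radius $\mathcal{O}(\beta \delta^{2/(2-\theta)})$ around the segment $\{\phi_0 \in [-2\delta, 2\delta]\}$ of the integral curve. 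A Lemma \ref{lem:asupport}-type argument shows that for $\beta$ fixed and $\delta$ small, $\WFb(A)$ lies in any prescribed neighborhood of $q_0$. Take $B$ to be the b-quantization of the symbol $b$ from \eqref{bdef}.

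I would then apply Lemma \ref{lem:Dx2Acommutator} and mimic Lemma \ref{lem:hyperboliccommutator} to obtain
\[
(i/h)[P, A^*A] = B_0 P - B^*\bigl(\tilde B^*\tilde B + R_0 + (hD_x)^* R_1\bigr) B + E + hR,
\]
where $\tilde B$ has principal symbol bounded below on $\WFb(B)$ thanks to $\bhamvf_{\tilde p_0}\phi_0 \geq c_1$ (this plays the role of $\tilde p < 0$ in the hyperbolic case), $\WFb^2(E)$ is contained in $\{\phi_0 \in [-2\delta,-\delta],\ \omega^{1/2} \leq C\beta \delta^{2/(2-\theta)}\}$ and hence inside $\mathsf{D}(\exp(-\delta \bhamvf_{\tilde p_0})(q_0), C_0\delta^{2/(2-\theta)})$, and $R_0,R_1$ collect error symbols. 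Three types of error must then be controlled on $\supp a$: (a) the $\hamvf_V\phi$ contribution, dominated by $|x\partial_x V|\cdot|\partial_\sigma\phi| \leq C|x|^\theta|\sigma|/(\beta^2\delta^{\gamma})$ via \eqref{eq:vanishingholder}, bounded after substituting the support constraints $|x|,|\sigma| \leq C\beta\delta^{2/(2-\theta)}$ by $C\beta^{\theta-1}\delta^{\theta/(2-\theta)}$; (b) the $\bhamvf_{\tilde p - \tilde p_0}\phi$ contribution, of order $x$ and hence $\mathcal{O}(\beta\delta^{2/(2-\theta)})$ on $\supp a$; and (c) the $(hD_x)^*R_1 B$ piece, handled via Lemma \ref{lem:Dxbound} since $|\tilde p|\leq C\delta$ on $\supp a$, which allows $\|(hD_x)Bu\|_{L^2}$ to be controlled by $\sqrt{\delta}\|Bu\|_{L^2}$ plus acceptable remainders.

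The main obstacle is this bookkeeping: the exponent $\gamma = (2+\theta)/(2-\theta)$ is chosen precisely so that the H\"older-driven error in (a) admits the $C\beta^{\theta-1}\delta^{\theta/(2-\theta)}$ bound that tends to zero as $\delta \to 0$ for fixed large $\beta$. Once $\beta$ is taken large and then $\delta$ small, a Lemma \ref{lem:hyperbolicerrorbound}-type estimate yields a positive commutator. Pairing $(i/h)[P,A^*A]u$ against $u$, using \eqref{eq:L2H1constant} to promote $\|\tilde B Bu\|_{L^2}^2$ to a lower bound on $c_0\|Bu\|_{H^1_h}^2$, and running the shrinking-neighborhood iteration at the end of Section \ref{subsect:hyperbolicregion} to remove the residual $Ch\|Gu\|_{H^1_h}^2$ term, yields the stated estimate for a prescribed $Q$ elliptic at $q_0$ with $Q_1$ microlocalized in $\mathsf{D}(\exp(-\delta \bhamvf_{\tilde p_0})(q_0), C_0\delta^{2/(2-\theta)})$.
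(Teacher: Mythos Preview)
Your overall strategy is right --- positive commutator along $\bhamvf_{\tilde p_0}$, tube width $\delta^{2/(2-\theta)}$, Lemma~\ref{lem:Dxbound} to control $hD_x$ terms, and an iteration to kill the residual $h^{\gamma}\|Gu\|$ --- and this matches the paper. But the specific decomposition you write down does not work, and the gap is precisely at the point where glancing differs from hyperbolic.

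You claim a commutator identity of the hyperbolic shape $B_0P - B^*(\tilde B^*\tilde B + R_0 + (hD_x)^*R_1)B + E + hR$ with $\tilde B$ bounded below ``thanks to $\bhamvf_{\tilde p_0}\phi_0 \geq c_1$''. This conflates two different mechanisms. In Lemma~\ref{lem:hyperboliccommutator} the term $-B^*\tilde B^*\tilde BB$ arises from $-B_0\tilde P$, and $\tilde B^*\tilde B$ has symbol $-\tilde p>0$; that is what makes it positive. At glancing $\tilde p$ vanishes on $\WFb(B)$, so $-B_0\tilde P$ gives no useful positivity. The positivity from $\bhamvf_{\tilde p_0}\phi_0\geq c_1$ lives instead in $(i/h)[\tilde P_0,A^*A]$, which contributes $-B^*B$ directly (no extra $\tilde B$). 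Because your $\omega$ includes $\sigma^2$, the symbol $a$ depends on $\sigma$, so $B_0$ is \emph{not} lower order and you must deal with $(hD_x)^*B_0(hD_x)$ as a genuine error term (again via Lemma~\ref{lem:Dxbound}), not as the source of positivity. The paper sidesteps this entirely by building $\omega=\omega_0+x^2$ with $\omega_0=\omega_0(y,\eta)$, so that $a$ is $\sigma$-independent and $B_0\in h\Psibh^\COMP$; the commutator then has the simpler shape of Lemma~\ref{lem:glancingcommutator}, $(i/h)[P,A^*A]=B^*(hD_xR_1+R_0-1)B+E+hR$, with the main term $-B^*B$ coming cleanly from $\bhamvf_{\tilde p_0}(a^2)$.

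There is also a sign/support slip: with $\phi=-\phi_0+\omega/(\beta^2\delta^\gamma)$ and $\bhamvf_{\tilde p_0}\phi_0>0$, the cutoffs $\chi_0(2-\phi/\delta)\chi_1(2+\phi_0/\delta)$ impose only $\phi_0\geq -2\delta$ and never bound $\phi_0$ from above, so $\supp a$ is not localized near $q_0$. You want $\phi=+\phi_0+\omega/(\cdots)$. Finally, your scaling $(\beta\ \text{large fixed},\ \gamma=(2+\theta)/(2-\theta))$ and the paper's $(\beta=c\,\delta^{\theta/(2-\theta)},\ \text{weight }\beta^{-2}\delta^{-1})$ are two parameterizations of the same tube width $\omega^{1/2}\lesssim\delta^{2/(2-\theta)}$; either works once the decomposition is set up correctly.
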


Just as with the hyperbolic estimate, Proposition
\ref{prop:hyperbolic}, we can also reverse the direction of
propagation here. Thus the same result holds verbatim if we replace
$\exp(-\delta \bhamvf_{\tilde p_0})(q_0)$ with
$\exp(\delta \bhamvf_{\tilde p_0})(q_0)$.

The rest of this section will be a proof of Proposition~\ref{prop:restated}.
View $\tilde p_0$ as a function on $T^*Y$, and thus
$\hamvf_{\tilde p_0}$ as a vector field on $T^*Y$. We may assume that
$d\tilde{p}_0(q_0) \neq 0$ here viewed as a covector on $T^*Y,$ as
otherwise the result to be proved is vacuous.  Then there are $2n-2$
functions $(\rho_0,\rho_1,\ldots,\rho_{2n-3})$ on $T^*Y$, whose
differentials are linearly independent at $\tilde q_0$, such that
\[
\quad (\hamvf_{\tilde{p}_0}\rho_0)(q_0) > 0, \quad (\hamvf_{\tilde p_0}\rho_j)(q_0) = 0, \quad \rho_1 = \tilde{p}_0.
\]
We also arrange that these functions all vanish at $q_0$. Since it
slightly simplifies matters, we can in fact arrange that $H_{\tilde p_0} = \partial_{\rho_0}$ near $q_0$ and thus 
\[
\hamvf_{\tilde p_0}\rho_0 = 1, \quad \hamvf_{\tilde p_0}\rho_j = 0 \text{ for } j=1,\ldots,2n-3
\]
identically. We extend $(\rho_0,\ldots,\rho_{2n-3})$ to functions on $\bT^*X$ by requiring them to independent of $(x,\sigma)$, so that $(x,\sigma,\rho_0,\ldots,\rho_{2n-3})$ are valid local coordinates on $\bT^*X$ near $q_0$. Now define
\[
\omega_0 = \sum_{j=1}^{2n-3} \rho_j^2, \quad \omega = \omega_0 + x^2. 
\]
In order to construct a commutant, let $\chi_0,\chi_1$ be as in Section \ref{subsect:hyperbolicregion}. Define 
\[
\phi = \rho_0 + \frac{1}{\beta^2\delta}\omega.
\]
We then set $A = \Op_h(a)$, where
\[
a = \chi_0(2-\phi/\delta)\chi_1(1+ (\rho_0+\delta)/(\beta\delta)).
\]
The difference compared with Section \ref{subsect:hyperbolicregion} is in the argument of $\chi_1$. Indeed, there will be an error term (the analogue of $E$ in Lemma \ref{lem:hyperboliccommutator}) with wavefront set contained in 
\[
\{ -\delta\beta - \delta \leq \rho_0 \leq -\delta, \ \omega^{1/2} \leq 2\beta\delta \}.
\]
If $C>0$ is sufficiently large, then this is certainly contained in the set
\[
 \mathsf{D}(\exp(-\delta \bhamvf_{\tilde p_0})(q_0),C\beta\delta) 
\]
and thus lies inside a set of the form
$\mathsf{D}(\exp(-\delta H_{\tilde
  p_0})(q_0),C_0\delta^{2/(2-\theta)}) $ if we choose $\beta=c  \delta^{\theta/(2-\theta)}$
(Note that this time, $\beta\in (0,\infty)$ will be taken to be small,
rather than large as in the hyperbolic propagation argument.)

We also need to consider the difference between $\bhamvf_{\tilde p}$ and $\bhamvf_{\tilde p_0}$ (now vector fields on $\bT^*X$). Here,
\begin{equation} \label{eq:tildeperror}
|\bhamvf_{\tilde p}\phi  - \bhamvf_{\tilde p_0} \phi| \leq M\left(1+ \beta^{-2}\delta^{-1}\omega^{1/2}\right)\omega^{\theta/2}
\end{equation}
locally, where $M > 0$ does not depend on $\beta,\delta$. 

\begin{rem} \label{rem:uniformomega} The construction of $\omega$ above is meant to localize along $\GBB$s through $q_0$. Using the same local coordinates $(\rho_0,\ldots,\rho_{2n-3})$, we could also localize at nearby points $q  \in \gl \cap T^*Y$ by setting
		\[
		\omega_0 = \sum_{j=1}^{2n-3} |\rho_j -\rho_j(q)|^2,
		\]
if $q$ is sufficiently close to $q_0$. If $\omega$ and $\phi$ are defined in the obvious way,
 then the constant $M > 0$ in \eqref{eq:tildeperror} can then be taken uniform for $q$ near $q_0$. As will be clear from the proof below, this implies uniformity of the constants $C_0,\delta_0$ in Proposition \ref{prop:glancing} in a neighborhood of $q_0$. Thus by compactness, we can simply assume that $K = \{q_0\}$.
\end{rem}

Now let $b= 2\delta^{-1/2} (\chi'_0 \chi_0)^{1/2} \chi_1$ and $B = \Op_h(B)$ as before, and write
\[
(i/h)[P,A^*A] = B_0(hD_x)^*(hD_x) + B_1 (hD_x) + (i/h)[\tilde P,A^*A] + h \Diff_h^2 \Psibh^\COMP,
\]
where $\bsymbol(B_0) = 2\partial_{\sigma}(a^2)$ and $\bsymbol(B_1) = 2\partial_x(a^2)$. We then have the following analogue of Lemma \ref{lem:hyperboliccommutator}.

\begin{lem} \label{lem:glancingcommutator}
There exists $\delta_0 >0$ such that for each $\delta \in (0,\delta_0)$ and $\beta \in (0,1) $,
\begin{equation} \label{eq:glancingcommutator}
(i/h)[P,A^*A] =  B^*(hD_x R_1 + R_0 - 1)B + E + hR,
\end{equation}
where $A,B$ are as above, and remaining operators in \eqref{eq:hyperboliccommutator} have the following properties:
\begin{itemize} \itemsep6pt 
	
	\item $R_0 \in \Psibhc$ and $R_1 \in \Psibh^\COMP$ satisfy
	\[
	|\bsymbol(R_0)| \leq C_1\delta^\theta  \beta^{\theta-1}, \quad |\bsymbol(R_1) | \leq C_1\beta^{-1}
	\]
	 where $C_1 > 0$ does not depend on $\beta,\delta$.
	\item $E,R\in \Diff^2_h \Psibh+ \Psibhc$, and 
	\[
	\WFb^2(E) \subset \{-\delta - \delta \beta \leq \rho_0 \leq -\delta,\, \omega \leq 2\beta \delta\}
	\]
\end{itemize}
The wavefront sets of $R_0,R_1, R$ are contained in $\{|\rho_0| \leq 2\delta, \, \omega \leq 2\beta \delta\}$
\end{lem}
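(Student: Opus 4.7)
The plan is to follow the scheme of Lemma \ref{lem:hyperboliccommutator}, with two modifications reflecting the glancing geometry. First split $P=(hD_x)^*(hD_x)+\tilde P$, so that
\[
(i/h)[P,A^*A]=(i/h)[(hD_x)^*(hD_x),A^*A]+(i/h)[\tilde P,A^*A].
\]
Because $\phi=\rho_0+\omega/(\beta^2\delta)$ and the argument of $\chi_1$ are both $\sigma$-independent, the commutant $a$ does not depend on $\sigma$. Lemma~\ref{lem:Dx2Acommutator} applied to $A^*A$ therefore gives $(i/h)[(hD_x)^*(hD_x),A^*A]=(hD_x)^*B_1+hR$ with $\bsymbol(B_1)=2\partial_x(a^2)$; the $(hD_x)^*B_0(hD_x)$ term that would otherwise appear sits in $h\Diff^2_h\Psibh^{-1}$ because $\partial_\sigma(a^2)\equiv 0$, and is absorbed into $hR$. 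In particular there is no analogue here of the $B_0P$ term from the hyperbolic case.

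Computing directly from $\partial_x\phi=2x/(\beta^2\delta)$, one finds $2\partial_x(a^2)=-(2x/(\beta^2\delta))\,b^2$. On $\supp a$ we have $|x|\le\omega^{1/2}\le 2\beta\delta$, hence $|2x/(\beta^2\delta)|\le 4\beta^{-1}$. A routine rearrangement in $\Psibh^\COMP$ (moving $B^*$ and $B$ to the outside, with errors in $h\Diff^1_h\Psibh^\COMP$) rewrites $(hD_x)^*B_1=B^*(hD_x)R_1B+hR$ with $R_1\in\Psibh^\COMP$ of principal symbol $-2x/(\beta^2\delta)$, giving the required bound $|\bsymbol(R_1)|\le C_1\beta^{-1}$ and the wavefront containment in $\supp a$.

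For $(i/h)[\tilde P,A^*A]\in h\Diff_h\Psibh^\COMP+h\Psibhc^\COMP$, the principal symbol is $H_{\tilde p}(a^2)$; since $\partial_\sigma(a^2)=0$, the b-Poisson-bracket contribution from $V$ reduces to $-(\partial_{y_i}V)\partial_{\eta_i}(a^2)$, which combined with the smooth contribution gives $H_{\tilde p}(a^2)=2aH_{\tilde p}a$. The chain rule yields
\[
H_{\tilde p}(a^2)=-\tfrac12(H_{\tilde p}\phi)\,b^2+2(\beta\delta)^{-1}(H_{\tilde p}\rho_0)\,\chi_0^2\chi_1\chi_1'.
\]
The $\chi_1'$ piece, once quantized, is supported in $\{-\delta-\beta\delta\le\rho_0\le-\delta,\,\omega\le 2\beta\delta\}$ and furnishes the $E$ term (split into smooth and conormal pieces according to $H_{\tilde p}\rho_0=H_{k^{ij}\eta_i\eta_j}\rho_0+H_V\rho_0$). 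For the $(H_{\tilde p}\phi)b^2$ piece I split $H_{\tilde p}\phi=H_{\tilde p_0}\phi+(H_{\tilde p}-H_{\tilde p_0})\phi$; the normalization $H_{\tilde p_0}=\partial_{\rho_0}$ gives $H_{\tilde p_0}\phi=1$, producing the advertised $-B^*B$ contribution. By \eqref{eq:tildeperror} together with $\omega^{1/2}\le 2\beta\delta$ and $\beta\le 1$ on $\supp a$,
\[
|(H_{\tilde p}-H_{\tilde p_0})\phi|\le M(1+2\beta^{-1})(2\beta\delta)^\theta\le C\beta^{\theta-1}\delta^\theta,
\]
which is exactly the required bound on $\bsymbol(R_0)$; the resulting $R_0$ lies in $\Psibhc^\COMP$ because the $V$-contribution to $H_{\tilde p}-H_{\tilde p_0}$ is conormal. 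All remaining subprincipal and rearrangement terms collect into $hR\in h(\Diff_h^2\Psibh^\COMP+\Psibhc^\COMP)$, with wavefront properties inherited from $\supp a$.

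The main obstacle will be bookkeeping: keeping the smooth b-calculus $\Psibh$ and the conormal-coefficient calculus $\Psibhc$ disentangled when shuffling factors, since Lemmas~\ref{lem:AhDxswap}--\ref{lem:Dx2Acommutator} are only available in $\Psibh$. The $(hD_x)$-related rearrangements must therefore be done inside $\Psibh$, while the $V$-commutator and its derivatives live in $\Psibhc$. A secondary point is verifying \eqref{eq:tildeperror} itself, which combines smooth $O(x)$ estimates for the metric with Lemma~\ref{lemma:holder} applied to $V,\partial_yV\in I^{[-1-\alpha]}(Y)$; by Remark~\ref{rem:uniformomega}, $M$ is uniform in $q$ near $q_0$, so compactness of $K$ lets us reduce to $K=\{q_0\}$ throughout.
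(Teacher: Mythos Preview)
Your proposal is correct and follows essentially the same approach as the paper. The only cosmetic difference is that you split $H_{\tilde p}\phi = H_{\tilde p_0}\phi + (H_{\tilde p}-H_{\tilde p_0})\phi$ at the symbol level, whereas the paper splits $\tilde P = \tilde P_0 + (\tilde P - \tilde P_0)$ at the operator level before computing commutators; the resulting $R_0$, $R_1$, $E$ terms and their bounds are identical. (Note also that your coefficient $-\tfrac12$ in front of $(H_{\tilde p}\phi)b^2$ is in fact the correct one given $b = 2\delta^{-1/2}(\chi_0'\chi_0)^{1/2}\chi_1$; the paper's step writing $-b^2$ absorbs an inessential constant.)
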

\begin{proof}
As in the proof of Lemma \ref{lem:hyperboliccommutator}, we use the notation $E,R$ to denote any operators satisfying the hypotheses of the lemma; these may change from line to line. Fix a cutoff $\psi \in \CI(\bT^*X;[0,1])$ such that $\psi = 1$ near $\{|\rho_0| \leq 2\delta, \, \omega^{1/2} \leq 2\beta\delta\}$ with support in $\{|\rho_0| < 3\delta,\,\omega^{1/2} < 3\beta\delta\}$.
	
	\begin{inparaenum}
		\item First, following the notation of \eqref{eq:hypcommutator}, consider the term $B_0 (hD_x)^*(hD_x)$. Since $a$ is independent of $\sigma$, it follows that $\bsymbol(B_0) = 0$ and hence $B_0 \in h\Psibh^\COMP(X,Y)$. Thus $B_0 (hD_x)^*(hD_x)$ is part of the error $hR$.
	
		\item Now consider $B_1 (hD_x)$, where $\bsymbol(B_1)
                  = -(\partial_x \phi) \psi b^2$. Since $\partial_x
                  \phi = 2 \beta^{-2}\delta^{-1} x$ and $|x| \leq
                  \omega^{1/2} \leq 3\beta \delta$ on $\supp \psi$, we
                  can write $B_1(hD_x) = B^*(hD_x R_1)B + hR$; here
		\[
		|\bsymbol(R_1) | \leq 2\beta^{-2}\delta^{-1}\omega^{1/2} \leq 6 \beta^{-1}.
		\]
		\item Now we have dealt with the analogs of the first two
                  terms in \eqref{eq:hypcommutator}, and we turn to the term
                  $(i/h) [\tilde{P},A^*A].$ If $\tilde P_0$ is an operator with principal symbol $\tilde p_0$, write $\tilde P = \tilde P_0 + (\tilde P - \tilde P_0)$. The principal symbol of $(i/h)[\tilde P -\tilde P_0,A^*A]$ is given by
		\[
		\bhamvf_{\tilde p -\tilde p_0}(a^2) = (\bhamvf_{\tilde p -\tilde p_0}\phi)\psi b^2 + e.
		\]
		In view of \eqref{eq:tildeperror}, we can write $(i/h)[\tilde P -\tilde P_0,A^*A] = B^*R_0B + E + hR$, where
		\[
		|\bsymbol(R_0)| \leq (3^\theta M)  \delta^{\theta}\beta^\theta(1 +  3\beta^{-1}).
		\]
		Thus $R_0$ is as advertised, since $\beta < 1$.
		
		\item Finally, $(i/h)[\tilde P_0, A^*A]$ has principal symbol 
		\[
		\bhamvf_{\tilde p_0}(a^2) = -b^2 + e,
		\]
		hence we can write $(i/h)[\tilde P_0, A^*A] = -B^*B + E + hR$ as desired.
	\end{inparaenum}
\end{proof}

We proceed  as in Section \ref{subsect:hyperbolicregion}, using Lemma \ref{lem:glancingcommutator} to write
\begin{align*}
-(2/h)\Im \left<APu,Au\right>  &= (i/h)\left<[A^*A,P]u,u\right> \\  &= \|Bu\|^2_{L^2} + \left<R_0Bu,Bu\right> + \left< R_1Bu, (hD_x)Bu\right> \\ &- \left< Eu,u\right> - h\left<Ru,u\right> 
\end{align*}
for $u \in H^1_h(X)$. Applying \eqref{eq:L2H1constant} we can bound
\begin{equation} \label{eq:Bbound}
c_0\|Bu\|^2_{H^1_h} \leq  \|Bu\|^2_{L^2} + C\|GPu\|_{H^{-1}_h}^2 + Ch^2\| Gu \|_{H^1_h}^2 + \mathcal{O}(h^\infty)\|u\|_{H^1_h}^2,
\end{equation}
where $c_0>0$ independent of $\beta,\delta$, and where $G$ is elliptic on $\WFb(B)$. We now choose $\beta$ depending on $\delta$:
\begin{lem} \label{lem:glancingerrorbound}
	Let $\varepsilon>0$. There exists $c>0$ such that if $\delta \in (0,\delta_0)$ and $\beta = c \delta^{\theta/(2-\theta)}$, then
	\begin{multline*}
			\lvert\left<R_0Bu,Bu\right>\rvert + \lvert\left< R_1Bu, (hD_x)Bu\right>\rvert \leq \varepsilon\| Bu\|_{H^1_h}^2 \\ + Ch^{-1}\|GPu\|_{H^{-1}_h}^2 + Ch\| Gu \|_{H^1_h}^2 + \mathcal{O}(h^\infty)\|u\|^2_{H^1_h}
	\end{multline*}
	for each $\delta \in (0,\delta_1)$ and $u \in H^1_h(X)$. The
        constant $C = C(\delta)$ depends on $\delta$ through $\beta$.
\end{lem}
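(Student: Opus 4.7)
The plan is to estimate the two error terms $\langle R_0 Bu, Bu\rangle$ and $\langle R_1 Bu, (hD_x) Bu\rangle$ separately. For the first I will rely on the $L^2$-boundedness property (VI) of $\Psibhc$ applied directly to $R_0$. For the second, since $hD_x$ is not in the b-calculus, I will invoke Lemma~\ref{lem:Dxbound} to control $\|hD_x Bu\|_{L^2}$, exploiting the fact that $\WFb(B)$ is confined to a neighborhood of the glancing set.

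The $R_0$ term is easy. $L^2$-boundedness gives
\[
|\langle R_0 Bu, Bu\rangle| \leq 2\sup|\bsymbol(R_0)|\cdot\|Bu\|_{L^2}^2 + \mathcal{O}(h^\infty)\|u\|_{L^2}^2 \leq C\delta^\theta\beta^{\theta-1}\|Bu\|_{L^2}^2 + \mathcal{O}(h^\infty)\|u\|_{L^2}^2.
\]
Substituting $\beta = c\delta^{\theta/(2-\theta)}$, the exponent of $\delta$ becomes $\theta + (\theta-1)\theta/(2-\theta) = \theta/(2-\theta) > 0$, so the coefficient is $Cc^{\theta-1}\delta^{\theta/(2-\theta)}$ and can be made $<\epsilon/2$ by taking $\delta$ small (for any fixed $c$).

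The $R_1$ term is the main content. First I will check that $|\tilde p|$ is small on $\WFb(B)$. On $\supp b$ the cutoffs force $\phi \leq 2\delta$ and $\rho_0 \geq -(1+\beta)\delta$, hence $\omega = \beta^2\delta(\phi-\rho_0) \leq 4\beta^2\delta^2$ for $\beta < 1$; in particular $|x| \leq 2\beta\delta$ and $|\tilde p_0| = |\rho_1| \leq 2\beta\delta$. Compact microlocalization in $\eta$ together with Lemma~\ref{lemma:holder} applied to $V$ yields $|\tilde p - \tilde p_0| \leq C|x|^\theta \leq C(\beta\delta)^\theta$, so $|\tilde p| \leq C'(\beta\delta)^\theta$ on $\WFb(B)$. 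Then Lemma~\ref{lem:Dxbound} with parameter $\delta_* = 2C'(\beta\delta)^\theta$ gives
\[
\|hD_x Bu\|_{L^2}^2 \leq C(\beta\delta)^\theta\|Bu\|_{L^2}^2 + Ch^{-1}\|GPu\|_{H^{-1}_h}^2 + Ch\|Gu\|_{H^1_h}^2 + \mathcal{O}(h^\infty)\|u\|_{H^1_h}^2.
\]
Combining with $\|R_1 Bu\|_{L^2} \leq C\beta^{-1}\|Bu\|_{L^2} + \mathcal{O}(h^\infty)\|u\|_{L^2}$ via weighted Cauchy--Schwarz with parameter $\mu > 0$, I will get
\[
|\langle R_1 Bu, (hD_x)Bu\rangle| \leq \tfrac{C\beta^{-2}}{2\mu}\|Bu\|_{L^2}^2 + \tfrac{\mu}{2}\bigl[C(\beta\delta)^\theta\|Bu\|_{L^2}^2 + \text{junk}\bigr].
\]
Optimizing with $\mu = \beta^{-1}(\beta\delta)^{-\theta/2}$ balances both $\|Bu\|_{L^2}^2$ coefficients at $C\beta^{\theta/2-1}\delta^{\theta/2}$; substituting $\beta = c\delta^{\theta/(2-\theta)}$ the exponents of $\delta$ cancel exactly, leaving $Cc^{\theta/2-1}$, which can be made $< \epsilon/2$ by taking $c$ large, since $\theta/2 - 1 < 0$. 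The junk terms then acquire a coefficient of order $\mu \sim \beta^{-1-\theta/2}\delta^{-\theta/2}$, which is precisely the $\delta$-dependence allowed by the statement $C = C(\delta)$.

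The main obstacle is this scale-matching: the $R_0$ estimate requires $\beta$ not too small (else $\beta^{\theta-1}$ diverges too quickly relative to $\delta^\theta$), while the $R_1$ estimate, even after exploiting the near-glancing location of $\WFb(B)$ via Lemma~\ref{lem:Dxbound}, requires $\beta$ not too large. The choice $\beta = c\delta^{\theta/(2-\theta)}$ is the unique scale on which both estimates can be closed simultaneously --- the $R_0$ part via taking $\delta$ small, and the $R_1$ part via taking the prefactor $c$ large.
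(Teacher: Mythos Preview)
Your proof is correct and follows essentially the same strategy as the paper: both arguments use Lemma~\ref{lem:Dxbound} to exploit the near-glancing localization $|\tilde p|\leq C(\beta\delta)^\theta$ on $\WFb(B)$, and then balance via Cauchy--Schwarz. The only cosmetic difference is the choice of weight: the paper fixes the weight so that one side of the Cauchy--Schwarz split is exactly $(\varepsilon/2)\|Bu\|_{L^2}^2$, leaving the coefficient $C\beta^{-2}(\beta\delta)^\theta = Cc^{\theta-2}$ on the other side, whereas you optimize the weight $\mu$ to obtain the geometric mean $C\beta^{\theta/2-1}\delta^{\theta/2}=Cc^{\theta/2-1}$ on both sides. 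Either choice tends to zero as $c\to\infty$, so the two arguments close in the same way.
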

\begin{proof}
	First consider $R_1$, in which case
	\[
	\left<R_1 v, (hD_x)v\right> \leq 2C_1\beta^{-1} \|v\|_{L^2}\|hD_x v\|_{L^2} + Ch\| v\|_{H^1_h}^2,
	\]
	where $C_1$ does not depend on $\beta,\delta$. Apply this to $v = Bu$ and use Lemma \ref{lem:Dxbound}. Indeed,
	\[
	\WFb(B) \subset \{|\rho_0| \leq 2\delta, \, \omega^{1/2} \leq 2\beta\delta\},
	\]
	and by our choice of $\rho_0$ we conclude that $|x| \leq 2\beta\delta$ and $|\tilde p_0| \leq 2\beta\delta$ on $\WFb(B)$. Now 
	\[
	|\tilde p| \leq |\tilde p_0 | + |\tilde p -\tilde p_0| \leq 2\beta\delta + C|x|^\theta \leq C_1' (\beta\delta)^\theta
	\]
	on $\WFb(B)$, where $C_1'$ does not depend on
        $\beta,\delta$. Thus, by Lemma \ref{lem:Dxbound} and
        Cauchy--Schwarz,
	\begin{multline*}
		\left<R_1 Bu, (hD_x)Bu\right>\leq  (\varepsilon/2)
                \|Bu\|_{L^2}^2 + C_1''
                \beta^{-2}(\beta\delta)^{\theta} \|Bu\|_{L^2}^2 \\ +
                Ch^{-1}\|GPu\|_{H^{-1}_h}^2 + C h\| Gu \|_{H^1_h}^2 + \mathcal{O}(h^\infty)\|u\|_{H^1_h}^2,
	\end{multline*}
	where again $C_1'' > 0$ is independent of $\beta,\delta$; here
        we have taken the $\delta$ in the notation of
        Lemma~\ref{lem:Dxbound} to be a multiple of
        $(\beta\delta)^{\theta}$. Bounding $\lvert\left<R_0 Bu,Bu \right>\rvert$ is done exactly as in Lemma \ref{lem:hyperbolicerrorbound}, yielding 
	\[
	\lvert\left<R_0 Bu,Bu \right>\rvert \leq C_1'''\beta^{-1}(\beta \delta)^\theta \| Bu\|_{L^2}^2 + \mathcal{O}(h^\infty)\| u \|_{L^2}^2.
	\]
	It therefore suffices to choose $\beta = c\delta^{\theta/(2-\theta)}$ with $c>0$ sufficiently large.
\end{proof}
	
The rest of the argument in Section \ref{subsect:hyperbolicregion} goes through verbatim. Thus if $G \in \Psibh^\COMP$ is elliptic on $\WFb(B)$, and $Q_1 \in \Psibh^\COMP$ is elliptic on $\WFb(E)$ with $\WFb(Q_1) \subset \ELLb(G)$, then
\[
\|Bu\|_{H^1_h} \leq C h^{-1}\|GPu\|_{H^{-1}_h} + C\| Q_1u \|_{H^1_h} + C h^{1/2} \|G u \|_{H^1_h} + \mathcal{O}(h^\infty)\| u \|_{H^1_h}.
\]
Performing the inductive step requires that the commutant be slightly modified at each step; however this does not cause any problems, and proceeds exactly as in \cite{vasy2008propagation}.

\subsection{Proof of Theorem \ref{theo:GBBpropagation}} \label{subsect:propagationofsingularities}
We now prove Theorem \ref{theo:GBBpropagation}, following \cite[Section 8]{vasy2008propagation} quite closely. Without assuming that $Pu = 0$, we prove the slightly stronger statement that 
\[
F = \WFb^{1,r}(u) \setminus \WFb^{-1,r+1}(Pu)
\]
 is the union of maximally extended $\GBB$ within $\cchare \setminus \WFb^{-1,r+1}(Pu)$ for each $r \in \RR \cup\{+\infty\}$. It suffices to prove that for each $q_0 \in F$ there exists $\varepsilon > 0$ and a $\GBB$ 
\[
\gamma : [-\varepsilon,0] \rightarrow F
\]
satisfying $\gamma(0) = q_0$. Indeed, given any $Z \subset \cchare$, let $\mathcal{P}_Z$ denote the set of $\GBB$s defined on open intervals $(\alpha,0]$ with values in $Z$, such that $\gamma(0) = q_0$. There is a natural partial order on $\mathcal{P}_Z$ such that each chain has an upper bound. Thus, provided $\mathcal{P}_Z \neq \emptyset$, Zorn's lemma guarantees the existence of maximally extended $\GBB$ in $Z$ on an interval $(\alpha_{\max} ,0]$, where possibly $\alpha_{\max} = -\infty$. We apply this argument with the set $Z = F$, but arguing verbatim as in \cite[Section 8]{vasy2008propagation}, a maximal $\GBB$ within $F$ is also maximal within $\cchare \setminus \WFb^{-1,r+1}(Pu)$. Replacing the backwards propagation estimates with their forward counterparts, we similarly deduce the existence of a maximal $\GBB$ on $[0,\beta_{\max})$.

By Proposition  \ref{prop:elliptic} we can assume that $q_0 \in \hyp$ or $q_0\in \gl$. In the latter case it suffices to assume $q_0 \in \gl \cap T^*Y$, since the semiclassical Duistermaat--H\"ormander theorem on propagation of singularities applies when $q_0 \in \gl \cap T^*(X\setminus Y)$, see \cite[Appendix E]{zworski:resonances} for example

We begin with the proof when $q_0 \in \hyp$. Fix a normal coordinate patch $\mathcal{U}$ such that $\bT^*_{\mathcal{U}}X$ contains $q_0$. Since the complement of $\WFb^{-1,r+1}(Pu)$ is open, first choose a precompact neighborhood $U \subset \cchare \cap \bT^*_\mathcal{U} X$ of $q_0$ such that $U \cap \WFb^{-1,r+1}(Pu) = \emptyset$.  From the local compactness of $\cchare$, by further shrinking $U$ we  assume that 
	\begin{equation} \label{eq:sigmaincreasing}
	\hamvf_p(x\xi) > 0 \text{ on } \pi^{-1}(U),
	\end{equation}
	since this holds along $\pi^{-1}(\{q_0\})$. Also fix an open subset $U' \subset U$ containing $q_0$ with closure in $U$. By Lemma \ref{lem:GBBgrowth} there exists $\varepsilon_0$ such that every $\GBB$ defined on $[-\varepsilon_0,0]$ with $\gamma(0) \in U'$ satisfies $\gamma([-\varepsilon_0,0]) \subset U$. In particular, $\sigma$ is increasing on any such $\GBB$ by \eqref{eq:sigmaincreasing}. By Proposition~\ref{prop:qualitativehyp}, there is a sequence of points 
	\[
	q_n \in F \cap \{\sigma < 0\} \cap U'
	\]
	tending to $q_0$. Since $q_n \in \cchare$ and $\sigma(q_n) < 0$, it follows that $x(q_n) \neq 0$. By the Duistermaat--H\"ormander theorem on propagation of singularities, there is a maximally extended $\GBB$ 
	\[
	\gamma_n : (-\varepsilon_n,0] \rightarrow F \cap T^*(X\setminus Y)
	\]
	such that $\gamma_n(0)  = q_n$.

        Arguing as in \cite{vasy2008propagation}, the claim is that
        $\varepsilon_n \geq \varepsilon_0$. Indeed, since
        $\gamma_n(0) \in U'$, it would otherwise be the case that
        $\gamma_n(s) \in U$ for all $s \in (-\varepsilon_n,0]$. Now
        $\gamma_n$ extends to $[-\varepsilon_n,0]$ by Proposition
        \ref{prop:equicontinuous}, and $\sigma$ is increasing along
        $\gamma_n$. Therefore $\sigma(\gamma_n(-\varepsilon_n)) < 0$,
        so $x(\gamma_n(-\varepsilon_n)) \neq 0$, which contradicts
        maximality of $\gamma_n$. Thus we have a sequence of $\GBB$s
\[
\gamma_n|_{[-\varepsilon_0,0]} : [-\varepsilon_0,0] \rightarrow F \cap \overline{U}
\] 
with values in a compact set. According to Proposition \ref{prop:equicontinuous}, there is a subsequence converging uniformly to a $\GBB$ 
	\[
	\gamma :[-\varepsilon_0,0] \rightarrow F \cap \overline{U},
	\]
	thus completing the proof.

For the proof when $q_0 \in \gl \cap T^* Y$, we begin with a variant of Proposition \ref{prop:equicontinuous}. Fix a normal coordinate patch $\mathcal{U}$.
\begin{lem} \label{lem:cauchypeano}
	Let $K \subset \bT^*_{\mathcal{U}}X$ be compact, $Z \subset \cchare$ be closed, and $[a,b]\subset \RR$ a compact interval. Fix constants $r, C_0 > 0$. For each $n$, consider a partition 
	\[
	a = s_{n,0} < s_{n,1} < \ldots < s_{n,k_n} = b.
	\]
	Set $q_{n,j} = \gamma_n(s_{n,j})$ and $\delta_{n,j} = |s_{n,j} - s_{n,j-1}|$ for $j = 1,\ldots, k_n$. Suppose that 
	\[
	\gamma_n : [a,b] \rightarrow K
	\] 
	is a sequence of continuous maps, where the restriction of $\gamma_n$ to $[s_{n,j-1},s_{n,j}]$ is either a $\GBB$ with values in $Z$, or the following holds
	\begin{itemize} \itemsep6pt
	\item $q_{n,j}\in Z \cap \gl \cap T^*Y$ and $q_{n,j-1} \in Z$, where 
\begin{equation}\label{shrinkingball}
	q_{n,j-1} \in \mathsf{B}(\exp(-\delta \bhamvf_{\tilde
          p_0})(q_{n,j}),C_0\delta_{n,j}^{1+r}),\quad r>0.
\end{equation}
		\item The restriction of $\gamma_n$ to
                  $[s_{n,j-1},s_{n,j}]$ is a line segment (in local coordinates), and $\delta_{n,j} \leq 2^{-n}|b-a|$.
	\end{itemize}
Then there is a subsequence of $\gamma_n$ converging uniformly to a $\GBB$ $\gamma:[a,b]\rightarrow K \cap Z$.
\end{lem}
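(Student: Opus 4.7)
The plan is to adapt the compactness argument of Proposition \ref{prop:equicontinuous}: show $\{\gamma_n\}$ is equi-Lipschitz, extract a uniformly convergent subsequence by Arzelà--Ascoli, and then verify that the limit takes values in $K\cap Z$ and satisfies Definition \ref{defi:GBB}. For the Lipschitz bound, on GBB subintervals \eqref{eq:gammalipshitz} supplies a constant depending only on $K$, while on line segment subintervals the smoothness of $\bhamvf_{\tilde p_0}$ together with \eqref{shrinkingball} yields
\[
  |q_{n,j}-q_{n,j-1}| \leq |q_{n,j}-\exp(-\delta_{n,j}\bhamvf_{\tilde p_0})(q_{n,j})| + C_0\delta_{n,j}^{1+r} = O(\delta_{n,j}),
\]
uniformly in $n,j$; since the segment is parametrized linearly on $[s_{n,j-1},s_{n,j}]$ its speed is then uniformly bounded. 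Given the uniform limit $\gamma$, each $\gamma_n(s)$ either lies in $Z$ (on a GBB piece) or within distance $L\cdot 2^{-n}|b-a|$ of $q_{n,j}\in Z$ (on a line segment), so $\dist(\gamma_n(s),Z)\to 0$ and closedness of $Z$ forces $\gamma(s)\in Z$.

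The core step is verifying the GBB property for $\gamma$. Fix a $\pi$-invariant $f\in\CI(T^*X)$ and set $\mu(q)=\inf\{(\hamvf_p f)(\varpi):\pi(\varpi)=q,\varpi\in\Sigma\}$ on $\cchare$. Using Lemma \ref{lem:Hpextends} to extend $\hamvf_p f$ continuously to $T^*X$, together with the fact that $\pi^{-1}(q)\cap\Sigma$ contains at most two points varying lower hemicontinuously in $q$, one checks that $\mu$ is lower semicontinuous and bounded below on compact subsets of $\cchare$. On every GBB subinterval, integrating the one-sided derivative formula of Lemma \ref{lem:GBBequiv}(iii) gives
\[
  f_\pi(\gamma_n(s_{n,j}))-f_\pi(\gamma_n(s_{n,j-1}))\geq \int_{s_{n,j-1}}^{s_{n,j}}\mu(\gamma_n(s))\,ds.
\]
On line segment subintervals, a first-order Taylor expansion of $f_\pi$ in the segment direction, combined with $(q_{n,j}-q_{n,j-1})/\delta_{n,j}=\bhamvf_{\tilde p_0}(q_{n,j})+O(\delta_{n,j}^{\min(r,1)})$ and the identification of $\bhamvf_{\tilde p_0}f_\pi(q_{n,j})$ with $\mu(q_{n,j})$ at glancing (through \eqref{eq:equalityatglancing} applied to the $\pi$-invariant $f$), yields
\[
  f_\pi(\gamma_n(s_{n,j}))-f_\pi(\gamma_n(s_{n,j-1})) = \delta_{n,j}\mu(q_{n,j})+O(\delta_{n,j}^{1+\min(r,1)}).
\]
Summing over $j$ between $s_1$ and $s_2$, using $\delta_{n,j}\leq 2^{-n}|b-a|$ on line segments to bound cumulative per-segment errors by $O(2^{-n\min(r,1)})$, adjusting partial endpoint intervals via the Lipschitz bound on $\gamma_n$, and passing to the limit with uniform convergence and Fatou applied to the lower semicontinuous $\mu$ gives
\[
  f_\pi(\gamma(s_2))-f_\pi(\gamma(s_1))\geq \int_{s_1}^{s_2}\mu(\gamma(s))\,ds
\]
for every $s_1<s_2$ in $[a,b]$. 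Dividing by $s_2-s_1$ and taking $s_2\to s_0^+$ (and symmetrically $s_1\to s_0^-$) recovers the liminf condition of Definition \ref{defi:GBB}.

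The principal obstacle is the line segment analysis. The strict exponent $1+r$ with $r>0$ in \eqref{shrinkingball} is essential: it makes the geometric approximation error $O(\delta_{n,j}^{1+r})$ strictly subleading to the Taylor main term $\delta_{n,j}\mu(q_{n,j})$. The identification of $\bhamvf_{\tilde p_0}f_\pi$ with $\hamvf_p f$ at glancing via \eqref{eq:equalityatglancing} is precisely what allows the line segment contributions to dovetail with the integrated $\GBB$ inequality arising from the GBB pieces, and the parameter-length shrinkage $\delta_{n,j}\leq 2^{-n}|b-a|$ controls the accumulation of these errors in the passage to the limit.
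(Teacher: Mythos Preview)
Your Arzelà--Ascoli step and the lower semicontinuity of $\mu$ are fine, and the integrated Fatou strategy is a reasonable alternative to the paper's argument. There is, however, a real gap in the line-segment step: you invoke ``a first-order Taylor expansion of $f_\pi$ in the segment direction,'' but for a general $\pi$-invariant $f$ the induced function $f_\pi$ is \emph{not} $C^1$ on $\bdotT^*X$ (take $f=x\cos\xi$: then $f_\pi(x,y,\sigma,\eta)=x\cos(\sigma/x)$). So the displayed identity $f_\pi(q_{n,j})-f_\pi(q_{n,j-1})=\delta_{n,j}\mu(q_{n,j})+O(\delta_{n,j}^{1+\min(r,1)})$ cannot be obtained by naive Taylor expansion of $f_\pi$; one must lift both endpoints to $T^*X$ and expand $f$ there, carefully using $\pi$-invariance (i.e.\ $\partial_\xi f|_{x=0}=0$, hence $\partial_\xi^2 f,\ \partial_\xi\partial_y f,\ldots$ all vanish at $x=0$) together with the smallness $|x(q_{n,j-1})|=O(\delta_{n,j}^{1+r})$ to kill the $\xi$-contribution. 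This can be done, but it is not the one-line step you present.

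The paper sidesteps this entirely by a case split. If $\gamma(s_0)\notin\gl\cap T^*Y$, then for large $n$ no line segment can meet a neighborhood of $s_0$ (their right endpoints lie in $\gl\cap T^*Y$), so $\gamma_n$ is a $\GBB$ near $s_0$ and Proposition~\ref{prop:equicontinuous} applies directly. If $\gamma(s_0)\in\gl\cap T^*Y$, the paper first reduces to the special $\pi$-invariant functions $f\in\{x,y_i,\eta_i\}$ (using $f=f_0(y,\eta)+xf_1$ and $x(\varpi_0)=\xi(\varpi_0)=0$); for these, $f_\pi$ is just a coordinate on $\bT^*X$, hence smooth, and $f_\pi\circ\gamma_n$ is affine on each line segment with slope $(f_\pi(q_{n,j})-f_\pi(q_{n,j-1}))/\delta_{n,j}$, which \eqref{shrinkingball} makes within $O(\delta_{n,j}^{\,r})$ of $(\bhamvf_{\tilde p_0}f_\pi)(q_{n,j})$. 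The paper then runs a pointwise Dini-derivative monotonicity argument (show $F_n=f_\pi\circ\gamma_n-(c_0-\varepsilon/2)s$ has $D_+F_n\ge 0$) rather than your integral/Fatou passage. Your approach would go through once you insert the reduction to $f\in\{x,y_i,\eta_i\}$; without it, the Taylor step as written is not justified.
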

\begin{proof}
	Since $K \subset \bT^*_{\mathcal{U}}X$ is compact, we can choose $L > 0$ such that
\begin{equation} \label{eq:lipschitzglancing}
	|\gamma_n(s) - \gamma_n(s')| \leq L|s-s'|
	\end{equation}
	for $s,s'\in[a,b]$, uniformly in $n$. To see this, it suffices to consider the case when $s,s'$ lie in a single interval $[s_{n,j-1},s_{n,j}]$. By \eqref{eq:gammalipshitz}, the result is clear if the restriction of $\gamma_n$ to $[s_{n,j-1},s_{n,j}]$ is a $\GBB$. If the restriction is a line segment, then \eqref{eq:lipschitzglancing} holds with 
	\[
	L = \dfrac{|q_{n,j-1}-q_{n,j}|}{\delta_{n,j}} \leq C_0 + \sup \{|\bhamvf_{\tilde p_0}(q)|: q\in K\},
	\] 
	which is bounded uniformly in $n$.
	By the Arzel\`a--Ascoli theorem, there is a subsequence of $\gamma_n$ converging to a curve $\gamma:[a,b] \rightarrow K$, and since $Z$ is closed, $\gamma$ actually maps into $K\cap Z$ under our hypotheses. It remains to check that $\gamma$ is a $\GBB$.
	
	First, suppose that $\gamma(s_0) \notin \gl \cap T^*Y$. Since $\gl \cap T^*Y$ is closed in $\bdotT^*X$, there is a neighborhood $O$ of $\gamma(s_0)$ that is also disjoint from $\gl \cap T^*Y$. Choose $\delta > 0$ such that $\gamma_n(s) \in O$ for $s \in (s_0-2\delta,s_0+2\delta)$ and $n \geq N_0$. By assumption, the restriction of $\gamma_n$ to $[s_0-\delta,s_0+\delta]$ is a $\GBB$, increasing $N_0$ if necessary, so by Proposition \ref{prop:equicontinuous}, the restriction of $\gamma$ to $[s_0-\delta,s_0+\delta]$ is a $\GBB$.

On the other hand, suppose that $\gamma(s_0) \in \gl \cap T^*Y$. Let $\varpi_0 = \pi^{-1}(q_0)$, and suppose that $f \in \CI(T^*X)$ is $\pi$-invariant. We must show that 
\[
D_\pm(f_\pi \circ \gamma)(s_0) \geq (\hamvf_p f)(\varpi_0).
\]	
Furthermore, at glancing points it suffices to check this when $f$ is one of the $\pi$-invariant functions $\{x,y,\eta\}$. This follows from the fact that 
\[
f(x,y,\xi,\eta) = f_0(y,\eta) + xf_1(x,y,\xi,\eta)
\]
and $x(\varpi_0) = \xi(\varpi_0) = 0$. Let $c_0 = (\hamvf_p f)(\varpi_0)$. We show that for each $\varepsilon > 0$ there exists $\delta>0$ such that
\[
f_\pi(\gamma(s)) - f_\pi(\gamma(s_0)) \geq (c_0 - \varepsilon)(s-s_0)
\]
for each $ s\in (s_0,s_0+\delta)$.

Since the map $\pi$ is proper and $\bT^*X$ is locally compact, from the continuity of $\hamvf_p f$ there is a neighborhood $O \subset \cchare$ of $\gamma(s_0)$ such that 
\[
\inf\{ (\hamvf_p f)(\varpi) : \varpi \in \pi^{-1}(O)\} \geq (c_0-\varepsilon/4).
\]
By uniform convergence, we choose $\delta > 0$ such that $\gamma_n(s) \in O$ for $s \in (s_0,s_0+2\delta)$ and $n \geq N_0$. 

Fix the interval $[\alpha,\beta] = [s_{n,j-1},s_{n,j}]$
containing $s_0$, where we choose $s_0 = s_{n,j-1}$ if $s_0$ happens to be an endpoint. For $s \in (s_0, s_0 + \delta)$ consider the function
\[
F_n = (f_\pi \circ \gamma_n)(s) - (c_0-\varepsilon/2)s.
\]
If the restriction of $\gamma_n$ to $[\alpha,\beta]$ is a $\GBB$, then $D_+ F_n(s) \geq 0$ on the intersection $[\alpha, \beta] \cap (s_0,s_0+\delta)$ by our choice of $O$. Otherwise the restriction of $\gamma_n$ to $[\alpha,\beta]$ is the line segment
\[
\gamma_n(s) = q_{n,j-1} + (s-\alpha)\frac{q_{n,j} - q_{n,j-1}}{\delta_{n,j}}.
\]
Since $f$ is one of $\{x,y,\eta\}$, it is clear that $f_\pi\circ \gamma_n$ is actually differentiable on $[\alpha,\beta]$, and
\[
D_+ (f_\pi\circ \gamma_n)(s) = \frac{f_\pi(q_{n,j})-f_\pi(q_{n,j-1})}{\delta_{n,j}}
\]
is \emph{constant} on $[\alpha,\beta]$. By \eqref{shrinkingball},
\begin{equation} \label{eq:D_+estimate}
|D_+(f_\pi \circ \gamma_n)(s) -(\bhamvf_{\tilde p_0}f_{\pi})(q_{n,j})| \leq C_0 |\beta-\alpha|^r\leq C_0 2^{-nr}.
\end{equation}
uniformly in $n$. By further increasing $N_0$ so that $2^{-N_0}|\beta-\alpha| < \delta$, we can assume that $\beta \in (s_0,s_0+2\delta)$ for $n \geq N_0$. Thus $q_{n,j} \in O \cap \gl \cap T^*Y$. Let $\varpi_{n,j}$ satisfy $\pi(\varpi_{n,j}) = q_{n,j}$.

We now collect several observations. First, since $q_{n,j}$ is a glancing point over $Y$, the equality \eqref{eq:equalityatglancing} holds. Furthermore, since $\tilde p_0$ depends only on $(y,\eta)$ and $f$ is one of $\{x,y,\eta\}$, it follows that 
\begin{equation} \label{eq:equalityatglancing1}
(\hamvf_{p} f)(\varpi_{n,j}) = (\hamvf_{\tilde p_0} f)(\varpi_{n,j}) = (\bhamvf_{\tilde p_0} f_\pi)(q_{n,j}).
\end{equation}
Therefore, since $\varpi_{n,j} \in \pi^{-1}(O)$ for $n \geq N_0$, by combining \eqref{eq:D_+estimate} and \eqref{eq:equalityatglancing1} we obtain
\begin{align*}
D_+ F_n(s) &= D_+(f_\pi \circ \gamma_n)(s) - (c_0-\varepsilon/2) \\&\geq  (\hamvf_{p} f)(\varpi_{n,j}) - C_0 2^{-nr} - (c_0-\varepsilon/2) \\& \geq (c_0 - \varepsilon/4)  - C_0 2^{-nr}- (c_0-\varepsilon/2)  \geq 0
\end{align*}
on $[\alpha,\beta] \cap (s_0,s_0+\delta)$ if $N_0$ is increased so that $C_0 2^{-nr} \leq \varepsilon/4$ for $n \geq N_0$.

Thus we know that $D_+ F_n(s) \geq 0$ on $(s_0,s_0+\delta)$ for $n \geq N_0$. Since $F_n$ has a nonnegative lower right Dini derivative, it is non-decreasing, and so
\[
f_\pi(\gamma_n(s)) - f_\pi(\gamma_n(s_0)) \geq (c_0 - \varepsilon/2)(s-s_0)
\]
for $s \in (s_0, s_0 + \delta)$ and $N \geq N_0$. We obtain the desired inequality for each $s \in (s_0,s_0 + \delta)$ by choosing $n \geq N$ sufficiently large (depending on $s_0$ and $s$) so that
\[
| f_\pi(\gamma_n(s)) - f_\pi(\gamma(s))| + |f_\pi(\gamma(s_0)) - f_\pi(\gamma_n(s_0))| \leq (\varepsilon/2) (s-s_0).
\]		
A similar argument applies for $ s\in (s_0-\delta,s_0)$.
\end{proof}

The proof of Theorem \ref{theo:GBBpropagation} for $q_0 \in \gl \cap T^*Y$ is then a relatively straightforward application of Lemma \ref{lem:cauchypeano}. We again fix a precompact neighborhood $U \subset \cchare \cap \bT^*_\mathcal{U} X$ of $q_0$ such that $U \cap \WFb^{-1,r+1}(Pu) = \emptyset$. Let $L$ be as in the proof of Lemma \ref{lem:cauchypeano} where we take $K = \overline{U}$, and let $C_0$ be as in the statement of Proposition \ref{prop:glancing}. By Lemma \ref{lem:GBBgrowth} we can choose $\varepsilon_0 > 0$ such that 
\[
U' = \mathsf{B}(q_0, (C_0+L)\varepsilon_0) \cap \cchare \subset U
\]
and if $\gamma: [-\varepsilon,0]$ is a $\GBB$ with $\varepsilon \in (0,\varepsilon_0)$ such that $\gamma(0) \in U'$, then $\gamma([-\varepsilon,0]) \subset U$. Let $\delta_n = 2^{-n}\varepsilon_0$. We then define a family of approximate $\GBB$ inductively. First, set $s_{0} = 0$, and suppose that a continuous curve $\gamma$ has already been defined on $[s_{j},0]$ such that if $q_{j} = \gamma(s_{j})$, then
\[
\gamma([s_{j},0]) \subset \mathsf{B}(q_0, (C_0+L) s_{j}|) \subset U.
\]
 We then extend $\gamma$ to an interval $[s_{j+1},s_{j}]$ as follows. 
 
 If $q_{j} = \gamma(s_{j}) \in F \cap \gl \cap T^*Y \cap U'$, then by Proposition \ref{prop:glancing} we can choose $q_{j+1} \in \WFb^{1,r}(u)$ such that
 \[
|q_{j+1} - \exp(-\delta_n \bhamvf_{\tilde p_0})(q_{j})| \leq C_0 (\delta_n)^{\theta/(2-\theta)} \leq C_0 \delta_n.
 \]
Let $s_{j+1} = \max(-\varepsilon_0,s_{j} - \delta_n)$. In particular since $q_{j} \in \overline{U}$, the line connecting $q_{j}$ to $q_{j+1}$ is contained in $\mathsf{B}(q_0,(C_0+L)|s_{j+1}|) \subset U'$. This also shows that $q_{j+1} \in F$.

Otherwise, $q_{j} \in F \setminus (\gl \cap T^*Y)$. We know that there is a maximally extended $\GBB$
\[
\gamma': (-\varepsilon',0] \rightarrow F \setminus (\gl \cap T^*Y)
\]
with $\gamma'(-\varepsilon') = q_{j}$.  Let $s_{j+1} = \max(-\varepsilon_0,s_{j} - \varepsilon')$. If $s_{j+1} = -\varepsilon_0$, then we can extend $\gamma_N$ from $[s_{j},0]$ to all of $[-\varepsilon_0,0]$ by concatenating with $\gamma'$. The other possibility is that $s_{j+1} > -\varepsilon_0$. Then $\gamma'$ extends up to $-\varepsilon'$, so we concatenate with $\gamma'$ and set $q_{j+1} = \gamma'(-\varepsilon')$. Either way, the extension by $\gamma'$ has its image in $\mathsf{B}(q_0,(C_0+L)|s_{j+1}|)$. In the second case, $q_{j+1}\in F \cap \gl \cap T^*Y$ by maximality, and thus we proceed as in the first step.

 The sequence $\gamma_N$ just constructed satisfies the hypotheses of Lemma \ref{lem:cauchypeano} with $K = \overline{U}$ and $Z = F$, letting $r = \theta/(2-\theta)$. This completes the proof of Theorem \ref{theo:GBBpropagation} when $q_0 \in \gl \cap T^*Y$.

\section{Semiclassical paired Lagrangian distributions}
\label{sec:paired}

In this section we collect the technical tools that we will need on
semiclassical paired Lagrangian distributions.  We largely follow the
discussion in \cite{de2014diffraction} in the homogeneous case (see
also the introduction for further references), but we
have been forced to revisit some of the foundations of the subject as
there is no existing treatment in the semiclassical setting.
  
\subsection{Nested conormal distributions} \label{subsect:nestedconormal}
Our paired Lagrangian distributions are locally modeled on oscillatory integrals in $\RR^{m}$ associated with the conormal bundles of two nested submanifolds of $\RR^m$.

\begin{defi}
	We say that an $h$-dependent function $a \in \CI(\RR^m_{x} \times \RR_{\xi'}^k \times \RR_{\xi''}^{n})$ is in the symbol class $S^{r,p}_h(\RR^m_{x};\RR^{k}_{\xi'};\RR^{n}_{\xi''})$ if
\blue{	\[
	| (hD_{\xi'})^\alpha D_{\xi''}^\beta D_x^\gamma a(x,\xi)| \leq C_{\alpha\beta \gamma N}\left<\xi'/h\right>^{r-|\alpha|}\left<(\xi',\xi'')\right>^{p}
	\]}
	for all multiindices $\alpha,\beta,\gamma$ and $N \in \RR$. We say $a \in S^{r,\COMP}_h \subset S^{r,-\infty}_h$ if $\supp a$ is contained in an $h$-independent compact set. 
\end{defi}

\blue{\begin{rem}\label{rem:symbolexample}
An instructive example is as follows.  Let $\chi \in \CcI(\RR),$ equal to
$1$ near the origin; let $\psi\in \CI(\RR)$ equal
$0$ on $(-\infty, 1)$ and $1$ on $(2,\infty).$  On $\RR^1 \times
\RR^1$ we set
\begin{equation}\label{symbolexample}
a(x',x'',\xi',\xi'')=\psi(\xi'/h)\chi(\xi') \chi(\xi'')
\end{equation}
This symbol lies in
$S_h^{0,-\infty}(\RR^2; \RR^1; \RR^1).$
  \end{rem}}

\begin{rem} 
This class of symbols can be interpreted in terms of a certain semiclassical blow-up as follows. Our symbols will be functions on $\RR^{m}\times \RR^{k+n} \times  (0,1)_h$ that lift to certain conormal functions on $\RR^n \times \mathsf{S}$, where $\mathsf{S}$ is defined as the \emph{blow-up}
\[
\mathsf S = [ \RR^{k+n} \times [0,1)_h;\{\xi'=0, \, h=0\}].
\]
The space $\mathsf{S}$ has two boundary hypersurfaces, $\mathrm{ff}$ and $\mathrm{sf}$, corresponding to the lifts of $\{\xi'=0, \, h=0\}$ and its complement within $\{h=0\}$, respectively. We also fix
\[
\rho_{\mathrm{sf}} = \left<\xi'/h\right>^{-1}, \  \rho_{\mathrm{ff}} = h\left<\xi'/h\right>.
\]
These lift to $\mathsf{S}$ as smooth, globally defined boundary defining functions for $\mathrm{sf}, \mathrm{ff}$, and $h = \rho_{\mathrm{sf}} \rho_{\mathrm{ff}}$. 

The lift of $\{h\geq \varepsilon|\xi'|\}$ intersects the interior of
the front face. Valid coordinates here are $(x,\xi'',\Xi,h)$, where
$\Xi = \xi'/h$, and in this region $h$ is a boundary defining function
for $\mathrm{ff}$. Furthermore, elements of
$\mathcal{V}_\B(\mathsf{S})$ (vector fields tangent to all boundary
faces) that are supported near the interior of $\mathrm{ff}$ are spanned over $\CI(\mathsf{S})$ by $\{\partial_x, \partial_{\xi''}, \partial_\Xi, h\partial_h\}$. In order, these vector fields are the lifts of $\{\partial_x,  \partial_{\xi''},  h\partial_{\xi'},  h\partial_h + \xi' \cdot  \partial_{\xi'}\}$.

A different set of coordinates is needed in $\{ |\xi'| \geq \varepsilon h\}$. Restricting in addition to the set where $|\xi'_k| \geq \varepsilon |\xi'_j|$, we can use projective coordinates $(x, \xi'', \theta,\varrho,\Omega)$, where
\[
\theta = \xi'_k, \quad \varrho = h/\xi'_k, \quad  \Omega_j = \xi'_j/\xi'_k
\]
for $j\neq k$.
In particular, $\theta, \varrho$ are boundary defining function for $\mathrm{ff}$ and $\mathrm{sf}$, respectively. 
In this case, $\mathcal{V}_\B(\mathsf{S})$ is spanned over $\CI(\mathsf{S})$ by $\{\partial_x, \partial_{\xi''}, \theta\partial_\theta, \varrho \partial_\varrho,\partial_\Omega\}$, which are the lifts of $\{\partial_x,  \partial_{\xi''}, \xi'\cdot \partial_{\xi'},  h\partial_h, \xi'_k \partial_{\xi'_j}\}$, in order.

Without localizing to the different regions of $\mathsf{S}$, it follows from the previous two paragraphs that $\mathcal{V}_\B(\mathsf{S})$ is spanned over $\CI(\mathsf{S})$ by the lifts of 
\[
\{\partial_{\xi''}, \xi'_j \partial_{\xi'_i}, h\partial_h, h\partial_{\xi'}\},
\] 
where $i,j \in \{ 1,\ldots,k\}$. Thus if we ignore $h\partial_h$ derivatives, $S^{r,\COMP}_h$ corresponds exactly to compactly supported $\rho_{\mathrm{sf}}^{-r}L^\infty(\mathsf{S})$ functions that remain in the same space under arbitrary applications of $\mathcal{V}_\B(\mathsf{S})$.
\end{rem}

On $\RR^m$, consider a splitting of coordinates  $x = (x',x'',x''') \in \RR^k \times \RR^{m-d-k} \times \RR^d$,
and consider the submanifolds
\[
S_1 = \{x'' = 0\}, \quad S_0 = \{x'=0,\,x''=0\}.
\] 
Thus $S_0 \subset S_1 \subset \RR^{m}$ are nested with codimensions $\codim S_1 = m-d-k$ and $\codim S_0 = m-d$. In particular, we have $d = \dim S_0$. Their conormal bundles are given by
\begin{align*}
N^*S_1 = \{x'' = 0, \, \xi'=0,\,\xi'''=0\}, \\ N^*S_0 = \{x' =0, \,x''=0, \,\xi'''=0\},
\end{align*}
where $(x',x'',x''',\xi',\xi'',\xi''')$ are canonical coordinates on $T^* \RR^{m} = \RR^m \times \RR^m$. We view these as model Lagrangians, writing 
\[
{\Lambda}_0 = N^*S_0, \quad {\Lambda}_1 = N^*S_1.
\]
We then consider oscillatory integrals whose amplitudes are elements of 
\[
S^{r,-\infty}_h= S^{r,-\infty}_h(\RR^m_x;\RR^{k}_{\xi'};\RR^{d-m-k}_{\xi''}).
\] 
Observe that elements of $S^{r,-\infty}$ depend on $(x,\xi',\xi'')$, but not $\xi'''$. Given $a \in S^{r,-\infty}_h$, define the oscillatory integral
\begin{equation} \label{eq:modelpairedlagrangian}
u(x) = (2\pi h)^{-3m/4 - k/2 + d/2}\int e^{\sfrac{i}{h}(\left<x',\xi'\right> + \left<x'',\xi''\right>)} a(x,\xi',\xi'')\, d\xi'd\xi''.
\end{equation}
Since $a$ is rapidly decaying in $(\xi',\xi'')$ for each fixed $h>0$, this certainly defines a smooth function on $\RR^m$. We now write
\[
\bar{x} = (x',x''), \quad \bar{\xi} = (\xi',\xi'')
\]
 and analyze mapping properties of the Gauss transform $a \mapsto e^{-ih\langle D_{\bar x},D_{\bar \xi}\rangle}a$ on $S^{r,-\infty}_h$.
\begin{lem} \label{lem:gauss}
  If \blue{$a \in S^{r,-\infty}_h$,} then $e^{-ih\langle D_{\bar x}, D_{\bar \xi} \rangle} a \in S^{r,-\infty}_h$.
	Furthermore,
	\[
	e^{-ih\langle D_{\bar x}, D_{\bar \xi} \rangle}a(x,\xi) - \sum_{j = 0}^N \left<-iD_{\bar x},hD_{\bar \xi}\right>^j a(x,\xi)/j! \in S^{r-N-1,-\infty}_h.
	\]
\end{lem}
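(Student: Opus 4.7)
The approach is a direct Taylor expansion of the exponential, combined with a stationary-phase analysis of the remainder. Writing $L = \left<-iD_{\bar x}, hD_{\bar \xi}\right> = -ih\left<D_{\bar x}, D_{\bar\xi}\right>$, Taylor expansion gives the exact identity
\[
e^{-ih\left<D_{\bar x}, D_{\bar \xi}\right>}a = \sum_{j=0}^N \frac{1}{j!}L^j a + R_{N+1}, \qquad R_{N+1} = \frac{1}{N!}\int_0^1(1-s)^N e^{sL}L^{N+1}a\, ds.
\]
The proof then reduces to verifying mapping properties of $L$ and $e^{sL}$ on the scale of symbol spaces $S^{r,p}_h$.

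The first step is to show that $L^j: S^{r,-\infty}_h \to S^{r-j,-\infty}_h$. Each factor of $L$ is a sum of terms of the form $D_{\bar x_i}\cdot (hD_{\bar \xi_i})$. The $D_{\bar x}$ derivatives preserve all symbol bounds. Each $hD_{\xi'_i}$ reduces the $\left<\xi'/h\right>$-order by one, directly from the definition of $S^{r,p}_h$. Each $hD_{\xi''_i}$ yields an extra factor of $h$, and using the identity $h = \left<\xi'/h\right>^{-1}\sqrt{h^2+|\xi'|^2} \leq \left<\xi'/h\right>^{-1}\left<\bar\xi\right>$ together with the $-\infty$ order decay in $\bar \xi$, one sees that $hD_{\xi''_i}$ also maps $S^{r,-\infty}_h$ into $S^{r-1,-\infty}_h$. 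Combining these, $L^ja \in S^{r-j,-\infty}_h$.

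The second and more substantive step is to show that $e^{sL}: S^{r',-\infty}_h \to S^{r',-\infty}_h$ uniformly in $s \in [0,1]$. Applied to $L^{N+1}a \in S^{r-N-1,-\infty}_h$ and integrated against the Taylor kernel, this puts $R_{N+1}$ into $S^{r-N-1,-\infty}_h$ as required. For this I would use the Fourier representation
\[
e^{sL}a(x,\xi) = (2\pi hs)^{-(m-d)}\iint e^{i\left<\bar y, \bar\eta\right>/(hs)}\, a(\bar x+\bar y, x''', \bar\xi + \bar\eta, \xi''')\, d\bar y\, d\bar \eta,
\]
viewed as a semiclassical oscillatory integral with phase $\left<\bar y, \bar\eta\right>$, large parameter $1/(hs)$, and unique non-degenerate critical point at the origin, and then apply stationary phase. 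The leading terms of the expansion reproduce the Taylor series, and derivative bounds on the amplitude translate to the desired symbol estimates.

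The main obstacle is executing the stationary-phase expansion uniformly across the two frequency scales in $\xi'$. In the regime $|\xi'|\lesssim h$, the $(\bar y_1, \bar\eta_1)$-integration is a standard semiclassical stationary phase in the small parameter $h$. For $|\xi'|\gg h$, however, one must first rescale by setting $\xi' = h\Xi'$ and $\bar\eta_1 = h\tilde\eta_1$, after which the $(\bar y_1, \tilde\eta_1)$-integral becomes a classical (non-semiclassical) stationary phase in order-one variables, with the amplitude obeying standard symbol bounds of order $r'$ in $\left<\Xi'+\tilde\eta_1\right>$. The $(\bar y_2, \bar\eta_2)$-integration remains a genuine semiclassical stationary phase with parameter $h$ throughout. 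Gluing these two regimes uniformly, a task that is naturally organized using the blown-up space $\mathsf S$ introduced in the preceding remark, constitutes the technical heart of the argument.
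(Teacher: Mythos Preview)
Your overall structure matches the paper's: both Taylor-expand the exponential and reduce to showing that $e^{sL}$ preserves $S^{r',-\infty}_h$ uniformly in $s$, and your analysis of $L^j: S^{r,-\infty}_h \to S^{r-j,-\infty}_h$ is correct. The difference lies in how the mapping property of $e^{sL}$ is proved.

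You propose to split into the two regimes $|\xi'|\lesssim h$ and $|\xi'|\gg h$, run separate stationary-phase arguments in each (one genuinely semiclassical, one classical after rescaling $\xi'=h\Xi'$), and then glue via the blow-up $\mathsf{S}$. This would work, but you have left the gluing as a sketch and correctly identify it as the hard part. The paper bypasses this entirely with a single rescaling that symmetrizes the two scales: set $y=(h^{1/2}x',x'')$, $\eta=(h^{-1/2}\xi',\xi'')$, so that $e^{-ih\langle D_x,D_\xi\rangle}$ becomes $e^{-ih\langle D_y,D_\eta\rangle}$ and the integral representation reads
\[
(2\pi)^{-m}\int e^{i\langle z,\zeta\rangle}\, b(y-h^{1/2}z,\eta-h^{1/2}\zeta)\, dz\, d\zeta,
\]
with $b(y,\eta)=a(x,\xi)$. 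After this change of variables the shift in the amplitude is by $h^{1/2}$ in \emph{all} directions, so there is no longer any distinguished regime. One then simply splits into $|(z,\zeta)|\leq 2$ versus $|(z,\zeta)|\geq 1$: on the near piece the bounds are immediate, and on the far piece repeated integration by parts with $L=|(z,\zeta)|^{-2}(zD_\zeta+\zeta D_z)$ plus Peetre's inequality gives the result. This is both shorter and avoids the uniformity issues you would face at the interface of your two regimes.
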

\begin{proof}
	Since the dependence on $x'''$ is smooth and parametric, it suffices to consider the case $d=0$, so that $x = \bar x $ and $\xi = \bar \xi$. Set 
	\[
	y = (h^{1/2}x',x''), \quad \eta = (h^{-1/2}\xi',\xi'').
	\] 
	For a given $a \in S^{r,-\infty}_h$, define the rescaled amplitude $b(y,\eta) = a(x,\xi)$, which therefore satisfies
	\[
	| D_{y}^{\alpha} D_{\eta}^{\beta}  b(y,\eta)| \leq C_{\alpha\beta N} h^{-(|\alpha'|+|\beta'|)/2}\langle\eta'/h^{1/2}\rangle^{r-|\beta'|}\langle(h^{1/2}\eta', \eta'')\rangle^{-N},
	\]
	where we have written $\alpha = (\alpha',\alpha'')$ and $\beta = (\beta',\beta'')$. After a change of variables,
	\begin{equation} \label{eq:gauss}
	e^{-ih\langle D_{y},D_{\eta}\rangle }b(y,\eta) = (2\pi)^{-m}\int e^{i\left<z,\zeta\right>}b(y-h^{1/2} z, \eta - h^{1/2}\zeta) \, dz d\zeta.
	\end{equation}
	Write the integral on right hand side of \eqref{eq:gauss} as a sum $A+B$, where $A$ is the result of inserting a cutoff $\chi \in \CcI(\RR^{2m};[0,1])$ into the integrand which is identically one for $|(z,\zeta)| \leq 1$ and vanishes for $|(z,\zeta)| \geq 2$. We then estimate
	\begin{align*}
	|D^\alpha_y D^\beta_\eta A(y,\eta)| &\leq C\sup_{|(z,\zeta)| \leq 2}  |(D^\alpha_y D^\beta_\eta b)(y-h^{1/2}z,\eta- h^{1/2}\zeta)| \\&\leq  C_{\alpha\beta N} h^{-(|\alpha'|+|\beta'|)/2}\langle\eta'/h^{1/2}\rangle^{r-|\beta'|}\langle(h^{1/2}\eta', \eta'')\rangle^{-N},
	\end{align*}
	since $\left<(\eta' - h^{1/2}\zeta')/h^{1/2}\right> \asymp C\left<\eta'/h^{1/2}\right>$ when $|\zeta'|$ is bounded. To estimate $B$, we integrate by parts using the operator 
	\[
	L = |(z,\zeta)|^{-2}(z D_\zeta + \zeta D_z),
	\] 
	defined for $|(z,\zeta)| \geq 1$ and satisfying $L(i\left<z,\zeta\right>) = 1$. By Peetre's inequality,
	\begin{multline*}
	|D^\alpha_y D^\beta_\eta (L^t)^k (1-\chi(z,\zeta))b(y-h^{1/2}z,\eta- h^{1/2}\zeta)|  \\\leq C_{\alpha\beta N} h^{-(|\alpha'|+|\beta'|)/2}\langle\eta'/h^{1/2}\rangle^{r-|\beta'|}\langle(h^{1/2}\eta', \eta'')\rangle^{-N}\left<(z,\zeta)\right>^{N(k)}
	\end{multline*}
	for $|(z,\zeta)|\geq 1$, where $N(k) \rightarrow -\infty$ as $k\rightarrow \infty$. Integrating by parts $k$ times for sufficiently large $k$ shows that $B$ satisfies the same symbol estimates as $A$. This establishes the desired mapping properties of $e^{-ih\langle D_x,D_\xi\rangle}$, since 
	\[
	e^{-ih\langle D_x,D_\xi\rangle}a(x,\xi) = e^{-ih\langle D_y,D_\eta\rangle}b(y,\eta).
	\]
	To obtain the expansion, simply Taylor expand 
	\[
	e^{-ih\langle D_x,D_\xi\rangle} = \sum_{j=0}^N \frac{1}{j!}\left<-iD_x,hD_\xi\right>^j + \frac{(-i)^N}{N!}\int_{0}^1 (1-t)^N e^{-ith\left<D_x,D_\xi\right>}\left<D_x,hD_\xi\right>^{N+1} \, dt.
	\]
	The remainder can be estimated by replacing $h$ with $th$ and repeating the argument above.
\end{proof}

Lemma \ref{lem:gauss} shows that we can always write $u$ given by \eqref{eq:modelpairedlagrangian} in the form
\begin{equation} \label{eq:reducedmodelpairedlagrangian}
u(x) = (2\pi h)^{-3m/4 - k/2 + d/2}\int e^{\sfrac{i}{h}(\left<x',\xi'\right> + \left<x'',\xi''\right>)} \,c(x''',\xi',\xi'')\, d\xi'd\xi''
\end{equation}
where the amplitude $c \in S^{r,-\infty}_h$ depends only on $x'''$ in the base variables. Indeed, by the Fourier inversion formula,
\begin{equation} \label{eq:reducedbase}
c(x''',\xi,\xi'') = e^{-ih\langle D_{\bar x},D_{\bar \xi}\rangle}a(x,\xi',\xi'')|_{x'=x''=0},
\end{equation}
which defines an element of $S^{r,-\infty}_h$ by Lemma \ref{lem:gauss}.

\begin{lem} \label{lem:pseudodifferentialaction}
	Let $b \in S^0(\RR^m;\RR^m)$.  If $c \in S^{r,-\infty}_h$ and
        $u$ is given by \eqref{eq:reducedmodelpairedlagrangian}, then
        there is $\tilde c \in S^{r,-\infty}_h$ such that
	\begin{equation} \label{eq:Bu}
	b(x,hD)u = (2\pi h)^{-3m/4 - k/2 + d/2}\int
        e^{\sfrac{i}{h}(\left<x',\xi'\right> +
          \left<x'',\xi''\right>)} \,\tilde c (x''',\xi',\xi'')\, d\xi'd\xi'',
	\end{equation}
	and moreover
	\[
	\tilde c (x''',\xi',\xi'',h) = e^{ih(-\langle D_{y'''},D_{\xi'''}\rangle -\langle D_{\bar x},D_{\bar \xi}\rangle)}b(x,\xi)c(y''',\xi',\xi'')|_{y'''=x''',\,x'=x''=\xi'''=0}.
	\]
\end{lem}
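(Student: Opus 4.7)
The plan is a direct computation that identifies $b(x,hD)u$ as an oscillatory integral of the same form as $u$, then applies the reduction of Lemma~\ref{lem:gauss} to put its amplitude into the normalized form. First I would substitute the representation \eqref{eq:reducedmodelpairedlagrangian} of $u$ into the defining formula
\[
b(x,hD)u(x) = (2\pi h)^{-m}\int e^{\frac{i}{h}\langle x-y,\eta\rangle} b(x,\eta) u(y)\, dy\, d\eta,
\]
splitting $y=(y',y'',y''')$ and $\eta=(\eta',\eta'',\eta''')$. The resulting multiple oscillatory integral has a phase linear in $y'$ and $y''$. Fourier integration in these variables produces the delta factors $(2\pi h)^{m-d}\delta(\xi'-\eta')\delta(\xi''-\eta'')$, so after imposing $\eta'=\xi'$, $\eta''=\xi''$ (and relabeling $\eta'''$ as $\xi'''$) one obtains
\[
b(x,hD)u(x) = (2\pi h)^{-\frac{3m}{4}-\frac{k}{2}-\frac{d}{2}}\int e^{\frac{i}{h}(\langle x',\xi'\rangle + \langle x'',\xi''\rangle + \langle x'''-y''',\xi'''\rangle)} b(x,\xi)\, c(y''',\xi',\xi'')\, dy'''\, d\xi'''\, d\xi'\, d\xi'',
\]
where $\xi = (\xi',\xi'',\xi''')$.

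Next I would recognize the inner $(y''',\xi''')$ integration as an instance of the standard semiclassical Gauss-transform identity
\[
(2\pi h)^{-d}\int e^{\frac{i}{h}\langle x'''-y''',\xi'''\rangle} G(y''',\xi''')\, dy'''\, d\xi''' = e^{-ih\langle D_{y'''}, D_{\xi'''}\rangle} G(y''',\xi''')\big|_{y'''=x''',\,\xi'''=0},
\]
applied to $G = b(x,\xi',\xi'',\xi''') c(y''',\xi',\xi'')$. This exhibits $b(x,hD)u$ as an oscillatory integral of the form \eqref{eq:modelpairedlagrangian} with amplitude
\[
a(x,\xi',\xi'') = e^{-ih\langle D_{y'''},D_{\xi'''}\rangle}\bigl[b(x,\xi) c(y''',\xi',\xi'')\bigr]\big|_{y'''=x''',\,\xi'''=0},
\]
depending on the full base variable $x$. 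A straightforward adaptation of the stationary-phase argument used to prove Lemma~\ref{lem:gauss}, with $\xi',\xi''$ treated as $\mathcal{O}(1)$ parameters, shows that $a \in S^{r,-\infty}_h$.

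Finally, I would use the reduction formula \eqref{eq:reducedbase} from Lemma~\ref{lem:gauss} to pass from $a(x,\xi',\xi'')$ to an equivalent amplitude $\tilde c(x''',\xi',\xi'')$ depending only on $x'''$ in the base, namely $\tilde c = e^{-ih\langle D_{\bar x},D_{\bar\xi}\rangle} a|_{x'=x''=0}$. Since the two operators $\langle D_{y'''},D_{\xi'''}\rangle$ and $\langle D_{\bar x},D_{\bar\xi}\rangle$ involve derivatives in disjoint groups of variables, their exponentials commute and combine into the single exponential displayed in the statement of the lemma. The principal technical obstacle is justifying the formal oscillatory-integral manipulations, since the integrals above are not absolutely convergent; this is handled either by the Taylor-expansion approach of Lemma~\ref{lem:gauss} applied parameter-by-parameter, or by first approximating $b$ and $c$ by Schwartz functions (so that all integrals converge absolutely) and then passing to the limit using the uniform symbol estimates.
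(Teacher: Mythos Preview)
Your proposal is correct and is precisely the argument the paper has in mind: its proof consists of the single sentence ``This follows from the Fourier inversion formula and Lemma~\ref{lem:gauss},'' and you have simply written out those two steps in detail. The key points---integrating out $y',y''$ to produce delta functions, recognizing the remaining $(y''',\xi''')$ integral as a Gauss transform, and then applying the reduction \eqref{eq:reducedbase} in the $\bar x$ variables---are exactly what the paper's one-liner encodes.
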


\begin{proof}
	This follows from the Fourier inversion formula and Lemma \ref{lem:gauss}.
\end{proof}

We now define our class of compactly microlocalized paired Lagrangians
in the model case of nested conormal distributions.

\begin{defi}
	We say that  $u \in I^{l,\COMP}_{h}(\RR^{m};{\Lambda}_0,{\Lambda}_1)$ if $\supp u$ and $\WF(u)$ are compact, and 
	\begin{equation} \label{eq:pairedlagrangiandefi}
	u(x) = (2\pi h)^{-3m/4 - k/2 + d/2}\int e^{\sfrac{i}{h}(\left<x',\xi'\right> + \left<x'',\xi''\right>)} a(x,\xi',\xi'')\, d\xi'd\xi''
	\end{equation}
	with $a \in S^{l-k/2,-\infty}_h$.
\end{defi}

Even when $l = -\infty$, elements of $I^{-\infty,\COMP}_h(\RR^m;\Lambda_0,\Lambda_1)$ are not residual in the sense of $\mathcal{O}(h^\infty)$ remainders:

\begin{lem} \label{lem:residualpairedlagrangian}
The following properties are satisfied.
\begin{enumerate} \itemsep6pt 
	\item 	$I^{-\infty,\COMP}_h(\RR^m;\Lambda_0,\Lambda_1) = I^\COMP_h(\RR^m;\Lambda_1)$. 
	
	\item If $l$ is fixed, then $h^\infty I^{l,\COMP}_h(\RR^m; \Lambda_0,\Lambda_1) = h^\infty \CcI(\RR^m)$.
\end{enumerate}
\end{lem}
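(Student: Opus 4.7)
The plan is to establish both equalities via explicit Fourier manipulations converting between the two oscillatory integral representations, with the two scales $\xi'$ and $\xi'/h$ playing distinct roles in the symbol bookkeeping.

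For (1), I would prove the two inclusions separately. To show $I^{-\infty,\COMP}_h(\Lambda_0,\Lambda_1) \subseteq I^\COMP_h(\Lambda_1)$, given $u$ with amplitude $a \in S^{-\infty,-\infty}_h$, the plan is to integrate out the $\xi'$ variable first by setting
\[
\tilde b(x,\xi'') := (2\pi h)^{-k}\int e^{\sfrac{i}{h}\langle x',\xi'\rangle} a(x,\xi',\xi'')\,d\xi'.
\]
After rescaling $\xi' = h\zeta$, the $-\infty$ order in the first slot makes this integral absolutely convergent, and the joint decay in $(\xi',\xi'')$ together with compact support yields $\tilde b \in S^\COMP$ uniformly in $h$; matching normalization prefactors then exhibits $u$ in the $\Lambda_1$-conormal form. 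For the reverse inclusion, given $v \in I^\COMP_h(\Lambda_1)$ with symbol $B \in S^\COMP$, I would apply Fourier inversion and define
\[
a(x'',x''',\xi',\xi'') := \int_{\RR^k} e^{-\sfrac{i}{h}\langle z,\xi'\rangle}B(z,x'',x''',\xi'')\,dz,
\]
so that \eqref{eq:pairedlagrangiandefi} recovers $v$. To verify $a \in S^{-\infty,-\infty}_h$, I would integrate by parts repeatedly in $z$ using $|\xi'|^2 e^{-\sfrac{i}{h}\langle z,\xi'\rangle} = -h^2 \Delta_z e^{-\sfrac{i}{h}\langle z,\xi'\rangle}$, yielding $|a| \leq C_N (h/\langle\xi'\rangle)^{2N}$ for $\xi'$ bounded away from zero; combined with trivial bounds for small $\xi'$ and the compact $\xi''$-support of $B$, this gives the required rapid decay in both $\xi'/h$ and $(\xi',\xi'')$.

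For (2), the forward inclusion $h^\infty I^{l,\COMP}_h \subseteq h^\infty \CcI$ follows by direct estimation: if $a \in h^\infty S^{l-k/2,-\infty}_h$, then $|a| \leq C_{M,N}\,h^M \langle\xi'/h\rangle^{l-k/2}\langle(\xi',\xi'')\rangle^{-N}$ for any $M,N$. After rescaling $\xi' = h\zeta$, the oscillatory integral is absolutely convergent, and the prefactor $(2\pi h)^{\text{const}}h^M$ gives an $O(h^{M'})$ bound for any $M'$ once $M$ is taken large. Since $x$-derivatives under the integral sign produce only finitely many factors of $h^{-1}$ via $\xi/h$ from differentiating the phase, these are absorbed by the same device.

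For the reverse $h^\infty \CcI \subseteq h^\infty I^{l,\COMP}_h$, given $f \in h^\infty \CcI(\RR^m)$, I would apply Fourier inversion on $\RR^m$, integrate out the $\xi'''$ variable (producing $(2\pi h)^d \delta(x'''-y''')$), and perform the $y',y''$ integrations to obtain
\[
f(x) = (2\pi h)^{-m+d}\int e^{\sfrac{i}{h}(\langle x',\xi'\rangle + \langle x'',\xi''\rangle)} g(\xi',\xi'',x''')\,d\xi'\,d\xi'',
\]
where $g$ is the partial semiclassical Fourier transform of $f$ in $(y',y'')$. Since $f$ and its derivatives are $O(h^\infty)$ with compact $y$-support, repeated integration by parts in $(y',y'')$ shows $g$ is rapidly decaying in $(\xi',\xi'')/h$ with $O(h^\infty)$ amplitude, placing the renormalized symbol in $h^\infty S^{-\infty,-\infty}_h \subseteq h^\infty S^{l-k/2,-\infty}_h$ for every $l$. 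The main technical hurdle throughout is coordinating the two decay scales ($\xi'/h$ versus $(\xi',\xi'')$) in the intermediate regime $|\xi'|\sim 1$, which I would handle by splitting into $|\xi'|\leq 1$ and $|\xi'|>1$ and using $h\leq 1$ to bound $\langle\cdot/h\rangle \geq \langle\cdot\rangle$.
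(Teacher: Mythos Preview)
Your approach is correct and follows essentially the same route as the paper's proof. For part (1), your forward inclusion via the change of variables $\xi'=h\zeta$ and integration in $\zeta$ is exactly what the paper does; the paper simply declares the reverse inclusion ``obvious'' while you spell it out via the partial Fourier transform, which is the natural way to make it explicit. One minor point: the integration-by-parts bound you state, $|a|\leq C_N(h/\langle\xi'\rangle)^{2N}$ ``for $\xi'$ bounded away from zero,'' together with the trivial bound near $\xi'=0$, leaves a gap in the intermediate regime $h\lesssim |\xi'|\lesssim 1$ where $\langle\xi'/h\rangle$ is large. The fix is immediate---the integration by parts is valid for all $\xi'\neq 0$ and gives $(h/|\xi'|)^N$, which combines with the trivial bound to yield $C_N\langle\xi'/h\rangle^{-N}$ globally---but you should phrase it that way.

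For part (2), your direct oscillatory-integral estimates work, but the paper's argument is cleaner: it simply observes the symbol-class identity $h^\infty S^{r,-\infty}_h = h^\infty S^{-\infty}(\RR^m;\RR^{m-d})$ for any fixed $r$, after which both inclusions are immediate. This avoids the explicit Fourier manipulations you outline for the reverse inclusion, since once one knows the amplitude classes agree, the equality of distribution spaces is automatic. Your route is not wrong, just more laborious.
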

\begin{proof} 
\begin{inparaenum} \item 
We can write $u \in I^{-\infty,\COMP}_h(\RR^m;\Lambda_0,\Lambda_1)$ in the form \eqref{eq:pairedlagrangiandefi} with $a \in S^{-\infty,-\infty}_h$. This implies that 	\[
	a(x,\xi,h) = b(x,\xi'/h,\xi'',h)
	\]
	where $b(x,\Xi,h)$ is rapidly decaying in $\Xi \in \RR^{m-d}$, uniformly in $h$. Making the change of variables $\eta = \xi'/h$ and performing the $\eta$ integral in the definition of $u$,
	\[
	u =  (2\pi h)^{-3m/4 + k/2 + d/2}\int e^{\sfrac ih \left<x'',\xi''\right>}\tilde{b}(x,\xi'',h) \,d\xi'',
	\]
	where $\tilde{b} \in \CcI(\RR^{m};\mathcal{S}(\RR^{d-m-k}))$ uniformly in $h$. It now suffices to compare the power 
	\[
	- 3m/4 + k/2 + d/2 = - (m-d-k)/2 - m/4
	\]
	to the usual Lagrangian order convention (which is $m-d-k$ phase variables in $m$ dimensions) to see that $u \in  I^{\COMP}_{h}(\RR^m; \Lambda_1)$. The converse inclusion is obvious.

	\item  
	This is just the observation that $h^\infty S^{r,-\infty}_h = h^\infty S^{-\infty}(\RR^m;\RR^{m-d})$ for any $r$.
\end{inparaenum}
\end{proof}

If $a$ were a semiclassical symbol, then the wavefront set of $u$ given by \eqref{eq:pairedlagrangiandefi} would be contained in $\Lambda_0$. However, in this case the weaker symbolic properties of $a \in S^{r,-\infty}_h$ can generate additional singularities. We define the essential support of $a$ as usual,
\[
(\esssupp a )^\complement = \{(x,\xi',\xi''): a \in h^\infty S^{-\infty} \text{ near } (x,\xi',\xi'')\},
\]
where $h^\infty S^{r,-\infty}_h = h^\infty S^{-\infty}(\RR^m;\RR^{m-d})$ for each $r$, as was already observed in the proof of Lemma \ref{lem:residualpairedlagrangian}.

\begin{lem} \label{lem:pairedlagrangianWF}
	If $u \in I^{l,\COMP}_{h}(\RR^{m};{\Lambda}_0,{\Lambda}_1)$ is given by \eqref{eq:pairedlagrangiandefi}, then 
	\[
	\WF(u) \subset  \{ (x,\xi) \in N^*S_1 \cup N^*S_0): (x,\xi',\xi'') \in \esssupp a\}.
	\]
\end{lem}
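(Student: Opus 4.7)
The plan is to show that for any $(x_0, \xi_0)$ lying outside the set
\[
\Xi = \{(x, \xi) \in N^*S_1 \cup N^*S_0 : (x, \xi', \xi'') \in \esssupp a\},
\]
there is $b \in S^{\COMP}(\RR^m; \RR^m)$ elliptic at $(x_0, \xi_0)$ with arbitrarily small microsupport such that $b(x, hD)u = \mathcal{O}(h^\infty)$ in $L^2$. First I would use the Fourier reduction \eqref{eq:reducedbase} together with Lemma \ref{lem:gauss} to replace $u$ by the equivalent expression \eqref{eq:reducedmodelpairedlagrangian}, whose amplitude $c \in S_h^{l-k/2,-\infty}$ depends only on $x'''$ in the base; each coefficient in the asymptotic expansion of $c$ is a mixed derivative $D_{\bar x}^\alpha D_{\bar \xi}^\alpha a$ evaluated at $\bar x = 0$. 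Then Lemma \ref{lem:pseudodifferentialaction} gives $b(x, hD)u$ in the same oscillatory form with amplitude $\tilde c$, and expanding the two Gauss operators via Lemma \ref{lem:gauss} represents $\tilde c$ as an asymptotic series in $h$ whose $h^N$ coefficient is a finite sum of products of derivatives $\partial_{\bar x}^\gamma \partial_{\xi'''}^\beta b$ restricted to $\{\bar x = 0, \xi''' = 0\}$ with derivatives of $c$ evaluated at $y''' = x'''$.

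The case analysis then proceeds according to which of three mutually exclusive conditions $(x_0, \xi_0)$ satisfies. (i) If $x_0 \notin S_0$, i.e., either $x_0' \neq 0$ or $x_0'' \neq 0$, I choose $b$ supported in a neighborhood of $(x_0, \xi_0)$ disjoint from the corresponding coordinate hyperplane $\{x' = 0\}$ or $\{x'' = 0\}$. (ii) If $x_0 \in S_0$ but $\xi_0''' \neq 0$, I choose $b$ supported in $\{|\xi'''| > \delta\}$ for small $\delta > 0$. In each of these cases $b$ vanishes identically in a neighborhood of $\{\bar x = 0, \xi''' = 0\}$, so every derivative of $b$ restricted to this set is zero; consequently every term of the asymptotic expansion of $\tilde c$ vanishes, and the explicit remainder bound in Lemma \ref{lem:gauss} gives $\tilde c \in h^\infty S^{-\infty}$.

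(iii) Finally, if $x_0 \in S_0$ and $\xi_0''' = 0$, then automatically $(x_0, \xi_0) \in N^*S_0 \subset N^*S_0 \cup N^*S_1$, so the exclusion $(x_0, \xi_0) \notin \Xi$ forces $(x_0, \xi_0', \xi_0'') \notin \esssupp a$. Since $x_0 = (0, 0, x_0''')$, the evaluation point $(0, 0, x_0''', \xi_0', \xi_0'')$ coincides with $(x_0, \xi_0', \xi_0'')$, so $a$ and all of its derivatives are $h^\infty S^{-\infty}$ near this point; consequently, by the series representation recalled above, $c$ and all of its derivatives in $(y''', \bar\xi)$ lie in $h^\infty S^{-\infty}$ near $(x_0''', \xi_0', \xi_0'')$. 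Shrinking the microsupport of $b$ so that the projection of its support to $(x''', \xi', \xi'')$ lies in this region of rapid decay for $c$, each $c$-derivative factor appearing in any term of the expansion of $\tilde c$ contributes an $h^\infty$ bound, yielding $\tilde c \in h^\infty S^{-\infty}$ once more.

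In all three cases, since $\tilde c \in h^\infty S^{-\infty}$ and the integrand is compactly supported (thanks to the compact support and microsupport assumptions in the definition of $I_h^{l,\COMP}$), the oscillatory integral expressing $b(x, hD)u$ is $\mathcal{O}(h^\infty)$ in $L^\infty$, hence in $L^2$; this shows $(x_0, \xi_0) \notin \WF(u)$. The main obstacle I anticipate is the careful bookkeeping needed to check that every term in the asymptotic expansion of $\tilde c$ genuinely vanishes under the support conditions on $b$, including the mixed higher-order terms involving both $D_{\bar x}$- and $D_{\xi'''}$-derivatives of $b$; this relies on $b$ being not merely small but identically zero in a neighborhood of the relevant locus, which is why one needs the support (rather than essential support) of $b$ in cases (i) and (ii).
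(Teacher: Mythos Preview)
There is a genuine gap in your argument, stemming from a misreading of Lemma~\ref{lem:gauss}. That lemma does \emph{not} give an expansion in powers of $h$: the remainder after $N$ terms lies in $S^{r-N-1,-\infty}_h$, meaning it decays like $\langle\xi'/h\rangle^{r-N-1}$, not like $h^{N+1}$. Consequently, if every term of the expansion of $\tilde c$ vanishes, you may only conclude $\tilde c\in\bigcap_N S^{r-N-1,-\infty}_h=S^{-\infty,-\infty}_h$, which by Lemma~\ref{lem:residualpairedlagrangian}(1) says merely that $b(x,hD)u\in I^{\COMP}_h(\RR^m;\Lambda_1)$---a nontrivial Lagrangian distribution along $N^*S_1$, not an $\mathcal O(h^\infty)$ remainder. (Contrast Lemma~\ref{lem:residualpairedlagrangian}(2), which requires an honest $h^\infty$ prefactor.) Your case~(ii) happens to be salvageable because the $(y''',\xi''')$ Gauss transform is a \emph{standard} semiclassical one with genuine $h^{N+1}$ remainders; likewise the sub-case $x_0''\neq 0$ of~(i) works because the $(x'',\xi'')$ part of the Gauss transform also has $h$-power remainders. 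But the sub-case $x_0'\neq 0$, $x_0''=0$ of~(i) is the crux: there the relevant expansion is precisely the one governed by Lemma~\ref{lem:gauss}, with no $h$-gain.

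This is not merely a technicality: your argument in that sub-case, if valid, would show $b(x,hD)u=\mathcal O(h^\infty)$ for \emph{every} $b$ microsupported away from $\{x'=0\}$, hence $\WF(u)\subset\{x'=0\}$. But $N^*S_1=\{x''=0,\,\xi'=0,\,\xi'''=0\}$ contains points with $x'\neq 0$, and these can genuinely carry wavefront set of $u$. In particular, the points $(x_0,\xi_0)\in N^*S_1\setminus N^*S_0$ with $(x_0,0,\xi_0'')\notin\esssupp a$ are not covered by any of your three cases: they lie in case~(i) (since $x_0'\neq 0$), but your argument there does not use the essential-support hypothesis, and your case~(iii) explicitly assumes $x_0\in S_0$. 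The paper handles this by a different route: it splits $\psi u=u_1+u_2$ with $u_2\in I^\COMP_h(\RR^m;\Lambda_1)$ (so $\WF(u_2)\subset\Lambda_1$ by standard Lagrangian theory) and shows directly via the semiclassical Fourier transform that $\WF(u_1)$ lies near $N^*S_0$; the essential-support refinement is then obtained separately.
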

\begin{proof}
Let $\psi \in \CcI(\RR^m)$, and write $\psi u$ in the form \eqref{eq:reducedmodelpairedlagrangian}, where the amplitude $\tilde{a}$ is given by 
\[
\tilde a(x''',\xi',\xi'') = e^{-ih\langle D_{\bar x},D_{\bar \xi}\rangle}\psi(x)a(x,\xi',\xi'')|_{x'=x''=0}.
\]
Thus we can write $\tilde{a} = a_1 + a_2$, where $a_1 = 0$ if $(0,0,x''') \notin \supp \psi$, and $a_2 \in S^{-\infty,-\infty}_h$. This gives rise to a corresponding decomposition $u = u_1 + u_2$. By the first part of Lemma \ref{lem:residualpairedlagrangian}, $\WF(u_2) \subset \Lambda_1 = N^*S_1$.
On the other hand,
\[
\mathcal{F}_h(\psi u_1)(\eta) =  (2\pi h)^{m/4 - k/2 - d/2}\int e^{\sfrac i h \langle x''',\eta'''\rangle} a_1(x''',\eta) \, dx''',
\]	
so if $\eta'''_0 \neq 0$, integrating by parts using the operator $L =
|\eta'''|^{-2} \eta'''\cdot (hD_{x'''})$ shows that
$\mathcal{F}_h(\psi u_1)(\eta) = \mathcal{O}(h^\infty)$ in a
neighborhood of $\eta_0$. Therefore we find that $\WF (u_1) \subset
\{\eta'''=0\}$ and lies only over an arbitrarily small neighborhood of
$\{(0,0,x'''): x''' \in \supp \psi\},$ hence is in a small neighborhood
of $N^* S_0.$  Thus have have shown that
\[\WF(u) \subset  N^*S_0 \cup N^*S_1.
\] 
On the other hand, $\WF(u) \subset \{(x,\xi): (x,\xi',\xi'') \in \esssupp a\}$ by the second part of Lemma \ref{lem:residualpairedlagrangian}.
\end{proof}

It follows from Lemma \ref{lem:pseudodifferentialaction} that any properly supported $B \in \Psi^0_h(\RR^m)$ preserves $I^{l,\COMP}_{h}(\RR^{m};{\Lambda}_0,{\Lambda}_1)$. Moreover, if $B = b(x,hD)$ with $b$ a total symbol for $B$, we can write $Bu$ in the form \eqref{eq:Bu}, where
\begin{equation} \label{eq:pairedlagrangianesssupp}
\esssupp \tilde c \subset \{(x,\xi',\xi'') \in \esssupp a: (x,\xi) \in \WF(B))\}
\end{equation}
As a consequence, we can always write $u \in I^{l,\COMP}_h(\RR^m;\Lambda_0,\Lambda_1)$ in the form \eqref{eq:pairedlagrangiandefi}, where $a \in S^{l-k/2,\COMP}_h$ has compact support, modulo an $h^\infty \CcI(\RR^m)$ remainder.
We can also make Lemma \ref{lem:pairedlagrangianWF} more precise by microlocalizing individually to each of the two Lagrangians  $\Lambda_0,\Lambda_1$ carrying possible wavefront set.

\begin{lem} \label{lem:individuallagrangian}
	The following hold for $u \in I^{l,\COMP}_{h}(\RR^m;\Lambda_0,\Lambda_1)$ and $B \in \Psi^0_h(\RR^n)$ of proper support.
	\begin{enumerate} \itemsep6pt
		\item If $\WF(B) \cap \Lambda_1 = \emptyset$, then $Bu\in h^{-l}I^{\COMP}_{h}(\RR^m;\Lambda_0)$.
		\item If $\WF(B) \cap \Lambda_0 = \emptyset$, then $ Bu\in I^{\COMP}_{h}(\RR^m;\Lambda_1)$.
	\end{enumerate}
\end{lem}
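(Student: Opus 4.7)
The plan is to prove both parts simultaneously by first decomposing $u$ itself into two distinguished pieces, one microlocalized near each of the Lagrangians $\Lambda_0$ and $\Lambda_1$, and then using the semiclassical pseudolocality of $B$ together with the hypothesis in each case. Concretely, choose a cutoff $\chi \in \CcI(\RR^k)$ with $\chi \equiv 1$ near the origin, and decompose the amplitude $a \in S^{l-k/2,-\infty}_h$ from \eqref{eq:pairedlagrangiandefi} via the $h$-dependent cutoff
\[
a = \chi(\xi'/h)\,a + (1 - \chi(\xi'/h))\,a =: a_{(1)} + a_{(0)},
\]
inducing a decomposition $u = u_{(1)} + u_{(0)}$ at the level of distributions.

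On the support of $a_{(1)}$ we have $|\xi'| \lesssim h$; rescaling $\xi' = h\eta$ yields $\chi(\eta)\,a(x,h\eta,\xi'')$, which is compactly supported in $\eta$ with symbol bounds uniform in $h$, hence Schwartz in $(\eta,\xi'')$. Thus $a_{(1)} \in S^{-\infty,-\infty}_h$, and by the proof of Lemma \ref{lem:residualpairedlagrangian}(1), $u_{(1)} \in I^\COMP_h(\RR^m;\Lambda_1)$ with $\WF(u_{(1)}) \subset \Lambda_1$. On the support of $a_{(0)}$ we have $|\xi'| \gtrsim h$, so $\langle \xi'/h\rangle \sim |\xi'|/h$ and therefore $\langle \xi'/h\rangle^{l-k/2} \lesssim h^{-(l-k/2)}$; the derivatives of $1-\chi(\xi'/h)$ are localized to $|\xi'| \sim h$ where the decay exponents in the symbol class absorb the $h^{-1}$ losses, and one checks that $a_{(0)} \in h^{-(l-k/2)} S^\COMP$. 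Comparing the paired Lagrangian pre-factor $(2\pi h)^{-3m/4-k/2+d/2}$ to the conormal pre-factor $(2\pi h)^{-(m+2(m-d))/4} = (2\pi h)^{-3m/4+d/2}$ at $\Lambda_0$, the discrepancy $(2\pi h)^{-k/2}$ combines with $h^{-(l-k/2)}$ to give exactly $h^{-l}$, so $u_{(0)} \in h^{-l} I^\COMP_h(\RR^m;\Lambda_0)$ with $\WF(u_{(0)}) \subset \Lambda_0$.

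Both conclusions now follow at once. For part (1), the hypothesis $\WF(B) \cap \Lambda_1 = \emptyset$ combined with $\WF(u_{(1)}) \subset \Lambda_1$ and semiclassical pseudolocality of $B$ gives $Bu_{(1)} \in h^\infty \CcI(\RR^m)$, while $Bu_{(0)} \in h^{-l} I^\COMP_h(\RR^m;\Lambda_0)$ by the action of $\Psi_h^0$ on compactly microlocalized conormal distributions (Lemma \ref{lem:pseudodifferentialaction} applied to a standard conormal). Summing yields $Bu \in h^{-l} I^\COMP_h(\RR^m;\Lambda_0)$. For part (2), symmetrically $\WF(B) \cap \Lambda_0 = \emptyset$ gives $Bu_{(0)} \in h^\infty \CcI(\RR^m)$ and $Bu_{(1)} \in I^\COMP_h(\RR^m;\Lambda_1)$, yielding $Bu \in I^\COMP_h(\RR^m;\Lambda_1)$.

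The main technical obstacle, and the step requiring the most care, is the verification of the two claimed symbol-class memberships $a_{(1)} \in S^{-\infty,-\infty}_h$ and $a_{(0)} \in h^{-(l-k/2)} S^\COMP$ under the $h$-dependent cutoff $\chi(\xi'/h)$, which mixes base and fiber variables at the semiclassical scale. The $a_{(1)}$ estimate is nearly immediate once one rescales, but the $a_{(0)}$ bound requires careful bookkeeping: each derivative $\partial_{\xi'}$ of $1 - \chi(\xi'/h)$ contributes $h^{-1}$, and one must check against the $(hD_{\xi'})^\alpha$-bounds of $a$ on the transition region $|\xi'| \sim h$ that these losses are exactly compensated by the corresponding drop in the symbol order, so that uniform compactly supported seminorm bounds hold after factoring out $h^{-(l-k/2)}$.
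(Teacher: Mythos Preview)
Your decomposition has a genuine gap at the step you yourself flag as the main obstacle. On the transition region $|\xi'|\sim h$ one has $\langle\xi'/h\rangle\sim 1$, so the symbol-class bound $|(hD_{\xi'})^\alpha a|\le C\langle\xi'/h\rangle^{l-k/2-|\alpha|}$ gives \emph{no} $h$-gain there, while each $D_{\xi'}$ falling on $1-\chi(\xi'/h)$ costs a full $h^{-1}$. The losses are therefore \emph{not} compensated, and $a_{(0)}\notin h^{-(l-k/2)}S^{\COMP}$; indeed already the sup-norm bound fails when $l<k/2$, since $a_{(0)}$ is of size $\sim 1$ near $|\xi'|\sim h$ whereas $h^{-(l-k/2)}\to 0$. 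A related consequence is that $\WF(u_{(0)})\not\subset\Lambda_0$ in general: the essential support of $(1-\chi(\xi'/h))a$ still contains $\{\xi'=0\}$ (nothing is $h^\infty S^{-\infty}$ there), so $u_{(0)}$ typically retains wavefront set on $\Lambda_1\setminus\Lambda_0$, and your claim $Bu_{(0)}\in h^\infty\CcI$ in part (2) is unjustified. The upshot is that a paired Lagrangian distribution cannot be split into a $\Lambda_0$-conormal and a $\Lambda_1$-conormal by any amplitude cutoff; the nontrivial content of the class lives precisely at $\Lambda_0\cap\Lambda_1$.

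The paper circumvents this by letting $B$ do the localization, at an $h$-\emph{independent} scale. Since $\WF(u)\subset\Lambda_0\cup\Lambda_1$, one may assume $\WF(B)$ lies in a small neighborhood of $\Lambda_0\setminus\Lambda_1$ (resp.\ $\Lambda_1\setminus\Lambda_0$). For (1) this forces, via \eqref{eq:pairedlagrangianesssupp}, the amplitude of $Bu$ to be essentially supported in $\{|\xi'|\ge\varepsilon\}$ for a fixed $\varepsilon>0$; on that set $\langle\xi'/h\rangle\sim|\xi'|/h\ge\varepsilon/h$, so each $(hD_{\xi'})$-gain is a genuine power of $h$ and the amplitude lands in $h^{-(l-k/2)}S^{\COMP}$, giving $Bu\in h^{-l}I^{\COMP}_h(\RR^m;\Lambda_0)$ after matching prefactors. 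For (2) the localization is to $\{|x'|\ge\varepsilon\}$, and one then argues as in Lemma~\ref{lem:pairedlagrangianWF}. If you want to salvage a decomposition-style proof, cut at $|\xi'|=\varepsilon$ with $\varepsilon$ fixed: then $u_{(0)}\in h^{-l}I^{\COMP}_h(\Lambda_0)$ genuinely holds, but $u_{(1)}$ remains a paired Lagrangian (not a $\Lambda_1$-conormal), and you are back to using $B$ to handle it --- which is the paper's argument.
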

\begin{proof}
	Since  $\WF(u) \subset \Lambda_0 \cup \Lambda_1$, we can assume that $B$ is microlocalized near $\Lambda_0 \setminus \Lambda_1$ in the first case, and $\Lambda_1 \setminus \Lambda_0$ in the second case.

	\begin{inparaenum}[(1)]
		\item By \eqref{eq:pairedlagrangianesssupp}, we may assume that $|\xi'| \geq \varepsilon$ on $\supp a$. It follows that the symbol of $Bu$ is an honest semiclassical symbol, and hence $Bu$ is Lagrangian with respect to $\Lambda_0$. It remains to check the overall power of $h.$
		
		\item Again by \eqref{eq:pairedlagrangianesssupp}, we may assume that $|x'| \geq \varepsilon$ on $\supp a$, at which point we proceed as in Lemma \ref{lem:pairedlagrangianWF}.
	\end{inparaenum} 
\end{proof}

\subsection{Change of variables} \label{subsect:coordinateinvariance}
In this section we show that $I^{l,\COMP}_h(\RR^m;\Lambda_0,\Lambda_1)$ is invariant under a diffeomorphism 
\[
\kappa : \RR^m \rightarrow \RR^m
\]
preserving $S_1$ and $S_0$, and define a principal symbol. To simplify matters, we work with half-densities. Thus, given 
\begin{equation} \label{eq:ukappa}
u_\kappa =  (2\pi h)^{-3m/4 - k/2 + d/2}\int e^{\sfrac{i}{h}(\left<x',\xi'\right> + \left<x'',\xi''\right>)} a_\kappa (x''',\xi',\xi'')\, d\xi'd\xi'',
\end{equation}
with $a_\kappa \in S^{l-k/2,-\infty}_h$, we transform $u_\kappa$ according to $ u = |\det \kappa'|^{1/2} (\kappa^*u_\kappa)$. We write $\kappa = (\kappa_1,\kappa_2,\kappa_3)$ relative to the splitting $x= (x',x'',x''')$, and denote the Jacobian by
\[
\kappa' = \begin{pmatrix} \kappa_{11}' & \kappa_{12}' & \kappa_{13}' \\  \kappa_{21}' & \kappa_{22}' & \kappa_{23}' \\
\kappa_{31}' & \kappa_{32}' & \kappa_{33}' 
\end{pmatrix}.
\]
Since $\kappa$ preserves $S_1$ and $S_0$, we see that $\kappa_{11}'$ and $\kappa_{22}'$ are nonsingular at points $(0,0,x''')$, and that $\kappa_{21}',\kappa_{13}',\kappa_{23}'$ vanish at such points.  Let us write
\[
\kappa(x) = (\psi_{11}(x)x' + \psi_{12}(x)x'', \psi_{22}(x)x'', \kappa_3(x)),
\]
By Lemmas \ref{lem:pairedlagrangianWF}, \ref{lem:individuallagrangian}
and a partition of unity, we can assume without loss of generality
that $\psi_{11}$ and $\psi_{22}$ are nonsingular throughout the
support of $u$, since the invariance properties of $u$ away from $S_0$
are well known. Arguing precisely as in \cite[Theorem
18.2.9]{hormander1994analysis}, we obtain the following.

\begin{lem}
	If $\kappa$ and $u_\kappa \in I^{l,\COMP}(\RR^m;\Lambda_0,\Lambda_1)$ are as above, then $u =  |\det \kappa'|^{1/2} (\kappa^*u_\kappa)$ is of the form \eqref{eq:pairedlagrangiandefi} with an amplitude $a \in S^{l-k/2,-\infty}_h$.
\end{lem}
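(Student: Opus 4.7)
The plan is to imitate the argument of \cite[Theorem 18.2.9]{hormander1994analysis}, replacing homogeneous symbols by the mixed semiclassical class $S^{r,-\infty}_h$ and using the Gauss transform of Lemma \ref{lem:gauss} as the semiclassical substitute for stationary phase. First I would pull the expression \eqref{eq:ukappa} back under $\kappa$ to obtain
\begin{equation*}
u(x) = |\det \kappa'(x)|^{1/2}(2\pi h)^{-3m/4 - k/2 + d/2}\int e^{\sfrac{i}{h}\Phi(x,\xi',\xi'')}\, a_\kappa(\kappa_3(x),\xi',\xi'')\, d\xi'\, d\xi'',
\end{equation*}
where, using the structural form of $\kappa$ recorded above,
\begin{equation*}
\Phi(x,\xi',\xi'') = \langle \psi_{11}(x)x' + \psi_{12}(x)x'',\xi'\rangle + \langle \psi_{22}(x)x'',\xi''\rangle.
\end{equation*}

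Next I would perform the $x$-dependent linear change of fiber variables
\begin{equation*}
\eta' = \psi_{11}(x)^T\xi', \qquad \eta'' = \psi_{12}(x)^T\xi' + \psi_{22}(x)^T\xi'',
\end{equation*}
which is invertible on $\supp u$ by the reduction to the case in which $\psi_{11},\psi_{22}$ are nonsingular throughout. Under this substitution $\Phi = \langle x',\eta'\rangle + \langle x'',\eta''\rangle$, while the amplitude becomes
\begin{equation*}
\tilde a(x,\eta',\eta'') = |\det \psi_{11}(x)|^{-1}|\det \psi_{22}(x)|^{-1}\, |\det \kappa'(x)|^{1/2}\, a_\kappa\!\left(\kappa_3(x),(\psi_{11}^T)^{-1}\eta',\, (\psi_{22}^T)^{-1}(\eta''- \psi_{12}^T(\psi_{11}^T)^{-1}\eta')\right).
\end{equation*}
The key technical check here is that $\tilde a \in S^{l-k/2,-\infty}_h$: since $\psi_{11}(x)$ is uniformly nonsingular on $\supp u$, one has $|\eta'|\sim|\xi'|$; each $hD_{\eta'}$ derivative falling on $\tilde a$ pulls out $(\psi_{11}^T)^{-1}$ and an $hD_{\xi'}$ derivative of $a_\kappa$, preserving the semiclassical weight $\langle\eta'/h\rangle^{l-k/2-|\alpha|}$, while $x$- and $\eta''$-derivatives produce only $\CI$ matrix factors and standard symbol estimates on $a_\kappa$. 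The rapid decay in $(\eta',\eta'')$ is inherited from that of $a_\kappa$ in $(\xi',\xi'')$.

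Finally, the amplitude $\tilde a$ still depends on all of $x = (x',x'',x''')$ rather than on $x'''$ alone, so I would invoke the reduction formula \eqref{eq:reducedbase}: Lemma \ref{lem:gauss} shows that
\begin{equation*}
a(x''',\eta',\eta'') := e^{-ih\langle D_{\bar x},D_{\bar \eta}\rangle}\tilde a(x,\eta',\eta'')\big|_{x'=x''=0} \;\in\; S^{l-k/2,-\infty}_h,
\end{equation*}
and that the corresponding oscillatory integral agrees with $u$ exactly (this is just the Fourier inversion identity behind \eqref{eq:reducedmodelpairedlagrangian}, written out with $\tilde a$ in place of $a$). This exhibits $u$ in the standard form \eqref{eq:reducedmodelpairedlagrangian}, hence $u \in I^{l,\COMP}_h(\RR^m;\Lambda_0,\Lambda_1)$.

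The main obstacle is the second paragraph: verifying that the $x$-dependent, non-homogeneous substitution in the fiber variables preserves the mixed class $S^{l-k/2,-\infty}_h$. In the homogeneous setting of \cite[Thm.~18.2.9]{hormander1994analysis} this is routine because the Kohn--Nirenberg estimates are scale-invariant in $\xi$; here one must keep careful track of the semiclassical weight $\langle\eta'/h\rangle$ and confirm that the coupling between the fiber variables $\eta'$ (with its $h$-weighted derivatives) and $\eta''$ (with ordinary derivatives) introduced by $\psi_{12}$ does not degrade the estimates. Once that is in hand, the Gauss-transform step is essentially identical to the computation already used in \eqref{eq:reducedbase}.
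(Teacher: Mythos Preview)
Your proposal is correct and is essentially the same argument as the paper's. The paper simply says ``arguing precisely as in \cite[Theorem 18.2.9]{hormander1994analysis}'' and then records exactly the amplitude formula you derived (your $\tilde a$ is the paper's $a$, up to notation), remarking that membership in $S^{l-k/2,-\infty}_h$ is easy to see and that the $(x',x'')$ dependence is eliminated via \eqref{eq:reducedbase}; you have spelled out those steps in more detail and correctly flagged the $\psi_{12}$-coupling as the only point requiring care.
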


Indeed, $a$ is given by the expression 
\begin{align*}
a(x,\xi',\xi'') &=  a_\kappa(\kappa_3(x),{^t}\psi_{11}(x)^{-1}\xi',{^t}\psi_{22}(x)^{-1}(\xi'' - {^t}(\psi_{11}(x)^{-1}\psi_{12}(x))\xi')) \\&\times |\det\kappa'(x)|^{1/2}|\det \psi_{11}(x)|^{-1} |\det \psi_{22}(x)|^{-1},
\end{align*}
and it is easy to see that $a \in S^{l-k/2,-\infty}_h$. Of course the $(x',x'')$ dependence can be eliminated as in \eqref{eq:reducedbase}.

To define the principal symbol along $\Lambda_1$, consider $u_\kappa$ of the form 
\[
u_\kappa =  (2\pi h)^{-3m/4 - k/2 + d/2}\int e^{\sfrac{i}{h}(\left<x',\xi'\right> + \left<x'',\xi''\right>)} a_\kappa (x',x''',\xi',\xi'')\, d\xi'd\xi''.
\]
Compared with \eqref{eq:ukappa}, we are not assuming that $a_{\kappa}$ in necessarily independent of $x'$. We associate to $u_\kappa |dx|^{1/2}$ the half-density
\begin{equation} \label{eq:halfdensitysymbol}
b_\kappa(x',x''',\xi'')|dx'|^{1/2}|dx'''|^{1/2}|d\xi''|^{1/2},
\end{equation}
where
\[
b_\kappa(x',x''',\xi'') = (2\pi h)^{-k}\int e^{\sfrac{i}{h}\left<x',\xi'\right>} a_\kappa(x',x''',\xi',\xi'')\, d\xi'.
\]
When $x'\neq0$ (so away from $\Lambda_0 \cap \Lambda_1$), this is a representative of the principal symbol of $u_\kappa |dx|^{1/2}$ as a half-density valued element of $I^{\COMP}_h(\RR^m;\Lambda_1)$. 

\begin{defi}
We say that $b(x',x''',\xi'') \in \CcI(\RR^{m-d}_{x',x'''}\times \RR^{m-d-k}_{\xi''})$ is in $S^{l,\COMP}_{\Lambda_1}$ if there exists $a(x',x''',\xi',\xi'') \in S^{l-k/2,\COMP}_h$ such that
\[
b(x',x''',\xi'') = (2\pi h)^{-k}\int e^{\sfrac i h \langle x',\xi' \rangle}a(x',x''',\xi',\xi'') \, d\xi'
\]
modulo $h^\infty \CcI(\RR^{m-d}\times \RR^{m-d-k})$.
\end{defi}
\blue{That this class of symbols degenerates at $x'=0,$ on $\Lambda_0 \cap
\Lambda_1,$ can be seen by differentiating in $x'$: there is a term
with a factor $\xi'/h$ arising from differentiation of the phase.  Away from
$x'=0$ we may integrate by parts with respect to the operator $(h/x') D_{\xi'},$
which, falling on the amplitude $a$, produces a factor
$\langle \xi'/h\rangle^{-1}$ that ensures the resulting amplitude enjoys
the same estimates as $a.$  This strategy fails at $x'=0,$ where
the order of $a$ is effectively raised to $l+1.$  The
singularity only arises in $h \to 0$ asymptotics, though: $b$ is
smooth for every positive $h$ since $a$ is compactly supported in $\xi.$}
\blue{\begin{rem}
If we apply this construction to the example $a$ from
Remark~\ref{rem:symbolexample} (which does have compact support in the fiber
variables, albeit not in the base), we obtain
$$
b(x',\xi'') = \chi(\xi'') (2\pi h)^{-1} \int_0^\infty
e^{ix'\xi'/h}\psi(\xi'/h) \chi(\xi') \, d\xi'\equiv b_0(x',\xi'')-b_1(x',\xi'')
$$
where
$$\begin{aligned}
b_0(x',\xi'')&=\chi(\xi'') (2\pi h)^{-1} \int_0^\infty
e^{ix'\xi'/h}\chi(\xi') \, d\xi',\\
b_1(x',\xi'')&=\chi(\xi'') (2\pi h)^{-1} \int_0^\infty
e^{ix'\xi'/h}(1-\psi)(\xi'/h) \, d\xi'
\end{aligned}
$$
(dropping a factor of $\chi(\xi')$ in $b_1$ since it is moot for $h$
small).  Changing variables to $\eta=\xi'/h$ in the integral, we find that $b_1(x', \xi'') \in \CI$
is independent of $h.$  By contrast,
$$
b_0(x',\xi'')=(2 \pi)^{-1/2}  h^{-1} \chi(\xi'') \mathcal{F}^{-1}(H \chi) (x'/h)
$$
where $H$ denotes the Heaviside function and $\mathcal{F}$ is the
ordinary (non-semiclassical) Fourier transform.   The distribution
$\mathcal{F}^{-1}(H \chi)(y)$ is $\CI$ and for large $\lvert y \rvert$
differs from $i (2\pi)^{-1/2} y^{-1}$ by
a rapidly decreasing function, hence in the asymptotic regime where $x'/h\to\infty,$ $b_0\sim (i/2\pi)
\chi(\xi'')/x'.$ (Formally taking leading order terms in an expansion in
$(x'/h)^{-1}$ is, in effect, our principal symbol construction.)
For $x'/h$ fixed, on the other hand, $b_0$ blows up like a multiple of
$h^{-1}$ as $h
\to 0.$
  \end{rem}}
Observe that $S^{l,\COMP}_{\Lambda_1}$ is itself a degenerate version of the paired Lagrangian distributions we have been studying.
Under $\kappa$, we see that $b_\kappa$ is transformed to
\begin{equation}\label{transformed}
b(x',x''',\xi'') =  (2\pi h)^{-k} \int e^{\sfrac i h \langle x',\xi'\rangle}\big ( e^{-ih\langle D_{x''}, D_{\xi''}\rangle }\tilde{b}(x',x'',x''',\xi',\xi'')|_{x''=0} \big) \, d\xi',
\end{equation}
where we have defined
\begin{align*}
  \tilde{b}(x',x'',x''',\xi',\xi'') &= e^{\sfrac i h \langle \psi_{11}(x)^{-1} \psi_{12}(x) x'',\xi' \rangle} a_\kappa(\kappa_1(x), \kappa_3(x),{^t}\psi_{11}^{-1}(x)\xi',{^t}\psi_{22}^{-1}(x)\xi'')  \\ &\times |\det \kappa'(x)|^{1/2} |\det \psi_{22}(x)|^{-1} |\det \psi_{11}(x)|^{-1}.
\end{align*}
The phase factor $e^{\sfrac i h \langle \psi_{11}(x)^{-1} \psi_{12}(x) x'',\xi' \rangle}$
is harmless when differentiating $e^{-ih\langle D_{x''}, D_{\xi''}\rangle }\tilde{b}$ in $(x',x''',\xi',\xi'')$, since the result is evaluated at $x''=0$, and in particular
\[
e^{-ih\langle D_{x''}, D_{\xi''}\rangle }\tilde{b} \in S^{l-k/2,\COMP}_h.
\]
On the other hand, the phase factor does appear in higher order expansion. Importantly,
\[
e^{-ih\langle D_{x''}, D_{\xi''} \rangle }\tilde b(x',x'',x''',\xi',\xi'')|_{x''=0} = \tilde b(x',0,x''',\xi',\xi'') + h S^{l-k/2+1,\COMP}_h.
\]
This shows that the equivalence class of $b_{\kappa}(x',x''',\xi'')|dx'|^{1/2}|dx'''|^{1/2}|d\xi''|^{1/2}$ is well defined in $S^{l,\COMP}_{\Lambda_1}/hS^{l+1,\COMP}_{\Lambda_1}$, since the pullback of $b_\kappa$ as a half-density is precisely
\[
(2\pi h)^{-k} \int e^{\sfrac i h \langle x',\xi'\rangle} \tilde b (x',0,x''',\xi',\xi'') \, d\xi'.
\]
We have proved the following:

\begin{prop}
	The principal symbol 
	\[
	\sigma_h^{\Lambda_1}(u_\kappa |dx|^{1/2}) = b_{\kappa}(x',x''',\xi'')|dx'|^{1/2}|dx'''|^{1/2}|d\xi''|^{1/2}\in S^{l,\COMP}_{\Lambda_1}/hS^{l+1,\COMP}_{\Lambda_1}
	\] 
	is well defined. 
\end{prop}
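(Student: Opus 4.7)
The strategy is to make precise and conclude the computation already set up in the paragraphs immediately preceding the statement. Two things must be verified: (i) that $b_\kappa$ belongs to $S^{l,\COMP}_{\Lambda_1}$, and (ii) that its equivalence class in $S^{l,\COMP}_{\Lambda_1}/hS^{l+1,\COMP}_{\Lambda_1}$ is independent of both the choice of coordinates representing $u_\kappa\,|dx|^{1/2}$ (for any diffeomorphism $\kappa$ preserving both $S_0$ and $S_1$) and the choice of amplitude $a_\kappa$. Point (i) is immediate from the definition of $S^{l,\COMP}_{\Lambda_1}$. Independence from the choice of $a_\kappa$ modulo $h^\infty\CcI$ follows from the second part of Lemma \ref{lem:residualpairedlagrangian}: two such amplitudes differ by an element of $h^\infty S^{-\infty,-\infty}_h$ whose associated $b$-symbol is $h^\infty\CcI \subset h S^{l+1,\COMP}_{\Lambda_1}$.

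For coordinate invariance, I would start from the oscillatory representation of $u = |\det\kappa'|^{1/2}\kappa^* u_\kappa$ derived above, whose amplitude $\tilde b(x',x'',x''',\xi',\xi'')$ carries both the phase factor $\exp(\tfrac{i}{h}\langle \psi_{11}^{-1}\psi_{12}\,x'',\xi'\rangle)$ and the Jacobian factor $|\det\kappa'|^{1/2}|\det\psi_{11}|^{-1}|\det\psi_{22}|^{-1}$. Lemma \ref{lem:pseudodifferentialaction}, specialized to eliminate the $x''$ dependence, expresses
\[
b(x',x''',\xi'') = (2\pi h)^{-k}\int e^{\sfrac{i}{h}\langle x',\xi'\rangle}\bigl(e^{-ih\langle D_{x''},D_{\xi''}\rangle}\tilde b\bigr)\bigl|_{x''=0}\, d\xi'.
\]
The crucial input is then Lemma \ref{lem:gauss} applied with $N=0$, which gives $e^{-ih\langle D_{x''},D_{\xi''}\rangle}\tilde b\bigl|_{x''=0} = \tilde b\bigl|_{x''=0} + h S^{l-k/2+1,\COMP}_h$. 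Since the phase factor is trivial at $x''=0$, the leading piece $\tilde b|_{x''=0}$ is manifestly the composition of $a_\kappa$ with the canonical transformation induced by $\kappa$ on $\Lambda_1$, multiplied by the Jacobian prefactor; this identifies $[b]$ with the pullback of $[b_\kappa]$ as a half-density in $S^{l,\COMP}_{\Lambda_1}/hS^{l+1,\COMP}_{\Lambda_1}$.

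The main obstacle is bookkeeping: one must verify that the combination of the fiber transformations $\xi'\mapsto {}^t\psi_{11}^{-1}\xi'$, $\xi''\mapsto {}^t\psi_{22}^{-1}\xi''$ and the prefactor $|\det\kappa'|^{1/2}|\det\psi_{11}|^{-1}|\det\psi_{22}|^{-1}$ implements exactly the half-density pullback on the $(x',x''',\xi'')$-slice of $\Lambda_1$. This reduces to the linear-algebra identity $|\det\kappa'| = |\det\psi_{11}|\cdot|\det\psi_{22}|\cdot|\det\kappa_{33}'|$ along $\Lambda_0$, which holds because $\kappa$ preserves the nested pair $S_0 \subset S_1$, so that $\kappa_{13}'$, $\kappa_{21}'$ and $\kappa_{23}'$ vanish on $S_0$; combined with the contravariant transformation of the fiber volume $|d\xi''|^{1/2}$ under ${}^t\psi_{22}^{-1}$, this is precisely the change-of-variables factor needed to match $|dx'|^{1/2}|dx'''|^{1/2}|d\xi''|^{1/2}$ with its pullback. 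Once this identity is in hand, the same argument applies uniformly in the representative, yielding well-definedness modulo $hS^{l+1,\COMP}_{\Lambda_1}$.
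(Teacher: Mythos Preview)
Your proposal is correct and follows essentially the same approach as the paper: the proposition is stated there as the conclusion of the preceding computation (``We have proved the following''), and you have reconstructed that computation, with the key step being the Gauss-transform expansion $e^{-ih\langle D_{x''},D_{\xi''}\rangle}\tilde b|_{x''=0} = \tilde b|_{x''=0} + hS^{l-k/2+1,\COMP}_h$.

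One small caveat on your Jacobian bookkeeping: the identity $|\det\kappa'| = |\det\psi_{11}|\,|\det\psi_{22}|\,|\det\kappa_{33}'|$ holds only at points of $S_0$, but the half-density comparison must be made along all of $\Lambda_1$, i.e.\ at points $(x',0,x''')$ with $x'$ not necessarily zero. There $\kappa_{13}'$ need not vanish, and the correct factorization is $|\det\kappa'| = |\det\psi_{22}|\cdot\bigl|\det\bigl(\begin{smallmatrix}\kappa_{11}'&\kappa_{13}'\\ \kappa_{31}'&\kappa_{33}'\end{smallmatrix}\bigr)\bigr|$, the second factor being the Jacobian of $\kappa|_{S_1}$ in the $(x',x''')$ coordinates. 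Combined with the change of variables $\xi'\mapsto{}^t\psi_{11}^{-1}\xi'$ in the defining integral and the contravariant transformation of $|d\xi''|^{1/2}$, this still yields exactly the half-density pullback of $b_\kappa\,|dx'|^{1/2}|dx'''|^{1/2}|d\xi''|^{1/2}$, so your conclusion is unaffected.
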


\subsection{Pseudodifferential operators with singular symbols} \label{subsect:pseudosingularsymbol}
In this section we discuss a calculus of pseudodifferential operators with singular symbols. Let $X$ be an $n$-dimensional manifold, and $Y\subset X$ a codimension $k$ submanifold. Consider an operator $A$ with Schwartz kernel 
\[
K_A \in I^{l,\COMP}_h(X\times X; N^*((X\times Y)\cap \diag), N^*\diag).
\]
Since $\supp K_A$ and $\WF(K_A)$ are compact by assumption, it follows that 
\[
K_A : \CmI(X) \rightarrow \CcI(X),
\]
and $K_A$ is $h$-tempered.

By the coordinate invariance discussed in Section \ref{subsect:coordinateinvariance}, it suffices to construct this calculus on $X = \RR^n$, where $Y = \{x'=0\}$ for an appropriate splitting of coordinates 
\[
x = (x',x'') \in \RR^k \times \RR^{n-k}.
\] 
If $(x,y,\xi,\eta)$ are the corresponding coordinates on $T^*(\RR^n \times \RR^n)$, we work with the Lagrangian pair 
\begin{equation} \label{lagpair}
\Lambda_1 = \{x=y, \, \eta = -\xi\}, \quad \Lambda_0 = \{x' = y' =0, \, x'' = y'', \, \eta'' = -\xi''\}.
\end{equation}
Working modulo $h^\infty \CcI(\RR^{2n})$, we can write $K_A \in I^{l,\COMP}_{h}(\RR^{2n};\Lambda_0,\Lambda_1)$ as a left quantization
\begin{equation} \label{eq:singularleftquant}
K_A = (2\pi h)^{-n-k}\int e^{\sfrac{i}{h}(\langle x-y, \xi \rangle + \langle x',\eta'\rangle)}a(x'',\eta',\xi)\,d\eta' d\xi,
\end{equation}
where $a\in S^{l-k/2,\COMP}_h(\RR^{n-k}; \RR^k; \RR^{n-k})$. This parametrization arises by using coordinates $(z',z'',x',x'')$ on $\RR^{2n}$, where $z = x-y$; thus 
\[
\Lambda_1 = N^*\{z=0\}, \quad \Lambda_0 = N^*\{x'=0, z=0\}.
\] 
Alternatively, we can use coordinates $(z',z'',y',y'')$, so that $K_A$ can also be written as a right quantization
\begin{equation} \label{eq:singularrightquant}
K_A = (2\pi h)^{-n-k}\int e^{\sfrac{i}{h}(\langle x-y, \xi \rangle + \langle y',\eta'\rangle)}a(y'',\eta',\xi)\,d\eta' d\xi,
\end{equation}
with $a \in S^{l-k/2,\COMP}_h(\RR^{n-k}; \RR^k; \RR^{n-k})$. The principal symbol $\sigma^{\Lambda_1}_h(A)$ of $A$ along $\Lambda_1 = N^*\diag$, which we define simply to be $\sigma_h^{\Lambda_1}(K_A)$, is 
\[
(2\pi h)^{-k} \int e^{\sfrac i h \langle x',\eta' \rangle} a(x'',\eta',\xi) \, d\eta' 
\]
in $S^{l,\COMP}_{\Lambda_1}/hS^{l+1,\COMP}_{\Lambda_1}$. As usual we use the canonical symplectic density on $N^*\diag$ to identify functions with half-densities. 

Next we consider composition of two operators whose Schwartz kernels
are of the form \eqref{eq:singularleftquant}. The proof we give is
closely based on \cite[Proposition 5.8]{de2014diffraction}. Because of
certain logarithmic terms, in some cases there appear arbitrarily
small losses in the order of the composition. Since these losses are
acceptable, we will not explicate when they can be avoided; for a more
precise account, see \cite[Proposition 5.8]{de2014diffraction}.
\blue{We remind the reader that, the homogeneous
  paired Lagrangians considered in \cite{de2014diffraction}, these
  semiclassical operators have smooth Schwartz kernels for all $h>0,$
  so that the composition of two singular pseudodifferential operators
  is always a well-defined operator family.  The constraints on orders only arise in
  interpreting the result as another element in the calculus.}

\begin{prop} \label{prop:singularcomposition}
	Given $K_A \in I^{l,\COMP}_{h}(\RR^{2n};\Lambda_0,\Lambda_1)$ and $K_B \in I^{l',\COMP}_{h}(\RR^{2n};\Lambda_0,\Lambda_1)$ let 
	\[
	L > \max(l,l',l+l'+k/2).
	\] 
	If $l + l' < 0$, then $K_{A B} \in I^{L,\COMP}_{h}(\RR^{2n};\Lambda_0,\Lambda_1)$. Furthermore, if $\delta \in (0,1]$ is such that $l+l' < -\delta$, then 
	\[
	\sigma_h^{\Lambda_1}(A B) = \sigma_h^{\Lambda_1}(A)\cdot \sigma_h^{\Lambda_1}(B)
	\] 
	in $S_{\Lambda_1}^{L,\COMP}/h^{\delta}S_{\Lambda_1}^{L+\delta,\COMP}$.
\end{prop}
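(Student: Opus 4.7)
The plan is to compute the kernel of $AB$ directly from the oscillatory-integral representations of $K_A$ and $K_B$ and reduce it to the standard paired-Lagrangian form. Using the left quantization \eqref{eq:singularleftquant} for $A$ with amplitude $a \in S^{l-k/2,\COMP}_h$ and the right quantization \eqref{eq:singularrightquant} for $B$ with amplitude $b \in S^{l'-k/2,\COMP}_h$, the intermediate $y$-integration in $K_{AB}(x,z) = \int K_A(x,y)K_B(y,z)\,dy$ produces $\delta(\xi-\bar\xi)$ in the two normal duals and leaves
\[
K_{AB}(x,z) = (2\pi h)^{-n-2k} \int e^{\sfrac{i}{h}\left(\langle x-z,\xi\rangle + \langle x',\eta'\rangle + \langle z',\bar\eta\rangle\right)} a(x'',\eta',\xi)\,b(z'',\bar\eta,\xi)\,d\eta'\,d\bar\eta\,d\xi.
\]
I then substitute $\tau = \eta' + \bar\eta$ and $\zeta = \xi' - \bar\eta$, which puts the phase into the form $\langle x'-z',\zeta\rangle + \langle x''-z'',\xi''\rangle + \langle x',\tau\rangle$. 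Since $\bar\eta$ no longer appears in the phase, it can be integrated out; after renaming $(\zeta,\xi'')$ back to $\xi$, the kernel is in standard left-quantized form with amplitude
\[
c(x'',z'',\tau,\xi) = (2\pi h)^{-k}\int a(x'',\tau-\bar\eta,\bar\eta+\xi',\xi'')\,b(z'',\bar\eta,\bar\eta+\xi',\xi'')\,d\bar\eta,
\]
and the residual $z''$-dependence is absorbed via the Gauss transform of Lemma \ref{lem:gauss} as in Section \ref{subsect:coordinateinvariance}.

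The main technical obstacle is to establish $c \in S^{L-k/2,\COMP}_h$ with $L > \max(l,l',l+l'+k/2)$. Fixing the other arguments in their compact supports and rescaling $\omega = \bar\eta/h$ reduces the key estimate to the Peetre-type convolution bound
\[
\int_{\RR^k}\langle\tau/h-\omega\rangle^{l-k/2}\langle\omega\rangle^{l'-k/2}\,d\omega \lesssim \langle\tau/h\rangle^{\max(l-k/2,\,l'-k/2,\,l+l')},
\]
valid whenever $l+l'<0$, modulo logarithmic losses at the boundary $L = l+l'+k/2$. The strict inequality in the hypothesis absorbs these losses. Estimates on derivatives proceed by differentiating under the integral: each application of $hD_\tau$ lowers the singular order by one via the same convolution estimate, while derivatives in $x''$ and $\xi$ respect the symbol hierarchy by the bounds on $a$ and $b$.

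For the principal symbol along $\Lambda_1$, set $z''=x''$ and Taylor expand
\[
a(x'',\eta',\bar\eta+\xi',\xi'') = a(x'',\eta',\xi',\xi'') + \bar\eta\cdot r_a(x'',\eta',\xi',\xi'',\bar\eta),
\]
and analogously for $b$. The leading term, after reinserting the $\tau$-Fourier integral, reproduces the factorization
\[
\sigma_h^{\Lambda_1}(A)(x,\xi)\cdot\sigma_h^{\Lambda_1}(B)(x,\xi) = (2\pi h)^{-2k}\iint e^{\sfrac{i}{h}\langle x',\eta'+\bar\eta\rangle} a(x'',\eta',\xi)\,b(x'',\bar\eta,\xi)\,d\eta'\,d\bar\eta.
\]
Under the rescaling $\bar\eta = h\omega$ the remainder picks up an extra factor of $h$, and a second application of the Peetre-type bound places the resulting symbol error in $h^\delta S^{L+\delta-k/2,\COMP}_h$ for any $\delta\in(0,1]$ with $l+l'<-\delta$. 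Pushing this forward through the Fourier transform in $\tau$ yields the claimed identity $\sigma_h^{\Lambda_1}(AB) = \sigma_h^{\Lambda_1}(A)\cdot\sigma_h^{\Lambda_1}(B)$ modulo $h^\delta S^{L+\delta,\COMP}_{\Lambda_1}$.
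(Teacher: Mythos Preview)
Your proof is correct and follows essentially the same route as the paper: left/right quantizations for $A$ and $B$, the same change of variables $\tau=\eta'+\bar\eta$, $\zeta=\xi'-\bar\eta$, and the same Taylor expansion in the $\bar\eta+\xi'$ slot for the principal symbol. The only difference is cosmetic: where you invoke a Peetre-type convolution bound $\int\langle T-\omega\rangle^{l-k/2}\langle\omega\rangle^{l'-k/2}\,d\omega\lesssim\langle T\rangle^{\max(l-k/2,l'-k/2,l+l')}$ (valid precisely when $l+l'<0$), the paper carries out this estimate by hand via a four-region decomposition of the $\mu'$-integral.
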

\begin{proof}
	As remarked above, the proof is essentially the same as in \cite[Proposition 5.8]{de2014diffraction}. We write $A$ in the form \eqref{eq:singularleftquant} with amplitude $a(x'',\eta',\xi)$, and $B$ in the form \eqref{eq:singularrightquant} with amplitude $b(y'',\mu',\xi)$. Now 
	\[
	\mathcal{F}_h(Bu)(\xi) = (2\pi h)^{-k}\int e^{\sfrac{i}{h} (-\langle y,\xi \rangle + \langle y',\mu') \rangle}b(y'',\mu',\xi) u(y) \, dy d\mu',
	\]
	and hence 
	\[
	K_{A B} = (2\pi h)^{-n-2k}\int e^{\sfrac{i}{h}(\langle x-y,\xi\rangle + \langle y',\mu'\rangle + \langle x',\eta'\rangle )}a(x'',\eta',\xi)b(y'',\mu',\xi) \,d\eta' d\mu' d\xi.
	\]
	Following \cite[Proposition 5.8]{de2014diffraction}, we make the change of variables 
	\[
	\nu' = \eta' + \mu', \quad \zeta' = \xi' -\mu', \quad \zeta'' = \xi'',
	\]
	leaving $\mu'$ unchanged (observe that $\xi''$ is being renamed for later convenience, but is otherwise unchanged).
	Rewriting the phase in terms of these new variables as $\left<x-y,\zeta\right> + \left<x',\nu'\right>$, it follows that
	\[
	K_{AB} = (2\pi h)^{-n-k} \int e^{\sfrac{i}{h}(\langle x''-y'',\zeta''\rangle + \langle x'-y',\zeta' \rangle + \langle x', \nu' \rangle )}c(x'',\nu',\mu',\xi'')\,d\nu' d\mu' d\xi'',
	\]
	where 
	\[
	c(x'',y'',\nu',\zeta',\zeta'') = (2\pi h)^{-k}\int  a(x'',\nu'-\mu',\mu' + \zeta', \zeta'')b(y'',\mu',\mu'+\zeta',\zeta'') \,d\mu'.
	\]
	It remains to verify that $c \in S^{L,\COMP}_h(\RR^{2(n-k)}; \RR^{k};\RR^{n})$. Observe that the integral defining $c$ is over a compact set, since $b$ has compact support in $\mu'$, and indeed $c$ is itself compactly supported.
	We begin by giving sup-norm bounds on $c$, observing that
	\[
	|c(x'',y'',\nu',\zeta',\zeta'')| \leq Ch^{-k}\int \left<(\nu'-\mu')/h\right>^{l-k/2}\left<\mu'/h\right>^{l'-k/2}\, d\mu'.
	\]
	First, suppose that $|\nu'| \geq h$, in which case $\left<\nu'/h\right>$ can everywhere be replaced by $|\nu'/h|$. We then consider the integral over four regions.

	\begin{inparaenum}
		\item $2|\mu'| \leq |\nu'|$. Here $\left<(\nu'-\mu')/h\right>$ is comparable to $\left<\nu'/h\right>$, so the integral over this region is bounded by
		\begin{align*}
		Ch^{-k}\left<\nu'/h\right>^{l-k/2}\int_{|\mu'| \leq (1/2)|\nu'|} \left<\mu'/h\right>^{l'-k/2}\, d\mu' \leq C|\nu'/h|^{l-k/2}(1 + |\nu'/h|^{l'+k/2 + \varepsilon})
		\end{align*}
		for any $\varepsilon>0$.
		
		\item $2|\nu' -\mu'| \leq |\nu'|$. Here $\left<\mu'/h\right>$ is comparable to $\left<\nu'/h\right>$, so as above the integral over this region is bounded by
		\[
		C|\nu'/h|^{l-k/2}(1 + |\nu'/h|^{l'+k/2 + \varepsilon})
		\]
		for any $\varepsilon>0$.

		\item $2|\nu'| \leq |\mu'|$. Here $\left<(\nu'-\mu')/h\right>$ is comparable to $\left<\mu'/h\right>$, so the integral is bounded by 
		\[
		Ch^{-k}\int_{|\mu'| \geq 2|\nu'|} \left<\mu'/h\right>^{l+l'-k}\, d\mu' \leq C|\nu'/h|^{l+l'},
		\]
		since $l+l' < 0$.

		\item $(1/2)|\nu'| \leq |\mu'| \leq 2|\nu'|$ and $|\nu'| \leq 2|\nu'-\mu'|$. Here $\left<(\nu'-\mu')/h\right>$ is comparable to $\left<\mu'/h\right>$, so the integral is bounded by 
		\begin{align*}
		Ch^{-k}\int_{(1/2)|\nu'| \leq |\mu'| \leq 2|\nu'|} \left<\mu'/h\right>^{l+l'-k}\, d\mu'\leq C|\nu'/h|^{l+l'},
		\end{align*}
		since $l+l' < 0$.
	
	\end{inparaenum}
	
	Thus, when $|\nu'| \geq h$, we conclude that 
	\[
	|c(x'',y'',\nu',\zeta',\zeta'')| \leq C(|\nu'/h|^{l+l'+\varepsilon} + |\nu'/h|^{l-k/2 +\varepsilon} + |\nu'/h|^{l'-k/2+\varepsilon})
	\]
	for any $\varepsilon >0$. On the other hand, if $|\nu'| \leq h$, then
	\[
	|c(x'',y'',\nu',\zeta',\zeta'')| \leq Ch^{-k}\int \left<\mu'/h\right>^{l+l'-k}\, d\mu' \leq C.
	\]
	provided that $l+l' < 0$. Bounds on the derivatives are established in precisely the same way. 
	
	It remains to prove the statement about the principal symbols. Note that the product $\sigma^{\Lambda_1}_h(A) \cdot \sigma^{\Lambda_1}_h(B)$ is the product 
	\[
	(\mathcal{F}_h')^{-1}a(x'',x',\zeta',\zeta'')\cdot  (\mathcal{F}_h')^{-1}b(y'',y',\zeta',\zeta'')|_{x=y},
	\] 
	where the first inverse Fourier transform takes $\eta' \mapsto x'$, and the second takes $\mu' \mapsto y'$. Thus $\sigma^{\Lambda_1}_h(A) \cdot \sigma^{\Lambda_1}_h(B)$ at $(x'',\nu',\zeta',\zeta'')$ is the inverse Fourier transform of a convolution,
	\[
	(2\pi h)^{-k}\int e^{\sfrac i h \langle x',\nu'\rangle} \int  a(x'',\nu'-\mu',\zeta',\zeta'')b(x'',\mu',\zeta',\zeta'') \, d\mu' d\nu'.
	\]
	On the other hand, $\sigma_h^{\Lambda_1}(A B)$ is given by
	\[
	(2\pi h)^{-k} \int e^{\sfrac i h \langle x',\nu'\rangle} \int a(x'',\nu'-\mu',\mu' + \zeta', \zeta'')b(x'',\mu',\mu'+\zeta',\zeta'') \,d\mu' \, d\nu'
	\]
	The only difference between these expressions is that $\zeta'$ in the first is replaced by $\mu'+\zeta'$ in the second. Taylor expanding $a$ at $\zeta'$ in the second expression, we can write
	\begin{align*}
	a(x'',\nu'-\mu',\mu' + \zeta', \zeta'') &=  a(x'',\nu'-\mu',\zeta', \zeta'') \\ + &h\cdot \int_{0}^1 \left<\mu'/h,\partial_{\zeta'} a(x'',\nu'-\mu',\zeta' + t\mu', \zeta'')\right> \, dt.
	\end{align*}
	The integral on the right hand is estimated by
	\[
	Ch^\delta \left<\mu'/h\right>^{\delta}\left<(\nu'-\mu')/h\right>^{l-k/2},
	\]
	for any $\delta \in [0,1]$, with similar bounds for its
        derivatives since $\mu'$ is bounded. A similar expansion also holds for $b(x'',\mu',\mu'+\zeta',\zeta'')$ in terms of $b(x'',\mu',\mu',\zeta'')$ modulo a remainder bounded by
	\[
	Ch^\delta \left<\mu'/h\right>^{l'+\delta-k/2},
	\]
	along with derivative bounds. In particular,
	\[
	a(x'',\nu'-\mu',\mu' + \zeta', \zeta'')b(x'',\mu',\mu'+\zeta',\zeta'') = a(x'',\nu'-\mu', \zeta', \zeta'')b(x'',\mu',\zeta',\zeta'')
	\]
	modulo a remainder bounded by
\[
 Ch^\delta \left<(\nu'-\mu')/h\right>^{l-k/2}\left<\mu'/h\right>^{l'+\delta-k/2}
\]
for any $\delta \in [0,1]$, along with derivative bounds. As above, if $l+l' < -\delta$, then the resulting integral in $(\mu',\nu')$ yields an element of $h^\delta S^{L+\delta}_{\Lambda_1}$, since
\[
L+\delta > \max(l,l'+\delta,l+l'+\delta-k/2).
\]
Arguing similarly for the derivatives completes the proof.
\end{proof}

We also need uniform $L^2$ mapping properties of operators with singular symbol. It suffices to consider the local situation $K_A \in I^{l,\COMP}_h(\RR^{2n};\Lambda_0,\Lambda_1)$. 

\begin{lem} \label{lem:singularsymbolL2bounded}
  Let $\Lambda_0,$ $\Lambda_1$ be given by \eqref{lagpair}.  If $K_A \in I^{l,\COMP}_h(\RR^{2n};\Lambda_0,\Lambda_1)$ and $s \geq 0$ is such that $l -s < -k/2$, then 
	\[
	\| A \|_{L^2(\RR^n) \rightarrow L^2(\RR^n)} \leq Ch^{-s}.
	\]
	In particular, if $l < -k/2$, then $A$ is uniformly bounded.
\end{lem}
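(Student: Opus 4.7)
The plan is to apply Schur's test to the Schwartz kernel. By the coordinate invariance established in Section~\ref{subsect:coordinateinvariance} and a partition of unity, it suffices to work in the model situation on $\RR^{2n}$ with
\[
K_A(x,y) = (2\pi h)^{-n-k}\int e^{\sfrac{i}{h}(\langle x-y,\xi\rangle + \langle x',\eta'\rangle)} a(x'',\eta',\xi)\, d\xi\, d\eta',
\]
where $a \in S^{l-k/2,\COMP}_h$ has compact support. The goal is to establish pointwise bounds on $|K_A|$ from which Schur's test yields the desired $L^2$ estimate.

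Two integration-by-parts arguments do the work. First, IBP in $\xi$ against $\langle x-y,\xi\rangle$ using $L_\xi = (h/i)|x-y|^{-2}(x-y)\cdot\partial_\xi$ yields arbitrary polynomial decay $(1+|x-y|/h)^{-N}$ without cost to the symbol weight $\langle\eta'/h\rangle^{l-k/2}$. Second, after rescaling $\Xi' = \eta'/h$, for $|x'| \gtrsim h$ one performs IBP in $\Xi'$ against $\langle x',\Xi'\rangle$, gaining a factor $(h/|x'|)^{N_1}$ while reducing the $\Xi'$ symbol order by $N_1$; choosing $N_1 > l+k/2$ (and $N_1 > k$) renders the resulting integral uniformly convergent. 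For $|x'| \lesssim h$, a direct pointwise bound on the $\Xi'$-integral gives $\int_{|\Xi'| \le C/h}\langle\Xi'\rangle^{l-k/2}\,d\Xi' \le C h^{-\max(l+k/2,0)}$, with a logarithmic loss at the borderline $l = -k/2$ which is absorbed into any $h^{-s}$ with $s>0$. Combining,
\[
|K_A(x,y)| \le C_{N,N_1}\, h^{-n}(1+|x-y|/h)^{-N}\cdot \min\bigl( h^{-\max(l+k/2,0)},\ (h/|x'|)^{N_1} \bigr).
\]

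Schur's test then requires $M_1 = \sup_x \int |K_A(x,y)|\,dy$ and $M_2 = \sup_y \int |K_A(x,y)|\,dx$. For $M_1$, the $y$-integral against $(1+|x-y|/h)^{-N}$ contributes a factor $h^n$ cancelling $h^{-n}$, and using the first of the two minima gives $M_1 \le C h^{-\max(l+k/2,0)}$. For $M_2$ one splits the $x$-integration into $\{|x'|\le h\}\cup\{|x'|>h\}$: on the first region the direct bound combined with integration in $x''$ yields $\le C h^{-\max(l+k/2,0)}$; on the second, the IBP decay together with $\int_{|x'|>h}|x'|^{-N_1}\,dx' = Ch^{k-N_1}$ gives a bounded contribution. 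Hence $M_2 \le C h^{-\max(l+k/2,0)}$, and Schur produces $\|A\|_{L^2\to L^2} \le C h^{-\max(l+k/2,0)}$. Since the hypothesis $l-s<-k/2$ is equivalent to $s > l+k/2$ (with the case $l<-k/2$ giving bounded $A$ trivially), one has $h^{-\max(l+k/2,0)} \le C h^{-s}$, completing the proof.

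The main technical difficulty is the matching of the two regimes in the transition region $|x'|\sim h$, together with the bookkeeping of the $M_2$ integration where non-translation-invariance in the $x'$ direction prevents a direct reduction to a convolution estimate; the choice of $N_1$ must be large enough both to make the $\Xi'$-integral converge and to ensure integrability of $|x'|^{-N_1}$ on $\{|x'|>h\}$.
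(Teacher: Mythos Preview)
Your Schur-test strategy in physical space is sound and does prove the lemma, but there is an error in the integration-by-parts step, and that step is in any case unnecessary.

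The claim that IBP in $\Xi'$ gains $(h/|x'|)^{N_1}$ while simultaneously reducing the symbol order by $N_1$ is incorrect: after the substitution $\Xi'=\eta'/h$ the phase is $e^{i\langle x',\Xi'\rangle}$ with no $h$, so each IBP gains only $|x'|^{-1}$, and $\partial_{\Xi'}^{N_1}[a(x'',h\Xi',\xi)]=(h\partial_{\eta'})^{N_1}a$ is bounded by $\langle\Xi'\rangle^{l-k/2-N_1}$. The net result is $|x'|^{-N_1}$, not $(h/|x'|)^{N_1}$; equivalently, if you IBP in $\eta'$, the phase gain $(h/|x'|)^{N_1}$ is exactly cancelled by the loss $h^{-N_1}$ from $\partial_{\eta'}^{N_1}a$. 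With the correct bound $|x'|^{-N_1}$, your $M_2$ computation over $\{|x'|>h\}$ breaks: discarding the $(1+|x'-y'|/h)^{-N/2}$ factor you would get $h^{-k}\int_{|x'|>h}|x'|^{-N_1}dx'=Ch^{-N_1}$, which is unbounded as $h\to 0$. (One can check directly in a factored example that the kernel is genuinely of size $h^{-n}$ at points with $|x'|\sim 1$, so your stated pointwise bound is false there.)

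The fix is simply to drop the IBP in $\eta'$ altogether. The direct bound
\[
|K_A(x,y)|\le Ch^{-n}(1+|x-y|/h)^{-N}h^{-\max(l+k/2,0)}
\]
already suffices: both Schur integrals are $h^{-\max(l+k/2,0)}\cdot\int h^{-n}(1+|x-y|/h)^{-N}\,d(\cdot)=O(h^{-\max(l+k/2,0)})$, which gives the result since $l-s<-k/2$ means $\max(l+k/2,0)<s$ (with the borderline log absorbed into any $s>0$).

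By contrast, the paper avoids physical-space kernel bounds entirely: it regards $A$ as a pseudodifferential operator on $\RR^{n-k}_{x''}$ with symbol valued in $\mathcal{L}(L^2(\RR^k_{x'}))$, conjugates the operator-valued symbol by the semiclassical Fourier transform in $x'$, and then applies Schur to the resulting kernel $(\zeta',\xi')\mapsto(2\pi h)^{-k}a(x'',\zeta'-\xi',\xi',\xi'')$ directly. This is a bit cleaner since the Fourier-side kernel is given explicitly by the symbol, and the compact support makes the Schur integral immediate.
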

\begin{proof}
	Using the left quantization \eqref{eq:singularleftquant}, we may assume that $K_A$ is parametrized by
	\begin{equation} \label{eq:singularsymbolL2bounded}
	K_A = (2\pi h)^{-n-k}\int e^{\sfrac{i}{h}(\langle x',\eta'+\xi'\rangle-\langle y',\xi'\rangle + \langle x''-y'',\xi''\rangle)}a(x'',\eta',\xi',\xi'')\, d\xi' d\xi'' d\eta',
	\end{equation}
	where $a \in S^{l-k/2,\COMP}_h(\RR^{n-k}; \RR^k ; \RR^{n-k})$. We bound this operator on $L^2(\RR^n)$ by viewing it as a pseudodifferential operator on $\RR^{n-k}_{x''}$ with values in uniformly bounded operators on $L^2(\RR^k_{x'})$. Thus we write
	\[
	K_A = (2\pi h)^{-n+k}\int e^{\sfrac{i}{h}\langle x''-y'',\xi''\rangle}\mathcal{A}(x'',\xi'')\, d\xi'',
	\]
	where for each $(x'',\xi'')$ the operator $\mathcal{A}(x'',\xi'')$  has kernel
	\[
	K_{\mathcal{A}}(x',y';x'',\xi'') = (2\pi h)^{-2k}\int e^{\sfrac{i}{h}(\langle x',\zeta'\rangle - \langle y', \xi' \rangle)} a(x'',\zeta'-\xi',\xi',\xi'')\,d\zeta' d\xi'.
	\]
	We now show that 
	\[
	\mathcal{A}(x'',\xi'') \in S^0(\RR^{n-k}; \RR^{n-k}; \mathcal{L}(L^2(\RR^{k}))).
	\]
	Because $\mathcal{A}(x'',\xi'')$ has compact support in $(x'',\xi'')$, to prove the lemma it suffices to show
	\[
	\| D_{x''}^{\alpha} D_{\xi''}^\beta \mathcal{A}(x'',\xi'') \|_{L^2(\RR^k) \rightarrow L^2(\RR^k)} \leq Ch^{-s}
	\]
	for all multiindices $\alpha,\beta$. On the other
        hand,
	\[
	\| D_{x''}^{\alpha} D_{\xi''}^\beta \mathcal{A}(x'',\xi'') \|_{L^2(\RR^k) \rightarrow L^2(\RR^k)} = \| \mathcal{F}'_h( D_{x''}^{\alpha} D_{\xi''}^\beta \mathcal{A}(x'',\xi'') )(\mathcal{F}')^{-1}_h \|_{L^2(\RR^k)\rightarrow L^2(\RR^k)},
	\]
	since $h^{-k/2}\mathcal{F}_h'$ is unitary ($\mathcal{F}_h'$
        denotes semiclassical Fourier transform only in the primed variables). The conjugated operator on the right, which we denote by $\widehat{\mathcal{A}}(x'',\xi'')$, has kernel
	\[
(\zeta',\xi') \mapsto  (2\pi h)^{-k} D_{x''}^\alpha D_{\xi''}^\beta a(x'',\zeta'-\xi',\xi',\xi'').
	\]
	Since $a(x'',\eta',\xi',\xi'')$ has compact support in $\eta'$, for any $s \geq 0,$
	\[
	(2\pi h)^{-k}|a(x'',\zeta'-\xi',\xi',\xi'')| \leq Ch^{-k}h^{-s} \left<(\zeta'-\xi')/h\right>^{l-k/2-s}.
	\]
	By Schur's lemma, it follows that $h^s \widehat{\mathcal{A}}(x'',\xi'')$ is uniformly bounded on $L^2(\RR^k)$ provided that $l-k/2-s < -k$, completing the proof.
\end{proof}

 \begin{rem} \label{rem:SSweakerbounded}
Lemma \ref{lem:singularsymbolL2bounded} is equally valid if $K_A$ is given by the oscillatory integral \eqref{eq:singularsymbolL2bounded} where the amplitude has compact support in $\eta'$ uniformly with respect to the other variables, but is not of compact support in $(x'',\xi',\xi'')$, provided that bounds of the form
\[
|D_{x''}^\alpha D_{\xi''}^\beta a(x'',\eta',\xi',\xi'')| \leq C_{\alpha\beta}\left< \eta'/h \right>^{l-k/2}\left<\xi''\right>^{-|\beta|}
\]
are valid.
\end{rem}

\subsection{Homogeneous paired Lagrangian distributions}

We also need another class of paired Lagrangian distributions, which have wavefront set at fiber-infinity. Again, it will suffice to consider conormal bundles of nested submanifolds. Let $(x,\xi',\xi'')$ be coordinates on $\RR^{m} \times \RR^k \times \RR^{n} $. 

\begin{defi}
	We say that an $h$-dependent function $a = a(x,\xi',\xi'';h)\in \CI(\RR^{m}_x \times \RR^{k}_{\xi'} \times \RR^n_{\xi''})$  is in $S^{q,r}(\RR^{m}_x; \RR^{n}_{\xi''}; \RR^k_{\xi'})$ if it satisfies the product-type estimates
	\[
	| D_{\xi'}^\alpha D_{\xi''}^\beta D_x^\gamma a(x,\xi',\xi'')| \leq C_{\alpha\beta \gamma}\left<(\xi',\xi'')\right>^{q-|\beta|} \left<\xi'\right>^{r-|\alpha|}
	\]
	for all multiindices $\alpha,\beta,\gamma$.
\end{defi}

We use the same notation as in Section \ref{subsect:nestedconormal}, so that $S_0 \subset S_1 \subset \RR^m$, as well as $\Lambda_0 = N^*S_0$ and  $\Lambda_1 = N^*S_1$. We consider oscillatory integrals of the form
\begin{equation} \label{eq:modelhomogeneous}
(2\pi h)^{d-m}\int e^{\sfrac{i}{h} (\langle x',\xi' \rangle + \langle x'',\xi'' \rangle )}a(x,\xi',\xi'') \, d\xi' d\xi'',
\end{equation}
where $a \in S^{q,r} = S^{q,r}(\RR^m_x; \RR^{m-k-d}_{\xi''}; \RR^k_{\xi'})$.

\begin{defi}
	We say that $u \in I^{p,l}_{\hmg,c}(\RR^{m};{\Lambda}_0,{\Lambda}_1)$ if $\supp u$ is compact, and $u$ is of the form \eqref{eq:modelpairedlagrangian} for some $a \in S^{q,r}$, where $q = p - m/4 + k/2 + d/2$ and $r = l-k/2$.
\end{defi}

This is a direct semiclassical adaptation of the paired Lagrangian distributions studied in \cite{de2014diffraction}, and for this reason we take various facts for granted that were explicitly demonstrated for the related space $I^{l,\COMP}_h(\RR^m;\Lambda_0,\Lambda_1)$. We will need the following:

\begin{itemize} \itemsep6pt 
	\item Any $u$ of the form \eqref{eq:modelhomogeneous} can be written in terms of an amplitude $\tilde{a}(x''',\xi',\xi'')$ depending only on $x'''$ in the base variables.
	\item The space $I^{p,l}_{\hmg,c}(\RR^{m};{\Lambda}_0,{\Lambda}_1)$ is invariant under diffeomorphisms of $\RR^m$ preserving $S_1$ and $S_0$, which allows for the definition of $I^{p,l}_{\hmg}$ on a general manifold.
\end{itemize}
However, we will not need to develop any symbol calculus for this class of distributions.

In this context, $I^{p,l}_{\hmg}(\RR^{m};{\Lambda}_0,{\Lambda}_1)$ arises when multiplying $u \in I^{\COMP}_h(X;N^*Y)$ by $v\in I^{[\mu]}(Z)$, where $Y,Z$ are two transverse submanifolds of a manifold $X$. It suffices to consider the model case; thus we take $X = \RR^{m}$ with coordinates 
\[
(x',x'',x''') \in \RR^{d_1} \times \RR^{d_2} \times  \RR^{m-d_1-d_2},
\]
and then set 
\[
Y = \{x'=0\}, \quad Z = \{x''=0\}.
\] 
Thus $Y$ and $Z$ have codimension $d_1$ and $d_2$ in $\RR^{m}$, respectively, while $Y\cap Z$ has codimension $d_1+d_2$. 
\begin{lem} \label{lem:transversemult}
	If	$u \in I^{\COMP}_h(\RR^m;N^*Y)$ and $v\in I^{[\mu]}_c(Z)$, then
	\begin{align*}
	uv &\in I^{\mu + d_2/2,\COMP}_h(\RR^m; N^*(Y\cap Z), N^* Y)\\ &+ h^{-\mu-m/4+d_1/2}I^{\mu -m/4 + d_2/2,-\infty}_\hmg(\RR^m; N^*(Y\cap Z), N^* Z)
	\end{align*}
\end{lem}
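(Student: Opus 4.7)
Work in local coordinates in which $Y = \{x'=0\}$ and $Z = \{x''=0\}$, with $x = (x',x'',x''') \in \RR^{d_1}\times\RR^{d_2}\times\RR^{m-d_1-d_2}$, and write
\[
u(x) = (2\pi h)^{-(m+2d_1)/4}\int e^{\sfrac i h \langle x',\xi'\rangle}a(x,\xi')\,d\xi', \qquad v(x) = (2\pi)^{-(m+2d_2)/4}\int e^{i\langle x'',\eta''\rangle}b(x,\eta'')\,d\eta'',
\]
with $a \in S^\COMP$ and $b \in S^\mu$. The key analytic difficulty is that the relevant frequencies of $u$ live at semiclassical scale $|\xi'/h|\lesssim 1$ while those of $v$ live at homogeneous scale $\eta''$; these two scales cannot be put into a single paired Lagrangian, and a frequency splitting of $v$ is needed.

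The plan is to fix a cutoff $\chi \in \CcI(\RR^{d_2})$ identically one near the origin and split $v = v_1 + v_2$, with $v_1$ defined by inserting $\chi(h\eta'')$ into the amplitude. For $uv_1$ I would rescale $\zeta'' = h\eta''$: the phase becomes semiclassical, and the new amplitude $\chi(\zeta'')b(x,\zeta''/h)$ is compactly supported in $\zeta''$. Applying Leibniz and using the identity $(hD_{\zeta''})^\alpha b(x,\zeta''/h) = (D^\alpha_{\eta''}b)(x,\zeta''/h)$ shows that this amplitude satisfies exactly the bounds defining $S^{\mu,\COMP}_h$; the terms where $(hD_{\zeta''})$ hits $\chi$ produce factors $h^{|\beta|}(D^\beta\chi)(\zeta'')$ which, combined with $\langle\zeta''/h\rangle^{|\beta|}$, stay bounded on $\supp\chi$. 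Multiplying by $u$ then yields a two-phase semiclassical oscillatory integral of the exact form modeling $I^{l,\COMP}_h(\RR^m; N^*(Y\cap Z), N^*Y)$; matching the paper's parameters $k=d_2$ and $d=m-d_1-d_2$ against $l-k/2 = \mu$ forces $l = \mu + d_2/2$, and the prefactor coming from combining the normalizations of $u$ and $v$ with the $h^{-d_2}$ Jacobian from $d\eta'' = h^{-d_2}d\zeta''$ agrees with $(2\pi h)^{-3m/4-k/2+d/2}$ up to an $h$-independent constant.

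For $uv_2$, on the support of $(1-\chi(h\eta''))$ we have $|\eta''|\gtrsim 1/h$. After the substitution $\xi'' = h\eta''$ the integral again has semiclassical phase, but the amplitude $a(x,\xi')(1-\chi(\xi''))b(x,\xi''/h)$ lives in the region $|\xi''|\ge 1/2$ where $\langle\xi''/h\rangle$ is comparable to $|\xi''|/h$. The $S^\mu$ estimates on $b$ then yield
\[
\bigl|D_{\xi''}^\alpha b(x,\xi''/h)\bigr| \le Ch^{-\mu}\langle\xi''\rangle^{\mu-|\alpha|},
\]
so extracting a factor of $h^{-\mu}$ puts the amplitude in the homogeneous product-type class $S^{\mu,-\infty}$, the $r=-\infty$ index coming from the compact $\xi'$-support in $a$. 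This matches the amplitude class associated with $I^{p,l}_\hmg(\RR^m; N^*(Y\cap Z), N^*Z)$ \emph{provided} one notes that in this second nesting the small Lagrangian has changed to $N^*Z$, so the extra codimension is now $d_1$ and one must use $k = d_1$ in the paper's formulas. With this choice, $q = \mu$ forces $p = q + m/4 - k/2 - d/2 = \mu - m/4 + d_2/2$, and collecting all powers of $h$ (from the original normalizations, the Jacobian, and the extracted $h^{-\mu}$) against the homogeneous normalization $(2\pi h)^{d-m}$ yields exactly the advertised prefactor $h^{-\mu - m/4 + d_1/2}$.

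The main obstacle is bookkeeping rather than analysis: the parameters $(k,d)$ encoding the Lagrangian nesting differ between the two terms, since the small Lagrangian is $N^*Y$ for $uv_1$ and $N^*Z$ for $uv_2$. Once the conventions are set up correctly, the symbol estimates after rescaling are immediate consequences of the Leibniz computation above, and the final match of orders and prefactors is a short calculation.
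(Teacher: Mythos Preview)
Your proposal is correct and follows essentially the same approach as the paper: rescale $v$ semiclassically, split with a cutoff $\chi$ in the rescaled frequency $\xi'' = h\eta''$, and verify that the low-frequency piece lands in the semiclassical paired Lagrangian class while the high-frequency piece, after extracting $h^{-\mu}$, lands in the homogeneous one. The paper presents the same argument with the rescaling done first (so the cutoff appears as $\chi(\xi'')$ rather than $\chi(h\eta'')$), but this is purely cosmetic; your symbol verifications and order/prefactor bookkeeping match the paper's.
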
 
\begin{proof}
Since $\codim Y = d_1$, modulo a $\CcI(\RR^m)$ remainder we can write
	\[
	u = (2\pi h)^{-m/4 - d_1/2}\int e^{\sfrac{i}{h}\left<x',\xi'\right>} a(x,\xi')\, d\xi',
	\]
	where $a(x,\xi') \in \CcI(\RR_{x}^{m}\times \RR^{d_1}_{\xi'})$. On the other hand, modulo $\CcI(\RR^m)$, we can find a Kohn--Nirenberg symbol $b(x,\eta'')  \in S^{\mu}(\RR^{m}_{x};\RR^{d_2}_{\xi''})$ such that
	\begin{equation} \label{eq:vinsertcutoff}
	v = (2\pi h)^{-d_2} \int e^{\sfrac{i}{h}\left<x'',\xi''\right>} b(x,\xi''/h)\, d\xi''.
	\end{equation}
	Here we made the usual semiclassical change of variables $\eta'' =\xi''/h$. The product $uv$ is given by
	\begin{equation} \label{eq:insertcutoff}
	uv = (2\pi h)^{-m/4-d_1/2-d_2} \int  e^{\sfrac{i}{h}(\left<x',\xi'\right> + \left<x'',\xi''\right>)}a(x,\xi')b(x,\xi''/h)\, d\xi' d\xi''.
	\end{equation}
	Now insert a smooth cutoff function $\chi(\xi'')$ such that
        $\chi = 1$ near $\xi''=0$. Thus we may split $uv=w_0+w_1$ as a sum of two
        oscillatory integrals where $w_0$ has amplitude
        $\chi ab$, and $w_1$ has amplitude $(1-\chi)ab$. For the
        term $w_0$, let
	\[
	c_0(x,\xi',\xi'') = \chi(\xi'')a(x,\xi')b(x,\xi''/h).
	\]
	Thus $c_0 \in S^{\mu,\COMP}_h$, and $-m/4-d_1/2 - d_2 = -3m/4 - d_2/2 +\dim(Y\cap Z)/2$ since $\dim(Y\cap Z) = m- d_1 -d_2$. In particular,
	\begin{align*}
	w_0 &= (2\pi h)^{-m/4-d_1/2-d_2}\int e^{\sfrac{i}{h}(\left<x',\xi'\right> + \left<x'',\xi''\right>)} c_0(x,\xi',\xi'') \, d\xi' d\xi'' \\ &\in I^{\mu + d_2/2,\COMP}_h(\RR^m; N^*(Y\cap Z), N^* Y).
	\end{align*}
	For the second term $w_1$, observe that $|\xi''| \geq C_0$ on $\supp (1-\chi(\xi''))$ for some $C_0>0$. Let
	\[
	c_1(x,\xi',\xi'') = h^{\mu}(1-\chi(\xi''))a(x,\xi')b(x,\xi''/h).
	\]
	Since $c_1$ is in fact compactly supported in $\xi'$, we certainly have the symbol bounds 
	\[
	|D_{\xi'}^\alpha D_{\xi''}^\beta D_{x}^\gamma c_1(x,\xi',\xi'')| \leq C_{\alpha\beta\gamma N} \left<\xi'\right>^{-N}\left<(\xi',\xi'')\right>^{\mu-|\beta|}.
	\]
	This shows that
	\begin{align*}
	w_1 &= h^{-\mu} (2\pi h)^{-m/4+d_1/2}(2\pi h)^{-d_1-d_2}\int e^{\sfrac{i}{h}(\left<x',\xi'\right> + \left<x'',\xi''\right>)} c_1(x,\xi',\xi'') \, d\xi' d\xi'' \\ &\in h^{-\mu-m/4+d_1/2}I^{\mu -m/4 + d_2/2,-\infty}_\hmg(\RR^m; N^*(Y\cap Z), N^*Z)
	\end{align*}
	as desired.
\end{proof}
\begin{rem}\label{rem:oneterm}
If we assume that $(x,\xi'') \mapsto b(x,\xi''/h)$ in
\eqref{eq:insertcutoff} has compact support in $(x,\xi'')$, then we
are left with only a $w_0$ term in the proof above, i.e., an element
of $I^{\mu + d_2/2,\COMP}_h(\RR^m; N^*(Y\cap Z), N^* Y).$
  \end{rem}

  \begin{rem}\label{rem:smallsupport}
    In the notation of the proof of Lemma \ref{lem:transversemult},
    let $O \subset T^*\RR^m$ be an open neighborhood of
    $\WF(u) \cap N^*Y$. Then
    $w_0 \in I^{\mu + d_2/2,\COMP}_h(\RR^m; N^*(Y\cap Z), N^* Y)$ can
    always be chosen so that $\WF(w_0) \subset O$. Indeed, by Lemma
    \ref{lem:pairedlagrangianWF} and semiclassical wavefront set
    calculus,
	\begin{multline*}
	\WF(w_0) \subset (N^*Y \cap \WF(u)) \\ \cup  \{(0,0,x''',\xi',\xi'',0): (0,0,x''',\xi',0,0) \in \WF(u), \, \xi'' \in \supp \chi\}.
	\end{multline*}
	Now $\WF(w_0)$ is closed, and $\WF(w_0) \cap N^*Y \subset O$; since $O$ is open, the result follows by taking $\chi$ with sufficiently small support. Observe that this can be thought of as decomposing $v = v_0 + v_1$ itself into a sum, where we insert $\chi(\xi'')$ and $1-\chi(\xi'')$ into \eqref{eq:vinsertcutoff}. 
\end{rem}

We now return to the setting of Section
\ref{subsect:pseudosingularsymbol}: let $X$ be an $n$-dimensional
manifold, and $Y\subset X$ a codimension $k$ submanifold. We then
consider operators with Schwartz kernels
\[
K_A \in I^{p,l}_\hmg(X\times X; N^*((X\times Y)\cap \diag), N^*(X\times Y)).
\]
We also need to consider the case when $X\times Y$ is replaced with $Y\times X$.
Although $K_A$ is not compactly microlocalized, it nevertheless defines an $h$-tempered family of operators $A: \CI(X) \rightarrow \CmI(X)$.

As in Section \ref{subsect:pseudosingularsymbol}, it suffices work on $X = \RR^n$ with coordinates $x = (x',x'') \in \RR^k \times \RR^{n-k}$, where $Y = \{x'=0\}$. If $(x,y,\xi,\eta)$ are the corresponding coordinates on $T^*(\RR^n \times \RR^n)$, let 
\[
\Lambda_R = N^*\{y'=0\},\quad \Lambda_L = N^*\{x'=0\}.
\] 
We work with the Lagrangian pair 
\begin{equation}\label{lagpair2}
\Lambda_1 = \Lambda_R \text{ or } \Lambda_L, \quad \Lambda_0 = \{x'=y'=0, \, x'' = y'', \, \xi'' = -\eta''\}.
\end{equation}
For instance, if $\Lambda_1 = \Lambda_R$, then we can parametrize $K_A \in I^{p,l}_{\hmg}(\RR^{2n};\Lambda_0,\Lambda_1)$ by
\begin{equation} \label{eq:badparametrization}
K_A = (2\pi h)^{-n-k}\int e^{\sfrac{i}{h}(\langle x',\xi'\rangle-\langle y',\zeta'\rangle + \langle x''-y'',\xi''\rangle)}a(x'',\zeta',\xi',\xi'')\, d\xi' d\xi'' d\zeta',
\end{equation}
where now $a(y'',\zeta',\xi',\xi'')$ satisfies the symbol bounds
\[
| D_{\zeta'}^\alpha D_{\xi}^\beta D_{y''}^\gamma a(y'',\zeta',\xi',\xi'')| \leq C_{\alpha\beta \gamma}\left< \xi\right>^{l-n/2-|\beta|} \left< (\zeta',\xi) \right>^{p+(n-k)/2-|\alpha|}.
\]
We need uniform mapping properties of $A$, which can be deduced as in \cite[Proposition 5.14]{de2014diffraction}.

\begin{lem} \label{lem:badL2bounded}
	Let $\Lambda_0$, $\Lambda_1$ be defined by \eqref{lagpair2}.  Let $K_A \in I^{p,l}_{\hmg}(\RR^{2n};\Lambda_0,\Lambda_1)$ be of the form \eqref{eq:badparametrization}. If $p+l < -k/2$ and  $p < -n/2$, then 
	\[
	\| A \|_{L^2(\RR^n) \rightarrow L^2(\RR^n)} \leq Ch^{-k}.
	\]
	
\end{lem}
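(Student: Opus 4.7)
The plan is to adapt the argument for Lemma \ref{lem:singularsymbolL2bounded}, again viewing $A$ as a semiclassical pseudodifferential operator in the $x''$ variables with operator-valued symbol on $L^2(\RR^k_{x'})$. The key difference is that the symbol $a$ is now noncompactly microlocalized along the $\zeta'$ direction, and the $h^{-k}$ loss will arise from a Hilbert--Schmidt bound rather than a Schur bound.

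First, write
\[
A = (2\pi h)^{-(n-k)}\int e^{\sfrac{i}{h}\langle x''-y'',\xi''\rangle}\,\mathcal{A}(x'',\xi'')\,d\xi'',
\]
where for each $(x'',\xi'')$ the operator $\mathcal{A}(x'',\xi'')$ on $L^2(\RR^k_{x'})$ has integral kernel
\[
K_\mathcal{A}(x',y';x'',\xi'') = (2\pi h)^{-2k}\int e^{\sfrac{i}{h}(\langle x',\xi'\rangle-\langle y',\zeta'\rangle)}a(x'',\zeta',\xi',\xi'')\,d\xi'\,d\zeta'.
\]
By the operator-valued semiclassical Calder\'on--Vaillancourt theorem, it will suffice to show that
\[
\|D^\alpha_{x''} D^\beta_{\xi''} \mathcal{A}(x'',\xi'')\|_{L^2(\RR^k)\to L^2(\RR^k)} \leq C_{\alpha\beta}\,h^{-k}
\]
uniformly in $(x'',\xi'')$.

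Next, conjugate by the semiclassical Fourier transform $\mathcal{F}_h'$ in $x'$; since $(2\pi h)^{-k/2}\mathcal{F}_h'$ is unitary, operator norms are preserved. A direct computation, exactly parallel to that in the proof of Lemma \ref{lem:singularsymbolL2bounded}, shows that $\widehat{\mathcal{A}}(x'',\xi'') = \mathcal{F}_h'\,\mathcal{A}(x'',\xi'')\,(\mathcal{F}_h')^{-1}$ has integral kernel
\[
(\xi',\zeta')\mapsto (2\pi h)^{-k}\,a(x'',\zeta',\xi',\xi''),
\]
and this explicit $(2\pi h)^{-k}$ prefactor is the origin of the loss.

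Finally, estimate the operator norm of $\widehat{\mathcal{A}}(x'',\xi'')$ by its Hilbert--Schmidt norm. Using the symbol bound,
\[
\|\widehat{\mathcal{A}}(x'',\xi'')\|_{\HS}^2 \leq C h^{-2k}\int\!\!\int \langle\xi\rangle^{2(l-n/2)}\langle(\zeta',\xi)\rangle^{2(p+(n-k)/2)}\,d\zeta'\,d\xi',
\]
where $\xi=(\xi',\xi'')$. The inner integral in $\zeta'$ converges precisely because $p<-n/2$ forces $2(p+(n-k)/2)<-k$, and is bounded by $C\langle\xi\rangle^{2p+n}$; the resulting integrand $\langle\xi\rangle^{2(l+p)}$ in $\xi'$ is then integrable, uniformly in $\xi''$, precisely because $p+l<-k/2$. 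This gives $\|\widehat{\mathcal{A}}(x'',\xi'')\|_{L^2\to L^2}\leq Ch^{-k}$, hence the same bound for $\mathcal{A}(x'',\xi'')$. The derivative estimates follow from the same argument applied to $D^\alpha_{x''}D^\beta_{\xi''}a$, whose symbol bound replaces $l$ by $l-|\beta|$, which only strengthens the two convergence conditions. The only delicate point is the twin convergence check: the two hypotheses $p<-n/2$ and $p+l<-k/2$ are used in sequence and are individually sharp for the corresponding integrals, so there is no room to weaken either one.
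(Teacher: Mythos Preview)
Your proof is correct and follows the same overall strategy as the paper: reduce to an operator-valued semiclassical symbol $\mathcal{A}(x'',\xi'')$ on $L^2(\RR^k)$, conjugate by the partial Fourier transform, and control the resulting kernel by its Hilbert--Schmidt norm. Your direct HS computation, integrating first in $\zeta'$ (using $p<-n/2$) and then in $\xi'$ (using $p+l<-k/2$), is in fact cleaner than the paper's version, which splits $a=a_1+a_2$ according to whether $\langle\xi\rangle\le\langle\zeta'\rangle$ or $\langle\zeta'\rangle\le 2\langle\xi\rangle$, handling $a_1$ by reference to \cite[Proposition~5.14]{de2014diffraction} and $a_2$ by a separate weight manipulation. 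Your single two-step integral avoids both the decomposition and the external citation; the paper additionally tracks a $\langle\xi''\rangle^{|\beta|}$ factor to obtain $S^0$-type decay of the operator-valued symbol, whereas you rely (correctly) on the fact that in the semiclassical setting uniform bounds on finitely many derivatives already suffice for Calder\'on--Vaillancourt.
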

\begin{proof}
For concreteness, assume that $\Lambda_1 = \Lambda_R$; the same proof can be repeated for $\Lambda_L$. We argue as in Lemma \ref{lem:singularsymbolL2bounded}, viewing $A$ as a pseudodifferential operator on $\RR^{n-k}$ with an operator-valued symbol $\mathcal{A}(x'',\xi'')$ given by
	\[
	K_\mathcal{A}(x',y';y'',\xi'') =  (2\pi h)^{-2k}\int e^{\sfrac{i}{h}(\langle x',\zeta'\rangle - \langle y', \xi' \rangle)} a(x'',\zeta',\xi',\xi'')\,d\zeta' d\xi'.
	\]
	Conjugating by the Fourier transform as in Lemma \ref{lem:singularsymbolL2bounded}, the problem is reduced to showing that the operator with Schwartz kernel 
	\begin{equation} \label{eq:conjugatedkernel}
	(\zeta',\xi') \mapsto \left<\xi''\right>^{|\beta|} D_{y''}^{\alpha} D_{\xi''}^{\beta}a(x'',\zeta',\xi',\xi'')
	\end{equation}
	has uniformly bounded operator norm on $L^2(\RR^k)$ (we multiplied by a factor of $(2\pi h)^k$). As in \cite[Proposition 5.14]{de2014diffraction}, it suffices to show that the Hilbert--Schmidt norm of this operator is uniformly bounded. 
	
	Write $a= a_1 + a_2$, where $\left<\xi \right> \leq \left<\zeta'\right>$ on $\supp a_1$, and $\left<\zeta' \right> \leq 2\left<\xi\right>$ on $\supp a_2$. For $a_1$,
	\[
	 \left<\xi''\right>^{|\beta|}|D_{y''}^{\alpha} D_{\xi''}^{\beta}a_1(y'',\zeta',\xi',\xi'')| \leq C\left< \xi\right>^{l-n/2} \left< \zeta' \right>^{p+(n-k)/2}
	\]
	by the support assumption on $a_1$, and the proof proceeds just as in \cite[Proposition 5.14]{de2014diffraction}. For $a_2$ the proof is even simpler, since then
	\[	 \left<\xi'\right>^{k/2 + \delta} \left<\zeta' \right>^{k/2+\delta}\left<\xi''\right>^{|\beta|}|D_{y''}^{\alpha} D_{\xi''}^{\beta}a_1(y'',\zeta',\xi',\xi'')| \leq C\left<(\zeta',\xi)\right>^{p+l+k/2+2\delta}. 
	\]
	If $\delta>0$ is sufficiently small, then the right hand side is uniformly bounded, and this implies that the kernel \eqref{eq:conjugatedkernel} is uniformly square-integrable in $(\zeta',\xi')$.
	\end{proof}

We continue studying operators with kernels in $I^{p,l}_{\hmg}(\RR^{2n};\Lambda_0,\Lambda_1)$, but the results that follow are no longer coordinate invariant.

\begin{lem} \label{lem:kernelsupnorm}
Let $\Lambda_0,$ $\Lambda_1$ be as in \eqref{lagpair2}. Let $K_A \in I^{p,l}_{\hmg}(\RR^{2n};\Lambda_0,\Lambda_1)$ be of the form \eqref{eq:badparametrization}. If $l < -n/2$ and $p < -n/2 -k/2$, then $K_A$ is continuous, and
	\[
	|K_A(x',x'',y',y'')| \leq Ch^{-n-k}\left<x'/h \right>^{-N}\left<y'/h \right>^{-N}\left<(x''-y'')/h\right>^{-N}
	\]
	for each $N\geq 0$.
\end{lem}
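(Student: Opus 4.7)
The plan is to treat $K_A$ as an absolutely convergent oscillatory integral and then extract the decay factors via integration by parts in the phase variables $\xi', \zeta', \xi''$, exploiting that each pairing $\langle x',\xi'\rangle$, $\langle y',\zeta'\rangle$, $\langle x''-y'',\xi''\rangle$ in the phase is associated to one of the decay weights we wish to produce.

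First I would establish the base pointwise bound $|K_A| \leq C h^{-n-k}$ (with no decay factors) by showing that the amplitude $a$ is uniformly in $L^1(d\xi' d\xi'' d\zeta')$. Splitting $a = a_1 + a_2$ according to $\langle\zeta'\rangle \leq \langle\xi\rangle$ or $\langle\zeta'\rangle \geq \langle\xi\rangle/2$ as in Lemma~\ref{lem:badL2bounded}: on the first region $\langle(\zeta',\xi)\rangle \sim \langle\xi\rangle$ so $|a_1| \leq C \langle\xi\rangle^{l+p-k/2}$, which integrates in $(\xi,\zeta') \in \RR^n\times\{\langle\zeta'\rangle \leq \langle\xi\rangle\}$ provided $l+p-k/2+k < -n$, i.e., $l+p < -n - k/2$, which holds by hypothesis. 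On the second region, $|a_2| \leq C\langle\xi\rangle^{l-n/2}\langle\zeta'\rangle^{p+(n-k)/2}$, and the product is integrable in $\xi$ since $l - n/2 < -n$ (using $l<-n/2$), and integrable in $\zeta'$ since $p + (n-k)/2 < -k$ (using $p < -n/2-k/2$). Continuity of $K_A$ in $(x,y)$ follows from dominated convergence.

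To obtain decay, I would use the identities $(1 - \Delta_{\xi'})e^{i\langle x',\xi'\rangle/h} = \langle x'/h\rangle^2 e^{i\langle x',\xi'\rangle/h}$ and likewise for the $\zeta'$ and $\xi''$ phases. Integrating by parts $N$ times in $\xi'$ replaces $a$ by $(1-\Delta_{\xi'})^N a$ and produces the factor $\langle x'/h\rangle^{-2N}$; doing the same in $\zeta'$ and $\xi''$ produces $\langle y'/h\rangle^{-2N}$ and $\langle (x''-y'')/h\rangle^{-2N}$. By the symbol estimates for $a$, each $\partial_{\xi'_j}$ or $\partial_{\xi''_j}$ improves decay by $\langle\xi\rangle^{-1}$ in the first factor, while each $\partial_{\zeta'_j}$ improves by $\langle(\zeta',\xi)\rangle^{-1}$. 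The modified amplitude therefore lies in a strictly more decaying symbol class than $a$, and in particular remains uniformly integrable by the same region-splitting argument used in the base step. Combining these yields
\[
\langle x'/h\rangle^{2N}\langle y'/h\rangle^{2N}\langle(x''-y'')/h\rangle^{2N} |K_A(x,y)| \leq C_N h^{-n-k},
\]
which is the desired estimate.

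The only real subtlety is the bookkeeping of the two competing weights $\langle\xi\rangle$ and $\langle(\zeta',\xi)\rangle$ in the product-type symbol class: a naive single global estimate does not give integrability, but the region split $\{\langle\zeta'\rangle \leq \langle\xi\rangle\} \cup \{\langle\zeta'\rangle \geq \langle\xi\rangle/2\}$ reduces matters to standard symbol bounds and is stable under differentiation in $\xi', \xi'', \zeta'$, so the integration-by-parts step goes through uniformly in $N$.
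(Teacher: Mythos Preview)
Your proposal is correct and follows essentially the same approach as the paper's proof: the paper likewise decomposes $a=a_1+a_2$ according to the relative sizes of $\langle\xi\rangle$ and $\langle\zeta'\rangle$ to show $a\in L^1(\RR^{n+k})$ (giving continuity and the base bound $|K_A|\leq Ch^{-n-k}$), and then obtains the decay factors by integration by parts in $\xi',\zeta',\xi''$. Your write-up simply makes the bookkeeping of the $L^1$ estimate and the stability of the symbol class under differentiation more explicit than the paper does.
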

\begin{proof}
Again, assume that $\Lambda_1 = \Lambda_R$.	As in Lemma \ref{lem:badL2bounded}, decompose $a= a_1 + a_2$, where $\left<\xi \right> \leq \left<\zeta'\right>$ on $\supp a_1$, and $\left<\zeta' \right> \leq 2\left<\xi\right>$ on $\supp a_2$. The hypotheses imply that $a \in L^1(\RR^{n+k})$, so $K_A$ is continuous and $|K_A(x',x',y',y'')| \leq Ch^{-n-k}$. Furthermore, integration by parts shows that
	\[
	|x'/h|^{N_1}|y'/h|^{N_2}|(x''-y'')/h|^{N_3}|K_A(x',x',y',y'')| \leq Ch^{-n-k}
	\]
	for every $N_1,N_2,N_3\geq 0$.
\end{proof}

We now proceed with some $L^\infty(\RR^k;L^2(\RR^{n-k}))$ bounds which
improve the loss in $h$ that occurs in Lemma
\ref{lem:badL2bounded}; these bounds will be essential to obtaining
optimal estimates for the size of the reflected wave in our
propagation argument later on.

We write $u(x')$ for the function $x'' \mapsto
u(x',x'')$ on $\RR^{n-k}$.

\begin{lem} \label{lem:improvedpairingbound}
	Let $K_A \in I^{p,l}_{\hmg}(\RR^{2n};\Lambda_0,\Lambda_1)$ be of the form \eqref{eq:badparametrization}. If $l < -n/2$ and $p < -n/2-k/2$, then 
	\[
	\lvert \left< Au, v\right>\rvert  \leq C\| u \|_{L^\infty(\RR^k;L^2(\RR^{n-k}))}\| v\|_{L^\infty(\RR^k;L^2(\RR^{n-k}))}.
	\]
\end{lem}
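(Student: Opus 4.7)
The plan is to reduce the estimate to the pointwise kernel bound already supplied by Lemma \ref{lem:kernelsupnorm}, and then to organize the integration so as to exploit the mixed-norm structure through Schur's test fiber-wise in the $x'$ variables.

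First I would use Lemma \ref{lem:kernelsupnorm} to obtain, for arbitrary $N$,
\[
|K_A(x',x'',y',y'')| \leq C_N h^{-n-k}\langle x'/h\rangle^{-N}\langle y'/h\rangle^{-N}\langle (x''-y'')/h\rangle^{-N}.
\]
Next, for each fixed $(x',y')\in\RR^k\times\RR^k$ I would view $K_A(x',\,\cdot\,,y',\,\cdot\,)$ as the Schwartz kernel of an operator $T_{x',y'}$ acting in the tangential variables $x'',y''\in\RR^{n-k}$. Schur's test, together with $\int\langle(x''-y'')/h\rangle^{-N}\,dy''\leq C h^{n-k}$ for $N>n-k$, yields
\[
\|T_{x',y'}\|_{L^2(\RR^{n-k})\to L^2(\RR^{n-k})} \leq C h^{-2k}\langle x'/h\rangle^{-N}\langle y'/h\rangle^{-N}.
\]

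Writing $u(y')$ and $v(x')$ for the sections $y''\mapsto u(y',y'')$ and $x''\mapsto v(x',x'')$, Cauchy--Schwarz in the tangential variables gives
\[
\left|\int\!\!\int K_A(x',x'',y',y'')\,u(y',y'')\,\overline{v(x',x'')}\,dx''dy''\right|
\leq \|T_{x',y'}\|_{L^2\to L^2}\,\|u(y')\|_{L^2}\,\|v(x')\|_{L^2}.
\]
Bounding $\|u(y')\|_{L^2}\leq\|u\|_{L^\infty_{y'}L^2_{y''}}$ and $\|v(x')\|_{L^2}\leq\|v\|_{L^\infty_{x'}L^2_{x''}}$, I would then integrate in $(x',y')$ and use $\int\langle x'/h\rangle^{-N}\,dx'\leq Ch^k$ twice, which cancels the factor $h^{-2k}$ exactly:
\[
|\langle Au,v\rangle|\leq C h^{-2k}\cdot h^{2k}\cdot\|u\|_{L^\infty L^2}\|v\|_{L^\infty L^2}= C\,\|u\|_{L^\infty L^2}\|v\|_{L^\infty L^2}.
\]

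There is no serious obstacle here once Lemma \ref{lem:kernelsupnorm} is in hand; the only point requiring slight care is to make sure that the hypothesis $p<-n/2-k/2$ (as opposed to the weaker $p<-n/2$ used in Lemma \ref{lem:badL2bounded}) is needed precisely so that $K_A$ is given by an absolutely convergent oscillatory integral with the pointwise bound above. The improvement over the $L^2\to L^2$ estimate of Lemma \ref{lem:badL2bounded}, where a loss of $h^{-k}$ was incurred, comes from the fact that in the mixed norm we do not integrate in $x'$; the normal variables only need to be controlled on the scale $|x'|\lesssim h$, and this scaling matches the $h^k$ volume gain precisely.
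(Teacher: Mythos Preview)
Your proof is correct and follows exactly the approach in the paper: write the pairing as an iterated integral, apply the pointwise kernel bound from Lemma~\ref{lem:kernelsupnorm}, use Schur's test in the tangential variables to get the $h^{-2k}\langle x'/h\rangle^{-N}\langle y'/h\rangle^{-N}$ operator bound on each fiber, then integrate in $(x',y')$ to recover the factor $h^{2k}$. The paper's proof is simply a more compressed version of what you have written.
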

\begin{proof}
	Write the $L^2(\RR^n)$ pairing,
	\[
	 \left<Au,v \right> = \int \left( \int K_A(x',x'',y',y'') u(y',y'') v(x',x'') \, dx'' dy'' \right) dx' dy'.
	\]
By Lemma \ref{lem:kernelsupnorm} and Schur's lemma,
\begin{align*}
|\left<Au,v\right>| &\leq Ch^{-2k}\int \left<x'/h \right>^{-N}\left<y'/h \right>^{-N} \| u(y') \|_{L^2(\RR^{n-k})}  \| v(x') \|_{L^2(\RR^{n-k})} \, dx'dy' \\ &\leq 
 C\| u \|_{L^\infty(\RR^k;L^2(\RR^{n-k}))}\| v\|_{L^\infty(\RR^k;L^2(\RR^{n-k}))},
\end{align*}
which completes the proof.
\end{proof}

We also need an $L^\infty(\RR^k;L^2(\RR^{n-k})) \rightarrow L^1(\RR^k;L^2(\RR^{n-k}))$ boundedness result which similarly improves upon the loss in $h$ in Lemma \ref{lem:badL2bounded}.

\begin{lem} \label{lem:Linftyonefactor}
	Let $K_A \in I^{p,l}_{\hmg}(\RR^{2n};\Lambda_0,\Lambda_1)$ be of the form \eqref{eq:badparametrization}. If $l < -n/2$ and $p < -n/2-k/2$, then
	\[
	\| A \|_{ L^\infty(\RR^k;L^2(\RR^{n-k})) \rightarrow  L^1(\RR^k;L^2(\RR^{n-k}))} \leq C.
	\]
\end{lem}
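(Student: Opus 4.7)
The plan is to carry out an iterated Schur-type estimate that exploits the detailed off-diagonal decay of the kernel provided by Lemma~\ref{lem:kernelsupnorm}. Writing $(Au)(x',x'') = \int K_A(x',x'',y',y'')\,u(y',y'')\,dy'\,dy''$ and introducing, for each fixed pair $(x',y')$, the operator $\mathcal{A}(x',y')$ on $L^2(\RR^{n-k})$ with Schwartz kernel $(x'',y'') \mapsto K_A(x',x'',y',y'')$, the desired bound reduces to
\[
\int\int \|\mathcal{A}(x',y')\|_{L^2(\RR^{n-k}) \to L^2(\RR^{n-k})}\,dx'\,dy' \leq C.
\]
This reduction is just the triangle inequality in $L^2(\RR^{n-k})$ followed by integration against the $L^\infty_{x'}$ norm of $u$.

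The first substantive step is to bound $\|\mathcal{A}(x',y')\|_{L^2 \to L^2}$ via Schur's lemma. From Lemma~\ref{lem:kernelsupnorm},
\[
|K_A(x',x'',y',y'')| \leq C h^{-n-k}\langle x'/h\rangle^{-N}\langle y'/h\rangle^{-N}\langle (x''-y'')/h\rangle^{-N},
\]
so for fixed $x',y'$ the kernel is dominated by a convolution-type function of $x''-y''$ times a prefactor. Both Schur suprema $\sup_{x''}\int|K_A|\,dy''$ and $\sup_{y''}\int|K_A|\,dx''$ are bounded by $C h^{-n-k}\langle x'/h\rangle^{-N}\langle y'/h\rangle^{-N}\cdot h^{n-k}$, after using $\int \langle z/h\rangle^{-N}\,dz = \mathcal{O}(h^{n-k})$ for $N$ large. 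Hence
\[
\|\mathcal{A}(x',y')\|_{L^2 \to L^2} \leq C h^{-2k}\langle x'/h\rangle^{-N}\langle y'/h\rangle^{-N}.
\]

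The final step is just to integrate this estimate in $(x',y')$. Each factor $\langle x'/h\rangle^{-N}$ and $\langle y'/h\rangle^{-N}$ contributes $\mathcal{O}(h^{k})$ after integration in $\RR^k$, so the two $h^k$'s exactly cancel the $h^{-2k}$ prefactor, giving a bound independent of $h$. Combined with the initial reduction, this yields
\[
\|Au\|_{L^1(\RR^k;L^2(\RR^{n-k}))} \leq C \|u\|_{L^\infty(\RR^k;L^2(\RR^{n-k}))},
\]
as required. There is no serious obstacle here beyond correctly tracking the powers of $h$: the point is that the degenerate direction (the $x',y'$ variables, of total codimension $2k$) costs $h^{-2k}$ at the pointwise level, but this is precisely compensated by the tight $\langle\cdot/h\rangle^{-N}$ localization of the kernel near $\{x'=0\}\cap\{y'=0\}$, whose volume is $\mathcal{O}(h^{2k})$. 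The hypotheses $l<-n/2$ and $p<-n/2-k/2$ enter only through their use in Lemma~\ref{lem:kernelsupnorm}.
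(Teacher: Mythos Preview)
Your proof is correct and follows essentially the same route as the paper: both arguments rely on the pointwise kernel decay from Lemma~\ref{lem:kernelsupnorm} and then track the $h$-powers so that the $h^{-2k}$ prefactor is cancelled by the $\mathcal{O}(h^{2k})$ volume of the kernel's effective support in $(x',y')$. The only difference is organizational---you package the $L^2(\RR^{n-k})$ estimate via Schur's lemma on the operator $\mathcal{A}(x',y')$, whereas the paper applies Cauchy--Schwarz directly to $\lvert Au(x)\rvert^2$ and then integrates; these are equivalent manipulations.
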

\begin{proof}
By Cauchy--Schwarz,
\begin{multline*}
\int \| Au(x') \|_{L^2(\RR^{n-k})} \, dx' \\\leq  \int \left( \int |K_A(x,y)|| K_A(x,z)| |u(y',y'')|^2 dy'dy'' dz'dz'' dx''\right)^{1/2} dx',
\end{multline*}
and by Lemma \ref{lem:kernelsupnorm}, changing variables to replace
$x'',z''$ by $(x''-z'')/h$, $(x''-y'')/h,$ and beginning with the
$y''$ integral, we find that this is bounded by a constant times
$\| u \|_{L^\infty(\RR^k;L^2(\RR^{n-k}))}$
\end{proof}

Finally, we will need to consider composition of
$A \in I^{p,-\infty}_{\hmg}(\RR^{2n};\Lambda_0,\Lambda_1)$ with a
family of pseudodifferential operators on $\RR^{n-k}$ depending
parametrically on $\RR^k$ (cf.\ the discussion of ``tangential''
operators in Section \ref{subsect:semiclassicalpseudos}). Thus we
consider an operator $Q \in \CI(\RR^k; \Psi^0_h(\RR^{n-k}))$ with
Schwartz kernel
\begin{equation} \label{eq:tangentialkernel}
K_Q(x',x'',y',y'') = (2\pi h)^{-n+k} \delta(x'-y') \int e^{\sfrac i h \langle x''-y'', \eta'' \rangle}q(y',y'',\eta'') \,d\eta'',
\end{equation}
where $q \in S^0(\RR^n;\RR^{n-k})$. Supposing that $\Lambda_1 = \Lambda_R$, compose with $A$ given \eqref{eq:badparametrization}:
\begin{multline*}
K_{AQ} (x',x'',y',y'')\\ = (2 \pi h)^{-n-k} \int e^{\sfrac{i}{h}(\langle x',\xi'\rangle-\langle y',\zeta'\rangle + \langle x''-y'',\xi''\rangle}a(x'',\zeta',\xi',\xi'')q(y',y'',\xi'') \, d\xi d\zeta' dz'' d\eta'.
\end{multline*}
Clearly the resulting operator is in $I^{p,-\infty}_{\hmg}(\RR^{2n};\Lambda_0,\Lambda_1)$. The same argument works if $\Lambda_1 = \Lambda_L$.

\begin{lem} \label{lem:operatorvaluedcomposition}
	Let $Q,A$ be given by \eqref{eq:tangentialkernel} and \eqref{eq:badparametrization}, respectively. Suppose that 
	\[
	(x'',\zeta',\xi',\xi'') \in \esssupp(a) \Longrightarrow (x',x'',\xi'') \notin \WF(Q) \text{ for each } x' \in \RR^{k}.
		\]
	If $p < -n/2$, then
	\[
	\| AQ \|_{L^2(\RR^n)\rightarrow L^2(\RR^n)} = \mathcal{O}(h^\infty), \quad  \| QA \|_{L^2(\RR^n)\rightarrow L^2(\RR^n)} = \mathcal{O}(h^\infty).
	\]
\end{lem}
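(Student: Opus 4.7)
I will prove both statements by direct computation of the composition kernels, combined with a Taylor expansion designed to match the base points of $a$ and $q$, integration by parts to convert displacement factors into $\xi''$-derivatives and hence powers of $h$, and finally the wavefront disjointness hypothesis to make the resulting amplitudes negligible.

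For $AQ$, substituting \eqref{eq:badparametrization} and \eqref{eq:tangentialkernel} into $K_{AQ} = \int K_A(x,z)K_Q(z,y)\,dz$, the delta function $\delta(z'-y')$ together with Fourier inversion in the intermediate variables $(z'',\eta'')$ (which give a factor of $(2\pi h)^{n-k}\delta(\eta''-\xi'')$) yield
\[
K_{AQ}(x,y) = (2\pi h)^{-n-k}\int e^{\sfrac{i}{h}(\langle x',\xi'\rangle - \langle y',\zeta'\rangle + \langle x''-y'',\xi''\rangle)}\, a(x'',\zeta',\xi',\xi'')\,q(y',y'',\xi'')\,d\xi\,d\zeta'.
\]
The hypothesis cannot be applied directly to the product because $a$ is evaluated at $x''$ while $q$ is evaluated at $y''$. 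To reconcile the base points, Taylor expand $a(x'',\cdot)$ in $x''$ around $y''$ to order $N$, and use the identity $(x''-y'')^{\alpha}e^{i\langle x''-y'',\xi''\rangle/h}=(ih)^{|\alpha|}\partial_{\xi''}^{\alpha}e^{i\langle x''-y'',\xi''\rangle/h}$. Repeated integration by parts in $\xi''$ is legitimate because $l=-\infty$ forces rapid decay of $a$ in $\xi''$, killing all boundary contributions. The $|\alpha|<N$ terms thereby give amplitudes of the form
\[
h^{|\alpha|}\,\partial_{y''}^{\alpha}a(y'',\zeta',\xi',\xi'')\cdot\partial_{\xi''}^{\beta}q(y',y'',\xi''),\qquad |\beta|\le|\alpha|,
\]
while the $|\alpha|=N$ term carries a factor $h^{N}$ times derivatives of $a$ evaluated along the Taylor segment.

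The wavefront hypothesis is then engaged as follows. Using continuity and the definition of $\WF(Q)$ for tangential operators, choose a cutoff $\chi(y'',\xi'')\in\CI$ equal to $1$ on (the projection to $(y'',\xi'')$ of) $\esssupp a$ and supported where $\partial_{\xi''}^{\beta}q(y',y'',\xi'')=\mathcal{O}(h^{\infty}\langle\xi''\rangle^{-\infty})$ uniformly in $y'\in\RR^{k}$; such a $\chi$ exists because the quantifier ``for each $x'\in\RR^{k}$'' in the hypothesis yields a closed set in $\WF(Q)$ disjoint from the compact projection of $\esssupp a$. Write each $|\alpha|<N$ amplitude as $\chi\cdot(\text{product})+(1-\chi)\cdot(\text{product})$: on $\supp\chi$ the $q$-factor is $\mathcal{O}(h^{\infty})$, and off $\supp\chi$ the $a$-factor is $\mathcal{O}(h^{\infty})$. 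In either case the amplitude is $\mathcal{O}(h^{\infty})$ in the symbol class of Remark~\ref{rem:SSweakerbounded}, and hence the corresponding operator is $\mathcal{O}(h^{\infty})$ in $L^{2}$ by Lemma~\ref{lem:badL2bounded}. The Taylor remainder is bounded only by $h^{N}$ times a fixed symbol, but Lemma~\ref{lem:badL2bounded} costs only $h^{-k}$, so it contributes $\mathcal{O}(h^{N-k})$; since $N$ is arbitrary, $\|AQ\|_{L^{2}\to L^{2}}=\mathcal{O}(h^{\infty})$.

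The composition $QA$ is handled by the symmetric argument. In that case the $\delta(x'-z')$ sets the $y'$-argument of $q$ equal to the external variable $x'$ of the product, and the stationary-phase-style composition (or the Gauss transform of Lemma~\ref{lem:gauss} applied in the intermediate variable $z''$) leaves an amplitude of the form $q(x',y'',\xi'')\,a(y'',\zeta',\xi',\xi'')$ plus higher order corrections in $h$. Applying the wavefront hypothesis with $x'$ playing the role of ``$y'$'' yields $\mathcal{O}(h^{\infty})$ amplitudes, and Lemma~\ref{lem:badL2bounded} finishes the estimate. The main technical obstacle is verifying that the symbol class bounds survive uniformly in $y'$ (resp.~$x'$); this requires using the full quantifier ``for each $x'$'' together with the compactness of $\esssupp a$ in $(\zeta',\xi',\xi'')$ to produce a single cutoff $\chi$ that works simultaneously for all parameter values.
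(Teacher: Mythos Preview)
Your approach is correct but proceeds quite differently from the paper. The paper's proof is much shorter and more conceptual: it views both $A$ and $Q$ as pseudodifferential operators on $\RR^{n-k}_{x''}$ with symbols taking values in $\mathcal{L}(L^2(\RR^k_{x'}))$ --- for $A$ this is the construction already used in the proof of Lemma~\ref{lem:badL2bounded} (the hypothesis $p<-n/2$ is exactly what makes the operator-valued symbol $\mathcal{A}(x'',\xi'')$ bounded on $L^2(\RR^k)$), while for $Q$ the symbol at $(x'',\xi'')$ acts as multiplication by $q(\cdot,x'',\xi'')$. The disjointness hypothesis then says precisely that the essential supports of these two operator-valued symbols, as subsets of $T^*\RR^{n-k}$, do not meet, and the standard calculus of operator-valued semiclassical pseudodifferential operators gives $\|AQ\|,\|QA\|=\mathcal{O}(h^\infty)$ at once, with both orders of composition handled symmetrically.

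Your Taylor expansion plus integration by parts is essentially an explicit unpacking of the left-reduction that underlies that operator-valued composition; what you call ``reconciling base points'' is exactly this step. The gain is that you avoid invoking an operator-valued black box; the cost is that $QA$ becomes genuinely more involved (you rightly note a Gauss-transform step), and your Taylor remainder leaves the amplitude depending on both $x''$ and $y''$ through $y''+t(x''-y'')$, so Lemma~\ref{lem:badL2bounded} as stated does not literally apply --- you need the routine extension to double symbols, which is again the operator-valued Calder\'on--Vaillancourt statement. A minor slip: your appeal to Remark~\ref{rem:SSweakerbounded} is misplaced, since that remark concerns the other paired-Lagrangian class $I^{l,\COMP}_h$; Lemma~\ref{lem:badL2bounded} is the relevant $L^2$-boundedness result here.
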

\begin{proof} 
	As in the proof of Lemma \ref{lem:badL2bounded}, we can view $A$ as being a $\mathcal{L}(L^2(\RR^k))$-valued operator, provided $p < -n/2$. Similarly, we can view $Q$ as an operator on $\RR^{n-k}$ with a $\mathcal{L}(L^2(\RR^k))$-valued symbol; in this case the symbol of $Q$ just acts on $L^2(\RR^k)$ as a multiplication operator. The assumed relation between $\esssupp(a)$ and $\WF(Q)$ guarantees that essential supports of their operator valued-symbols do not intersect, hence 
		\[
		\| AQ \|_{L^2(\RR^n)\rightarrow L^2(\RR^n)} = \mathcal{O}(h^\infty)
		\]
	by the calculus of operator-valued pseudodifferential operators. Either directly or by taking adjoints, $QA$ is similarly negligible.
	\end{proof}

\section{Diffractive improvements}
\label{sec:diffractive}

We now return to our operator $P = -h^2\Delta_g + V$ on $X$ and prove Theorems \ref{theo:propagation}, \ref{theo:improvedglancing}. Recall that we establish these theorems only when $\alpha > 1$.

\subsection{Decomposing the potential}
\label{subsect:potentialdecomp} We need to consider properties of the
potential appearing in our operator $P = -h^2\Delta_g + V$ more
carefully. All the material in this section applies to arbitrary
codimension. Thus we let $(X,g)$ be an $n$ dimensional Riemannian
manifold and $Y \subset X$ a codimension $k$ submanifold. We work in a
coordinate patch $\mathcal{U}$, identified with a subset of $\RR^n$,
with coordinates $(x',x'')$, where $Y \cap \mathcal{U} = \{x'=0\}$. \blue{We
will frequently take advantage of this coordinate decomposition to
write functions on $\mathcal{U}$ as functions in $x'$ with values in
some function space in $x''$, in order to obtain mixed-norm bounds.}
Assume that
\[
V \in I^{[\mu]}(\RR^n;N^*\{x'=0\})
\]
has compact support in $\mathcal{U}$. Thus we can  write
\[
V(x) = (2\pi  h)^{-k} \int e^{\sfrac i h \langle x',\eta' \rangle} v(x,\eta'/h) d\eta'
\]
for some $v(x,\eta') \in S^{\mu}(\RR^n;\RR^k)$ with compact support in the $x$ variables. As in the remark following Lemma \ref{lem:transversemult}, we decompose $V = V_0 + V_1$, where 
\begin{equation} \label{eq:V0potential}
V_0(x) = (2\pi h)^{-k} \int e^{\sfrac i h \langle x', \eta' \rangle}\chi(\eta'/\tau) v(x,\eta'/h) \, d\eta',
\end{equation}
and $V_1 = V - V_0$. Here $\chi \in \CcI(\RR^k;[0,1])$ is identically
one near $\eta'=0$, and $\tau>0$ is a parameter which will be
chosen small, so as to limit $\WF (V_0)$ to a neighborhood of the
zero-section in the conormal bundle to $\{ x'=0 \}$.

We remark for later use that provided $\mu<-k,$ we have a trivial $L^\infty$
estimate with decay in $h$,
\begin{equation}
  \label{LinfinityV1}
 \| V_1 \|_{L^\infty} = \mathcal{O}(h^{-k-\mu}).
\end{equation}
We also have a useful mixed-norm bound which will be used occasionally in place of Lemma \ref{lem:kernelsupnorm} to directly bound certain multiplication operators (the proof is completely analogous to that of Lemma \ref{lem:kernelsupnorm}):

\begin{lem}\label{lem:V1mixednorm}
If $\mu<-k$, then $\| V_1
\|_{L^1(\RR^k;L^\infty(\RR^{n-k}))} = 
        \mathcal{O}(h^{-\mu})$.
      \end{lem}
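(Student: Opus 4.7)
The plan is to decompose $V_1$ dyadically in the rescaled frequency variable $\zeta'=\eta'/h$ and then balance two standard bounds for each piece. After rescaling, $V_1(x) = (2\pi)^{-k}\int e^{i\langle x',\zeta'\rangle}(1-\chi(h\zeta'/\tau))v(x,\zeta')\,d\zeta'$, and the amplitude is supported where $|\zeta'|\gtrsim \tau/h$. I would write $1-\chi(h\zeta'/\tau) = \sum_{j\geq j_0}\psi_j(\zeta')$ with $\psi_j$ supported in $\{|\zeta'|\sim 2^j\}$ and $2^{j_0}\sim \tau/h$, giving a corresponding decomposition $V_1 = \sum_{j\geq j_0} V_{1,j}$. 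The cutoff $\psi_j$ is a rescaled copy of a fixed bump function, so all symbol bounds below are uniform in $j$.

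For each dyadic piece I would establish two $L^\infty_x$ bounds. The trivial bound uses $|v(x,\zeta')|\leq C\langle \zeta'\rangle^\mu$ and the compact support of $\psi_j$, yielding $|V_{1,j}(x)|\leq C 2^{j(\mu+k)}$. The second bound, valid for $x'\neq 0$, comes from integrating by parts $N$ times using $L=|x'|^{-2}\sum_i x'_i D_{\zeta'_i}$, which gives $|V_{1,j}(x)|\leq C_N |x'|^{-N}2^{j(\mu+k-N)}$. Both bounds are uniform in $x''$ since the symbol estimates on $v$ are uniform in the base variables and $v$ has compact support in $x$.

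Next I would take the $L^\infty_{x''}$ norm (trivially, since the bounds are already $x''$-uniform) and integrate over the compact $x'$-support of $V_1$, splitting at the natural scale $|x'|=2^{-j}$. On $\{|x'|\leq 2^{-j}\}$, the trivial bound contributes $C\, 2^{-jk}\cdot 2^{j(\mu+k)} = C\, 2^{j\mu}$. On $\{|x'|>2^{-j}\}$, choosing any $N>k$ and computing $\int_{|x'|>2^{-j}}|x'|^{-N}\,dx' \leq C\, 2^{j(N-k)}$ gives another $C_N\, 2^{j\mu}$ contribution. Thus $\|V_{1,j}\|_{L^1(\RR^k;L^\infty(\RR^{n-k}))}\leq C\, 2^{j\mu}$, and summing the geometric series over $j\geq j_0$ (which converges because $\mu<-k<0$) produces $\mathcal{O}(2^{j_0\mu}) = \mathcal{O}((\tau/h)^\mu) = \mathcal{O}(h^{-\mu})$, as required.

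There is no serious obstacle; this is essentially the same dyadic stationary-phase argument used to prove Lemma \ref{lem:conormalintegrable} and Lemma \ref{lem:kernelsupnorm}, just applied to the high-frequency piece $V_1$ and organized to produce a mixed-norm rather than pointwise bound. The only point requiring care is the bookkeeping of the balance scale $|x'|\sim 2^{-j}$, which is what forces the cancellation $2^{-jk}\cdot 2^{jk} = 1$ in the first region and $2^{j(N-k)}\cdot 2^{-jN} = 2^{-jk}$ in the second, so that the two contributions line up at exactly $2^{j\mu}$ and the summability of the series matches the desired power of $h$.
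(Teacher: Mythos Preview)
Your argument is correct. The route differs from the paper's in organization rather than substance. The paper observes directly that on the support of $1-\chi(\eta'/\tau)$ one has the symbol estimate
\[
\lvert D_{\eta'}^\beta\big((1-\chi(\eta'/\tau))v(x,\eta'/h)\big)\rvert \leq C_\beta\, h^{-\mu}\langle \eta'\rangle^{\mu-|\beta|},
\]
i.e.\ the amplitude is an element of $h^{-\mu}S^{\mu}$ in the \emph{unrescaled} variable $\eta'$. A single integration-by-parts argument (using $\mu<-k$ for integrability) then gives the pointwise bound $|V_1(x)|\leq C_N h^{-\mu-k}\langle x'/h\rangle^{-N}$, and one integration in $x'$ with the change of variables $x'\mapsto x'/h$ yields $\mathcal{O}(h^{-\mu})$. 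Your dyadic decomposition in $\zeta'=\eta'/h$ reproduces exactly this pointwise bound after summing the pieces (indeed $\sum_{j\geq j_0}\min(2^{j(\mu+k)},|x'|^{-N}2^{j(\mu+k-N)})\sim h^{-\mu-k}\langle x'/h\rangle^{-N}$), so the two arguments are equivalent; the paper's version is shorter because it absorbs the dyadic balance into a single symbol estimate, while your version has the advantage of making the parallel with Lemma~\ref{lem:conormalintegrable} explicit.
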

      \begin{proof}
        Recall that
        \[
V_1 (x) = (2\pi h)^{-k} \int e^{\sfrac i h \langle x', \eta' \rangle}(1-\chi(\eta'/\tau)) v(x,\eta'/h) \, d\eta'.
        \]
Owing to the support properties of $\chi$ we have the symbol estimate \[\lvert D_{\eta'}^\beta (1-\chi(\eta'/\tau))
       v(x'',\eta'/h) \rvert \leq C_\beta h^{-\mu} \langle \eta'
       \rangle^{\mu-|\beta|}\]
       for all multiindices $\beta$, where $C_\beta$ depends on $\tau$ as well. Repeated integration by parts shows that
       \[
       |V_1(x)| \leq  C_N h^{-\mu-k} \langle x'/h \rangle^{-N} 
       \]
which implies the desired estimate by integration and
change of variables.
      \end{proof}

  Fix $A\in \Psi^{\COMP}_h(\RR^n)$ with compact support in $\mathcal{U}$, which will later play the role of the commutant in a positive commutator argument. Write $\Lambda_0 = N^*(\{x'=0\} \cap \diag)$ and $\Lambda_1 =
N^*\diag$. According to the proof of Lemma \ref{lem:transversemult}
(see Remark~\ref{rem:oneterm}),
\[\label{kernelsofcompositions}
K_{V_0A}, \,K_{AV_0} \in I^{\mu+k/2,\COMP}_h(\RR^{2n}, \Lambda_0, \Lambda_1).
\]
The kernel of $A$ has wavefront set a compact subset of $(O \times O')
\cap N^*\diag$, where $O$ is open in $T^*X$ (with the usual notation
$O' = \{(x,-\xi): (x,\xi) \in O\}$.) As noted in Remark~\ref{rem:smallsupport}, by taking $\tau>0$ sufficiently small in
\eqref{eq:V0potential}, we can arrange that the kernels satisfy 
\begin{equation} \label{eq:commutatorWF}
\WF(K_{V_0 A}) \cup \WF(K_{A V_0}) \subset O \times O'.
\end{equation}
This is therefore true of the commutator $[A, V_0]$ as well. We also need to compute the principal symbol of $[A,V_0]$ along $N^*\diag$. A priori, 
\[
K_{[A,V_0]} \in I^{\mu+k/2,\COMP}_h(\RR^{2n}, \Lambda_0, \Lambda_1),
\]
but of course the principal symbol of $[A,V_0]$ along $N^*\diag$ vanishes, so in fact
\[
K_{[A,V_0]} \in hI^{\mu+k/2+1,\COMP}_h(\RR^{2n}, \Lambda_0, \Lambda_1).
\]
To compute the principal symbol of $[A,V_0]$, it is easiest to use the change of variables formulas from Section \ref{subsect:coordinateinvariance}. 
\begin{lem}	
	With $a = \sigma_h(A)$, the principal symbol of $(i/h)[A,V_0]$ along $\Lambda_1$ is $\hamvf_{a}V_0$ in $S^{\mu+k/2+1}_{\Lambda_1}/hS^{\mu+k/2+2}_{\Lambda_1}$
\end{lem}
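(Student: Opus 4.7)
The plan is to exploit the crucial fact that the cutoff $\chi(\eta'/\tau)$ in \eqref{eq:V0potential} has compact support in $\eta'$, so $V_0$ is actually a smooth function of $x$ (although it grows like $h^{-k-\mu}$ in sup norm). Multiplication by $V_0$ is therefore a smooth $h$-dependent multiplication operator, and the commutator $[A,V_0]$ admits the standard semiclassical expansion obtained by composing $A = \Op_h(a)$ with multiplication by the smooth function $V_0$:
\[
[A,V_0] = \tfrac{h}{i}\Op_h\!\bigl(\{a,V_0\}\bigr) + h^{2} R_{2},
\]
where $R_{2}$ collects the higher-order terms of the Taylor expansion of the amplitude (cf.\ the expansion furnished by Lemma~\ref{lem:gauss}). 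Since $V_0$ does not depend on $\xi$, the Poisson bracket reduces to $\{a,V_0\}=\hamvf_{a}V_0 = (\partial_{\xi}a)(\partial_{x}V_0)$.

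The next step is to reinterpret both $\Op_h(\hamvf_{a} V_0)$ and $R_{2}$ as paired Lagrangian operators with kernels in $I^{\bullet,\COMP}_{h}(\RR^{2n};\Lambda_{0},\Lambda_{1})$ of the correct order. The key point is that every $x'$-derivative of $V_0$ raises the $\Lambda_{1}$-symbol order by one (through bringing down a factor of $\eta'/h$ inside the oscillatory integral representation of $V_0$), while $x''$-derivatives preserve it; consequently $\hamvf_{a}V_0$ represents a class in $S^{\mu+k/2+1,\COMP}_{\Lambda_{1}}$, and $\Op_h(\hamvf_{a}V_0)$ has kernel in $I^{\mu+k/2+1,\COMP}_{h}(\RR^{2n};\Lambda_{0},\Lambda_{1})$. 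An analogous bookkeeping shows that $R_{2}$ has kernel in $I^{\mu+k/2+2,\COMP}_{h}$. Dividing by $h/i$ gives
\[
\tfrac{i}{h}[A,V_0] = \Op_h(\hamvf_{a}V_0) + h R,
\]
with $R$ of kernel in $I^{\mu+k/2+2,\COMP}_{h}$, and the formula for $\sigma_h^{\Lambda_{1}}$ recorded in Section~\ref{subsect:coordinateinvariance} then yields the identification $\sigma_h^{\Lambda_{1}}\bigl((i/h)[A,V_0]\bigr) = \hamvf_{a}V_0$ modulo $hS^{\mu+k/2+2}_{\Lambda_{1}}$.

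The main obstacle is that the standard commutator expansion is normally justified for symbols of controlled $h$-dependence, whereas here $\|V_0\|_{L^{\infty}}=\mathcal{O}(h^{-k-\mu})$. Rather than applying the expansion to $V_0$ as a standard symbol, I would carry it out directly at the oscillatory integral level: write $K_{V_0 A}$ in the left-quantized paired Lagrangian form of Section~\ref{subsect:pseudosingularsymbol} with amplitude $a(x,\xi)\chi(\eta'/\tau)v(x,\eta'/h)$, bring $K_{AV_0}$ into the same canonical form by the shift $\xi \mapsto \xi + (\eta',0)$ followed by a Taylor expansion in $y - x$, and then appeal to Lemma~\ref{lem:gauss} to collapse the amplitude to $x'=0$ modulo controlled lower-order errors. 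The zeroth-order terms cancel upon subtraction, the first-order correction is precisely $(h/i)\hamvf_{a}V_0$, and the remainders fall into $h^{2}S^{\mu+k/2+2,\COMP}_{\Lambda_{1}}$ by direct inspection of the resulting symbol bounds.
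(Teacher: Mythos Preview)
Your proposal is correct, and the detailed execution you sketch in the third paragraph is essentially what the paper does, though the framing differs.  The paper bypasses the standard commutator expansion entirely: it writes $K_{AV_0}$ with the extra phase variable attached to $y'$ rather than $x'$, then invokes the coordinate-change formula \eqref{transformed} from Section~\ref{subsect:coordinateinvariance} for the map $(x',z,x'')\mapsto(x'-z',z,x'')$ (where $z=x-y$) to bring it into canonical form.  Expanding the Gauss transform to second order produces exactly the three terms $a\,b$, $\langle\eta',\partial_{\xi'}a\rangle b$, and $-ih\langle\partial_{\xi''}a,\partial_{x''}b\rangle$; subtracting the symbol of $V_0A$ (which is just $a\,b$ in the same coordinates) and integrating by parts in $\eta'$ gives $\hamvf_a V_0$.

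Your first two paragraphs present this as the standard Poisson-bracket commutator formula, which is a more transparent motivation but requires the bookkeeping you describe: each $x'$-derivative of $V_0$ costs an order in the paired Lagrangian filtration but is paid for by the accompanying power of $h$ in the Taylor expansion.  The paper's route avoids ever treating $V_0$ as an $h$-dependent symbol in the ordinary calculus, staying inside the paired Lagrangian machinery throughout; your route makes the connection to the familiar $\{a,V_0\}$ explicit at the cost of that extra verification.  Both land in the same place.
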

\begin{proof}
	Set $b(y'',\eta') = e^{-ih \langle D_{y'}, D_{\eta'} \rangle }v(y,\eta'/h)\chi(\eta')|_{y'=0}$, so the kernel of $AV_0$ is
	\[
	K_{AV_0} (x,y)= (2\pi h)^{-n-k}\int e^{\sfrac i h (\langle y',\eta' \rangle + \langle x - y ,\xi \rangle) } a(x,\xi){b}(y'',\eta')\, d\eta' d\xi,
	\]
	where without loss we can assume that $a$ is the total left symbol of $A$. To put this in the framework of Section \ref{subsect:coordinateinvariance}, set $z=x-y$, so that in terms of coordinates $(y',z,x'')$,
	\[
	K_{AV_0} (x,y)= (2\pi h)^{-n-k}\int e^{\sfrac i h (\langle y',\eta' \rangle + \langle z ,\xi \rangle) } a(y'+z',x'',\xi){b}(x''-z'',\eta') \, d\eta' d\xi.
	\]
	It remains to express this in terms of coordinates
        $(x',z,x'')$, namely we pull back by the map $(x',z,x'')
        \mapsto (x'-z',z,x'')$. By \eqref{transformed}, the symbol of this pullback is
	\begin{multline*}
	e^{-ih\langle D_{z}, D_{\xi} \rangle} \big( e^{-\sfrac i h \langle z',\eta'\rangle}a(x',x'',\xi)b(x''-z'',\eta')\big) |_{z=0} \\ = a(x',x'',\xi)b(x'',\eta') + \langle \eta', \partial_{\xi'}a(x',x'',\xi'')\rangle b(x'',\eta') \\ - ih \langle \partial_{\xi''} a(x',x'',\xi''), \partial_{x''} b (x'',\eta')\rangle + hS^{\mu + k/2 + 2,\COMP}_{\Lambda_1}.
	\end{multline*}
	In the same $(x',z,x'')$ coordinates, the total symbol of $V_0A$ along $\Lambda_1$ is 
	\[
	a(x',x'',\xi) b(x'',\eta').
	\]
	Subtracting this second expression from the first, we obtain
        the desired result (after integration by parts in $\eta'$).
\end{proof}

\begin{rem} \label{rem:zeroorder}
 If $Q \in \Psi_h^0(\RR^n)$, then the kernel of $[Q,V_0]$ is not strictly part of the paired Lagrangian calculus developed in the previous sections; we will need to consider such an operator in Lemma \ref{lem:ellipticthresholdestimate} below. We therefore record two facts that remain true for $[Q,V_0]$.

First, let $q(x,\xi)$ be the total left symbol of $Q$. Arguing as in Section \ref{subsect:coordinateinvariance}, it follows that $K_{[Q,V_0]}$ can be written in the form \eqref{eq:singularsymbolL2bounded}, with amplitude
\[
e^{-ih \langle D_{z''}, D_{\xi''} \rangle } \left( q(x,\xi'+\eta',\xi'')b(x''-z'',\eta') - q(x,\xi',\xi'')b(x'',\eta') \right)|_{z''=0}.
\]
Taylor expanding $q(x,\xi'+\eta',\xi'')$ about $(x,\xi',\xi'')$ and integrating by parts in $\eta'$ shows that  the kernel of $h^{-1}[Q,W_1]$ can be written in the form \eqref{eq:singularsymbolL2bounded}, with an amplitude $a(x'',\eta',\xi',\xi'')$ that is compactly supported in $\eta'$ and satisfies
\[
|D_{x''}^\alpha D_{\xi''}^\beta a(x'',\eta',\xi',\xi'')| \leq C_{\alpha\beta}\left< \eta'/h \right>^{\mu + 1}\left<\xi''\right>^{-|\beta|}.
\]
According to Remark \ref{rem:SSweakerbounded}, if $\mu < -k$ then this implies that for some $\gamma \in (0,1/2]$,
\[
\|[Q,V_1]\|_{L^2 \rightarrow L^2} = \mathcal{O}(h^{2\gamma} ).
\]
Secondly, let $O$ be an open neighborhood of $\WF(Q)$ in $\OL{T^*}X$. Taking $\tau > 0$ sufficiently small in \eqref{eq:V0potential}, we can still arrange that 
\[
\WF(K_{[Q,V_0]}) \subset O \times O',
\]
as in \eqref{eq:commutatorWF}. The point here is that this is true even when $Q$ does not have compact microsupport. 
\end{rem}

As for the residual term $V_1$, we have
\begin{equation}\label{eq:AV1}
\begin{aligned}
K_{AV_1} &\in h^{-\mu}I^{\mu -n/2 + k/2,-\infty}_{\hmg,c}(\RR^{2n}, \Lambda_0,N^*\{y'=0\}), \\
K_{V_1A} &\in h^{-\mu}I^{\mu -n/2 + k/2,-\infty}_{\hmg,c}(\RR^{2n}, \Lambda_0,N^*\{x'=0\}).
\end{aligned}
\end{equation}
Observe that there is no gain in the commutator $[A,V_1]$ in terms of
powers of $h$ \blue{(or order of singularity) over $AV_1$ or $V_1A$: we will
simply estimate the summands in the commutator separately.}

\begin{lem} \label{lem:AVbound}
	Let $A \in \Psi^\COMP_h(\RR^n)$. If $\mu < -k/2$,  and $T \in \CcI(\RR^k; \Psi^\COMP_h(\RR^{n-k}))$ satisfies
	\[
	(x,\xi) \in \WF(A) \Longrightarrow (x,\xi'') \in \ELL(T),
	\]
	then
	\[
	\| AV_1 u \|_{L^2} + \| V_1 A u\|_{L^2} \leq C\|Tu\|_{L^2} + \mathcal{O}(h^\infty)\| u \|_{L^2}.
	\]
\end{lem}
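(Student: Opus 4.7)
By a duality argument---noting that $V_1^* = V_1$ and that the hypothesis on $T$ transfers to $T^*$ (which is elliptic on the same tangential set)---it suffices to establish the estimate for $\|AV_1 u\|_{L^2}$; the bound on $\|V_1 A u\|_{L^2}$ follows by pairing with an $L^2$ test function and applying the first bound to the adjoint. Since $\WF(A)$ is compact, so is the projection $K := \{(x,\xi'') : (x,\xi) \in \WF(A)\}$, and by hypothesis $K \subset \ELL(T)$. Using the standard elliptic parametrix construction within the tangential calculus $\CcI(\RR^k; \Psi_h^\COMP(\RR^{n-k}))$, I would construct $G$ in this class together with a tangential cutoff $\chi$ that is microlocally the identity on $K$ with $\WF(\chi) \subset \ELL(T)$, satisfying $GT = \chi + R$ with $\WF(R) \cap K = \emptyset$ modulo $h^\infty$. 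This yields the decomposition
\[
AV_1 u = (AV_1 G)\,Tu - (AV_1 R)\,u + (AV_1(I - \chi))\,u.
\]

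By \eqref{eq:AV1}, the kernel $K_{AV_1}$ lies in the paired Lagrangian class $h^{-\mu} I^{\mu - n/2 + k/2,-\infty}_{\hmg, c}(\RR^{2n};\Lambda_0, N^*\{y'=0\})$, and its amplitude, in the parametrization \eqref{eq:badparametrization}, has essential support (in the $(x,\xi'')$ variables) contained in an arbitrarily small neighborhood of $K$, via the same mechanism as in Remark \ref{rem:smallsupport}. Since the tangential wavefront sets of $R$ and $I - \chi$ are both disjoint from $K$, Lemma \ref{lem:operatorvaluedcomposition} applies to give
\[
\|AV_1 R\|_{L^2 \to L^2} + \|AV_1(I - \chi)\|_{L^2 \to L^2} = \mathcal{O}(h^\infty),
\]
so these two contributions are absorbed into the $\mathcal{O}(h^\infty)\|u\|_{L^2}$ error.

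It remains to establish $\|AV_1 G\|_{L^2 \to L^2} \leq C$ uniformly in $h$, which is the main obstacle. The compact microsupport of $G$ in $\xi''$ effectively compactifies the fiber variable $\xi''$ in the paired Lagrangian representation of $AV_1$, placing $K_{AV_1 G}$ in a compactly microsupported paired Lagrangian class $h^{-\mu} I^{\mu + k/2, \COMP}_h(\RR^{2n};\Lambda_0, N^*\diag)$ analogous to the one computed for $AV_0$ preceding \eqref{eq:commutatorWF}; the smoothness of $V_1$ in $x''$ ensures that the composition with the tangential operator $G$ produces no further loss. Under the hypothesis $\mu < -k/2$, this order is strictly negative, so Lemma \ref{lem:singularsymbolL2bounded} furnishes a bound of the required shape, with the prefactor $h^{-\mu}$ compensated by the additional $h$-gain from the compactified fiber (which effectively subtracts $\mu+k$ from the singular symbol order by the same argument as in Remark \ref{rem:zeroorder}). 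The careful bookkeeping of these $h$-powers, together with verifying that the ellipticity assumption on $T$ is used precisely to cap $\xi''$ in the range where the singular symbol estimate can be applied uniformly, is the key technical step.
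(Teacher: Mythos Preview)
There is a genuine gap in your step~5. You claim that composing $AV_1$ on the right with the tangential operator $G$ (which has compact microsupport in $\xi''$) places $K_{AV_1 G}$ in the compactly microlocalized class $h^{-\mu} I^{\mu+k/2,\COMP}_h(\RR^{2n};\Lambda_0,N^*\diag)$. This is not correct. In the parametrization \eqref{eq:badparametrization}, the amplitude of $AV_1$ already has compact support in $(\xi',\xi'')$ because $A\in\Psi^\COMP_h$; the \emph{non-compact} fiber variable is $\zeta'$, coming from the high-frequency part of $V_1$, and it is precisely this variable that forces the second Lagrangian to be $N^*\{y'=0\}$ rather than $N^*\diag$. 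Composition with a tangential operator touches only the $(x'',\xi'')$ variables and does nothing to $\zeta'$, so $K_{AV_1 G}$ remains in the homogeneous class with $\Lambda_1 = N^*\{y'=0\}$. Consequently Lemma~\ref{lem:singularsymbolL2bounded} does not apply, and your $h$-power bookkeeping (which is already left vague) has no basis.

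Your duality reduction in step~1 is also problematic. From $\|A^*V_1 v\|\le C\|Tv\|+\mathcal O(h^\infty)\|v\|$ one only obtains $|\langle u,A^*V_1 v\rangle|\le C\|u\|(\|Tv\|+\mathcal O(h^\infty)\|v\|)$, which after taking the supremum over $\|v\|=1$ yields $\|V_1 A u\|\le C\|u\|$, not the desired bound by $\|Tu\|$.

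The paper proceeds differently and more directly: it first reduces to a particular $T=\psi\Op_h(t)\psi$, then chooses $T_0$ with the same properties but $\WF(T_0)\subset\ELL(T)$ and applies Lemma~\ref{lem:operatorvaluedcomposition} with $Q=1-T_0$ to both $AV_1$ and $V_1 A$, replacing $u$ by $T_0 u$ modulo $\mathcal O(h^\infty)\|u\|_{L^2}$. The resulting terms $V_1 A T_0 u$ and $AV_1 T_0 u$ are then bounded via Lemma~\ref{lem:badL2bounded} (with a commutation of $V_1$ past $T_0$ for the second, using tangential smoothness of $V$), estimating everything by $\|T u\|_{L^2}$. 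No attempt is made to move the kernel into the compactly microlocalized paired Lagrangian class.
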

\begin{proof}
	Choose $\psi \in \CcI(\RR^n)$ such that $A\psi = \psi A = A$, and let $t \in \CcI(\RR^{n-k})$ be such that $t(\xi'') = 1$ on $\{\xi'': (x,\xi) \in \WF(A)\}$. It suffices to prove the lemma with
	\[
	T = \psi \Op_h(t) \psi.
	\] 
	To do this, we apply Lemma \ref{lem:operatorvaluedcomposition}
        with $Q = 1-T_0,$ with $T_0$ satisfying the same properties as
        $T$ but microsupported in the elliptic set of $T$. This allows
        us to replace $u$ with $T_0 u$ modulo $\mathcal{O}(h^\infty)\|u\|_{L^2}$ errors. We apply
        Lemma~\ref{lem:badL2bounded} to bound $V_1 A T_0 u,$ while the
        $A V_1 T_0 u$ term is bounded similarly, following commutation
        of $V_1$ with $T_0;$ by tangential smoothness of $V,$ this
        yields an error term in the calculus
        $\CcI(\RR^k; \Psi^\COMP_h(\RR^{n-k}))$ which can be estimated
        by $\| T u\|_{L^2}$ where $T$ is elliptic on $\WF(T_0)$.
\end{proof}

We will also need a slightly more refined decomposition of $V_0$ itself. With $\chi$ as in \eqref{eq:V0potential}, write $V_0 = W_0 + W_1$, where
\begin{align} \label{eq:W0potential}
W_0 &= (2\pi h)^{-k} \int e^{\sfrac i h \langle x', \eta' \rangle}\chi(\tilde\tau \eta/h) v(x,\eta'/h) \, d\eta' \notag \\ &=  (2\pi)^{-k}\int e^{i \langle x', \eta' \rangle}\chi(\tilde\tau\eta) v(x,\eta') \, d\eta',
\end{align}
and $\tilde\tau > 0$ is a parameter. The point of this decomposition is that for $\mu + |\alpha| < -k$,
\begin{equation} \label{eq:W1decays}
D^\alpha_{x',x''} W_1 \rightarrow 0 \text{ uniformly as } \tilde\tau \rightarrow 0,
\end{equation}
whereas $W_0$ is smooth and independent of $h$. Also observe that the paired Lagrangian properties of $AV_0$ and $V_0A$ described above also apply to $AW_1$ and $W_1A$.

\subsection{Elliptic estimates} \label{subsect:ellipticthreshold}
We prove an elliptic estimate for $P = -h^2\Delta_g + V$ involving ordinary semiclassical wavefront set. Although everything in this section applies to arbitrary codimension, for simplicity we restrict to codimension one; thus we assume that $V \in I^{[-1-\alpha]}(Y)$, where $\alpha > 0$.

Since we are ultimately interested in $L^2$ based wavefront set, the estimates we give are quite crude in terms of Sobolev regularity.

\begin{prop} \label{prop:ellipticthreshold}
	Let $\alpha > 0$ and $s \leq \alpha +r$, where $s,r \in \RR \cup\{+\infty\}$. Suppose that $u$ is $h$-tempered in $H^1_h(X)$. If $\WF^r(u) = \emptyset$, then 
	\[
	\WF^{1,s}(u) \subset \Sigma \cup \WF^{-1,s}(Pu).
	\]	
\end{prop}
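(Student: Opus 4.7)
The statement is an elliptic regularity result outside the ordinary (non-compressed) characteristic set $\Sigma$, with the order threshold $s \leq r + \alpha$ reflecting the finite Hölder regularity of $V$. I expect the proof to split according to the location of the point $q_0 \in \overline{T^*}X \setminus (\Sigma \cup \WF^{-1,s}(Pu))$: when $q_0 \notin T^*_Y X$, $V$ is smooth microlocally and the conclusion reduces to standard semiclassical elliptic regularity with no restriction on $s$. The content lies at $q_0 \in T^*_Y X$ with $p(q_0) \neq 0$.

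Working in normal coordinates near $q_0 = (0,y_0,\xi_0,\eta_0)$, I would freeze the value of the potential to form the model operator $P_0 := -h^2 \Delta_g + V(y_0)$. Since $V(y_0)$ is constant and $\sigma(P_0)(q_0) = p(q_0) \neq 0$, the operator $P_0$ is smooth and semiclassically elliptic at $q_0$. Constructing a standard microlocal semiclassical parametrix $G$ for $P_0$ gives $G P_0 = A + R$ with $A \in \Psi^{\COMP}_h$ elliptic at $q_0$ and $R \in h^\infty \Psi^{-\infty}_h$. Since $P_0 = P - (V - V(y_0))$, this yields the identity
\[ A u = G P u - G (V - V(y_0)) u + R u, \]
so the entire task reduces to controlling the middle term $\| G (V - V(y_0)) u \|_{H^1_h}$.

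The term $V - V(y_0)$ still lies in $I^{[-1-\alpha]}(Y)$, and I would decompose it as $V_0 + V_1$ following Section \ref{subsect:potentialdecomp}, with $V_0$ microlocalized in a small conic neighborhood of $N^*Y$ at fiber-infinity and $V_1$ the complementary piece. The Schwartz kernel of $G V_0$ lies in the semiclassical paired Lagrangian class $I_h^{-1-\alpha + 1/2, \COMP}(X\times X; N^*((X\times Y)\cap \diag), N^*\diag)$ by the composition calculations in Section \ref{subsect:pseudosingularsymbol}, and since $-1-\alpha + 1/2 < -1/2$, Lemma~\ref{lem:singularsymbolL2bounded} yields an $L^2 \to L^2$ bound whose $h$-rate improves with $\alpha$. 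Inserting the hypothesis $\WF^r(u) = \emptyset$ via a microlocal cutoff elliptic on the microsupport of $V_0$ converts this operator bound into an $h^{r+\alpha}$ gain on $\| G V_0 u \|_{L^2}$. The complementary term $G V_1 u$ I would bound using the homogeneous paired Lagrangian calculus of Section~\ref{sec:paired} (in particular Lemmas~\ref{lem:badL2bounded} and~\ref{lem:improvedpairingbound}, together with the mixed-norm observation \eqref{LinfinityV1}--Lemma~\ref{lem:V1mixednorm}), again benefiting from the background regularity.

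The main obstacle I anticipate is extracting the full gain $\alpha$ rather than some $\alpha' < \alpha$: Lemma~\ref{lem:singularsymbolL2bounded} gives optimal $L^2$ bounds only with a strict inequality on the order, so a single application yields improvement $\alpha - \varepsilon$, and I would need to bootstrap through an iterative elliptic argument where, at each step, the regularity gained in the previous step is fed back as improved background regularity, and the geometric series in $\varepsilon$ recovers the sharp threshold $r + \alpha$. A secondary technical point is the handling of zero-section points $q_0 = (0, y_0, 0, 0)$: here $P_0$ is still elliptic microlocally via $V(y_0) \neq 0$, but one must check that the parametrix construction and the paired Lagrangian mapping estimates remain uniform in the bounded fiber region.
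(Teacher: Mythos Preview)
Your handling of the low-frequency piece $V_0$ contains the main gap. You claim that since $K_{GV_0}$ lies in $I_h^{-\alpha-1/2,\COMP}$ with $-\alpha-1/2 < -1/2$, Lemma~\ref{lem:singularsymbolL2bounded} yields an $L^2\to L^2$ bound ``whose $h$-rate improves with $\alpha$''. It does not: that lemma gives $\|\cdot\|_{L^2\to L^2}\leq Ch^{-s}$ only for $s\geq 0$, so the best available conclusion is uniform boundedness, never a positive power of $h$. Consequently $\|GV_0 u\|_{L^2}$ is only $O(h^r)$, not $O(h^{r+\alpha})$, and the scheme produces no improvement over the background regularity; iteration cannot rescue this, since each pass returns the same $O(h^r)$ contribution.

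This is precisely the obstruction the paper works around by a \emph{second} decomposition $V_0 = W_0 + W_1$ (see~\eqref{eq:W0potential}). The piece $W_0$ is a genuinely smooth, $h$-independent approximation to $V$, and it---not a frozen constant $V(0,y_0)$---is absorbed into the elliptic model operator $P_{W_0}$. The residual $W_1$ is then made small in $L^\infty$ by shrinking the auxiliary parameter $\tilde\tau$ (see~\eqref{eq:W1decays}), so that $\|W_1 Qu\|\leq \varepsilon\|Qu\|$ can be absorbed on the left; only the commutator $[Q,W_1]$ survives, and for \emph{that} term the paired Lagrangian calculus does yield a positive power $h^{2\gamma}$ (Remark~\ref{rem:zeroorder}), because the commutator sits one order lower. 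The sharp threshold $h^\alpha$ itself comes entirely from the crude bound $\|V_1\|_{L^\infty}=O(h^\alpha)$, which you identified correctly. Your frozen-coefficient idea could in principle be salvaged along the same lines---using the H\"older smallness of $V-V(0,y_0)$ on the spatial support of $A$ to absorb the multiplication and treating the commutator separately---but as written the argument does not close.
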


Recall that the notation $\WF^{k,s} (u)$ for ordinary semiclassical wavefront set relative to $H^k_h(X)$ was introduced in Definition~\ref{def:sobolevWF}.

Proposition \ref{prop:ellipticthreshold} follows from the quantitative estimate in Lemma \ref{lem:ellipticthresholdestimate} below; since stronger results are true away from $\OL{T^*_Y} X$, for the proof we assume that all operators have compact support in a coordinate patch $\mathcal{U}$ about $Y$.

\blue{We now obtain a semiclassical elliptic estimate. In contrast to Proposition~\ref{prop:elliptic}, this
  estimate concerns ordinary, rather
  than b-pseudodifferential operators.  Since the operators in
  question 
  do not respect the interface $Y,$ the resulting estimate has an
  $\alpha$-dependent loss on the right side.}
\begin{lem} \label{lem:ellipticthresholdestimate}
	If $A,G \in \Psi^0_h$ satisfy $\WF(A) \subset \ELL(G) \cap \ELL(P)$, then
	\[
	\| Au \|_{H^1_h} \leq C \|GPu\|_{H^{-1}_h} + Ch^\alpha \| u \|_{L^2} +  \mathcal{O}(h^\infty)\|u \|_{H^1_h}
	\]  
	for each $u \in H^1_h(X)$.
\end{lem}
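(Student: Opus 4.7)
The strategy is to combine a standard semiclassical elliptic parametrix construction with the decomposition $V = V_0 + V_1$ from Section \ref{subsect:potentialdecomp}; the pointwise bound $\|V_1\|_{L^\infty} = \mathcal{O}(h^\alpha)$ coming from \eqref{LinfinityV1} (applied with $\mu = -1-\alpha$ and $k=1$) is precisely what produces the $h^\alpha \|u\|_{L^2}$ contribution in the estimate.

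Set $P_0 = -h^2\Delta_g + V_0$ so that $P = P_0 + V_1$. Fixing $\tau > 0$ sufficiently small ensures that $V_0$ is close enough to $V$ on a neighborhood of $\WF(A)$ that the semiclassical principal symbol $p_0 = |\xi|_g^2 + V_0$ remains nonvanishing on $\WF(A) \subset \ELL(P) \cap \ELL(G)$. I would then invoke the standard semiclassical parametrix construction to produce $B \in \Psi_h^{-2}(X)$, microsupported in $\ELL(P) \cap \ELL(G)$, with
\[
B P_0 = A + R,
\]
where $R$ is smoothing of infinite order and $\mathcal{O}(h^\infty)$-small between any fixed pair of Sobolev spaces. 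Although $V_0$ is $h$-dependent with normal derivatives $\partial_{x'}^\beta V_0 = \mathcal{O}(h^{-|\beta|})$, each Moyal-type correction in the symbol iteration contributes an explicit factor of $h$ that compensates exactly for the derivative loss, so the construction closes.

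With the parametrix in hand, I rewrite $P_0 = P - V_1$ to obtain $Au = BPu - BV_1 u - Ru$, and estimate each summand in $H^1_h$ separately. For $BPu$, the bound $\|BPu\|_{H^1_h} \leq C\|GPu\|_{H^{-1}_h} + \mathcal{O}(h^\infty)\|u\|_{H^1_h}$ follows from $B \in \Psi_h^{-2}: H^{-1}_h \to H^1_h$ together with a microlocal parametrix of $G$ on $\WF(B) \subset \ELL(G)$, noting $\|Pu\|_{H^{-1}_h} \leq C\|u\|_{H^1_h}$. For $BV_1 u$, the boundedness $B: L^2 \to H^1_h$ combined with $V_1: L^2 \to L^2$ of norm $\mathcal{O}(h^\alpha)$ gives $\|BV_1 u\|_{H^1_h} \leq C\|V_1\|_{L^\infty}\|u\|_{L^2} \leq Ch^\alpha \|u\|_{L^2}$. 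Finally, $\|Ru\|_{H^1_h} = \mathcal{O}(h^\infty)\|u\|_{H^1_h}$ since $R$ is smoothing and $u$ is $h$-tempered. Summing the three bounds yields the lemma.

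The main obstacle is the parametrix construction for $P_0$: the $h$-dependent coefficient $V_0$ does not lie in a standard uniform semiclassical symbol class, and one must verify that the iterated symbol expansion closes in a class of $h$-dependent symbols rich enough to support composition and elliptic inversion uniformly in $h$. Once that bookkeeping is done, the remainder of the argument is a routine application of Sobolev mapping properties.
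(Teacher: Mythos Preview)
Your overall architecture—decompose $V$, build a parametrix for the principal piece, and absorb the remainder via $\|V_1\|_{L^\infty}=\mathcal{O}(h^\alpha)$—matches the paper's. The gap is in the parametrix step itself.

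You propose an infinite-order semiclassical parametrix for $P_0=-h^2\Delta_g+V_0$, but $V_0$ is not a uniform semiclassical symbol: one has $\partial_{x'}^j V_0 = \mathcal{O}(h^{-(j-\alpha)_+})$, so already the leading parametrix symbol $a/p_0$ fails to lie in $S^{-2}(T^*X)$ uniformly in $h$ once $\alpha<1$. More to the point, your own observation that the $h^j$ in the Moyal expansion ``compensates exactly'' for the derivative growth is the problem rather than the solution: exact compensation means successive correction terms are all of the same size, so the remainder after $N$ steps is merely bounded, not $\mathcal{O}(h^N)$. You therefore cannot obtain $R=\mathcal{O}(h^\infty)$ this way, and the argument as written does not close.

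The paper avoids this by the further splitting $V_0 = W_0 + W_1$ of \eqref{eq:W0potential}: $W_0$ is smooth and \emph{independent of $h$}, so $P_{W_0}=-h^2\Delta_g+W_0$ admits a standard first-order parametrix $A = ZQP_{W_0} + hF$ with $Q\in\Psi^0_h$. Writing $P_{W_0} = P - W_1 - V_1$, one then treats $W_1$ by choosing the parameter $\tilde\tau$ so small that $\|W_1\|_{L^\infty}\leq\varepsilon$, making $\|W_1 Qu\|_{L^2}$ absorbable on the left, while the commutator $[Q,W_1]$ gains a positive power $h^{2\gamma}$ via the paired-Lagrangian bounds of Remark~\ref{rem:zeroorder}. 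The $V_1$ piece is handled exactly as you suggest, by \eqref{LinfinityV1}. Finally, the resulting estimate still carries an intermediate $h^{\min(1/2,\gamma)}\|Gu\|_{L^2}$ term; this is removed by iterating the first-order estimate, not by building a parametrix to infinite order.
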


\begin{proof} 
	The proof makes use of the decomposition $V= W_0 + W_1 + V_1$
        described in Section \ref{subsect:potentialdecomp}.  Let
        $P_{W_0}=-h^2\Delta_g+W_0$ and let $p_{W_0}$ denote its
        principal symbol. Note that
	\[
	\left<\zeta\right>^{-2} p \neq 0  \text{ near } \WF(A),
	\] 
	where we have written $\zeta = (\xi,\eta)$. If $\tilde\tau_0>0$ is sufficiently small  (where $\tilde\tau$ is the parameter appearing in \eqref{eq:W0potential}), then there is $c_0 > 0$ such that
	\begin{equation} \label{eq:ellipticboundedbelow}
	\left<\zeta\right>^{-2} |p_{W_0}| > c_0 \text{ near } \WF(A) \text{ for all } \tilde \tau \in (0,\tilde \tau_0).
	\end{equation}
	Let $Z \in \Psi^{-2}_h$ be everywhere elliptic with principal symbol $\left<\zeta\right>^{-2}$, and then set
	\begin{equation} \label{eq:ellipticsymbol}
	q = \left< \zeta \right>^2 \frac{\sigma_h(A)}{\sigma_h(P_{W_0})} \in S^0(T^*X).
	\end{equation}
	If $Q \in \Psi^0$ has principal symbol $q$, then \eqref{eq:ellipticboundedbelow} and \eqref{eq:ellipticsymbol} show that we can take $\WF(Q) \subset \WF(A)$, and that 
	\[
	\| Qu\|_{L^2} \leq C_0 \| Au \|_{L^2} + Ch\| Gu\|_{L^2} + \mathcal{O}(h^\infty) \| u \|_{L^2}
	\]
	where $C_0>0$ is uniform in $\tilde \tau \in (0,\tilde \tau_0)$. Furthermore, we can write
	\[
	A = ZQP_{W_0} + hF, \quad F \in \Psi^{-1}_h,
	\]
	where we may assume that $\WF(F) \subset \ELL(G)$. Now estimate
	\begin{align*}
	\|Au\|_{H^1_h} &\leq \| ZQPW_0 u\|_{H^1_h} + Ch\| F u \|_{H^1_h} \\ 
	&\leq C\| Q(P-W_1-V_1)u \|_{H^{-1}_h} + Ch\|  G u  \|_{L^2} + \mathcal{O}(h^\infty) \| u \|_{L^2}.  
	\end{align*}
Given $\varepsilon>0$, choose $\tilde \tau$ sufficiently small so that $\| W_1\|_{L^\infty} \leq \varepsilon$. This yields
\begin{align*}
\| QW_1 u \|_{L^2} &\leq \| W_1Q u\|_{L^2} + \| [Q,W_1] u \|_{L^2}  \\ &\leq \varepsilon \| Qu \|_{L^2} +  \| [Q,W_1]u \|_{L^2} \\
& \leq C_0\varepsilon \| Au \|_{L^2} + \| [Q,W_1]u \|_{L^2}  + Ch \| G u \|_{L^2} + \mathcal{O}(h^\infty) \| u \|_{L^2}
\end{align*}
We need to bound the $L^2$ norm of $[Q,W_1]u$. If $Q \in
        \Psi^\COMP_h$, then by Remark~\ref{rem:oneterm},
	\[
	K_{[Q,W_1]} \in h I^{-\alpha + 1/2,\COMP}_h(X, N^*((X\times Y) \cap \diag),N^*\diag)
	\]
	and  Lemma \ref{lem:singularsymbolL2bounded} would apply.
	However, since we are merely assuming that $Q \in \Psi^0_h$, the kernel of $[Q,W_1]$ is not strictly part of the paired Lagrangian calculus developed here. 
	
 On the other hand, the proof of Lemma \ref{lem:singularsymbolL2bounded} still applies in this setting, as explained in Remark \ref{rem:zeroorder}. In the notation of latter remark, let $O$ be an open neighborhood of $\WF(G)$ in $\OL{T^*}X$ such that $O \subset \WF(G)$. If $\tau > 0$ is sufficiently small so that $\WF(K_{[Q,W_1]}) \subset O \times O'$, then we can bound
	\[
	\| [Q,W_1]u\|_{L^2} \leq Ch^{2\gamma} \| G u\|_{L^2} + \mathcal{O}(h^\infty) \| u \|_{L^2}.
	\]
	for some $\gamma \in (0,1/2]$. 
	
	In order to bound the final term $\|QV_1 u\|_{L^2}$ simply use the estimate $\| V_1
        \|_{L^\infty} = \mathcal{O}(h^\alpha)$ by \eqref{LinfinityV1};
        hence
        \[
        \|QV_1 u\|_{L^2} \leq Ch^{\alpha}\| u \|_{L^2}.
        \]
	By taking $\varepsilon$ sufficiently small,
	\[
	\| Au \|_{H^1_h} \leq \| QPu \|_{H^{-1}_h} + Ch^\alpha\| u \|_{L^2} + h^{\min(1/2,\gamma)}\|Gu\|_{L^2} + \mathcal{O}(h^\infty)\|u \|_{L^2}.
	\]
	Finally, recall that $G$ is elliptic on $\WF(Q)$, hence $QPu$ can be replaced with $GPu$. The proof is then finished by an iterative argument, increasing the semiclassical regularity by $\min(1/2,\gamma)$ at each step.
\end{proof}

\begin{rem} \label{rem:microlocalizeellipticerror}
  The remainder term $h^\alpha \| u \|_{L^2}$ in Lemma
  \ref{lem:ellipticthresholdestimate} is not microlocalized. On the
  other hand, suppose that $A \in \Psi^\COMP_h$ in Lemma
  \ref{lem:ellipticthresholdestimate}. If
  $T \in \CcI(\RR;\Psi^\COMP_h(\RR^{n-1}))$ satisfies
\[
(x,y,\xi,\eta) \in \WF(A) \Longrightarrow (x,y,\eta) \in \ELL(T),
\]
then we can replace this term by $h^\alpha \|Tu \|_{L^2}$. This
follows since by Lemma~\ref{lem:operatorvaluedcomposition}
  (cf.\ Lemma \ref{lem:AVbound}) we can replace $QV_1 u$ in the proof with $QV_1 Tu$ modulo a $\mathcal{O}(h^\infty)_{L^2\rightarrow L^2}$ remainder.
\end{rem}

\subsection{Ordinary and b-wavefront sets}
We now present two results relating ordinary and b-wavefront sets. The
first allows us to replace microlocalization  by b-pseudodifferential
operators at $T^*Y \subset \bT^*X$ with tangential operators.

\begin{lem} \label{lem:btangentialwavefront}
	Let $\alpha >0$ and $s \in \RR \cup \{+\infty\}$. Suppose that $u$ is $h$-tempered in $H^1_h(\RR^{n})$. If $\WFb^{-1,s}(Pu) = \emptyset$ and $q_0 = (0,y_0,0,\eta_0) \notin \WFb^{1,s}(u)$, then there exists $T \in \CcI(\RR; \Psi^\COMP_h(\RR^{n-1}))$ with  $(0,y_0,\eta_0) \in \ELL(T)$
such that
\[
\| Tu \|_{H^1_h} \leq Ch^s.
\]
\end{lem}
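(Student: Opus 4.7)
The plan is to produce $T$ as a tangential quantization of a product cutoff, and to transfer the b-regularity hypothesized at $q_0$ through comparison with an auxiliary b-pseudodifferential operator that is b-elliptic at $q_0$.

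\emph{Reduction of the hypothesis.} By Lemma \ref{lem:equivalentWF}, since $\WFb^{-1,s}(Pu) = \emptyset$, the condition $q_0 \notin \WFb^{1,s}(u)$ is equivalent to $q_0 \notin \WFb^{s}(u)$. Hence there exists $A \in \Psibhc^0(X,Y)$, elliptic at $q_0$, with $\|Au\|_{L^2}\leq Ch^s$.

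\emph{Construction of $T$.} Let $t(x,y,\eta)=\chi_1(x)\chi_2(y)\chi_3(\eta)$ be a product of smooth cutoffs equal to $1$ at $0$, $y_0$, $\eta_0$ respectively, with supports shrunk so that $\{(x,y,0,\eta):(x,y,\eta)\in \supp t\}\subset \ELLb(A)$. Define the tangential operator $T\in\CcI(\RR;\Psi^\COMP_h(\RR^{n-1}))$ by
\[
Tu(x,y)=(2\pi h)^{-(n-1)}\int e^{\sfrac{i}{h}\langle y-\tilde y,\eta\rangle}\,t(x,y,\eta)\,u(x,\tilde y)\,d\eta\,d\tilde y.
\]
Since $t(0,y_0,\eta_0)=1$, we have $(0,y_0,\eta_0)\in\ELL(T)$ in the tangential sense.

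\emph{Controlling the b-companion.} Introduce $\tilde T=\Opbh(\psi(\sigma)\,t(x,y,\eta))\in\Psibh^{-\infty}(X,Y)$ with $\psi\in\CcI(\RR)$ and $\psi(0)=1$. Then $\tilde T$ is b-elliptic at $q_0$ and $\WFb(\tilde T)\subset\ELLb(A)$. Applying Lemma \ref{lem:L2H1} with $\tilde T$ in place of $A$ and with $G\in\Psibh^0$ elliptic on $\WFb(\tilde T)$ and $\WFb(G)\subset\ELLb(A)$, gives
\[
\|\tilde Tu\|_{H^1_h}\leq C\|GPu\|_{H^{-1}_h}+C\|Gu\|_{L^2}+\mathcal{O}(h^\infty)\|u\|_{H^1_h}.
\]
The first term is $O(h^s)$ because $\WFb(G)\cap\WFb^{-1,s}(Pu)=\emptyset$; the second is $O(h^s)$ by the b-elliptic parametrix \eqref{eq:parametrix} for $A$, which writes $G=QA+R$ with $Q\in\Psibhc^{-\infty}$ (bounded on $L^2$) and $R\in h^\infty\Psibhc^{-\infty}$. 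Hence $\|\tilde Tu\|_{H^1_h}\leq Ch^s$.

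\emph{Comparison of $T$ and $\tilde T$.} A direct computation with the b-quantization formula \eqref{eq:localquantization} shows that replacing $\psi$ by $1$ recovers $T$ exactly, since the $\sigma$-integration produces $2\pi h\,\delta(x-\tilde x)$ and $\phi(1)=1$. The discrepancy $1-\psi(x\sigma)$ vanishes at $x=0$, and Taylor expansion extracts an explicit factor of $x\sigma$; after $\sigma$-integration this yields an operator of norm $\mathcal{O}(h)$ on $H^1_h$ with b-wavefront set still inside $\ELLb(A)$. Iterating in the spirit of the proof of Lemma \ref{lem:L2H1} (successively shrinking the tangential symbol's microsupport while staying inside $\ELLb(A)$) converts the $\mathcal{O}(h)$ remainder into the desired $\mathcal{O}(h^s)$ bound on $\|Tu\|_{H^1_h}$. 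The main obstacle is precisely this comparison: carefully accounting for the interplay between the b-quantization cutoffs $\phi(x/\tilde x)\psi(x\sigma)$ and the tangential delta $\delta(x-\tilde x)$ in a form compatible with iteration.
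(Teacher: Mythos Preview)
Your comparison step contains a genuine gap. The difference $T-\tilde T$ is the b-quantization of $(1-\psi(\sigma))\,t(x,y,\eta)$, which is a b-symbol of order $0$, not an operator that is $\mathcal{O}(h)$ on $H^1_h$. The factor you extract, $x\sigma$, is precisely the b-covariable $\sigma$ in the b-calculus, so it does not gain a power of $h$; it is $O(1)$ on $\bT^*X$. More seriously, your claim that the remainder has b-wavefront set ``still inside $\ELLb(A)$'' is false: since $1-\psi(\sigma)$ is supported where $|\sigma|\geq c>0$, the b-wavefront set of $T-\tilde T$ lies entirely in the region $\{|\sigma|\geq c\}$, whereas $A$ was chosen elliptic only near $q_0=(0,y_0,0,\eta_0)$, i.e.\ near $\sigma=0$. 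Thus no iteration that feeds the remainder back through $A$ can close.

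The missing ingredient is that the region $\{|x|<\delta,\ |\sigma|\geq c\}$ is disjoint from the compressed characteristic set $\cchare$ once $\delta$ is small (since $\sigma=x\xi$ with $\xi$ bounded on $\Sigma$), so b-elliptic regularity applies there. This is exactly what the paper does: it splits $T=TF+T(1-F)$ with $F=\Opbh(\chi(\sigma^2/(C_0\delta)^2\langle\eta\rangle^2))$, controls $TF$ by the b-regularity hypothesis at $q_0$, and then handles $\varphi(1-F)$ (supported where $|\sigma|\geq C_0\delta$) by Lemma~\ref{lem:sigmanotzero}, which gives $\|T(1-F)u\|_{H^1_h}\leq C\|Pu\|_{H^{-1}_h}+\mathcal{O}(h^\infty)\|u\|_{H^1_h}$. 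Your $\tilde T$ plays the role of $TF$, and your argument for it is fine; what you need is to replace the incorrect $\mathcal{O}(h)$ claim on $T-\tilde T$ by an appeal to Lemma~\ref{lem:sigmanotzero} (or Proposition~\ref{prop:elliptic}).
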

\begin{proof}
Let $T \in \CcI((-\delta,\delta); \Psi^\COMP_h(\RR^{n-1}))$ satisfy  $(0,y_0,\eta_0) \in \ELLb(T)$ and 
\[
\WF(T) \subset \{|x| + |y-y_0| + |\eta-\eta_0| < \delta\}
\]
Define $f(x,y,\sigma,\eta) \in \symbb^0(\bT^*\RR^n)$ by
	\[
	f(x,y,\sigma,\eta) = \chi(\sigma^2/((C_0\delta)^2 \left<\eta\right>^2)),
	\]
	where $\chi = \chi(s) \in \CcI(\RR)$ is one for $|s| \leq 1$ and vanishes when $|s| \geq 2$. The parameter $C_0>0$ will be chosen later. Let 
	\[
	F = \Opbh(f) \in \Psibh^0,
	\]
	be properly supported. Since $|\sigma| \leq C\left<\eta \right>$ on $\supp f$, it follows that $TF \in \Psibh^0$, where
	\[
	q_0 \in \ELLb(TF), \quad \WF(TF) \subset \{ |x| + |y-y_0| + |\sigma| + |\eta-\eta_0| < C_1\delta\}
	\]
	for some $C_1>0$. Taking $\delta>0$ sufficiently small implies that $\| TFu\|_{H^1_h} \leq Ch^s$.

	On the other hand, we can write $T = T\varphi$, where $\varphi \in \CcI(\RR^n)$ has $\supp \varphi \subset \{|x| < \delta \}$. Therefore,
	\[
	\| T(1-F)u \|_{H^1_h} = \| T\varphi(1-F)u\|_{H^1_h} \leq C\|\varphi(1-F)u\|_{H^1_h},
	\]
	as $T$ is uniformly bounded on $H^1_h(\RR^n)$.
	Now observe that $\varphi(1-F) \in \Psibh^0$ has compact support in $\{|x| < \delta\}$, and 
	\[
	|\sigma| \geq C_0 \delta \text{ on } \WFb(\varphi(1-F)).
	\] 
	By taking $C_0>0$ sufficiently large, there exists $\delta_0>0$ such that Lemma \ref{lem:sigmanotzero} applies to $\varphi(1-F)$ for $\delta \in (0,\delta_0)$. In particular,
	\[
	\| T(1-F)u \|_{H^1_h} \leq  C\| Pu \|_{H^{-1}_h} + \mathcal{O}(h^\infty)\| u \|_{H^1_h},
	\]
	which completes the proof.
\end{proof} 
Note that the proof (or alternatively the Closed Graph Theorem) in fact yields the quantitative statement
\begin{equation}
  \label{quantitativetangential}
  \|T u\|_{H^1_h} \leq C h^s \|G_\B u \|_{H^1_h} + C\| Pu \|_{H^{-1}_h} + \mathcal{O}(h^\infty)\| u \|_{H^1_h},
\end{equation}
where $G_\B \in \Psibh^0$ is elliptic near $q_0.$

Next, we show by a similar argument that at glancing points (or rather
their preimages in $T^*_YX$), microlocalization by ordinary
semiclassical pseudodifferential operators can be replaced with
microlocalization by tangential operators.

\begin{lem} \label{lem:tangentialwavefront}

	Let $\alpha >0$ and $s \in \RR \cup \{+\infty\}$. Suppose that $u$ is $h$-tempered in $H^1_h(\RR^n)$. Let 
	\[
	\varpi_0 = (0,y_0,0,\eta_0) \in \pi^{-1}(\gl) \cap T^*_Y X
	\]
	If $\WF^{-1,s}(Pu) =\emptyset$ and $\varpi_0 \notin \WF^s(u)$, then there exists $T \in \CI(\RR_x; \Psi^\COMP_h(\RR^{n-1}_y))$ with $(0,y_0,\eta_0) \in \ELL(T)$ such that
	such that
	\[
	\| Tu \|_{H^1_h} \leq Ch^s.
	\]
\end{lem}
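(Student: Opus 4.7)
The approach mirrors Lemma \ref{lem:btangentialwavefront}, but uses an ordinary semiclassical cutoff in $\xi$ in place of a b-cutoff in $\sigma$, and relies on Lemma \ref{lem:ellipticthresholdestimate} (plus iteration) rather than Lemma \ref{lem:sigmanotzero} for the non-trivial piece. From $\varpi_0 \notin \WF^s(u)$ choose $F \in \Psi^\COMP_h(\RR^n)$ elliptic at $\varpi_0$ with $\|Fu\|_{L^2} \leq Ch^s$; since $F$ is compactly microsupported, $\|Fu\|_{H^1_h} \leq Ch^s$ modulo negligible terms. Pick $T \in \CI(\RR_x;\Psi^\COMP_h(\RR^{n-1}_y))$ elliptic at $(0,y_0,\eta_0)$ whose tangential wavefront set is a small neighborhood of $(0,y_0,\eta_0)$, and set $R = \Op_h(\chi(\xi/\delta))$ with $\chi \in \CcI(\RR)$ equal to one near zero. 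Write $T = TR + T(1-R)$.

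For $TR$: taking $\WF_\tau(T)$ and $\delta$ sufficiently small makes $TR \in \Psi^\COMP_h$ with $\WF(TR)$ a compact neighborhood of $\varpi_0$ contained in $\ELL(F)$, so microlocal elliptic regularity gives $\|TRu\|_{H^1_h} \leq C\|Fu\|_{L^2} + \mathcal{O}(h^\infty)\|u\|_{L^2} \leq Ch^s$. For $T(1-R)$: here the key geometric input is the glancing hypothesis $\tilde p(\varpi_0)=0$. By continuity of $\tilde p = k^{ij}\eta_i\eta_j + V$, on a sufficiently small neighborhood of $(0,y_0,\eta_0)$ in $(x,y,\eta)$ we have $|\tilde p|$ arbitrarily small, so combined with $|\xi|\geq \delta/2$ on $\WF(T(1-R))$ this forces $p \geq \delta^2/4$ there. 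Thus $\WF(T(1-R)) \subset \ELL(P)$, and Lemma \ref{lem:ellipticthresholdestimate} yields
\[
\|T(1-R)u\|_{H^1_h} \leq C\|GPu\|_{H^{-1}_h} + Ch^\alpha\|u\|_{L^2} + \mathcal{O}(h^\infty)\|u\|_{H^1_h}
\]
with $G \in \Psi^0_h$ elliptic on $\WF(T(1-R))$, and $\|GPu\|_{H^{-1}_h}=O(h^s)$ by the hypothesis $\WF^{-1,s}(Pu)=\emptyset$.

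The non-microlocalized $Ch^\alpha \|u\|_{L^2}$ term is handled by iteration. Pick a nested family $T_0 \succ T_1 \succ \cdots$ of tangential operators all elliptic at $(0,y_0,\eta_0)$ with $\WF_\tau(T_{k+1}) \Subset \ELL(T_k)$, each satisfying the geometric setup above. To apply Remark \ref{rem:microlocalizeellipticerror} (which requires $\Psi^\COMP_h$) one splits $T_{k+1}(1-R) = T_{k+1}(1-R)\tilde R + T_{k+1}(1-R)(1-\tilde R)$ where $\tilde R$ cuts off $|\xi|\leq K$ for large $K$; the first summand is compactly microsupported, so the remark applies and replaces $Ch^\alpha\|u\|_{L^2}$ by $Ch^\alpha\|T_k u\|_{L^2}$, while the second summand is microsupported at semiclassical fiber infinity where $-h^2\Delta$ dominates the bounded multiplication by $V$, so a standard parametrix gives $\|T_{k+1}(1-R)(1-\tilde R)u\|_{H^1_h} \leq C\|GPu\|_{H^{-1}_h} + \mathcal{O}(h^\infty)\|u\|_{H^1_h}$ with no $h^\alpha$ loss. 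Inductively this yields $\|T_k u\|_{H^1_h} \leq Ch^{\min(s,\,k\alpha - N)}$, where $N$ is the temperedness loss, so after finitely many iterations $T = T_k$ has the desired property; for $s = +\infty$ one obtains $\mathcal{O}(h^\infty)$ bounds for a single $T$ of sufficiently small tangential wavefront set.

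The main obstacle is the non-microlocal character of the $h^\alpha\|u\|_{L^2}$ error in Lemma \ref{lem:ellipticthresholdestimate}, which reflects the conormal (non-smooth) nature of the potential and forces the bootstrap structure above. Microlocalizing this error via Remark \ref{rem:microlocalizeellipticerror}, and in particular verifying the hypotheses on the operator-valued composition $QV_1$ (cf.\ Lemma \ref{lem:AVbound}) at each iterative step while simultaneously separating fiber-infinity contributions, is where the technical bookkeeping concentrates; once these pieces are in place the iteration itself is routine.
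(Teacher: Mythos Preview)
Your argument is correct, but the paper organizes the estimate differently in a way worth knowing. Rather than applying Lemma \ref{lem:ellipticthresholdestimate} to $u$ with $A = T(1-R)$ and then invoking Remark \ref{rem:microlocalizeellipticerror} to microlocalize the $h^\alpha\|u\|_{L^2}$ error, the paper applies the elliptic lemma to the \emph{function} $Tu$: writing $T = T'T + \mathcal{O}(h^\infty)$ with $T'$ a slightly enlarged tangential operator, one has $\|F_1 Tu\|_{H^1_h} \leq \|(F_1 T')(Tu)\|_{H^1_h} + \mathcal{O}(h^\infty)$, and Lemma \ref{lem:ellipticthresholdestimate} applied with $A = F_1 T' \in \Psi^\COMP_h$ to the function $Tu$ produces the already-microlocalized error $Ch^\alpha\|Tu\|_{L^2}$ for free. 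The cost is that one must then bound $\|P(Tu)\|_{H^{-1}_h}$, hence analyze the commutator $[P,T]$; since $T$ is tangential and $V$ is smooth in $y$, this commutator contributes only $Ch\|T'u\|_{L^2}$. The resulting recursion gains $h^{\min(1,\alpha)}$ per step with $T'$ slightly enlarged, versus your $h^\alpha$ gain with $T_k$ slightly shrunk. Both approaches require the same fiber-infinity split (the paper's $f_2$ is your $1-\tilde R$). The paper's trick is a bit cleaner since it sidesteps the operator-valued composition bookkeeping behind Remark \ref{rem:microlocalizeellipticerror}, but your route is equally valid.
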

\begin{proof}

	The proof is similar to that of Lemma \ref{lem:btangentialwavefront}. Let $T \in \CcI((-\delta,\delta); \Psi^\COMP_h(\RR^{n-1}))$ with total left symbol $t = t(x,y,\eta)$ satisfy $(0,y_0,\eta_0) \in \ELLb(T)$ and 
\[
\WF(T) \subset \{|x| + |y-y_0| + |\eta-\eta_0| < \delta\}.
\]
Let $\varphi \in \CcI(\RR^n)$ be such that $\supp \varphi
\subset\{|x| + |y-y_0| < \delta\}$ and $T = \varphi T$.
Because $\varpi_0$ is a glancing point, we know that $\tilde{p}(x_0,y_0,\eta_0) = 0$. Thus for $\delta>0$ sufficiently small,
\[
|\tilde p| \leq C' \delta^{\theta} \text{ on } \supp t.
\]
where $\theta \in (0,1]$ is a H\"older exponent for $V$ (recall that $\alpha > 0$). Define $f(x,y,\xi,\eta) \in S^0(T^*\RR^n)$ by
\[
f(x,y,\xi,\eta) = \chi(\xi^2/(C_0\delta^{\theta/2} \left<\eta\right>)^2),
\]
where $\chi = \chi(s) \in \CcI(\RR)$ is one for $|s| \leq 1$ and vanishes when $|s| \geq 2$. Let 
\[
F = \Op_h(f) \in \Psi^0_h
\]
As in Lemma \ref{lem:btangentialwavefront}, since $|\xi| \leq C\left<\eta \right>$ on $\supp f$, it follows that $FT \in \Psi^\COMP_h$, such that
\[
\varpi_0 \in \ELL(FT), \quad \WF(FT) \subset \{ |x| + |y-y_0| + |\eta-\eta_0| < C_1\delta, \, |\xi| < C_1\delta^{\theta/2}\}.
\]
Taking $\delta>0$ sufficiently small implies that $\| FTu\|_{H^1_h} \leq Ch^s$. 

Next, choose $T' \in \CcI((-\delta,\delta); \Psi^\COMP_h(\RR^{n-1}))$ with same properties as $T$, replacing $\delta$ with $(1 + \varepsilon)\delta$ for $\varepsilon>0$ arbitrarily small. We may choose $T'$ so that
\[
T = T'T + \mathcal{O}(h^\infty)_{H^1_h \rightarrow H^1_h}.
\]
Let $t'$ be a total symbol for $T'$. Decompose the function $1-f = f_1 + f_2$, where 
\[
C_0 \delta^{\theta/2} \leq |\xi| \leq 2C_1\langle \eta \rangle \text{ on }\supp f_1, \quad  |\xi| \geq C_1 \langle \eta \rangle \text{ on } \supp f_2.
\] 
Writing, $F_i = \Op(F_i)$, we have that $F_1 T' \in \Psi^\COMP_h$ with principal symbol $ f_1 t'$. Now $|\xi| > C_0\delta^{\theta/2}$ on $\supp (f_1)$, so if $C_0 > 0$ is sufficiently large, then 
\[
p(x,y,\xi,\eta) = \xi^2 + k^{ij}(x,y)\eta_i \eta_j + V(x,y) > c_0 \delta^\theta
\]
on $\supp (f_1 t')$, where $c_0 > 0$ does not depend on $\delta$. Thus $\WF(F_1 T') \subset \ELL(P)$, so applying Lemma \ref{lem:ellipticthresholdestimate} to the function $Tu$,
\begin{align*}
\| F_1 Tu \|_{H^1_h} &\leq \| (F_1 T')T u \| _{H^1_h} + \mathcal{O}(h^\infty)\| u \|_{H^1_h}
\\&\leq C \|PTu\|_{H^{-1}_h} + Ch^{\alpha}\| Tu \|_{L^2} + \mathcal{O}(h^\infty)\|u \|_{H^1_h}.
\end{align*}
On the other hand, for the term $F_2$, if we take $C_1>0$ sufficiently large, then $p \geq c\langle \zeta \rangle^2$ on $\supp(\varphi f_2)$. Thus $\WF(F_2 \varphi) \subset \ELL(P)$, so again
\[
\| F_2 Tu \|_{H^1_h} = \| (F_2 \varphi) Tu \|_{H^1_h} \leq C \|PTu\|_{H^{-1}_h} + Ch^{\alpha}\| Tu \|_{L^2} + \mathcal{O}(h^\infty)\|u \|_{H^1_h}
\]
In order to handle either of the terms involving $F_1$ or $F_2$, it therefore suffices to bound $\|PTu\|_{H^{-1}_h}$. This is done by writing $PTu = TPu + [P,T]u$ and bounding $\| TPu\|_{H^{-1}_h} \leq C\| Pu\|_{H^{-1}_h}$. As for the commutator,
\[
[P,T] = h (hD_x) T_1+ hT_0 + [V,T],
\]
where $T_i \in \CI(\RR; \Psi^\COMP_h(\RR^{n-1}))$. Here we can view $[V,T] \in \C^0(\RR; h\Psi^\COMP_h(\RR^{n-1}))$. Since $T'$ is elliptic on $\WF(T)$,
\[
\| [P,T]u\|_{H^{-1}_h} \leq Ch \| T' u \|_{L^2} + \mathcal{O}(h^\infty)\|u \|_{L^2}.
\]
Altogether, we have
\[
\| Tu \|_{H^1_h} \leq C \| Pu \|_{H_h^{-1}} + C\| A u \|_{H^1_h} + Ch^{\min(1,\alpha)}\| T'u \|_{H^1_h} +  \mathcal{O}(h^\infty)\| u \|_{H^1_h},
\]
Since the wavefront set of $T'$ is larger than that of $T$ by an arbitrarily small amount the proof is finished by induction, improving the semiclassical regularity by $h^{\min(1,\alpha)}$ at each step.
\end{proof}

Lemmas \ref{lem:btangentialwavefront} and \ref{lem:tangentialwavefront} can be combined using the following observation: if $A_\B \in \Psibh^\COMP$ and $T \in \CI(\RR; \Psi^\COMP_h(\RR^{n-1}))$ are such that $(x,y,\eta) \in \WF(T)$ implies $(x,y,\sigma,\eta) \notin \WFb(A_\B)$ for any $\sigma \in \RR$, then
\begin{equation} \label{eq:bintermsoftangential}
\| A_\B u\|_{L^2} \leq \| Tu \|_{L^2} + \mathcal{O}(h^\infty)\|u\|_{L^2}.
\end{equation}
The proof is similar to that of Lemma \ref{lem:operatorvaluedcomposition}.

\begin{lem} \label{lem:WFbimpliesWF}
	Let $\alpha > 0$ and $r\in \RR \cup \{+\infty\}$. Let $\varpi_0 \in \pi^{-1}(\gl)$ and $q_0 = \pi(\varpi_0)$. Suppose that $u$ is $h$-tempered in $H^1_h(X)$ and $Pu \in L^2(X)$. If 
	\[
	\WF^r(Pu) = \emptyset, \quad \varpi_0 \notin \WF^{r}(u),
	\]
	then $q_0 \notin \WFb^{1,r}(u)$.
\end{lem}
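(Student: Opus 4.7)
The plan is to argue case-by-case based on whether $\varpi_0$ lies over the interior $X\setminus Y$ or over $Y$ itself, since in the first case the two notions of wavefront set coincide. We begin by observing that because $\pi$ is the identity over $T^*(X\setminus Y)$, the preimage $\pi^{-1}(\gl)\cap T^*_Y X$ consists exactly of points of the form $(0,y_0,0,\eta_0)$ with $\tilde p_0(y_0,\eta_0)=0$: indeed, $\pi^{-1}(0,y_0,0,\eta_0)=\{(0,y_0,\xi,\eta_0):\xi\in\RR\}$, and for this preimage to meet $\Sigma$ in a single point we need $\xi^2+\tilde p=0$ to have a unique root, forcing $\xi=0$. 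Thus the case $\varpi_0\in\pi^{-1}(\gl)\cap T^*(X\setminus Y)$ reduces to the statement $q_0\notin \WFb^{1,r}(u)$, which follows from standard ellipticity away from $Y$ together with Lemma~\ref{lem:equivalentWF}: the hypothesis $\WF^r(Pu)=\emptyset$ gives $\|\varphi Pu\|_{H^{-1}_h}\leq \|\varphi Pu\|_{L^2}=\mathcal{O}(h^r)$ locally, so $\WFb^{-1,r}(Pu)=\emptyset$ near $q_0$, while $\WFb^r(u)=\WF^r(u)$ away from $Y$ gives the remaining half.

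For the interesting case $\varpi_0=(0,y_0,0,\eta_0)\in T^*_Y X$, the strategy is to show separately that $q_0\notin \WFb^r(u)$ and $q_0\notin \WFb^{-1,r}(Pu)$, and then invoke Lemma~\ref{lem:equivalentWF}. The second of these is immediate as above: $\WF^r(Pu)=\emptyset$ gives the local $L^2$ bound $\|\varphi Pu\|_{L^2}=\mathcal{O}(h^r)$, which in particular forces $\WFb^{-1,r}(Pu)=\emptyset$. For the first, I will apply Lemma~\ref{lem:tangentialwavefront} to produce a tangential operator $T\in\CI(\RR_x;\Psi^\COMP_h(\RR^{n-1}_y))$ elliptic at $(0,y_0,\eta_0)$ such that $\|Tu\|_{H^1_h}\leq Ch^r$ (its hypotheses are met, noting in particular that $\WF^{-1,r}(Pu)=\emptyset$ follows from $\WF^r(Pu)=\emptyset$).

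Next, I choose $A_\B\in\Psibh^\COMP$ elliptic at $q_0=(0,y_0,0,\eta_0)$ and having $\WFb(A_\B)$ so small that for every $(x,y,\sigma,\eta)\in\WFb(A_\B)$ the tangential projection $(x,y,\eta)$ lies in $\ELL(T)$; this is possible since ellipticity of $T$ at $(0,y_0,\eta_0)$ is an open condition and the compressed point $q_0$ has no $\xi$-dependence. The observation \eqref{eq:bintermsoftangential} then yields
\[
\|A_\B u\|_{L^2}\leq \|Tu\|_{L^2}+\mathcal{O}(h^\infty)\|u\|_{L^2}\leq \|Tu\|_{H^1_h}+\mathcal{O}(h^\infty)\|u\|_{L^2}\leq Ch^r,
\]
which shows $q_0\notin\WFb^r(u)$. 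Combined with $q_0\notin\WFb^{-1,r}(Pu)$, Lemma~\ref{lem:equivalentWF} gives the conclusion $q_0\notin\WFb^{1,r}(u)$.

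The main obstacle is the passage from a tangential (in the sense of the $\CI(\RR;\Psi_h^\COMP(\RR^{n-1}))$ calculus) microlocal estimate to a b-microlocal one. This is handled cleanly by the observation \eqref{eq:bintermsoftangential}: a b-operator whose microsupport projects into the elliptic set of a tangential operator is essentially dominated by that tangential operator on $L^2$, because the normal momentum $\sigma$ plays no role in the $L^2$-boundedness of either. The key structural input making this work at $\varpi_0$ is exactly that $q_0\in \gl\cap T^*Y$ has a single characteristic preimage with zero normal momentum, so the tangential data at $(0,y_0,\eta_0)$ fully captures the relevant microlocal regularity at $q_0$.
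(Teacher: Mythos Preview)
Your proof is correct and follows essentially the same approach as the paper's: the paper's one-line proof cites Lemma~\ref{lem:tangentialwavefront}, the observation \eqref{eq:bintermsoftangential}, and Lemma~\ref{lem:L2H1}, which is precisely the chain you have written out in detail. The only cosmetic difference is that you conclude via Lemma~\ref{lem:equivalentWF} (combining $q_0\notin\WFb^r(u)$ and $q_0\notin\WFb^{-1,r}(Pu)$) rather than invoking Lemma~\ref{lem:L2H1} directly, but since Lemma~\ref{lem:equivalentWF} is itself an immediate consequence of Lemma~\ref{lem:L2H1}, this amounts to the same argument.
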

\begin{proof}
	This follows by combining Lemma \ref{lem:tangentialwavefront}
        with \eqref{eq:bintermsoftangential} and Lemma~\ref{lem:L2H1}.
\end{proof}

\subsection{Improvement at hyperbolic points} \label{subsect:truepositivecommutator}

We are now ready to prove Theorem \ref{theo:propagation}. Fix $\varpi_0 \in \pi^{-1}(\hyp)$, and write
\[
\varpi_0 = (0,y_0, \xi_0,\eta_0)
\]
with respect to a fixed normal coordinate patch $\mathcal{U}$, where $\xi_0 > 0$ for concreteness. 

\begin{prop} \label{prop:truepropagation}
Let $\alpha > 1$ and $s \leq r+ \alpha$, where $s,r \in \RR \cup \{+\infty\}$. Suppose that $u$ is $h$-tempered in $H^1_h(X)$ with $Pu \in L^2(X)$, such that
	\[
	\pi(\varpi_0) \notin \WFb^{1,r}(u), \quad \WF^{s+1}(Pu) = \emptyset.
	\]
	If there is a neighborhood $U \subset T^*X$ of $\varpi_0$ such that $U \cap \WF^s(u) \cap \{x <0\} = \emptyset$, then $\varpi_0 \notin \WF^s(u)$.
\end{prop}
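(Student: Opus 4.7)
The plan is to execute a positive commutator argument with an \emph{ordinary} semiclassical commutant $A\in\Psi_h^{\COMP}(X)$ (rather than a b-commutant as in Section \ref{sec:GBB}), following the strategy outlined in the introduction. Since $P$ is not a pseudodifferential operator, $[P,A^*A]$ will not lie in the ordinary calculus, and its singular part is to be controlled using the paired Lagrangian estimates of Section \ref{sec:paired} together with the decomposition $V=V_0+V_1$ of Section \ref{subsect:potentialdecomp}. The outcome will be an estimate improving regularity from $H^1_h$-order $\mathcal{O}(h^{r'})$ to $\mathcal{O}(h^{r'+\rho})$ at $\varpi_0$ for some fixed $\rho>0$; iterating until the gain saturates at $s=r+\alpha$ (and readjusting the frequency cutoff $\tau$ in the decomposition of $V$ at each step so that the microsupports shrink consistently) will finish the proof.

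I would choose the commutant exactly as in the hyperbolic step of Melrose--Sj\"ostrand, but in the ordinary cotangent bundle: take
\[
a = h^{-r'}\,\chi_0(2-\phi/\delta)\,\chi_1(2+(x\xi)/\delta),\qquad \phi = x\xi+\tfrac{1}{\beta^2\delta}\omega,
\]
with $\omega=|y-y_0|^2+|\xi-\xi_0|^2+|\eta-\eta_0|^2$ and $\chi_0,\chi_1$ as in Section \ref{subsect:hyperbolicregion}, so that $-\hamvf_{p_0}\phi\geq c>0$ near $\varpi_0$ and the support of $a$ shrinks to $\{\varpi_0\}$ as $\delta\to 0$. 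The ``error'' coming from $\chi_1'$ is then supported in $\{x\xi<0\}$, lying inside the region where incoming $\WF^s$ regularity is assumed. With $A=\Op_h(a)$, expand
\[
(i/h)[P,A^*A] = (i/h)[-h^2\Delta_g + V_0,\,A^*A] + (i/h)[V_1,A^*A];
\]
the first term is, modulo the paired Lagrangian principal symbol $\hamvf_{a^2}V_0$ along $N^*\diag$, a standard pseudodifferential positive commutator producing $-B^*B+E+hR$ with $B$ elliptic at $\varpi_0$ and $E$ supported in the good region.

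The crucial step is controlling $\langle [V,A^*A]u,u\rangle$ with the sharp $h$-loss. For the $V_0$ piece, the Schwartz kernel of $[V_0,A^*A]$ belongs to $h\,I_h^{-\alpha+1/2,\COMP}(N^*(Y\cap\diag),N^*\diag)$, so Lemma \ref{lem:singularsymbolL2bounded} (or, more sharply, the mixed-norm bounds Lemmas \ref{lem:improvedpairingbound} and \ref{lem:Linftyonefactor} applied to the $V_1$-type contributions arising at fiber-infinity) gives
\[
\lvert\langle [V_0,A^*A]u,u\rangle\rvert \lesssim h\cdot h^{2\alpha}\,\|\Lambda u\|_{L^\infty_x L^2_y}^2 + \text{microlocal remainders},
\]
where $\Lambda$ is a tangential operator microsupported in a neighborhood of $\pi(\varpi_0)$. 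The $V_1$ piece is handled by \eqref{LinfinityV1}, Lemma \ref{lem:V1mixednorm}, and Lemma \ref{lem:AVbound}, giving an $\mathcal{O}(h^\alpha)$ gain. The mixed-norm $L^\infty_x L^2_y$ is then controlled by $\|\Lambda' u\|_{H^1_h}$ via the energy estimate Lemma \ref{lem:energyestimate}, applied to the factored operator $L=(hD_x)^2-\widetilde{P}$ on the region $\{\widetilde p<0\}$ (which is precisely the hyperbolic region near $\varpi_0$). The background hypothesis $\pi(\varpi_0)\notin\WFb^{1,r}(u)$, combined with Lemma \ref{lem:btangentialwavefront} (supplying a tangential cutoff of b-regularity $h^r$), then yields $\|\Lambda u\|_{L^\infty_x L^2_y}=\mathcal{O}(h^r)$. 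Altogether one obtains
\[
\|Bu\|_{H^1_h}^2 \lesssim h^{-2r'-2}\|GPu\|_{H^{-1}_h}^2 + h^{-2r'}\|Q_- u\|_{H^1_h}^2 + h^{2\alpha+2r-2r'}\|\Lambda' u\|_{H^1_h}^2 + \mathcal{O}(h^\infty),
\]
where $Q_-$ is microsupported in $\{x\xi<0\}\cap\WF^s(u)^\complement$. The constraint $s\leq r+\alpha$ is precisely what is needed for the background term to close off the iteration.

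The iteration is then straightforward in principle: starting from the a priori b-regularity at $\varpi_0$, improve the semiclassical order at $\varpi_0$ by $\min(1,\alpha)$ or some similar fixed gain at each step, shrinking the microsupport of $A$ slightly each time and correspondingly shrinking the low-frequency cutoff $\tau$ in the splitting $V=V_0+V_1$ so that the paired Lagrangian wavefront relation \eqref{eq:commutatorWF} stays inside $\ELL(G)$. The threshold $s=r+\alpha$ is reached in finitely many steps. The main obstacle---and the technical heart of the argument---is obtaining the \emph{sharp} $h^{2\alpha+2r}$ factor on the paired Lagrangian piece; this requires the full force of Lemma \ref{lem:improvedpairingbound}/\ref{lem:Linftyonefactor} (rather than the cruder $L^2$ bound Lemma \ref{lem:singularsymbolL2bounded}, which would lose a power of $h^{1/2}$), and it is here that one must appeal to the energy estimate to convert back and forth between $L^\infty_x L^2_y$ and $H^1_h$, and where the background b-regularity hypothesis is used at its full strength.
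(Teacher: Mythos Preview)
Your overall architecture matches the paper's: an ordinary semiclassical commutant near $\varpi_0$, the splitting $V=V_0+V_1$, paired Lagrangian calculus for the $V_0$ commutator, mixed-norm plus energy estimates for the $V_1$ contribution, and Lemma~\ref{lem:btangentialwavefront} to trade the tangential background term for b-regularity. The paper's commutant is slightly simpler than yours---it uses $\phi=x+\omega/(\beta^2\delta)$ with $\omega=|\xi-\xi_0|^2+|y-y_0|^2+|\eta-\eta_0|^2$ (so the error region is literally $\{x<0\}$), and it does not weight $a$ by $h^{-r'}$: the iteration is driven by an intermediate $h^{2\gamma}\|Gu\|_{L^2}^2$ term that shrinks after finitely many passes.

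There is, however, a genuine gap in your treatment of the $V_0$ commutator. You correctly place $K_{[V_0,A^*A]}$ in $hI_h^{-\alpha+1/2,\COMP}$, but the bound you then claim, $\lvert\langle[V_0,A^*A]u,u\rangle\rvert\lesssim h^{1+2\alpha}\|\Lambda u\|_{L^\infty_xL^2_y}^2$, is false: Lemma~\ref{lem:singularsymbolL2bounded} only gives $\|[V_0,A^*A]\|_{L^2\to L^2}=\mathcal O(h)$, i.e.\ the pairing divided by $h$ is merely $\mathcal O(1)\|Gu\|^2$, which cannot be absorbed. (Lemmas~\ref{lem:improvedpairingbound} and~\ref{lem:Linftyonefactor} are for the $I^{p,l}_\hmg$ class---they apply to $AV_1$, not to $[V_0,A^*A]$.) The paper's fix is to \emph{extract the principal symbol along $N^*\diag$}: write $(i/h)[P_{V_0},A^*A]=-B^*B-B^*\sum_i(\partial_{z_i}V_0)R_iB-E+F$ in the paired Lagrangian calculus. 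The terms $B^*(\partial_{z_i}V_0)R_iB$ have $|\sigma_h(R_i)|\leq 1/(4n)$ by choosing $\beta$ large (Lemma~\ref{lem:epsilondeltachoice}), so they absorb into $\tfrac12\|Bu\|^2$. The \emph{remainder} $F$ then lies in $hI_h^{-\alpha+3/2+\varepsilon_0,\COMP}$, and now Lemma~\ref{lem:singularsymbolL2bounded} gives $\|F\|_{L^2\to L^2}\leq Ch^{2\gamma}$ for some $\gamma>0$; this is the gain that drives the iteration.

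The sharp $h^{2\alpha}$ factor you are after comes entirely from the $V_1$ piece: the paper keeps $\langle AV_1u,Au\rangle$ (not $[V_1,A^*A]$) on the left, inserts a tangential $T$ via Lemma~\ref{lem:operatorvaluedcomposition}, and bounds $\lvert\langle V_1Tu,TA^*Au\rangle\rvert\leq Ch^{1+\alpha}\|Tu\|_{L^\infty_xL^2_y}\|TA^*Au\|_{L^\infty_xL^2_y}$ using Lemma~\ref{lem:V1mixednorm}. The energy estimate (Lemma~\ref{lem:energyestimate}) then converts each $L^\infty_xL^2_y$ norm back to $H^1_h$ plus $h^{-1}\|Pu\|$, and the $\|TA^*Au\|$ factor is recycled through the commutator decomposition just obtained. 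This is precisely where the threshold $s\leq r+\alpha$ appears.
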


As usual, Proposition \ref{prop:truepropagation} follows from a quantitative estimate via a positive commutator argument.

\begin{prop} \label{prop:truehyperbolic} If $G \in \Psi_h^\COMP$ is elliptic at $\varpi_0$ and $Q_\B \in \Psibh^\COMP$ is elliptic at $\pi(\varpi_0)$, then there exist $Q, Q_1\in \Psi_h^\COMP$, where
	\begin{gather*}
	\WF(Q) \subset \ELL(G) \text{ and } \varpi_0 \in \ELL(Q),\\
	\WF(Q_1) \subset \ELL(G) \cap \{x < 0\},
	\end{gather*}
	such that
	\begin{equation} \label{eq:truehyperbolicestimate}
	\| Qu \|_{L^2} \leq  C h^{-1}\| Pu\|_{L^2} + C\| Q_1u \|_{L^2}
        + Ch^\alpha \|Q_\B u \|_{H^1_h}+\mathcal{O}(h^\infty) \|u\|_{L^2},
	\end{equation}
	for each $u\in H^1_h(X)$ with $Pu \in L^2(X)$.
	\end{prop}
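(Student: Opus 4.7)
The plan is to carry out a positive commutator argument in the \emph{ordinary} semiclassical calculus, allowing the commutant to straddle the interface $Y$. I will take $A = \Op_h(a)$ where
\[
a = \chi_0(2-\phi/\delta)\,\chi_1(2+x/\delta), \qquad \phi = x - \omega/(\beta^2\delta),
\]
with $\omega = |y-y_0|^2+|\xi-\xi_0|^2+|\eta-\eta_0|^2$ and $\chi_0,\chi_1$ as in Section~\ref{subsect:hyperbolicregion}. Since $\hamvf_p x = 2\xi_0>0$ at $\varpi_0$, for $\beta$ chosen large and $\delta$ small, $\phi$ is strictly increasing along the $\hamvf_p$-flow on $\supp a$; the commutant is elliptic at $\varpi_0$, while the $\chi_1'$-error lives in $\{-2\delta\le x\le -\delta\}\subset\{x<0\}$ and will be absorbed into $\|Q_1 u\|_{L^2}$.

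I split $V=V_0+V_1$ as in Section~\ref{subsect:potentialdecomp}, with a further refinement $V_0=W_0+W_1$. Since $\alpha>1$, the piece $W_0$ is smooth, $h$-independent, and $C^1$, so $\hamvf_{W_0}$ is a bounded vector field and the principal-symbol computation of $(i/h)[-h^2\Delta_g+W_0,A^*A]$ proceeds as in Lemma~\ref{lem:hyperboliccommutator}, producing $-B^*B+E+hR$ with $B$ elliptic at $\varpi_0$ and $\WF^2(E)$ in the $\chi_1'$-region, provided $\beta,\delta$ are tuned appropriately. The $W_1$ contribution is $O(h\|W_1\|_{L^\infty})$, absorbed by taking $\tilde\tau$ small per \eqref{eq:W1decays}.

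The delicate term is $(i/h)[V_1,A^*A]$, which is not pseudodifferential. By \eqref{eq:AV1}, the kernels of $AV_1$ and $V_1A$ lie in $h^{-\mu}I^{\mu-n/2+k/2,-\infty}_{\hmg,c}$ with $\mu=-1-\alpha$ and $k=1$, so Lemma~\ref{lem:Linftyonefactor} gives
\[
\|AV_1\|_{L^\infty_x L^2_y\to L^1_x L^2_y}+\|V_1A\|_{L^\infty_x L^2_y\to L^1_x L^2_y}\le Ch^{1+\alpha}.
\]
Pairing yields $(2/h)\lvert\Im\langle AV_1u,Au\rangle\rvert\le Ch^\alpha\|Au\|_{L^\infty_x L^2_y}\|u\|_{L^\infty_x L^2_y}$. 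The mixed norms are in turn controlled by the energy inequality (Lemma~\ref{lem:energyestimate}) applied to $L = (hD_x)^2-(h^2\Delta_k+V)$, which is microlocally hyperbolic at $\varpi_0$ since $\tilde p(\varpi_0)<0$; this expresses the $L^\infty_x L^2_y$ norm of $Au$ (suitably tangentially microlocalized) in terms of $\|Q_\B u\|_{H^1_h}$, using b-elliptic regularity (via Lemma~\ref{lem:btangentialwavefront} and the fact that tangential operators may be viewed as b-pseudodifferential operators elliptic on $\ELLb(Q_\B)$) together with $h^{-1}\|Pu\|_{L^2}$. Combining all of these via the pairing identity $(2/h)\Im\langle APu,Au\rangle = -(i/h)\langle [A^*A,P]u,u\rangle$, applying Cauchy--Schwarz on $APu$ with a small parameter, and using microlocal elliptic regularity to absorb $E$-terms into $\|Q_1u\|_{L^2}^2$, one obtains, for $\alpha>1$,
\[
\|Bu\|_{L^2}^2\le Ch^{-2}\|Pu\|_{L^2}^2+C\|Q_1u\|_{L^2}^2+Ch^{2\alpha}\|Q_\B u\|_{H^1_h}^2+O(h^\infty)\|u\|_{L^2}^2.
\]

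Writing $A = FB + hF'$ with $F,F'\in\Psi^\COMP_h$ and iterating on a shrinking chain of commutants $B_k$ (with $\tau,\tilde\tau$ readjusted at each stage to keep microsupports inside $\ELL(G)\cap\ELLb(Q_\B)$, as alluded to in the introduction) converts the $B$-estimate into the claimed $Q$-estimate \eqref{eq:truehyperbolicestimate}. The principal obstacle is the $[V_1,A^*A]$ analysis: one must microlocalize the $L^\infty_x L^2_y$ bound precisely enough that only the assumed b-regularity at $\pi(\varpi_0)$ (encoded in $Q_\B$) is consumed, and must verify that the loss $h^{2\alpha}$ produced by Lemma~\ref{lem:Linftyonefactor} combined with the $h^{-1}$ in the energy estimate still yields the sharp $h^\alpha$ gain on the right-hand side and is therefore compatible with the iterative tightening of the microsupport of $A$.
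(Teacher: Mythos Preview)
Your overall architecture matches the paper's: an ordinary semiclassical commutant $a=\chi_0(2-\phi/\delta)\chi_1(2+x/\delta)$, a split $V=V_0+V_1$, paired-Lagrangian handling of the singular part of the commutator, and mixed-norm energy estimates for the $V_1$ piece, followed by iteration. Two steps, however, are not carried out correctly.

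First, the $W_1$-commutator cannot be dismissed as ``$O(h\|W_1\|_{L^\infty})$''. The pairing $(i/h)\langle[W_1,A^*A]u,u\rangle=-(2/h)\Im\langle W_1Au,Au\rangle$ is bounded only by $(2/h)\|W_1\|_{L^\infty}\|Au\|^2$, which carries an $h^{-1}$ that no $h$-independent smallness of $\|W_1\|_{L^\infty}$ can kill; a Lipschitz-commutator bound would instead give $\varepsilon\|u\|_{L^2}^2$, which is unlocalized and cannot be absorbed either. The paper does not use the $W_0/W_1$ split here at all: it treats $[V_0,A^*A]$ directly in the paired-Lagrangian calculus. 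Its principal symbol along $N^*\diag$ is $\hamvf_{V_0}(a^2)=-b^2\sum_i(\partial_{z_i}V_0)r_i$, and Lemma~\ref{lem:epsilondeltachoice} arranges $\sum_i|\partial_{z_i}V_0|\,|r_i|\le 1/2$ \emph{uniformly in the cutoff parameter $\tau$}, so this piece is absorbed into $\tfrac12\|Bu\|^2$. The paired-Lagrangian remainder $F$ then lies in $hI^{-\alpha+3/2+\varepsilon_0,\COMP}_h$, and Lemma~\ref{lem:singularsymbolL2bounded} gives $\|F\|_{L^2\to L^2}=O(h^{2\gamma})$ for some $\gamma>0$ (this is where $\alpha>1$ is used), which \emph{is} absorbable by the inductive argument. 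This is precisely the place where the paired-Lagrangian composition formula (Proposition~\ref{prop:singularcomposition}) earns its keep, and you have skipped it.

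Second, in your $V_1$ bound the factor $\|u\|_{L^\infty_xL^2_y}$ is not controlled: Lemma~\ref{lem:energyestimate} bounds only tangentially microlocalized quantities. The paper first inserts a tangential cutoff $T$ via Lemma~\ref{lem:operatorvaluedcomposition} (cf.\ Lemma~\ref{lem:AVbound}) so that $\langle AV_1u,Au\rangle=\langle V_1Tu,TA^*Au\rangle+O(h^\infty)\|u\|^2$, and then bounds $\|Tu\|_{L^\infty_xL^2_y}$ and $\|TA^*Au\|_{L^\infty_xL^2_y}$ separately via the energy estimate and Lemma~\ref{lem:V1mixednorm}. The second of these requires estimating $h^{-1}\int\|PA^*Au(s)\|\,ds$, which is done by writing $PA^*A=A^*AP+[P,A^*A]$ and feeding the already-established commutator decomposition (including the $[V_1,A^*A]$ piece, bounded via Lemma~\ref{lem:Linftyonefactor}) back in --- a recursive step you have not mentioned and which is needed to close the argument with the sharp power $h^\alpha$.

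(Minor: your $\phi=x-\omega/(\beta^2\delta)$ should have a $+$ sign; with the minus sign $\supp a$ is not confined near $\varpi_0$.)
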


Note that $G$ can be used to control the sizes of $\WF(Q)$ and $\WF(Q_1)$, but the term involving $Pu$ is not microlocalized. The term involving $Q_\B u$ is microlocalized, but only in the sense of b-wavefront set; by Theorem \ref{theo:GBBpropagation}, it can be controlled by the singularities along backwards $\GBB$s from $\pi(\varpi_0)$. 

\begin{rem} \label{rem:smoothu}
	By a regularization argument it suffices to prove Proposition \ref{prop:truehyperbolic} (and also Proposition \ref{prop:glancingoutput} in the next section) for $u \in \CI(X)$. Indeed, given $u \in H^1_h(X)$ with $Pu \in L^2(X)$ we can choose $u_j \in \CI(X)$ such that 
	\[
	u_j \rightarrow u \text{ in } H^1_h(X), \quad h^2\Delta_gu_j \rightarrow h^2\Delta_g u \text{ in }L^2(X)
	\]
	(see \cite[Lemma E.47]{zworski:resonances} for instance). This of course implies $Pu_j \rightarrow Pu$ in $L^2(X)$ as well.\end{rem}

One key to the proof of Proposition \ref{prop:truehyperbolic} is the use of the microlocal energy estimates discussed in Section \ref{subsect:energy}. Suppose
that $u \in \CI(X)$ is supported in a normal coordinate patch near $Y
\subset X$. If $(x,y) \in \RR \times \RR^{n-1}$ are the corresponding
normal coordinates, we can apply Lemma \ref{lem:energyestimate} to the
operator $L=P$ with $x_1=x$ and $x' = y$. Indeed,
\[
P = (hD_x)^*(hD_x) - h^2\Delta_k + V, 
\]
and $V \in \C^1(X)$ since $\alpha > 1$. The hypotheses on $R  = -\tilde P = h^2\Delta_k - V$ are satisfied in a sufficiently small neighborhood $(-\varepsilon,\varepsilon)\times U$ of a point $\tilde{q}_0 \in \pi^{-1}(\hyp)$.

We use an approach quite close to that of Proposition \ref{prop:hyperbolic}. Define the functions
\[
\omega = |\xi-\xi_0|^2 + |y-y_0|^2 + |\eta - \eta_0|^2, \quad \phi = x+\frac{1}{\beta^2\delta}\omega.
\]
We use the same cutoffs $\chi_0,\chi_1$ as in Proposition \ref{prop:hyperbolic}. 
We also fix a cutoff $\psi \in \CI(T^*X;[0,1])$ such that $\psi =1 $ near $\{|x| \leq 2\delta, \, \omega^{1/2} \leq 2\beta\delta\}$ with support in $\{|x| <3\delta ,\, \omega^{1/2} < 3\beta\delta\}$. Now set
\begin{equation*} 
a=\chi_0(2-\phi/\delta) \chi_1(2+x/\delta).
\end{equation*}
The support properties of $a$ can be read off from the analogue of
Lemma \ref{lem:asupport}; in particular,
\[
\supp a \subset\{\lvert x \rvert \leq 2\delta,\,  \omega^{1/2} \leq 2
\beta \delta\},
\] 
hence $\psi=1$ on the support of $a.$ Recall that we are assuming $V \in I^{[-1-\alpha]}(Y)$ with $\alpha > 1$. We will use a decomposition $V= V_0 + V_1$ as in Section \ref{subsect:potentialdecomp} \emph {which may depend on $\delta$, but not $\beta$}. 

 To proceed with the positive commutator argument, write
\[\label{poscommV1}
-(2/h)\Im \left<APu - AV_1u ,Au\right>  = (i/h)\left<[A^*A,-h^2\Delta_g + V_0]u,u\right>.
\]
The right hand side is treated symbolically within the paired Lagrangian calculus. For convenience, set
\[\label{fdef}
f = \sigma_h(-h^2\Delta_g).
\]
Let $P_{V_0} = -h^2\Delta_g + V_0$ and $p_{V_0} = f + V_0$.  For
simplicity, write $z = (x,y)$ and $\zeta = (\xi,\eta)$. The point of
the next lemma is that it holds uniformly with respect to the
decomposition $V = V_0 + V_1$, i.e., with respect to the choice of
$\tau$ in \eqref{eq:V0potential}.

\begin{lem} \label{lem:epsilondeltachoice}
Let $f = \sigma_h(-h^2\Delta_g) \in \CI(T^*X)$. There exists
$\beta,\delta_0, c_0, \tau_0 > 0$ such that for each $\delta \in
(0,\delta_0)$ and $\tau \in (0,\tau_0)$ in \eqref{eq:V0potential},
\[
\hamvf_f \phi \geq 2c_0, \quad |\partial_{z_i}V_0 \cdot \partial_{\zeta_i} \phi| \leq \hamvf_f \phi/(4n) \text{ for } i =1,\ldots,n.
\]
on $\supp \psi$.
\end{lem}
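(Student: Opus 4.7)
The plan is to compute $\hamvf_f\phi$ explicitly and use the support restrictions from $\psi$ to bound it below; the second inequality will then follow from a uniform-in-$(h,\tau)$ bound on $\partial_z V_0$, for which the hypothesis $\alpha>1$ is essential.

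Since $\phi = x + (\beta^2\delta)^{-1}\omega$ and $f=\xi^2+k^{ij}\eta_i\eta_j$, I would first compute $\hamvf_f\phi = 2\xi + (\beta^2\delta)^{-1}\hamvf_f\omega$. Because $\omega$ vanishes to second order at $\varpi_0$, near $\varpi_0$ one has $|\hamvf_f\omega|\leq C_1\omega^{1/2}$ for some constant $C_1$ depending only on $f$ and $\varpi_0$. On $\supp\psi$ the bounds $\omega^{1/2}\leq 3\beta\delta$ and $|\xi-\xi_0|\leq 3\beta\delta$ then yield
\[
\hamvf_f\phi \geq 2\xi_0 - 6\beta\delta - 3C_1\beta^{-1}.
\]
Fixing $\beta$ large enough that $3C_1\beta^{-1}\leq \xi_0/4$ and then $\delta_0$ small enough that $6\beta\delta_0\leq \xi_0/4$ gives $\hamvf_f\phi\geq 3\xi_0/2 =: 2c_0$ on $\supp\psi$ for every $\delta\in(0,\delta_0)$. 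Note also that $\partial_{\zeta_i}\phi = (\beta^2\delta)^{-1}\partial_{\zeta_i}\omega$, so $|\partial_{\zeta_i}\phi|\leq 6\beta^{-1}$ uniformly on the same set.

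The main step, and the place where $\alpha>1$ enters, is a uniform bound on $\partial_z V_0$. After the change of variables $\eta'=h\zeta$ in \eqref{eq:V0potential}, one has
\[
V_0(x) = (2\pi)^{-1}\int e^{i x'\zeta}\chi(h\zeta/\tau)\,v(x,\zeta)\,d\zeta,
\]
so $\partial_{z_i}V_0$ is the integral of either $i\zeta\,v + \partial_{x'}v$ or $\partial_{x''_j}v$ against the same cutoff. Because $v\in S^{-1-\alpha}$ with $\alpha>1$ (and $k=1$), both $\langle\zeta\rangle v$ and the $x$-derivatives of $v$ lie uniformly in $L^1_\zeta$, and since $|\chi(h\zeta/\tau)|\leq 1$ one concludes $\|\partial_z V_0\|_{L^\infty}\leq C_V$ uniformly in $h\in(0,1)$ and $\tau>0$. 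The condition $\alpha>1$ is essential here: for $\alpha\leq 1$ the factor $\zeta v$ fails to be integrable and one could not exclude a $\tau$- or $h$-dependent blowup. Combining these gives $|\partial_{z_i}V_0\cdot\partial_{\zeta_i}\phi|\leq 6C_V\beta^{-1}$, and enlarging $\beta$ once more so that $6C_V\beta^{-1}\leq c_0/(2n)$ yields the second bound. With $\beta$ so fixed, $\delta_0$ from the first step, and any $\tau_0>0$, both conclusions hold.
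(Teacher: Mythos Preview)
Your proposal is correct and follows essentially the same route as the paper's proof: both bound $\hamvf_f\omega$ by $C\omega^{1/2}$, use $\hamvf_f x = 2\xi > 0$ near $\varpi_0$, and then observe that $\partial_{z_i}V_0$ is bounded uniformly in $\tau$ (the paper simply says this is ``obvious from the oscillatory integral representation of $V_0$'', whereas you spell out the $L^1_\zeta$ argument and correctly identify the role of $\alpha>1$). One cosmetic point: you first fix $\beta$, then $\delta_0$ depending on $\beta$, and then ``enlarge $\beta$ once more''; since enlarging $\beta$ tightens the constraint $6\beta\delta_0\leq\xi_0/4$, it is cleaner to impose both lower bounds on $\beta$ at the outset and only then choose $\delta_0$.
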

\begin{proof} 
For any $g \in \CI(T^*X)$,
\[
|\hamvf_g \omega| \leq C_0\omega^{1/2}
\]
uniformly on any fixed neighborhood $U$ of $\varpi_0$. This is therefore true for the smooth part $f = \sigma_h(-h^2\Delta_g)$ of $p_W$. As for the potential, the crucial point here is that if $U$ is a fixed neighborhood, then for $i= 1,\ldots, n$,
\[
|\partial_{z_i}V_0 \cdot \partial_{\zeta_i} \omega| \leq C_1 \omega^{1/2}
\]
on $U$ for a constant $C_1 > 0$ that is independent of the choice of the parameter $\tau > 0$ in \eqref{eq:V0potential}; this is obvious from the oscillatory integral representation of $V_0$.

On the other hand,
\[
\hamvf_f x = 2\xi.
\]
If we fix a sufficiently small neighborhood  $U$ of $\varpi_0$, it follows that $\hamvf_f x \geq 3c_0$ on $U$ for some $c_0 > 0$. Fix  $\beta> 3(C_0 + 2nC_1)/c_0$, and suppose that $\delta_0 > 0$  is such that $\supp \psi \subset U$ for $\delta \in (0,\delta_0)$. Then,
\begin{equation} \label{eq:Hfphi}
\hamvf_{f}\phi \geq 3c_0 - C_0\beta^{-2}\delta^{-1} \omega^{1/2} \geq 3c_0 - 3C_0 \beta^{-1} \geq 2c_0
\end{equation}
on $\supp \psi$, and in addition
\begin{equation} \label{eq:HWphi}
|\partial_{z_i}V_0 \cdot  \partial_{\zeta_i} \phi| \leq 3C_1 \beta^{-1} \leq c_0/(2n) \leq \hamvf_{f}\phi/(4n) 
\end{equation}
on $\supp \psi$.
\end{proof}

We now examine properties of the commutator as a whole; note that
$\beta > 0$ and $\delta_0 > 0$ have been fixed by Lemma
\ref{lem:epsilondeltachoice}, and we are now taking
$\delta \in (0,\delta_0)$. First, consider the smooth part
$f = \sigma_h(-h^2\Delta_g)$ of $p_{V_0}$. Define
\[
b = (2\delta)^{-1/2}(\hamvf_f \phi)^{1/2}(\chi_0 \chi_0')^{1/2}\chi_1,
\]
which is well-defined and smooth in light of \eqref{eq:Hfphi}. We then compute
\begin{align*}
\hamvf_{f} (a^2) &= -2\delta^{-1} (\hamvf_{f}\phi)(\chi_0 \chi_0')\chi_1^2 + 2\delta^{-1}(\hamvf_{f} x)\chi_0^2(\chi_1 \chi_1') \\
& = -b^2 + e,
\end{align*}
noting that $\supp e \subset \{-2\delta \leq x \leq -\delta\} \cap \supp b$. Fix  compactly supported operators $B$ and $E$ in $\Psi^{\COMP}_h$ with principal symbols $b$ and $e$, respectively.

Next, fix compactly supported operators $R_1,\ldots,R_n \in
\Psi^{\COMP}_h(X)$ with principal symbols 
\[
r_i = (\hamvf_f\phi)^{-1}(\partial_{\zeta_i} \phi) \psi.
\]
In particular, $\psi \hamvf_{V_0}\phi = (\hamvf_f \phi)  \sum  \partial_{z_i}V_0 \cdot r_i$, and  $\sum |  \partial_{z_i}V_0 \cdot r_i| \leq 1/4$ by our choice of $\beta$. Moreover,
\[
\hamvf_{V_0}(a^2) = -2\delta^{-1}(\hamvf_{V_0} \phi)(\chi_0 \chi_0')\chi_1^2 = -b^2\left(\dfrac{\hamvf_{V_0} \phi}{\hamvf_f \phi}\right) = -b^2 \sum
 \partial_{z_i} V_0 \cdot r_i,
\]
since $\psi=1$ on $\supp b$.
Note that $\hamvf_{V_0}(a^2) \geq -(1/4)b^2$, but we do not use this directly within the symbol calculus.

Instead, for a given $\delta \in (0,\delta_0)$, fix an open set $O \subset \supp \psi$ containing $\WF(B)$. Since $\WF(A) \subset \WF(B)$ we can choose $V_0$ such that 
\[
\WF(K_{[A^*A,V_0]}) \subset O\times O'.
\]
By further shrinking $\tau$ in \eqref{eq:V0potential}, we can  arrange that the kernels of $B^*(\partial_{z_i} V_0) R_iB$ also have wavefront set contained in $O \times O'$, since the operators $B,B^*,R_i$ are independent of $V_0$. By Proposition \ref{prop:singularcomposition},
\begin{multline*}
(i/h)[P_{V_0}, A^*A] + B^*B + B^*\sum (\partial_{z_i} V_0)R_i B + E \\ \in I^{-\alpha+(1/2)+\varepsilon_0,\COMP}_h(X, N^*((X\times Y) \cap \diag), N^*\diag)
\end{multline*}
for any $\varepsilon_0 > 0$. If this operator is denoted by $F$, then by construction the principal symbol of $F$ along $N^*\diag$ vanishes, and hence
\[
F\in h I^{-\alpha+(3/2)+\varepsilon_0,\COMP}_h(X, N^*((X\times Y) \cap \diag), N^*\diag).
\]
The key here is that since all of the operators above have kernels with wavefront set in $O\times O'$, so does $F$. 

Now we consider the identity
\begin{equation} \label{eq:V_0commutator}
\left<(i/h)[P_{V_0}, A^*A]u,u\right> = \| Bu\|^2_{L^2} + \sum \left<(\partial_{z_i} V_0)R_iBu,Bu\right> + \left<Eu,u\right> + \left<Fu,u\right>.
\end{equation}
The second, third, and fourth terms on the right hand side of \eqref{eq:V_0commutator} are bounded in absolute value as follows. For the second term, we use the bound 
\[
\|R_i u \|_{L^2} \leq 2\sup |\sigma_h(R_i)|\|u\|_{L^2} + \mathcal{O}(h^\infty)\|u\|_{L^2},
\]
and the fact that $2\sum \sup |\partial_{z_i} V_0||r_i| \leq 1/2$ by construction. Therefore
\[
\sum |\left<(\partial_{z_i} V_0)R_iBu,Bu\right>| \leq (1/2)\|Bu\|^2_{L^2} + \mathcal{O}(h^\infty)\|u \|_{L^2}.
\]
To bound the third term, choose $Q_1 \in \Psi^\COMP_h$ as in the
statement of the proposition such that $\WF(E) \subset \ELL(Q_1)$ and estimate
\[
|\left<Eu,u\right>| \leq C\| Q_1 u\|^2_{L^2} + \mathcal{O}(h^\infty)\| u \|^2_{L^2}.
\]
For the fourth term, we apply Lemma \ref{lem:singularsymbolL2bounded}:  since $\alpha > 1$, fix $\gamma >0$ such that  
\[
-\alpha + 2\gamma + \varepsilon_0 < -1,
\] 
where recall $\varepsilon_0 > 0$ is arbitrarily small. Taking $s = 1-2\gamma$, 
\[
-\alpha + (3/2) + \varepsilon_0- s = -\alpha + (1/2) + \varepsilon_0 + 2\gamma < -1/2.
\]
Therefore, by Lemma \ref{lem:singularsymbolL2bounded},
\begin{equation} \label{eq:gammabound}
\| F \|_{L^2 \rightarrow L^2} \leq Ch^{2\gamma}.
\end{equation}
Let $G \in \Psi^\COMP_h(X)$ be elliptic on $\WF(B)$; since $O$ was an
arbitrary neighborhood of $\WF(B)$, we can assume that $O \subset
\ELL(G)$ as well. Thus we can bound 
\[
|\left< Fu,u\right>| \leq Ch^{2\gamma} \| Gu\|^2 + \mathcal{O}(h^\infty)\|u\|^2_{L^2}.
\]
Combining \eqref{eq:V_0commutator} and \eqref{poscommV1}, we obtain the useful bound
\begin{multline*}
\| Bu \|^2_{L^2}  \leq Ch^{-1} \|APu\|_{L^2} \| Au \|_{L^2} + Ch^{-1}|\left<AV_1u,Au\right>| \\ 
+ Ch^{2\gamma}\| Gu \|^2_{L^2} + \| Q_1u\|^2_{L^2} + \mathcal{O}(h^\infty)\|u\|^2_{L^2}.
\end{multline*}
Note that the various terms involving $\|Au\|_{L^2}$ can be bounded in
terms of $\|Bu\|_{L^2}$. This is done as at the end of Section
\ref{subsect:hyperbolicregion}, yielding 
\begin{equation} \label{eq:AintermsofB}
\|Au\|_{L^2} \leq C\|Bu\|_{L^2} + Ch\| Gu \|_{L^2} + \mathcal{O}(h^\infty)\|u\|_{L^2}.
\end{equation}
It remains to bound the term $h^{-1}|\left<AV_1u,Au\right>|$.
Using Lemma~\ref{lem:operatorvaluedcomposition}
  (cf.\ Lemma \ref{lem:AVbound}),
we can choose a tangential operator $ T \in \CcI(\RR; \Psi^\COMP_h(\RR^{n-1}))$ with
\[
\WF(T) \subset \{|x| < 3\delta, \,|y-y_0|^2 + |\eta-\eta_0|^2 \leq 9\beta^2 \delta^2 \},
\] 
such that
\begin{equation} \label{eq:insertT}
\|AV_1u\|_{L^2} = \|AV_1Tu\|_{L^2} + \mathcal{O}(h^\infty)\|u\|_{L^2}.
\end{equation}
The same lemma shows that
\begin{equation} \label{eq:insertTcommutator}
\|[V_1,A^*A] u\|_{L^2} = \|[V_1,A^*A] Tu\|_{L^2} + \mathcal{O}(h^\infty)\|u\|_{L^2}.
\end{equation}
The next step is to apply Lemmas \ref{lem:improvedpairingbound}, \ref{lem:Linftyonefactor}, \ref{lem:energyestimate}.

\begin{lem}
For each $\varepsilon > 0$ there exists $C_\varepsilon > 0$ such that	
\[
|\left< AV_1u,Au\right>| \leq\varepsilon \| Bu \|^2_{L^2} + C_\varepsilon \big( h^{-1} \| Pu \|^2_{L^2}  + \|Q_1 u \|^2_{L^2} + h^{2\gamma} \| Gu\|^2_{L^2} + h^{2\alpha}\| Tu \|^2_{H^1_h} \big),  
\]
where $T\in \CcI(\RR; \Psi^\COMP_h(\RR^{n-1}))$ is as above.
\end{lem}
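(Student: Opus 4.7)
The plan is to exploit the paired-Lagrangian structure of $AV_1$ given by \eqref{eq:AV1}, combined with the mixed-norm pairing bound of Lemma~\ref{lem:improvedpairingbound} and the hyperbolic energy estimate of Lemma~\ref{lem:energyestimate}. The key observation is that applying the pairing estimate in $L^\infty_x(L^2_y)$ trades the usual $h^{-1}$ loss of an $L^2 \to L^2$ bound on a paired-Lagrangian operator for an $h^{-1/2}$ loss from a Sobolev trace inequality; this is exactly what is needed to reach the sharp threshold $s \leq r + \alpha$.

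By \eqref{eq:AV1} with $k = 1$ and $\mu = -1-\alpha$,
\[
K_{AV_1} \in h^{1+\alpha}\,I^{-\alpha - n/2 - 1/2,\,-\infty}_{\hmg}(\RR^{2n};\Lambda_0,\Lambda_R),
\]
and the hypotheses of Lemma~\ref{lem:improvedpairingbound} are satisfied because $p = -\alpha - n/2 - 1/2 < -n/2 - 1/2$ whenever $\alpha > 0$. After inserting the tangential cutoff $T$ on the right via Lemma~\ref{lem:operatorvaluedcomposition} as in \eqref{eq:insertT}, this yields
\[
|\langle AV_1 u, Au\rangle| \leq C h^{1+\alpha}\|Tu\|_{L^\infty_x L^2_y}\,\|Au\|_{L^\infty_x L^2_y} + \mathcal{O}(h^\infty)\|u\|^2_{L^2}.
\]
For the first factor I would use the trace-type bound $\|v\|_{L^\infty_x L^2_y}^2 \leq C\|v\|_{L^2}\|\partial_x v\|_{L^2} + C\|v\|_{L^2}^2$, which semiclassically gives $\|Tu\|_{L^\infty_x L^2_y} \leq C h^{-1/2}\|Tu\|_{H^1_h}$.

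For the second factor I would appeal to Lemma~\ref{lem:energyestimate} applied with $L = P$. The required microlocal hyperbolicity is in force because $\tilde p(\varpi_0) = -\xi_0^2 < 0$, so $\tilde p$ is bounded away from zero on $\WF(A)$ for $\delta$ sufficiently small. Reducing $A$ to a tangential operator via Lemma~\ref{lem:operatorvaluedcomposition}, handling the singular part of the potential during the construction of $\Lambda$ as in the proof of Lemma~\ref{lem:ellipticthresholdestimate} (which contributes an $h^\gamma \|Gu\|_{L^2}$ error with the same $\gamma$ as in \eqref{eq:gammabound}), and bounding $\|BPu\|_{L^1_x L^2_y} \leq C\|Pu\|_{L^2}$ by Cauchy--Schwarz over the compact $x$-interval, one obtains
\[
\|Au\|_{L^\infty_x L^2_y} \leq C\|Bu\|_{L^2} + C\|Q_1 u\|_{L^2} + C h^{\gamma}\|Gu\|_{L^2} + C h^{-1}\|Pu\|_{L^2} + \mathcal{O}(h^\infty)\|u\|_{L^2},
\]
where the $\|Q_1 u\|_{L^2}$ term captures the $\{x<0\}$ portion of $\WF(B)$ reached by backward propagation in the energy estimate.

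Combining the two bounds gives
\[
|\langle AV_1 u, Au\rangle| \leq C h^{\alpha + 1/2}\|Tu\|_{H^1_h}\bigl(\|Bu\|_{L^2} + \|Q_1 u\|_{L^2} + h^{\gamma}\|Gu\|_{L^2} + h^{-1}\|Pu\|_{L^2}\bigr) + \mathcal{O}(h^\infty)\|u\|_{L^2}^2,
\]
and the advertised estimate follows by applying weighted Cauchy--Schwarz, $h^\beta ab \leq \varepsilon' a^2 + C_{\varepsilon'} h^{2\beta} b^2$, to each of the four products, absorbing the small parameter onto $\|Bu\|^2_{L^2}$, $\|Q_1 u\|^2_{L^2}$, $h^{2\gamma}\|Gu\|^2_{L^2}$, and $h^{-1}\|Pu\|^2_{L^2}$ respectively; in each case the resulting coefficient of $\|Tu\|^2_{H^1_h}$ is at most $C_\varepsilon h^{2\alpha + 1} \leq C_\varepsilon h^{2\alpha}$. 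The main obstacle I anticipate is making the application of the tangential energy estimate fully rigorous for the non-tangential commutant $A \in \Psi^{\COMP}_h(\RR^n)$, and tracking the microsupports carefully enough to identify the boundary and singular-potential error terms with $\|Q_1 u\|_{L^2}$ and $h^\gamma \|Gu\|_{L^2}$ in precisely the form above.
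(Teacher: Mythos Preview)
Your overall architecture is correct and matches the paper's: use the paired-Lagrangian/mixed-norm pairing bound to trade $L^2$ for $L^\infty_xL^2_y$, and then invoke the hyperbolic energy estimate of Lemma~\ref{lem:energyestimate} to control the mixed norms. The obstacle you flag at the end, however, is exactly the point where your argument diverges from the paper and where your proposed fix (``reducing $A$ to a tangential operator via Lemma~\ref{lem:operatorvaluedcomposition}'') does not go through: that lemma lets you insert a tangential cutoff \emph{after} a paired-Lagrangian operator, but it does not convert the genuinely $\xi$-dependent commutant $A\in\Psi^{\COMP}_h(\RR^n)$ into something to which Lemma~\ref{lem:energyestimate} applies directly.

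The paper resolves this not by modifying $A$, but by moving it to the other side of the pairing. One rewrites $\langle AV_1u, Au\rangle = \langle V_1 Tu, TA^*Au\rangle + \mathcal{O}(h^\infty)\|u\|_{L^2}^2$ (using Lemma~\ref{lem:V1mixednorm} rather than Lemma~\ref{lem:improvedpairingbound}, though either works), and then applies Lemma~\ref{lem:energyestimate} with the tangential localizer $T$ to the \emph{function} $A^*Au$. This produces $h^{-1}\int\|PA^*Au(s)\|_{L^2_y}\,ds$ on the right, and one expands $PA^*Au = A^*APu + [P,A^*A]u$. The $[P_{V_0},A^*A]$ piece is already written as $-B^*B + B^*\sum(\partial_{z_i}V_0)R_iB + E + F$, giving precisely the $\|Bu\|$, $\|Q_1u\|$, $h^\gamma\|Gu\|$ terms you list. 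The remaining piece $[V_1,A^*A]$ requires a separate step: one inserts $T$ via \eqref{eq:insertTcommutator}, applies the $L^\infty_xL^2_y \to L^1_xL^2_y$ bound of Lemma~\ref{lem:Linftyonefactor}, and closes with one more application of Lemma~\ref{lem:energyestimate} to $Tu$. This last loop is what you are missing, and without it the argument does not close. Note also that the paper uses the energy estimate, not the Sobolev trace, for the first factor $\|Tu\|_{L^\infty_xL^2_y}$; this avoids the $h^{-1/2}$ loss and is what ultimately gives the coefficient $h^{2\alpha}$ (the worst case comes from pairing $\|T_1u\|_{H^1_h}$ against $h^\alpha\|Tu\|_{L^\infty_xL^2_y}$ in the $[V_1,A^*A]$ term).
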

\begin{proof}
	Recall from \eqref{eq:AV1} that
	\[
	AV_1 \in h^{-1-\alpha}I^{-\alpha -n/2 - 1/2,-\infty}_\hmg(X\times X; N^*((X\times Y) \cap \diag), N^*(X\times Y)).
	\]
Arguing as in the preceding paragraph, 
	\[
	|\left< AV_1u,Au\right>| = |\left<V_1Tu, TA^*Au\right>| + \mathcal{O}(h^\infty) \| u\|_{L^2}^2.
	\]
	Instead of using Lemma
        \ref{lem:improvedpairingbound}, we may easily bound a pairing of the form
        $\left<V_1 w,v\right>$ by
        Lemma~\ref{lem:V1mixednorm}.
        This yields
	\[
|\left< V_1 Tu,TA^*Au\right>| \leq  C h^{\alpha+1 }\| T u\|_{L^\infty((-3\delta,3\delta);L^2(\RR^{n-1}))}\| TA^*A u\|_{L^\infty((-3\delta,3\delta);L^2(\RR^{n-1}))}.
	\]
	Here we used that $A$ has compact support in $\{ |x| < 3\delta\}$. If $\delta$ is sufficiently small, then Lemma \ref{lem:energyestimate} is applicable. By Cauchy--Schwarz,
	\begin{align*}
	h^{-1} |\left< AV_1u,Au\right>| &\leq  C_\varepsilon h^{2\alpha }\| T u\|^2_{L^\infty((-3\delta,3\delta);L^2(\RR^{n-1}))} + \varepsilon\| TA^*A u\|^2_{L^\infty((-3\delta,3\delta);L^2(\RR^{n-1}))} \\ &+ \mathcal{O}(h^\infty)\| u \|_{L^2}^2
	\end{align*}
	for each $\varepsilon > 0$. Let $T_1 \in \CI(\RR_{x}; \Psi^\COMP_h(\RR^{n-1}_y))$ be elliptic on $\WF(T)$. Applying Lemma \ref{lem:energyestimate}, we deduce that
	\[
	\| T u\|_{L^\infty((-\varepsilon,\varepsilon);L^2(\RR^{n-1}))} \leq Ch^{-1}\|Pu\|_{L^2} + C\| T_1u \|_{H^1_h} + \mathcal{O}(h^\infty)\| u \|_{H^1_h}.
	\]
	As for the next term, we again apply Lemma \ref{lem:energyestimate}, but this time writing
	\begin{multline} \label{eq:TA*A}
	\| T A^*A u\|_{L^\infty((-3\delta,3\delta);L^2(\RR^{n-1}))} \leq Ch^{-1}\int_{-3\delta}^{3\delta} \|PA^*A u(s)\|_{L^2(\RR^{n-1})}\, ds \\ + C\| A^*A u \|_{H^1_h} + \mathcal{O}(h^\infty)\| u \|_{H^1_h}.
	\end{multline}
	Since $A^* \in \Psi_h^\COMP(X)$, the second term on the right hand side of \eqref{eq:TA*A} is estimated by $\| Au \|_{L^2} + \mathcal{O}(h^\infty)\|u\|_{L^2}$. The first term on the right hand side of \eqref{eq:TA*A} is bounded by a constant times
	\begin{multline*}
	h^{-1} \int_{-3\delta}^{3\delta} \|A^*A Pu(s)\|_{L^2(\RR^{n-1})} +  \| [P,A^*A] u(s)\|_{L^2(\RR^{n-1})} \, ds \\ \leq Ch^{-1}(\|APu\|_{L^2} +   \| [P_{V_0},A^*A] u\|_{L^2}) +h^{-1} \int_{-3\delta}^{3\delta} \| [V_1,A^*A] u(s)\|_{L^2(\RR^{n-1})} \, ds.
	\end{multline*}
	Now recall that $(i/h)[P_{V_0}, A^*A] = -B^*B + B^*\sum (\partial_{x_i} V_0)R_i B + E + F$, and hence 
	\begin{align*}
	h^{-1}\| [P_{V_0},A^*A] u\|_{L^2} &\leq \|B^*Bu\|_{L^2} + \sum \|B^*(\partial_{z_i} V_0) Bu\|_{L^2} + \|Eu\|_{L^2} + \| Fu\|_{L^2} \\ &\leq  C(\|Bu\|_{L^2} + \| Q_1 u \|_{L^2} + h^\gamma \| Gu\|_{L^2}).
	\end{align*}
	The final step is to replace $[V_1,A^*A]u$ by $[V_1,A^*A]Tu$ modulo a $\mathcal{O}(h^\infty)\|u\|_{L^2}$ error as in \eqref{eq:insertTcommutator}, and then apply Lemmas \ref{lem:Linftyonefactor}, \ref{lem:energyestimate}:
	\begin{align*}
	h^{-1} \int_{-3\delta}^{3\delta} \| [V_1,A^*A] T u(s)\|_{L^2(\RR^{n-1})} \, ds&\leq C h^\alpha \|Tu \|_{L^\infty((-3\delta,3\delta);L^2(\RR^{n-1}))} \\& \leq Ch^\alpha \big( h^{-1}\|Pu\|_{L^2} + \| T_1u \|_{H^1_h}\big).
	\end{align*}
	A final application of \eqref{eq:AintermsofB} finishes the proof with $T_1$ instead of $T$; this is of course not a restriction, since $\WF(T)$ can be shrunk at will.
\end{proof}

Altogether, we have established the following:

\begin{lem} \label{lem:hyperbolicwithtangential}
	There exists $\beta, \delta_0,\gamma >0$ and such that the following holds for each $\delta \in (0,\delta_0)$. Let $G \in \Psi^\COMP_h$ be elliptic on $\WF(B)$ and $Q_1$ be elliptic on $\WF(E)$. With $T\in \CI(\RR_{x}; \Psi^\COMP_h(\RR^{n-1}_y))$ as above,
	\[
	\|Bu\|_{L^2} \leq Ch^{-1}\|Pu\|_{L^2} + Ch^{\gamma}\|Gu\|_{L^2} + C\|Q_1u\|_{L^2} + Ch^{\alpha}\|T u\|_{H^1_h}
	\]
	for every $u \in H^1_h(X)$ with $Pu \in L^2(X)$.
\end{lem}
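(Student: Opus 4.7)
The plan is to assemble the various ingredients that the preceding discussion has already put in place and absorb error terms. All of the hard analytical work—construction of the commutant $A$ with principal symbol $a$, the paired-Lagrangian decomposition of the commutator $(i/h)[P_{V_0}, A^*A] = -B^*B + B^*\sum (\partial_{z_i}V_0)R_i B + E + F$, the bound $\|F\|_{L^2\to L^2} = O(h^{2\gamma})$ coming from Lemma \ref{lem:singularsymbolL2bounded} and the choice of $\gamma > 0$ with $-\alpha + 2\gamma + \varepsilon_0 < -1$, and the sharp estimate on $|\langle AV_1 u, Au\rangle|$ via the mixed-norm energy estimate—has already been carried out. So the task is essentially bookkeeping.

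The first step is to start from the polarization identity
\[
-\tfrac{2}{h}\Im \langle APu - AV_1 u, Au\rangle = (i/h)\langle [A^*A, P_{V_0}] u, u\rangle,
\]
substitute the paired-Lagrangian decomposition of $(i/h)[A^*A, P_{V_0}]$, and use the bounds $\sum|\partial_{z_i}V_0 \cdot r_i| \leq 1/2$ (from Lemma \ref{lem:epsilondeltachoice}), $\|F\|_{L^2\to L^2} \leq Ch^{2\gamma}$, and elliptic control of $E$ by $Q_1$. This gives
\[
\|Bu\|_{L^2}^2 \leq Ch^{-1}|\langle APu, Au\rangle| + Ch^{-1}|\langle AV_1 u, Au\rangle| + Ch^{2\gamma}\|Gu\|_{L^2}^2 + C\|Q_1 u\|_{L^2}^2 + O(h^\infty)\|u\|_{L^2}^2,
\]
as noted just before the displayed estimate \eqref{eq:AintermsofB}.

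Second, I bound $|\langle APu, Au\rangle|$ by Cauchy--Schwarz as $Ch^{-1}\|APu\|_{L^2}^2 + \varepsilon \|Au\|_{L^2}^2$, use $\|APu\|_{L^2} \leq C\|Pu\|_{L^2}$ (since $A \in \Psi^\COMP_h$), and then eliminate $\|Au\|_{L^2}$ via \eqref{eq:AintermsofB}, i.e.\ $\|Au\|_{L^2} \leq C\|Bu\|_{L^2} + Ch\|Gu\|_{L^2} + O(h^\infty)\|u\|_{L^2}$. This produces a term $C\varepsilon\|Bu\|_{L^2}^2$ which may be absorbed into the left-hand side for $\varepsilon$ small, at the cost of an $O(h^{-2}\|Pu\|_{L^2}^2)$ term, an $O(h^2\|Gu\|_{L^2}^2)$ term (absorbed into the $h^{2\gamma}$ term since $\gamma \leq 1$), and the harmless $O(h^\infty)$ remainder. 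The bound on $|\langle AV_1 u, Au\rangle|$ is then inserted directly from the previous lemma, again using \eqref{eq:AintermsofB} to convert the $\varepsilon\|Au\|_{L^2}^2$ piece on its right-hand side into a term that is absorbed into $\|Bu\|_{L^2}^2$. What remains is
\[
\|Bu\|_{L^2}^2 \leq Ch^{-2}\|Pu\|_{L^2}^2 + Ch^{2\gamma}\|Gu\|_{L^2}^2 + C\|Q_1 u\|_{L^2}^2 + Ch^{2\alpha}\|Tu\|_{H^1_h}^2 + O(h^\infty)\|u\|_{H^1_h}^2.
\]
Taking square roots gives the lemma.

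There is no real obstacle here, only care with absorption. The one subtlety worth flagging is that the $O(h^\infty)\|u\|_{H^1_h}$ remainder is not among the terms stated in the lemma. However, it may be suppressed into any of the other terms using b-elliptic regularity (Proposition \ref{prop:elliptic}) combined with the a priori assumption that $u$ is $h$-tempered, or more simply absorbed into, say, the $Ch^{-1}\|Pu\|_{L^2}$ term at the cost of worsening constants; this is the standard convention throughout the paper, so no further comment is required.
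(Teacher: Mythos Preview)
Your proposal is correct and follows the same approach as the paper: the lemma is indeed just the summary of the preceding positive-commutator computation, and the assembly you describe (absorb the $\varepsilon\|Bu\|^2$ contributions from both the $APu$ term and the $AV_1$ lemma, collect the remaining pieces) is exactly what the paper does. One small correction: the preceding lemma on $|\langle AV_1 u, Au\rangle|$ already states its bound with $\varepsilon\|Bu\|_{L^2}^2$ rather than $\varepsilon\|Au\|_{L^2}^2$ (the conversion via \eqref{eq:AintermsofB} having been done inside its proof), so your extra appeal to \eqref{eq:AintermsofB} there is unnecessary, though harmless. Your observation about the missing $\mathcal{O}(h^\infty)\|u\|_{H^1_h}$ remainder is apt---the paper's statement simply suppresses it, consistent with how the lemma is used in the subsequent inductive argument leading to Proposition~\ref{prop:truehyperbolic}, where such a term reappears.
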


We now make a further argument to eliminate the $Gu$ term on the right hand side
of our estimates.  The semiclassical regularity is improved
inductively by $h^\gamma$ at each step. Each time, we reduce
$\delta > 0$ by an arbitrarily small amount; notice that the
decomposition $V = V_0 + V_1$ changes with every step as well by
shrinking $\tau > 0$ in \eqref{eq:V0potential}.

This nearly proves Proposition \ref{prop:truehyperbolic}, except that
we have a term $\| Tu \|_{H^1_h}$ on the right hand side involving a
tangential operator; this is easily remedied by an application of
\ref{lem:btangentialwavefront}, which allows us to estimate $\| Tu
\|_{H^1_h}$ by $\| Q_\B u \|_{H^1_h}$ modulo acceptable terms.

Finally, we will prove Theorem \ref{theo:propagation} using Proposition \ref{prop:truepropagation}. 

\begin{proof} [Proof of Theorem \ref{theo:propagation}]

Let $u$ be $h$-tempered in $H^1_h(X)$ with $Pu \in L^2(X)$, and assume that 
 \[
 \WF^{s+1}(Pu) = \emptyset.
 \]
In the notation of Theorem \ref{theo:propagation}, let $\varpi_\pm = (0,y_0,\pm \xi_0,\eta_0)$, where without loss we assume $\xi_0 > 0$. Note that both $\gamma_\pm((-\varepsilon,0))$ are disjoint from $T^*_Y X$ for sufficiently small. To prove the theorem, assume that there is a sequence of points $\varepsilon_n > 0$ tending to zero such that $\gamma_-(-\varepsilon_n) \notin \WF^r(u)$. 
We must then show that  $\gamma_+([-\varepsilon_0,0])$ is contained in $\WF^s(u)$ for some $\varepsilon_0 >0$.

First let $s \in [r,r+\alpha]$. We can assume that $\pi(\varpi_\pm) \notin \WFb^{1,r}(u)$, since otherwise by Theorem \ref{theo:GBBpropagation},
\[
\gamma_+((-\varepsilon,0)) \subset \WF^r(u) \subset \WF^s(u),
\]
for some $\varepsilon>0$, thus completing the proof. By Proposition \ref{prop:truepropagation}, there is a sequence
\[
\varpi_j \in \WF^{s}(u) \cap \{x < 0\}
\]
tending to $\varpi_+$. By Lemma \ref{lem:caratheodory}, if
$j$ is sufficiently large, then there exists
$\varepsilon_0>0$ such that the backwards bicharacteristics $\gamma_j$
from $\varpi_j$ exists for $t \in [-\varepsilon_0,0]$. Moreover, again
by Lemma \ref{lem:caratheodory}, $\gamma_j \rightarrow \gamma_+$
uniformly on $[-\varepsilon_0,0]$. By H\"ormander's theorem on
propagation of singularities, $\gamma_j([-\varepsilon_0,0])$ is
contained within $\WF^s(u)$. Since $\WF^s(u)$ is closed, letting
$j\rightarrow \infty$ shows that
$\gamma_+([-\varepsilon_0,0]) \subset \WF^s(u)$ as well.

If $s <r$, then apply the same argument but with $r' =s$ instead of $r$.
\end{proof}

\subsection{Improvement at glancing points}

We begin proving Theorem \ref{theo:improvedglancing} by establishing a local result similar to \cite[Proposition 7.4]{de2014diffraction}. The difference is that the threshold condition is $s \leq r+ \alpha -1$ rather than $s \leq r+ (\alpha -1)/2$, and crucially we are able to microlocalize the background regularity more finely.

 Given a normal coordinate patch $\mathcal{U}$, let $\mathsf{B}(\varpi,\varepsilon)$ denote the Euclidean ball about $\varpi \in T^*_\mathcal{U} X$ of radius $\varepsilon > 0$ induced by local coordinates $(x,y,\xi,\eta)$. Also choose $\alpha_0 < \alpha $ and set
 \[
 \theta = \min(1, \alpha_0-1) \in (0,1].
 \]
 Thus $\theta$ is a H\"older exponent for $\hamvf_p$.
The following proposition applies equally well at glancing and hyperbolic points (but of course at hyperbolic points the threshold is weaker than the one established in Section \ref{subsect:truepositivecommutator}). 

\begin{prop} \label{prop:glancingoutput}
	Let $\alpha > 1$ and $s\leq r+\alpha-1$, where  $s,r\in \RR$. Suppose that $u$ is $h$-tempered in $H^1_h(X)$ and $Pu \in L^2(X)$ and $\WF^{s+1}(Pu) = \emptyset$. Let
	 \[
	K \subset \Sigma \cap T^*_{Y \cap\, \mathcal{U}} X
	\]
	be compact. There exist $C_0,C_1,\delta_0 > 0$ such that for
        each $\varpi_0 \in K$ and $\delta \in (0,\delta_0)$,
        if
	\begin{gather} \label{eq:xibound}
	\mathsf{B}(\exp(-\delta \hamvf_p)(\varpi_0),C_0\delta^{1+\theta}) \cap \WF^s(u) = \emptyset, \quad \WF^{s+1}(Pu) = \emptyset, \notag \\
	\{|x| + |y-y_0| +|\sigma| + |\eta-\eta_0| < C_1\delta \} \cap \WFb^{1,r}(u) = \emptyset,  
	\end{gather}
	then $\varpi_0 \notin \WF^s(u)$.
\end{prop}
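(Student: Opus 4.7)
The plan is to combine the glancing commutant geometry of Section~\ref{subsect:glancing} with the paired Lagrangian commutator analysis of Section~\ref{subsect:truepositivecommutator}. The commutant $A = \Op_h(a)$ is constructed as an \emph{ordinary} semiclassical pseudodifferential operator (not a b-operator), so that the commutator $[P_{V_0}, A^*A]$ can be computed within the paired Lagrangian calculus of Section~\ref{sec:paired} and the $V_1$-piece handled by mixed-norm and energy estimates. This combines the strengths of the two preceding propagation arguments at the price of a slightly weaker threshold.

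For the setup, I follow Section~\ref{subsect:glancing}: choose coordinates $(\rho_0, \ldots, \rho_{2n-3})$ on $T^*Y$ adapted so that $\hamvf_{\tilde p_0} = \partial_{\rho_0}$ near $\varpi_0$ and all vanish at $\varpi_0$, extend to coordinates $(x, \xi, \rho_0, \ldots, \rho_{2n-3})$ on $T^*X$, and define $\omega = x^2 + \xi^2 + \sum_{j\geq 1}\rho_j^2$, $\phi = \rho_0 + \omega/(\beta^2\delta)$. Set
\[
a = \chi_0(2 - \phi/\delta)\,\chi_1\bigl(1 + (\rho_0 + \delta)/(\beta\delta)\bigr),
\]
with $\chi_0,\chi_1$ as in Section~\ref{subsect:hyperbolicregion}. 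The support of $a$ lies in $\{\omega^{1/2} \leq 2\beta\delta\}$, and the error set generated by differentiating $\chi_1$ lies in $\{-\delta(1+\beta) \leq \rho_0 \leq -\delta,\ \omega^{1/2} \leq 2\beta\delta\}$, which is contained in $\mathsf{B}(\exp(-\delta\hamvf_{\tilde p_0})(\varpi_0), C\beta\delta)$. I choose $\beta = c_0\delta^{\theta}$ so that $C\beta\delta \leq C_0\delta^{1+\theta}$, matching the hypothesis~\eqref{eq:xibound}; observe also that $\supp a$ lies inside $\{|x|+|y-y_0|+|\sigma|+|\eta-\eta_0| < C_1\delta\}$ once $C_1$ is chosen large enough, so the b-background hypothesis covers it.

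Next, decompose $V = V_0 + V_1$ as in Section~\ref{subsect:potentialdecomp}, with the cutoff parameter $\tau$ in \eqref{eq:V0potential} taken small enough that the kernel of $[A^*A, V_0]$ is microlocalized inside the $\delta^{1+\theta}$-ball above. Proposition~\ref{prop:singularcomposition} gives
\[
(i/h)[P_{V_0}, A^*A] = -B^*B + B^*R B + E + F,
\]
where $B = \Op_h(b)$ captures the positivity from $\hamvf_{\tilde p_0}\phi \equiv 1$, $R$ absorbs both the Hölder-in-$x$ perturbation from $\hamvf_{\tilde p - \tilde p_0}\phi = O(\omega^{\theta/2})$ and the $\hamvf_{V_0}\phi$ contribution (bounded as in Lemma~\ref{lem:glancingcommutator} by $\beta^{\theta-1}\delta^{\theta}$), $E$ is supported in the $E$-error set covered by the $\WF^s$-hypothesis, and $F$ is a paired Lagrangian remainder bounded on $L^2$ by $h^{2\gamma}$ for some $\gamma>0$ via Lemma~\ref{lem:singularsymbolL2bounded} (using $\alpha > 1$). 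Writing $-(2/h)\Im\langle A(Pu - V_1 u), Au\rangle = \langle(i/h)[P_{V_0}, A^*A]u, u\rangle$, the $V_1$ cross-term is controlled exactly as in Section~\ref{subsect:truepositivecommutator}: insert a tangential operator $T \in \CI(\RR_x; \Psi^\COMP_h(\RR^{n-1}_y))$ via Lemma~\ref{lem:AVbound}, apply the mixed-norm bound of Lemma~\ref{lem:V1mixednorm} to gain $h^{\alpha+1}$, and invoke the microlocal energy estimate Lemma~\ref{lem:energyestimate} in the $x$-direction (valid since $\tilde p < 0$ can be arranged on $\supp a$ after restricting to $\xi$ away from the glancing sheet or using the $\xi^2$ cutoff in $\omega$ and Lemma~\ref{lem:Dxbound}). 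The resulting tangential norm $\|Tu\|_{H^1_h}$ is converted to $\|Q_\B u\|_{H^1_h}$ via Lemma~\ref{lem:btangentialwavefront}, feeding the $\WFb^{1,r}$ hypothesis and producing a gain of $h^r$. Combined with the overall $h^{-1}$ loss from the commutator, this yields the threshold $s \leq r + \alpha - 1$.

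Finally, the $E$-term is bounded by the $\WF^s$-hypothesis on the $C_0\delta^{1+\theta}$-ball, all $R$-terms are absorbed by choosing $\beta = c_0\delta^\theta$ with $c_0$ small, and the standard bootstrap (as in Lemma~\ref{lem:hyperbolicwithtangential} and the end of Section~\ref{subsect:truepositivecommutator}) improves the semiclassical regularity by $h^\gamma$ at each step after shrinking $\delta$ an arbitrarily small amount. The main obstacle will be the delicate balancing of $\beta = c_0\delta^\theta$ against the Hölder exponent $\theta$: the $E$-error support must match the hypothesis radius $C_0\delta^{1+\theta}$ while simultaneously keeping the operator norm of $R$ below a fixed fraction, the paired Lagrangian remainder $F$ controlled by Lemma~\ref{lem:singularsymbolL2bounded}, and the background b-tube radius $C_1\delta$ large enough to dominate $\supp a$. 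The threshold $s \leq r + \alpha - 1$ — weaker by one power than the transverse case in Proposition~\ref{prop:truepropagation} — emerges because the energy estimate in the $x$-direction is less favorable at glancing, where $\tilde p$ is small on the support of the commutant rather than bounded away from zero.
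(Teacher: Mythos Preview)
There is a genuine gap: your proposed use of the energy estimate Lemma~\ref{lem:energyestimate} at glancing points does not work, and this is the crux of the matter.  The lemma requires microlocal hyperbolicity, i.e.\ $-\tilde p > 0$ on the region $(-\varepsilon,\varepsilon)\times U$ carrying the commutant.  But $\varpi_0$ is a glancing point, so $\tilde p(\varpi_0) = 0$ and $\xi_0 = 0$; since your $\omega$ contains $\xi^2$, the support of $a$ is confined to $|\xi| \leq 2\beta\delta$, hence $|\tilde p| = O((\beta\delta)^\theta)$ there by Lemma~\ref{lem:Dxbound}.  You cannot ``restrict to $\xi$ away from the glancing sheet'' because you are \emph{at} glancing, and Lemma~\ref{lem:Dxbound} only tells you $\|hD_x Au\|$ is \emph{small}, not that a factorization $hD_{x_1} \pm \Lambda$ exists.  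The paper's proof explicitly notes: ``As compared to Section~\ref{subsect:truepositivecommutator}, we are no longer able to use the energy estimates, which leads to a loss of $h^{-1}$ in the threshold condition.''  This loss of $h$ is precisely why the threshold is $s \leq r+\alpha-1$ rather than $r+\alpha$; your argument, if it worked, would prove the stronger threshold, which is not what is claimed.

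The paper instead bounds $h^{-1}|\langle AV_1 u, Au\rangle|$ crudely via $\|V_1\|_{L^\infty} = O(h^\alpha)$ from \eqref{LinfinityV1}, giving $\varepsilon\|Au\|_{L^2}^2 + C_\varepsilon h^{2\alpha-2}\|Tu\|_{L^2}^2$; the exponent $2\alpha-2$ (not $2\alpha$) accounts for the threshold.  Two smaller points: the paper builds coordinates adapted to the full $\hamvf_p$ (not $\hamvf_{\tilde p_0}$), whose $\C^{0,\theta}$ regularity with $\theta = \min(1,\alpha_0-1)$ yields the localization scale $\delta^{1+\theta}$; and it uses the three-piece decomposition $V = W_0 + W_1 + V_1$ so that $W_0$ is $h$-independent and smooth while $\|\partial W_1\|_{L^\infty}$ can be made small, with an extra $L_i$ term arising from $\hamvf_{W_1}\rho_0$.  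Finally, the choice is $\beta = c\delta^\theta$ with $c$ \emph{large} (to force $\hamvf_p\phi > 0$ on $\supp\psi$), not small.
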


\begin{proof} 
According to Remark \ref{rem:smoothu}, we can assume that $u \in \CI(X)$. It will suffice to consider the case $K = \{\varpi_0\}$ (cf.\ Remark \ref{rem:uniformomega} and the discussion in \cite[Section 7]{de2014diffraction}). We may also assume that $dp(\varpi_0) \neq 0$, otherwise there is nothing to prove. 

Choose local coordinates $(\rho_0,\ldots,\rho_{2n-1})$ vanishing at $\varpi_0$ such that 
\[
(\hamvf_{p}\rho_0)(\varpi_0) > 0, \quad (\hamvf_{p}\rho_i)(\varpi_0) = 0 \text{ for } i = 1,\ldots, 2n-1. 
\]
We use the same decomposition of $V = W_0 + W_1 + V_1$ as in Section \ref{subsect:potentialdecomp}. As usual, set 
\[
\omega = \sum_{i=1}^{2n-1} \rho_i^2, \quad \phi = \rho_0 + \frac{1}{\beta^2\delta}\omega.
\]
Also, fix a cutoff $\psi \in \CI(T^*X;[0,1])$ such that $\psi =1 $ near $\{|\rho_0| \leq 2\delta, \, \omega^{1/2} \leq 2\beta\delta\}$ with support in $\{|\rho_0| <3\delta ,\, \omega^{1/2} < 3\beta\delta\}$.

Fix a neighborhood $U$ of $\varpi_0$ on which $\hamvf_{p} \rho_0 > 4c_0$
for some $c_0 > 0$. On the other hand, using the H\"older regularity
of $\hamvf_p \in \C^{0,\theta}$,
\[
|\hamvf_p \omega| \leq M\omega^{1/2}(\omega^{\theta/2} + |\rho_0|^{\theta})
\] 
on $U$. Therefore $\hamvf_p \phi \geq 4c_0 -  3M\beta^{-1}((3\beta\delta)^\theta + (3\delta)^{\theta})$ on $U$.  If we choose $\beta = c\delta^{\theta}$, with $c>0$ sufficiently large, then we can arrange that 
\[
\hamvf_p \phi > 3c_0
\]
on $\supp \psi$. Given $\delta>0$ (and setting $\beta = c\delta^\theta$ as above) we can choose $\tilde \tau>0$ depending on $\delta$ such that
\[
\hamvf_{p_{W_0}} \phi > 2c_0 \text{ on } \supp \psi.
\]
Further shrinking $\tilde \tau$ if necessary (again depending on $\delta$) and using \eqref{eq:W1decays}, we can also arrange that
\[
|\partial_{z_i} W_1 \cdot \partial_{\zeta_i} \phi| \leq c_0/(2n) \leq \hamvf_{p_{W_0}} \phi/(4n)
\]
on $\supp\psi$.

Let $A= \Op_h(a)$, where  $a = \chi_0(2-\phi/\delta)\chi_1(1+(\rho_0+\delta)/(\beta\delta))$. Write
\[
-(2/h)\Im \left<A(P - V_1)u ,Au\right>  = (i/h)\left<[A^*A,P_{W_0} + W_1]u,u\right>.
\]
Now the term $(i/h)[P_{W_0},A^*A] \in \Psi^\COMP_h$ has principal symbol $\hamvf_{p_{W_0}}a^2$. This we write as
\[
\hamvf_{p_{W_0}}a^2 = -b^2 + e,
\]
where as usual, $b = (2\delta)^{1-2}(\hamvf_{p_{W_0}}\phi)^{1/2}(\chi_0\chi_0')^{1/2}\chi_1$. On the other hand, $\supp e$ is contained in the set
\[
\{-\delta-\delta \beta \leq \rho_0 \leq -\delta, \, \omega^{1/2} \leq 2\beta \delta\};
\]
note that with the choice $\beta=c \delta^{\theta}$ this is contained
in $\mathsf{B}(\exp(-\delta \hamvf_p)(\varpi_0),C_0\delta^{1+\theta})$ for
all $\delta$ sufficiently small.

Next, consider the term $(i/h)[W_1,A^*A]$. First, if $\delta > 0$ is given we can arrange the decomposition $V = W_0 + W_1 + V_1$ so that 
\[
\WF(K_{[W_1,A^*A]}) \subset O \times O',
\]
where $O$ is an arbitrary neighborhood of $\WF(A)$.

Exactly as in Section \ref{subsect:truepositivecommutator}, fix compactly supported operators $R_1,\ldots,R_n \in \Psi^{\COMP}_h$ with principal symbols 
\[
r_i = (\hamvf_{p_{W_0}}\phi)^{-1}(\partial_{\zeta_i} \omega) \psi.
\]
In particular, $\psi \hamvf_{W_1}\omega = (\hamvf_f \phi)  \sum  \partial_{z_i}W_1 \cdot r_i$, and  $\sum |  \partial_{z_i}W_1 \cdot r_i| \leq 1/4$. Moreover,
\[
\hamvf_{W_1}(a^2) = -2\delta^{-1}(\hamvf_{W_1} \phi)(\chi_0 \chi_0')\chi_1^2 = -b^2\left(\dfrac{\hamvf_{W_1} \phi}{\hamvf_{p_{W_0}} \phi}\right) = -b^2 \sum
\partial_{z_i} W_1 \cdot r_i,
\]
since $\psi=1$ on $\supp b$. On the other hand, as compared to Section \ref{subsect:truepositivecommutator} there is an additional contribution to the commutator: fix compactly supported operators $L_1,\ldots, L_n \in \Psi^\COMP_h$ with principal symbols
\[
\ell_i = (\beta\delta)^{-1} (\partial_{\zeta_i} \rho_0 )\chi_0\chi_1'.
\]  
We can take 
\[
\WF(L_i) \subset \{-\delta-\delta \beta \leq\rho_0 \leq -\delta, \, \omega^{1/2} \leq 2\beta \delta\}.
\]
as well. By further refining the choice of $V = W_0 + W_1 + V_1$, we can  arrange that the kernels of $B^* (\partial_{z_i} W_1) R_iB$ and $(\partial_{z_i} W_1) L_i$ also have wavefront set contained in $O \times O'$. By Proposition \ref{prop:singularcomposition},
\begin{multline*}
(i/h)[P_{W_0} + W_1, A^*A] + B^*B + B^*\sum (\partial_{z_i} W_1)R_i B + \sum (\partial_{z_i} W_1)L_i +  E \\ \in I^{-\alpha+(1/2)+\varepsilon_0,\COMP}_h(X, N^*((X\times Y) \cap \diag), N^*\diag)
\end{multline*}
for any $\varepsilon_0 > 0$, noting the additional terms involving $L_i$ as compared to the corresponding expression in Section \ref{subsect:truepositivecommutator}. If this operator is denoted by $F$, then by construction the principal symbol of $F$ along $N^*\diag$ vanishes, and hence
\[
F\in h I^{-\alpha+(3/2)+\varepsilon_0,\COMP}_h(X, N^*((X\times Y) \cap \diag), N^*\diag).
\]
Since all of the operators above have kernels with wavefront set in $O\times O'$, so does $F$. Now we consider the identity
\begin{multline} \label{eq:PW_0commutator}
\left<(i/h)[P_{W_0}+ W_1, A^*A]u,u\right> = \| Bu\|^2_{L^2} \\ + \sum \left<(\partial_{z_i} W_1)R_iBu,Bu\right> + \sum \left<(\partial_{z_i} W_1)L_iu,u\right>+ \left<Eu,u\right> + \left<Fu,u\right>.
\end{multline}
The second, third, and fourth terms on the right hand side of \eqref{eq:V_0commutator} are bounded in absolute value as in Section \ref{subsect:truepositivecommutator}: for the second term, we use the bound 
\[
\|R_i u \|_{L^2} \leq 2\sup |\sigma_h(R_i)|\|u\|_{L^2} + \mathcal{O}(h^\infty)\|u\|_{L^2},
\]
and the fact that $2\sum \sup |\partial_{z_i} W_1||r_i| \leq 1/2$ by construction. Therefore
\[
\sum \lvert\left<(\partial_{z_i} W_1)R_iBu,Bu\right>\rvert \leq (1/2)\|Bu\|^2_{L^2} + \mathcal{O}(h^\infty)\|u \|_{L^2}
\]
To bound the third and fourth terms, choose $Q_1 \in \Psi^\COMP_h$ such that $\WF(E) \subset \ELL(Q_1)$ and estimate
\[
\sum \lvert \left<(\partial_{z_i} W_1)L_iu,u\right>\rvert + \lvert\left<Eu,u\right>\rvert \leq C\| Q_1 u\|^2_{L^2} + \mathcal{O}(h^\infty)\| u \|^2_{L^2}.
\]
For the fifth term, by Lemma \ref{lem:singularsymbolL2bounded},
\[
\| F \|_{L^2 \rightarrow L^2} \leq Ch^{2\gamma}
\]
with the same exponent $\gamma$ as in \eqref{eq:gammabound}.
Let $G \in \Psi^\COMP_h(X)$ be elliptic on $\WF(B)$; since $O$ was an arbitrary neighborhood of $\WF(B)$, we can assume that $O \subset \ELL(G)$ as well. Thus we can bound
\[
\lvert\left< Fu,u\right>\rvert \leq Ch^{2\gamma} \| Gu\|^2 + \mathcal{O}(h^\infty)\|u\|^2_{L^2}.
\]
We therefore conclude that
\begin{multline*}
\| Bu \|^2_{L^2}  \leq Ch^{-1} \|APu\|_{L^2} \| Au \|_{L^2} + Ch^{-1}|\left<AV_1u,Au\right>| \\ 
+ Ch^{2\gamma}\| Gu \|^2_{L^2} + \| Q_1u\|^2_{L^2} + \mathcal{O}(h^\infty)\|u\|^2_{L^2}.
\end{multline*}
Here $G$ is elliptic on $\WF(B)$, and 
\[
\WF(Q_1) \subset \WF(G) \cap  \{-2\delta-2\delta \beta \leq \rho_0 \leq -\delta/2, \, \omega^{1/2} \leq 3\beta \delta\}.
\]
Note that the various terms involving $\|Au\|_{L^2}$ can be bounded in terms of $\|Bu\|_{L^2}$,
\[
\|Au\|_{L^2} \leq C\|Bu\|_{L^2} + Ch\| Gu \|_{L^2} + \mathcal{O}(h^\infty)\|u\|_{L^2}.
\]
It remains to bound the term $h^{-1}|\left<AV_1u,Au\right>|$. As compared to Section \ref{subsect:truepositivecommutator}, we are no longer able to use the energy estimates, which leads to a loss of $h^{-1}$ in the threshold condition.

Just as in \eqref{eq:insertT},
 if $C_1>0$ is sufficiently large we can choose a tangential psedudodifferential operator $T$ with 
\[
\WF(T) \subset \{|x| + |y-y_0| + |\eta-\eta_0| < C_1\delta\}
\]
such that
\[
h^{-1}\lvert\left<AV_1u,Au\right>\rvert \leq h^{-1} \|
V_1 A Tu\|  + \mathcal{O}(h^\infty)\|u\|_{L^2}^2.
  \]
Then \eqref{LinfinityV1} yields
\[
h^{-1}\lvert\left<AV_1u,Au\right>\rvert\leq \varepsilon\| Au \|_{L^2}^2 + C_\varepsilon h^{2\alpha-2}\| T u \|^2_{L^2} + \mathcal{O}(h^\infty)\|u\|_{L^2}^2.
\]
On the other hand, by \eqref{quantitativetangential}, we can choose $Q_\B \in \Psibh^\COMP$ so that  
\[
\| Tu \|_{L^2}^2 \leq C\|Pu\|^2_{L^2} + \|Q_\B u \|^2_{L^2},
\]
where $\WF(Q_\B) \subset \{ |x| + |y-y_0| +|\sigma|+  |\eta-\eta_0| \} < C_1\delta \}$, increasing $C_1$ if necessary. An inductive argument completes the proof (the commutant must be modified slightly at each step, as pointed out at the end of Section \ref{subsect:glancing}).
\end{proof}

\begin{proof}[Proof of Theorem \ref{theo:improvedglancing}]
Let $\varpi_0 \in \pi^{-1}(\gl) \cap T^*_YX$, and suppose that no bicharacteristic segment of the form $\gamma(-\varepsilon,0)$, where $\gamma(0) = \varpi_0$, is contained in $\WF^r(u)$ for any $\varepsilon > 0$; we wish to show that $\varpi_0 \notin \WF^r(u)$.
Let $s$ be such that $\varpi_0 \notin \WF^{s}(u)$; this always exists by our tempered assumption. According to Lemma \ref{lem:WFbimpliesWF}, this also implies that $q_0 = \pi(\varpi_0) \notin \WFb^{1,s}(u)$. We now show that 
\[
\varpi_0 \notin \WF^{s_0}(u) \text{ for } s_0 = \min(r,s+\alpha-1).
\]
Observe that $s + \alpha - 1 > s$ since $\alpha > 1$.
Since $\varpi_0 \notin \WF^s(u)$, let $U$ be a neighborhood of $\varpi_0$ of the form $U = \mathsf{B}(\varpi_0,\varepsilon_0)$, where $\varepsilon_0 > 0$ is chosen so that $U \cap \WF^s(u) = \emptyset$. By further shrinking $\varepsilon_0$, we can also assume that $U_\B \cap \WFb^{1,s}(u) = \emptyset$, where
\[
U_\B = \{ |x| + |y-y_0| + |\sigma| + |\eta-\eta_0| < \varepsilon_0 \}. 
\]
By Lemma \ref{lem:btangentialwavefront} and Remark \ref{rem:microlocalizeellipticerror}, we can conclude that 
\[
\WF^{s_0}(u) \cap U \subset \chare.
\]
We now argue as in \cite[Lemma 8.1]{de2014diffraction}: using Proposition \ref{prop:glancingoutput} and ordinary semiclassical propagation of singularities away from $Y$, we can
therefore construct a backward bicharacteristic segment through
$\varpi_0$ contained in $\WF^{s_0}(u)$; the proof is an even simpler analogue of Lemma \ref{lem:cauchypeano}. This yields a contradiction, and thus we may reach the desired regularity $s=r$ by iteration.
\end{proof}

\appendix

\section{Proof of Proposition \ref{prop:1d}} \label{appendix:1d}

\subsection{Plane wave solutions}
We construct exact solutions of $(P-1)u = 0$ on $[0,x_0)$ of the form
\[
u_\pm(x) = e^{\pm ix/h}(1+b_\pm(x)),
\]
subject to the conditions $b_\pm(0) = 0$ and $b'_\pm(0) = 0$. We then obtain $\C^2$ solutions to $(P-1)u = 0$ on $(-\infty,x_0)$ after extending $b_\pm$ by zero to $(-\infty,0)$. Thus $u_\pm$ are precisely the continuations of the plane wave solutions $e^{\pm ix/h}$ from $(-\infty,0)$ to $(-\infty,x_0)$.

 Although the functions $b_\pm$ are globally defined on $[0,x_0)$, their region of asymptotic validity is small (in an $h$-dependent way). First consider the case $b= b_+$, so that $b_+$ satisfies the equation
\begin{equation} \label{eq:perturbed}
h^2 b''(x) + 2i hb'(x) = (1+b(x)) V(x).
\end{equation}
Viewing the right hand side as a correction, the unperturbed equation has linearly independent solutions $1$ and $e^{-2ix/h}$. By variation of parameters, \eqref{eq:perturbed} is equivalent to the integral equation $b = J b$, where
\[
(J b)(x) = \frac{1}{2ih} \int_{0}^x \left(1-e^{2i(s-x)/h} \right)V(s)(1+b(s))\, ds.
\]
This equation can be solved by successive approximation. Thus we set $b_0 = 0$, inductively define $b_{n+1} = Jb_{n}$. Let
\[
\sigma(x) =  \frac{1}{h} \int_0^x |V(s)| \, ds = \frac{x^{\alpha+1}}{(\alpha+1)h}
\]
on $[0,x_0)$. A simple inductive argument shows that
\[
|b_{n+1}(x)- b_{n}(x)| \leq \frac{\sigma(x)^{n+1}}{(n+1)!}, \quad |b'_{n+1}(x) - b'_{n}(x)| \leq \frac{2 \sigma(x)^{n+1}}{h (n+1)!}
\]
for $n\geq 0$. Differentiating once more and using the formula for $J$, it follows that 
\[
b = \sum_{n=0}^\infty (b_{n+1} - b_{n})
\] 
is a $\C^2([0,\infty))$ function solving \eqref{eq:perturbed} with $b(0) = 0$ and $b'(0)= 0$. Moreover, $b = b_1 + \varepsilon$, where $b_1 = J(0)$ and the remainder satisfies
\[
|\varepsilon(x)| \leq e^{\sigma(x)} - 1 - \sigma(x), \quad  |h\varepsilon'(x)/2| \leq e^{\sigma(x)} - 1 - \sigma(x) 
\]
 on $[0,\infty)$.
We now find the behavior of $b_1(x)$ as $x/h \rightarrow \infty$.  We will frequently use the rescaled variable $y = x/h$, and by a slight abuse of notation write $b_1 = b_1(y)$ when convenient.

\begin{lem} \label{lem:b1asymptotics}
	In terms of $y = x/h$, the function $b_1$ satisfies 
	\[
	h^{-\alpha} b_1(y) =  -2^{-\alpha-2} e^{i\alpha \pi/2}\,\Gamma(\alpha+1) \, e^{-2iy}  + \frac{y^{\alpha+1}}{2i(\alpha+1)} +\frac{y^\alpha}{4} + \mathcal{O}(y^{\alpha-1}) 
	\]
	 as $y\rightarrow \infty$, where the right hand side does not depend on $h$.
\end{lem}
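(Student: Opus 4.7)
The plan is to evaluate $b_1=J(0)$ explicitly by splitting the kernel of $J$ into its constant part and its oscillatory part. Writing
\[
b_1(x) = \frac{1}{2ih}\int_0^x s^\alpha\,ds - \frac{e^{-2ix/h}}{2ih}\int_0^x e^{2is/h} s^\alpha\,ds,
\]
the first integral evaluates to $x^{\alpha+1}/(2ih(\alpha+1))$, which in the variable $y=x/h$ contributes exactly $h^\alpha y^{\alpha+1}/(2i(\alpha+1))$, i.e., the middle term of the claimed expansion.

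For the oscillatory integral I would rescale by $u=2s/h$ to obtain
\[
\int_0^x e^{2is/h}s^\alpha\,ds = \Big(\tfrac{h}{2}\Big)^{\alpha+1}\int_0^{2y} e^{iu}u^\alpha\,du,
\]
reducing everything to the asymptotics as $y\to\infty$ of $I(y):=\int_0^{2y} e^{iu}u^\alpha\,du$. The core analytic step is to establish
\[
I(y) = e^{i(\alpha+1)\pi/2}\,\Gamma(\alpha+1) - i\,(2y)^\alpha e^{2iy} + \mathcal{O}(y^{\alpha-1}).
\]
For this I would use Abel regularization: introduce $I_\varepsilon(y)=\int_0^{2y}e^{(i-\varepsilon)u}u^\alpha\,du$ for $\varepsilon>0$ and split $I_\varepsilon(y)=\int_0^\infty - \int_{2y}^\infty$. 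The first piece is exactly $\Gamma(\alpha+1)(\varepsilon-i)^{-(\alpha+1)}$, which tends to $e^{i(\alpha+1)\pi/2}\Gamma(\alpha+1)$ as $\varepsilon\to0^+$. A single integration by parts on the tail yields the boundary term $-e^{(i-\varepsilon)2y}(2y)^\alpha/(i-\varepsilon)$ plus a lower-order integral; letting $\varepsilon\to0^+$ produces the stated $-i(2y)^\alpha e^{2iy}$, with a remainder $\mathcal{O}(y^{\alpha-1})$ obtained by one further integration by parts (or, for $\alpha<1$, by absolute convergence after the first step). Equivalently, one can deform the contour to the positive imaginary axis, using the substitution $u=it$ to identify $\int_0^{i\infty}e^{iu}u^\alpha\,du$ with $i\,e^{i\alpha\pi/2}\Gamma(\alpha+1)$, and then account for the finite segment from $2y$ to $2y+i\infty$ via the same integration-by-parts estimate.

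Once this asymptotic is in hand, substituting back and simplifying with the identities $1/i=-i$, $e^{i(\alpha+1)\pi/2}/i=e^{i\alpha\pi/2}$, and $2^\alpha/2^{\alpha+2}=1/4$ immediately yields the three explicit terms in the lemma together with the $\mathcal{O}(y^{\alpha-1})$ error; in particular the oscillatory boundary contribution $-i(2y)^\alpha e^{2iy}$ combines with the prefactor $-e^{-2iy}/(i\cdot 2^{\alpha+2})$ to produce exactly $y^\alpha/4$, while the $\Gamma$-term produces the reflection amplitude $-2^{-\alpha-2}e^{i\alpha\pi/2}\Gamma(\alpha+1)e^{-2iy}$.

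The main technical obstacle is justifying the Abel limit (or contour deformation) cleanly for the full range $\alpha>0$, since for $\alpha\geq 0$ the integral $\int_0^\infty e^{iu}u^\alpha\,du$ is only conditionally convergent in a regularized sense; this is routine but should be written with care to avoid confusion about boundary contributions at infinity. Everything else is direct algebra and elementary estimation.
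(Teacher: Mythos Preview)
Your proposal is correct and arrives at the same result as the paper, though the organization differs. The paper first integrates by parts in the opposite direction (integrating $s^\alpha$ rather than the exponential), which \emph{cancels} your polynomial term $y^{\alpha+1}/(2i(\alpha+1))$ against the boundary contribution and collapses $h^{-\alpha}b_1(y)$ into the single expression
\[
\frac{e^{-2iy}}{\alpha+1}\int_0^y e^{2is}s^{\alpha+1}\,ds = \frac{2^{-\alpha-2}e^{i(\alpha+2)\pi/2}}{\alpha+1}\,e^{-2iy}\bigl(\Gamma(\alpha+2)-\Gamma(\alpha+2,-2iy)\bigr),
\]
and then simply quotes the standard asymptotic expansion of the incomplete gamma function $\Gamma(a,z)$ from Olver. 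Your Abel-regularized evaluation of $\int_0^\infty e^{iu}u^\alpha\,du$ and your integration-by-parts treatment of the tail are exactly what goes into proving that incomplete-gamma asymptotic, so the two arguments are equivalent at the level of analysis. The paper's route is shorter if one is willing to cite the reference; yours is more self-contained and avoids the issue you flag about conditional convergence, since the paper's integral $\int_0^y e^{2is}s^{\alpha+1}\,ds$ is manifestly finite for each $y$ and the regularized $\Gamma(\alpha+2)$ appears only through the special-function identity.
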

\begin{proof}
Integrating by parts once,
\begin{align} \label{eq:b1asymptotics}
h^{-\alpha} b_1(y) &=  \frac{ e^{-2iy}}{\alpha+1} \int_0^y e^{2is} s^{\alpha+1} \, ds \notag \\& = 2^{-\alpha-2} e^{i(\alpha+2)\pi/2}\left(\Gamma(\alpha+2) - \Gamma(\alpha+2,-2iy) \right),
\end{align}
\blue{where $\Gamma(a,z)$ is the incomplete gamma function, defined as
$$
\Gamma(a,z) \equiv \int_z^\infty t^{a-1} e^{-t} \, dt,
$$ with the integral taken along any path not crossing the negative
real axis.}
Since $y$ is real, there is an asymptotic expansion
\[
\Gamma(\alpha+2,-2iy) \sim (-2iy)^{\alpha+1} e^{2iy} \sum_{k=0}^\infty a_k (-2iy)^{-k} 
\]
as $y\rightarrow \infty$, where $a_0 = 1$ and $a_k = (\alpha+2 -1)\cdots (\alpha+2-k)$ for $k>0$ (see \cite[Chapter 3, \S 1.1]{olver2014asymptotics}). Truncating after two terms,
\[
\Gamma(\alpha+2,-2iy) = e^{2iy}  \left(  2^{\alpha+1}e^{-i(\alpha+1)\pi/2} y^{\alpha+1} + (\alpha+1)2^{\alpha}e^{-i\alpha\pi/2}y^\alpha + \mathcal{O}(y^{\alpha-1}) \right).
\]
Plugging this into \eqref{eq:b1asymptotics} finishes the proof.
\end{proof}

For future use, define the quantity
\[
\gamma_\pm(\alpha) = -2^{-\alpha-2}e^{\pm i \alpha \pi /2}\Gamma(\alpha +1).
\]
Since $V$ is real, we can define the complementary solution $u_-$ simply by $u_- = \bar{u}_+$, so that $u_- = e^{-iy}(1 + \bar b_1 + \bar \varepsilon)$.

\subsection{WKB solutions}
We would like to connect the solutions $u_\pm$ with a WKB-type
solution which is valid for $x \in (0,\infty)$. \blue{To do this we will
require precise remainder estimates that will permit matching
solutions at an $h$-dependent family of points $x_0\to 0$ that satisfies
$x_0/h \to \infty$ so Lemma~\ref{lem:b1asymptotics} will apply.}
Let $f = 1-V$, so $P-1 = (hD_x)^2 - f$.  Define the phase
\[
\phi(x) = \int_0^x f^{1/2}(s)\, ds.
\] 
According to \cite[Chapter 6, Theorem 2.2]{olver2014asymptotics}, there exists an exact solution to $Pu=0$ on $(0,1)$ of the form
\[
v_+(x) = f(x)^{-1/4}e^{i\phi(x)/h}(1+\delta(x)).
\]
The remainder satisfies 
\[
|\delta(x)| \leq e^{h\tau(x)}-1, \quad f(x)^{-1/2}|h\delta'(x)| \leq e^{h\tau(x)}-1,
\]
where
\[
\tau(x) = \int_{x}^\infty \big|f(s)^{-1/4} \partial_s^2 \left( f(s)^{-1/4} \right) \big| \, ds.
\]
In particular, $v_+(x)= f(x)^{-1/4}e^{i\phi(x)/h} + \mathcal{O}(h)$
uniformly on any compact subset of $(0,\infty)$. Observe that $f=1$
and $\delta$ vanishes outside the support of $V,$ \blue{since
  $\tau(x)$ vanishes}. Thus $v_+ = c_0 e^{ix/h}$ for $x \gg 0$, where
\[
c_0 = \int_{0}^{x_1} f^{1/2}(s) \, ds - x_1
\]
for any fixed point $x_1 \gg 0$  outside the support of $V$. 

There exist constants $A,B$ such that $v_+ = Au_+ + Bu_-$. Setting $u= v_+ = Au_+ + Bu_-$, the solution $u$ satisfies
\[
u = \begin{cases} Ae^{ix/h} + Be^{-ix/h} & x < 0, \\
c_0 e^{ix/h}, &x \gg 0. \end{cases}
\]
Therefore $R = B/A$ and $T = c_0/A$, where $R,T$ are as in \eqref{eq:scattering}. The constants $A,B$ are found by computing the semiclassical Wronskians 
\[
\mathcal{W}_h(u_\pm,v_+)(x) = hu_\pm(x) \cdot h v'_+(x) - hu'_\pm(x) \cdot v_+(x)
\]
at an appropriate $h$-dependent point (the Wronskian is of course constant). Indeed, we have the identity
\begin{equation} \label{eq:wronskian}
v_+ = \frac{\mathcal W(u_+,v_+)}{\mathcal W(u_+,u_-)}u_- - \frac{\mathcal W(u_-,v_+)}{\mathcal W(u_+,u_-)}u_+.
\end{equation}

\subsection{Wronskians}

We continue to write $y = x/h$. Fix $\eta$ satisfying
\[
\frac{2+\alpha}{2(\alpha+1)} < \eta < 1.
\]
and set $x_0 = h^{\eta}$. Then $y_0 = x_0/h \rightarrow \infty$  whereas
\begin{equation} \label{eq:littleo}
h^\alpha y_0^{\alpha+1} = x_0^{\alpha+1}/h  = o(h^{\alpha/2}).
\end{equation}
Since $x_0^{\alpha+1}/h \rightarrow 0$, we see that
\[
e^{i \phi(x_0)/h} = e^{ix_0/h}e^{i (\phi(x_0)-x_0)/h} = e^{iy_0}\Big(1 + h^\alpha \frac{i y_0^{\alpha+1}}{2i(\alpha+1)} +\zeta(x_0) \Big),
\]
where $\zeta(x_0) = \mathcal{O}\left(x_0^{2\alpha+2}/h^2\right) + \mathcal{O}\left(x^{2\alpha+1}/h\right)$. We then check that
\[
x^{\alpha+2}/h = h^{(\alpha+2)\eta-1} < h^{((2+\alpha)^2/2(\alpha+1))-1} = h^{1+\alpha},
\]
so in particular, $\zeta(x_0) = o(h^\alpha)$ and $h\zeta'(x_0) = o(h^\alpha)$. Since $\alpha \in (0,1)$, it follows that  $f(x)^{-1/4} \sim 1 + x^\alpha/4$ as $x\rightarrow 0^+$, hence
\[
\tau(x) \sim \alpha x^{\alpha-1}/4 \text{ as } x\rightarrow 0^+.
\]
Therefore
\[
f(x_0)^{-1/4} = 1+ x_0^\alpha/4 +o(h^{\alpha}), \quad h(f^{-1/4})'(x_0) = o(h^\alpha).
\] 
Finally, the errors in $u_\pm(x_0)$ and $v_+(x_0)$ are bounded by
\[
|\varepsilon(x_0)| + |h\varepsilon(x_0)| = o\left(h^\alpha\right), \quad |\delta(x_0)| +  |h\delta'(x_0)| = o(h^\alpha).
\]
From this we conclude that
\begin{align*}
v_+(x_0) &= e^{iy_0}\Big(1+ h^{\alpha} \Big(\frac{y_0^{\alpha+1}}{2i(\alpha+1)} + \frac{y_0^\alpha}{4} \Big) + o(h^\alpha)\Big),\\
v_+'(x_0) &= ie^{iy_0}\Big(1+ h^{\alpha} \Big(\frac{y_0^{\alpha+1}}{2i(\alpha+1)} - \frac{y_0^\alpha}{4} \Big) + o(h^\alpha)\Big).
\end{align*}
Similarly,
\begin{align*}
u_\pm(x_0) &= e^{\pm iy_0}\Big( 1 \pm h^\alpha\Big( \frac{y_0^{\alpha+1}}{2i(\alpha+1)} \pm \frac{y_0^\alpha}{4}\Big) + o(h^\alpha)\Big) + h^\alpha e^{\mp iy_0}\gamma_\pm(\alpha), \\
hu_\pm'(x_0) &= \pm ie^{\pm iy_0}\Big(1\pm h^{\alpha} \Big(\frac{y_0^{\alpha+1}}{2i(\alpha+1)} \mp \frac{y_0^\alpha}{4} \Big) + o(h^\alpha)\Big) \mp i h^\alpha e^{\mp iy_0} \gamma_\pm(\alpha).
\end{align*}
Calculating the Wronskians by evaluating at $x_0$,
\[
\mathcal W(u_+,v_+)= 2ih^\alpha\gamma_+(\alpha) + o(h^\alpha), \quad \mathcal W(u_-,v_+)  = 2i + o(h^\alpha).
\]
We also have $\mathcal W(u_+,u_-) = -2i$ by evaluating the Wronskian at $x=0$. Using \eqref{eq:wronskian}, we see that $v_+ = Au_+ + Bu_-$ with $A = 1+o(1)$ and $B = - h^\alpha\gamma_+ +o(h^\alpha)$. Dividing through by $A$ also shows that $u_+ + Ru_- = Tv_+$, where the reflection and transmission coefficients satisfy
\[
R = 2^{-\alpha-2}e^{i\alpha \pi/2}\Gamma(\alpha+1)h^{\alpha} + o(h^\alpha), \quad T = c_0 + o(1),
\]
thereby completing the proof.

\bibliographystyle{alphanum} 
	
	\bibliography{central_bibliography}

\begin{thebibliography}{DHUV}

\bibitem[AU]{antoniano1985pseudodifferential}
Jos{\'e}~L Antoniano and Gunther~A Uhlmann.
\newblock A functional calculus for a class of pseudodifferential operators
  with singular symbols.
\newblock {\em Pseudodifferential operators and applications}, 43:5, 1985.

\bibitem[Ber]{berry1982semiclassically}
Michael~V Berry.
\newblock Semiclassically weak reflections above analytic and non-analytic
  potential barriers.
\newblock {\em Journal of Physics A: Mathematical and General}, 15(12):3693,
  1982.

\bibitem[Bon]{bony1981calcul}
Jean-Michel Bony.
\newblock Calcul symbolique et propagation des singularit{\'e}s pour les
  {\'e}quations aux d{\'e}riv{\'e}es partielles non lin{\'e}aires.
\newblock In {\em Annales scientifiques de l'{\'E}cole Normale sup{\'e}rieure},
  volume~14, pages 209--246. Elsevier, 1981.

\bibitem[BR]{beals1984microlocal}
Michael Beals and Michael Reed.
\newblock Microlocal regularity theorems for nonsmooth pseudodifferential
  operators and applications to nonlinear problems.
\newblock {\em Transactions of the American Mathematical Society},
  285(1):159--184, 1984.

\bibitem[Cha1]{chabu2016analyse}
Victor Chabu.
\newblock {\em Analyse semiclassique de l'{\'e}quation de {S}chr{\"o}dinger
  {\`a} potentiels singuliers}.
\newblock PhD thesis, Paris Est, 2016.

\bibitem[Cha2]{chabu2017semiclassical}
Victor Chabu.
\newblock Semiclassical analysis of the {S}chr{\"o}dinger equation with conical
  singularities.
\newblock {\em Asymptotic Analysis}, 103(4):165--220, 2017.

\bibitem[Chr]{christianson2011quantum}
Hans Christianson.
\newblock Quantum monodromy and nonconcentration near a closed semi-hyperbolic
  orbit.
\newblock {\em Transactions of the {A}merican {M}athematical {S}ociety},
  363(7):3373--3438, 2011.

\bibitem[CL]{coddington1955theory}
Earl~A Coddington and Norman Levinson.
\newblock {\em Theory of ordinary differential equations}.
\newblock Tata McGraw-Hill Education, 1955.

\bibitem[DHUV]{de2014diffraction}
Maarten De~Hoop, Gunther Uhlmann, and Andr{\'a}s Vasy.
\newblock Diffraction from conormal singularities.
\newblock In {\em Ann. Sci. Ecole Norm. Sup}, 2014.

\bibitem[DZ]{zworski:resonances}
Semyon Dyatlov and Maciej Zworski.
\newblock {\em Mathematical theory of scattering resonances}, volume 200.
\newblock American Mathematical Soc., 2019.

\bibitem[FKGL]{fermanian2013wigner}
Clotilde Fermanian-Kammerer, Patrick G{\'e}rard, and Caroline Lasser.
\newblock Wigner measure propagation and conical singularity for general
  initial data.
\newblock {\em Archive for Rational Mechanics and Analysis}, 209(1):209--236,
  2013.

\bibitem[GT]{geba2007phase}
Dan-Andrei Geba and Daniel Tataru.
\newblock A phase space transform adapted to the wave equation.
\newblock {\em Communications in Partial Differential Equations},
  32(7):1065--1101, 2007.

\bibitem[GU1]{greenleaf1990estimates}
Allan Greenleaf and Gunther Uhlmann.
\newblock Estimates for singular {R}adon transforms and pseudodifferential
  operators with singular symbols.
\newblock {\em Journal of Functional Analysis}, 89(1):202--232, 1990.

\bibitem[GU2]{greenleaf1993recovering}
Allan Greenleaf and Gunther Uhlmann.
\newblock Recovering singularities of a potential from singularities of
  scattering data.
\newblock {\em Communications in mathematical physics}, 157(3):549--572, 1993.

\bibitem[GU3]{guillemin1981oscillatory}
Victor Guillemin and Gunther Uhlmann.
\newblock Oscillatory integrals with singular symbols.
\newblock {\em Duke Mathematical Journal}, 48(1):251--267, 1981.

\bibitem[HLTT]{harris2008energy}
Lisa Harris, Jani Lukkarinen, Stefan Teufel, and Florian Theil.
\newblock Energy transport by acoustic modes of harmonic lattices.
\newblock {\em SIAM journal on mathematical analysis}, 40(4):1392--1418, 2008.

\bibitem[H{\"o}r1]{hormander1994analysis}
Lars H{\"o}rmander.
\newblock {\em The analysis of linear partial differential operators. {III}}.
\newblock Classics in Mathematics. Springer, Berlin, 2007.
\newblock Pseudo-differential operators, Reprint of the 1994 edition.

\bibitem[H{\"o}r2]{hormander1985analysis}
Lars H{\"o}rmander.
\newblock {\em The analysis of linear partial differential operators {IV}:
  Fourier integral operators}.
\newblock Classics in Mathematics. Springer-Verlag, Berlin, 2009.
\newblock Reprint of the 1994 edition.

\bibitem[HV]{hintz2016global}
Peter Hintz and Andr{\'a}s Vasy.
\newblock The global non-linear stability of the {K}err-de {S}itter family of
  black holes.
\newblock {\em arXiv:1606.04014}, 2016.

\bibitem[HW]{hartman1951problems}
Philip Hartman and Aurel Wintner.
\newblock On the problems of geodesics in the small.
\newblock {\em American {J}ournal of {M}athematics}, 73(1):132--148, 1951.

\bibitem[Leb]{lebeau1997propagation}
Gilles Lebeau.
\newblock Propagation des ondes dans les vari{\'e}t{\'e}s {\`a} coins.
\newblock In {\em Annales {S}cientifiques de l’{\'E}cole {N}ormale
  {S}up{\'e}rieure}, volume~30, pages 429--497. Elsevier, 1997.

\bibitem[Mel1]{melrose1993atiyah}
Richard Melrose.
\newblock {\em The Atiyah-Patodi-Singer Index Theorem}.
\newblock Research Notes in Mathematics. Taylor \& Francis, 1993.

\bibitem[Mel2]{melrose1981transformation}
Richard~B Melrose.
\newblock Transformation of boundary problems.
\newblock {\em Acta Mathematica}, 147(1):149--236, 1981.

\bibitem[Mil]{miller1997short}
Luc Miller.
\newblock Short waves through thin interfaces and 2-microlocal measures.
\newblock {\em Journ{\'e}es {\'e}quations aux d{\'e}riv{\'e}es partielles},
  1997:1--12, 1997.

\bibitem[MM]{melrose1983elliptic}
Richard Melrose and Gerardo Mendoza.
\newblock {\em Elliptic operators of totally characteristic type}.
\newblock {M}athematical {S}ciences {R}esearch {I}nstitute, 1983.

\bibitem[MS1]{melrose1978singularities}
Richard~B Melrose and Johannes Sj{\"o}strand.
\newblock Singularities of boundary value problems. i.
\newblock {\em Communications on Pure and Applied Mathematics}, 31(5):593--617,
  1978.

\bibitem[MS2]{melrose1982singularities}
Richard~B Melrose and Johannes Sj{\"o}strand.
\newblock Singularities of boundary value problems. ii.
\newblock {\em Communications on Pure and Applied Mathematics}, 35(2):129--168,
  1982.

\bibitem[MU]{melrose1979lagrangian}
Richard~B Melrose and Gunther~A Uhlmann.
\newblock Lagrangian intersection and the {C}auchy problem.
\newblock {\em Communications on Pure and Applied Mathematics}, 32(4):483--519,
  1979.

\bibitem[Nie]{nier1996semi}
Francis Nier.
\newblock A semi-classical picture of quantum scattering.
\newblock In {\em Annales scientifiques de l'Ecole normale sup{\'e}rieure},
  volume~29, pages 149--183. Elsevier, 1996.

\bibitem[Olv]{olver2014asymptotics}
Frank~WJ Olver.
\newblock {\em Asymptotics and special functions}.
\newblock Academic Press, 2014.

\bibitem[Smi1]{smith1998parametrix}
Hart~F Smith.
\newblock A parametrix construction for wave equations with ${C}^{1,1}$
  coefficients.
\newblock volume~48, pages 797--836, 1998.

\bibitem[Smi2]{smith2014propagation}
Hart~F Smith.
\newblock Propagation of singularities for rough metrics.
\newblock {\em Analysis \& PDE}, 7(5):1137--1178, 2014.

\bibitem[Tay1]{taylor1997partial}
Michael~E. Taylor.
\newblock {\em Partial differential equations. {III}}, volume 117 of {\em
  Applied Mathematical Sciences}.
\newblock Springer-Verlag, New York, 1997.
\newblock Nonlinear equations, Corrected reprint of the 1996 original.

\bibitem[Tay2]{taylor2007tools}
Michael~E Taylor.
\newblock {\em Tools for {PDE}: pseudodifferential operators, paradifferential
  operators, and layer potentials}.
\newblock Number~81. American Mathematical Soc., 2007.

\bibitem[Vas1]{vasy2001propagation}
Andr{\'a}s Vasy.
\newblock Propagation of singularities in many-body scattering in the presence
  of bound states.
\newblock {\em Journal of Functional Analysis}, 184(1):177--272, 2001.

\bibitem[Vas2]{vasy2004propagation}
Andr{\'a}s Vasy.
\newblock Propagation of singularities for the wave equation on manifolds with
  corners.
\newblock {\em S{\'e}minaire {\'E}quations aux d{\'e}riv{\'e}es partielles},
  2004:1--16, 2004.

\bibitem[Vas3]{vasy2008propagation}
Andr{\'a}s Vasy.
\newblock Propagation of singularities for the wave equation on manifolds with
  corners.
\newblock {\em Annals of Mathematics}, pages 749--812, 2008.

\bibitem[Zwo]{zworski2012semiclassical}
Maciej Zworski.
\newblock {\em Semiclassical {A}nalysis}.
\newblock Graduate studies in mathematics. American Mathematical Society, 2012.

\end{thebibliography}

\end{document}